\newtheorem{thm}{Theorem}[section]
\newtheorem{prop}[thm]{Proposition}
\newcommand{\alg}{\mathcal{A}}
\newcommand{\dd}{\partial}
\newcommand{\Z}{\mathbb{Z}}
\newcommand{\R}{\mathbb{R}}
\newcommand{\A}{\mathcal{A}}
\def\ms{}
\def\dr{}
\newtheorem{theorem}{Theorem}[section]
\newtheorem{lemma}{Lemma}[section]
\newtheorem{corollary}{Corollary}[section]
\newtheorem{definition}{Definition}[section]
\newtheorem{proposition}{Proposition}[section]
\theoremstyle{definition}
\newtheorem{example}[theorem]{Example}
\newtheorem{remark}[theorem]{Remark}
\begin{document}

\title[Cellular LCH for surfaces, I]{Cellular Legendrian contact homology for surfaces, part I}

\thanks{We thank Tobias Ekholm and Lenny Ng for useful conversations.  The second author is supported by grant 317469 from the Simons Foundation. He thanks the Centre de Recherches Mathematiques for hosting him while some of this work was done.}

\author{Dan Rutherford}
\address{Ball State University}
\email{rutherford@bsu.edu}

\author{Michael Sullivan}
\address{University of Massachusetts, Amherst}
\email{sullivan@math.umass.edu}

\begin{abstract}
We give a computation of the Legendrian contact homology (LCH) DGA for an arbitrary generic Legendrian surface $L$ in the $1$-jet space of a surface.  As input we require a suitable cellular decomposition of the base projection of $L$.  A collection of generators is associated to each cell, and the differential is given by explicit matrix formulas.  In the present article, we prove that the equivalence class of this cellular DGA does not depend on the choice of decomposition, and in the sequel \cite{RuSu2} we use this result to show that the cellular DGA is equivalent to the usual Legendrian contact homology DGA defined via holomorphic curves.   
 Extensions are made to allow Legendrians with non-generic cone-point singularities.  We apply our approach to compute the LCH DGA for several examples including an infinite family, and to give general formulas for DGAs of front spinnings allowing for the axis of symmetry to intersect $L$.  
\end{abstract}

\maketitle

{\small \tableofcontents}

\section{Introduction}


The Legendrian contact homology (abbrv. LCH) algebra of a Legendrian submanifold, $L$, in the $1$-jet space, $J^1M$, of a manifold $M$ is a differential graded algebra (abbrv. DGA) with generators determined by the double points of the Lagrangian projection of $L$ and with differential defined via counts of suitable spaces of pseudo-holomorphic disks.  The LCH algebra was outlined in a more general setting as part of the wider class of symplectic field theory invariants in \cite{EliashbergGiventalHofer00} and, for Legendrian submanifolds in $1$-jet spaces, was first rigorously constructed in \cite{EkholmEtnyreSullivan05b, EkholmEtnyreSullivan07}.  


Aside from being able to distinguish Legendrian isotopy classes, 
  the LCH algebra is useful in identifying the border between symplectic flexibility and rigidity.  For example,  the existence of an augmentation of the LCH algebra of a displaceable Legendrian $L \subset J^1 M$ implies
that $L$ satisfies the Arnold conjecture for exact Lagrangian immersions--roughly speaking, the projection of $L$ to $T^*M$ must have enough double points to generate half of the homology of $L$ \cite{EkholmEtnyreSullivan05c, EESabloff}.  In contrast, Legendrian submanifolds whose LCH algebras have vanishing homology have been constructed to have only $1$ double point \cite{EEMurphySmith}.  
A sample of other applications of LCH algebras include (i)  surgery exact triangles explaining the effect of Legendrian surgery/symplectic handle attachment on the symplectic and contact homology of Stein manifolds and their contact boundaries \cite{BourgeoisEkholmEliashberg12};  (ii)  a complete invariant for smooth knots in $\R^3$ \cite{Ng08, EkholmEtnyreNgSullivan11, GordonLidman15, EkholmNgShende16}  (iii)  connections between the LCH algebra of $1$-dimensional Legendrian knots and  topological invariants of knots such as the Kauffman and HOMFLY polynomial \cite{Rutherford06, HenryRutherford15};  (iv) results concerning Lagrangian cobordisms in symplectizations \cite{EkholmKalmanHonda12, DimitroglouRizell12,  Chantraine15}.    








For $1$-dimensional Legendrians (in a $1$-jet space), the 
LCH algebra is combinatorially computable, as
the pseudo-holomorphic disks 
used to define the differential may be identified using the Riemann mapping theorem \cite{Chekanov02}.   
Moreover, after placing the front diagram of $L$ into a standard form, the count of such disks can be carried out in an algorithmic manner \cite{Ng03}.  As a result, the LCH algebra is an effective tool for distinguishing Legendrian knots with reasonably sized diagrams, and is useful in distinguishing many pairs of Legendrian knots in the current tables.



In contrast, the explicit computation of Legendrian contact homology for higher dimensional Legendrians is much more challenging as direct counts of the relevant moduli of pseudo-holomorphic disks are not feasible.  For Legendrians in $1$-jet spaces, a major step towards explicit computation was carried out in \cite{Ekholm07} where it was shown that the count of pseudo-holomorphic disks can be replaced by a count of certain rigid gradient flow trees (abbrv. GFTs).   
This replaces the need to identify pseudo-holomorphic disks with a problem phrased entirely in terms of  ODEs.  However, GFTs are themselves complicated global objects.  Consequently, explicit computations of the full DGA have been limited to several restricted classes of 2-dimensional Legendrians: conormal lifts of smooth knots in $\R^3$ \cite{EkholmEtnyreNgSullivan11}; two-sheeted Legendrians surfaces \cite{DimitroglouRizell11}; and Legendrian tori constructed from $1$-dimensional Legendrian knots via front and isotopy spinnings \cite{EkholmKalman08}.  
There have also been a number of partial DGA computations in higher dimensions: front spinning
\cite{EkholmEtnyreSullivan05a, DimitroglouRizellGolovko14}; 
and DGA maps induced from Lagrangian cobordims \cite{DimitroglouRizell12, Lowell15}.




In this paper and its companion \cite{RuSu2}, we give a computation of the LCH algebra with $\Z/2$-coefficients for all $2$-dimensional Legendrians  in the $1$-jet space of a surface.  For a surface $S$ and a Legendrian $L \subset J^1S$, we define a cellular DGA,  $(\mathcal{A},\partial)$, that requires as input a suitable polygonal decomposition of the base projection (to $S$) of $L$.  To each $0$-, $1$-, and $2$-cell in the decomposition we associate a collection of generators with the precise number of generators determined by the appearance of the front projection of $L$ above the cell.  After placing the generators associated to a particular cell and its boundary cells into matrices, the differential of $(\mathcal{A},\partial)$ is characterized by explicit formulas.  See Figure \ref{fig:2cell} for a brief summary.  

The standard notion of equivalence used when considering Legendrian contact homology DGAs is known as stable tame isomorphism. 
This is at least as strong as the notion 
of homotopy equivalence for DGAs. 
 Our main result is:
\begin{theorem} \label{thm:FirstStatement} For any $2$-dimensional $L \subset J^1S$, the cellular DGA $(\mathcal{A},\partial)$ is equivalent to the Legendrian contact homology DGA of $L$ with coefficients in $\Z/2$.  In particular, up to stable tame isomorphism, $(\A, \partial)$ does not depend on the choice of polygonal decomposition, and $(\A,\partial)$ is a Legendrian isotopy invariant.
\end{theorem}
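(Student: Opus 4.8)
The plan is to establish the theorem in two stages corresponding to its two logically distinct assertions: first, that the stable tame isomorphism class of $(\A,\partial)$ is an invariant of $L$, independent of the chosen polygonal decomposition; and second, that this invariant agrees with the holomorphic-curve DGA. \textbf{Stage 1 (decomposition independence).} Any two polygonal decompositions of the base projection of $L$ admit a common refinement, and any refinement is reached from a coarser one by a short list of elementary moves: subdividing a $2$-cell by an arc, subdividing a $1$-cell by a vertex, and their inverses. It therefore suffices to show each elementary move induces a stable tame isomorphism. For each move I would write down the generators it creates and show, using the explicit matrix formulas for $\partial$, that they organize into canceling pairs whose differential is upper triangular with invertible diagonal. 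Algebraically this exhibits the refined DGA as a stabilization of the original followed by a tame change of generators absorbing the off-diagonal terms; over $\Z/2$ the bookkeeping is cleaner since no signs intervene. The content is combinatorial but delicate: one must verify the defining matrix identities are preserved under each move and that the tame isomorphism is consistent across all cells adjacent to the one modified.

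\textbf{Stage 2 (comparison with LCH).} By \cite{Ekholm07}, the holomorphic-curve DGA of $L$ is computed, for a suitable almost complex structure and metric, by counts of rigid gradient flow trees (GFTs) of the sheet-difference functions of the front. The strategy is to choose a decomposition fine enough, and a metric adapted to it, so that every rigid GFT is confined to a controlled neighborhood of a single $2$-cell together with its boundary cells. One then (i) matches generators: Reeb chords of $L$ correspond to critical points of sheet-difference functions, and the adapted front is arranged so these are in bijection with a distinguished subset of the cellular generators; (ii) matches differentials: shows the local pieces of rigid GFTs -- $Y$-vertices, switches at cusp edges, and ends at Reeb chords -- are counted mod $2$ exactly by the terms in the cellular matrix formulas; and (iii) accounts for the remaining cellular generators, which do not correspond to Reeb chords, by exhibiting them as a stabilization. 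Combining (i)--(iii) gives a stable tame isomorphism between the cellular DGA for this decomposition and the GFT (hence LCH) DGA, which Stage 1 then promotes to the general statement. This is the stage carried out in the sequel \cite{RuSu2}.

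The main obstacle is the differential matching in Stage 2(ii), where two difficulties must be controlled at once. First, one needs a localization and compactness result guaranteeing that no rigid tree spreads beyond the allowed cells; this requires a careful choice of metric near the cusp and crossing loci together with a gluing argument ruling out trees that would connect distant cells. Second, even granting localization, one must check that the matrix products in the definition of $\partial$ reproduce the tree counts term by term, with the correct treatment of switches and of multiply-covered boundary strata. Once these local models are pinned down, the residual stable tame isomorphism bookkeeping is routine. Finally, the two ``in particular'' assertions follow immediately: decomposition independence is precisely Stage 1, and since the holomorphic-curve DGA is a Legendrian isotopy invariant up to stable tame isomorphism by its construction, the equivalence of Stage 2 transports that invariance to $(\A,\partial)$.
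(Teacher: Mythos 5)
Your two-stage architecture coincides with the paper's: Stage 1 is Theorem \ref{thm:IndCellD}, proved by elementary moves (subdividing a $1$-cell, Theorem \ref{thm:sub1cell}; subdividing a $2$-cell, Theorem \ref{thm:sub2cell}) together with a common-refinement argument (Theorem \ref{thm:refine}), with the pairwise cancellations carried out exactly as you describe via the triangular-cancellation result Theorem \ref{thm:Alg}; Stage 2 is deferred to the sequel \cite{RuSu2} using Ekholm's gradient flow trees, which is precisely the paper's division of labor. Two small points on Stage 1's move list: the paper also needs deletion of an edge with a $1$-valent vertex (Theorem \ref{thm:1val}) to reduce the common refinement back to each original decomposition, and the auxiliary choices of $1$-cell orientations and initial/terminal vertices are disposed of as consequences of the subdivision moves (Corollaries \ref{cor:IndOrient} and \ref{cor:IndVer}), consistent with your plan.

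There is, however, one genuine gap: when the front has swallowtail points, the differential depends on an additional choice, the labeling of the two regions adjacent to each swallowtail's crossing arc by $S$ and $T$, and independence of this choice is part of the theorem but is \emph{not} reachable by your elementary moves. Indeed, the paper's subdivision theorems explicitly require the $S$/$T$ decorations to be held fixed on both sides of each move, so no sequence of subdivisions and their inverses connects the two labelings; a refinement-style argument cannot close this case. The paper handles it by a separate explicit tame isomorphism (Theorem \ref{thm:IndDecorate}) defined on the matrix of generators of the crossing-arc $1$-cell by $\psi(I+B) = (I+B)(ST)$, whose chain-map property rests on the matrix identities of Lemma \ref{lem:ASAT}, e.g. $\partial S = \widehat{A}_{k,k+1}S + SA_S$, $QA_TQ = A_S$, and $ST = TS = I + \widehat{A}_{k,k+1}E_{k+2,k}$; verifying $\psi$ is tame also uses the triangularity of the generator ordering (Lemma \ref{lem:precA}). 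Your proposal would need this extra ingredient to complete Stage 1; otherwise the argument, including the reliance on \cite{Ekholm07} and \cite{RuSu2} for Stage 2, follows the paper's route.
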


In the present article, we define the cellular DGA, and prove independence of the choice of polygonal decomposition.  
In \cite{RuSu2}, we construct a stable tame isomorphism between the Legendrian contact homology DGA of $L$ and the cellular DGA of $L$ obtained from a particular choice of polygonal decomposition.  
Here, we use the reformulation of the differential in terms of GFTs \cite{Ekholm07}.  
We produce relatively explicit coordinate models for Legendrian surfaces to make our computations of LCH.

The differential in the cellular DGA is local in the sense that it preserves sub-algebras corresponding to generators associated to the closure of any particular  cell.  Starting with the splash construction \cite{Fu}, similar localization techniques have been developed in the case of $1$-dimensional Legendrians where they have been applied to study augmentations of the LCH algebra and to relate the LCH algebra with invariants defined via the techniques of generating families and constructible sheaves \cite{FuRu, Sivek11, NRSSZ}.  
The localizing splash construction has since been generalized to higher dimensional Legendrians \cite{HarperSullivan, Lowell15}.
We expect that the cellular DGA may also provide an approach for extending the other applications to the $2$-dimensional case.  In particular, in the upcoming article \cite{RuSu3} we apply Theorem \ref{thm:FirstStatement} to give a necessary and sufficient condition for the existence of augmentations in terms of a $2$-dimensional generalization of the Morse Complex Sequences studied in \cite{HenryRutherford13}, which exists in the presence of a generating family for the Legendrian.

Although the number of generators of $(\A,\partial)$ can be quite large, it is often possible to reduce the number of generators to a much smaller size while retaining an explicit formula for the differential.  
The idea is to cancel generators in pairs via a suitable stable tame isomorphisms.
Throughout Section \ref{sec:Examples}, we illustrate this process with explicit examples.  In that section we also show how to the relax conditions on the cell decomposition (for example, allowing cone points which arise naturally in \cite{EkholmEtnyreNgSullivan11, DimitroglouRizell11}), which further reduces the set of generators. See Sections \ref{ssec:ConePoints} and \ref{ssec:MultipleCrossings}.





In Section \ref{sec:Background}, we recall background material on DGAs and Legendrian surfaces in $1$-jet spaces.  
The definition of the cellular DGA is given in Section \ref{sec:DefDGA}, where it is verified that  
differential $\partial$ has degree $-1$ and satisfies $\partial^2 =0$;    see Theorem \ref{thm:Summary}.  
Theorem \ref{thm:IndCellD} of Section \ref{sec:Ind} shows that for any fixed Legendrian $L \subset J^1S$, the stable tame isomorphism type of the cellular DGA is independent of the choice of polygonal decomposition and of other auxiliary choices.   In Section \ref{ssec:ExampleSphere} and \ref{ssec:ExampleST}, we compute the LCH of two Legendrian spheres.
In Section \ref{ssec:FrontSpinning} we compute the LCH for front-spun knots, expressing it as a natural DGA operation.
Some of these computations agree with pre-existing ones. 
In Section \ref{sec:Lnsigma}, we provide a family of spheres many of which have identical Linearized homology groups, but are distinguished by product operations that would be difficult to identify without explicit formulas for differentials. 



We remark that knowledge of LCH is neither assumed nor required in the current article.  The required background on LCH and GFTs is postponed until \cite{RuSu2}.
\section{Background}  \label{sec:Background}

\subsection{Differential graded algebras}

In this article, we consider Differential Graded Algebras (abbrv.  {\bf DGAs}) with $\Z/2$-coefficients and $\Z/m$-grading for some non-negative, integer $m$.  The differential $\partial: \mathcal{A} \rightarrow \alg$ of a DGA $(\A,\partial)$ has degree $-1$ and satisfies the Liebniz rule $\partial(x\cdot y) = (\partial x) y + (-1)^{|x|}x (\partial y)$.

A {\bf based DGA}   is a DGA $(\mathcal{A}, \partial)$ with a chosen subset $\mathcal{B} = \{q_1, \ldots, q_r\} \subset \A$ such that  $\mathcal{A}$ is freely generated by $B$, as an associative (non-commutative) $\Z/2$-algebra with identity.  That is, $\mathcal{A} \cong (\Z/2)\langle q_1, \ldots, q_r \rangle$ so that  
elements of $\mathcal{A}$ can be written uniquely as $\Z/2$-linear combinations of words in the elements of $\mathcal{B}$.  Furthermore, we require that the generating set $\mathcal{B}$ consists of homogeneous elements, so that the  $\Z/m$-grading, $\alg = \bigoplus_{i \in \Z/m} \alg_i$, is determined by the assignment of degrees to generators, $|\cdot| : \mathcal{B} \rightarrow \Z/m$, and the requirement that $|x \cdot y| = |x| + |y|$ when $x$ and $y$ are homogeneous elements.  

We say that the differential $\partial$ is {\bf triangular} with respect to a total ordering of the generating set given by $\mathcal{B} =\{q_1, \ldots, q_n\}$ if for all $ 1\leq i \leq n$, $\dd q_i \in F_{i-1}\A$ where  $F_0 \A= \Z/2$ and $F_i\A \subset \A$
 denotes the subalgebra generated by $q_1, \ldots, q_{i}$.


\subsubsection{Stable tame isomorphism}
\label{ssec:STI}

An {\bf elementary automorphism} of $\A$ is a graded algebra map $\phi: \A \rightarrow \A$ such that for some $q_i \in \mathcal{B}$, we have $\phi(q_j) =q_j$ when $j\neq i$, and $\phi(q_i) = q_i + v$ where $v$ belongs to the subalgebra generated by $\mathcal{B} \setminus \{q_i\}$.  A {\bf tame isomorphism} between based DGAs  is an isomorphism of DGAs  
$\psi:(\A, \partial) \rightarrow (\A',\partial')$, i.e. a graded algebra isomorphism satisfying $\psi\circ \partial = \partial' \circ \psi$, that 
is a composition of elementary automorphisms of $\A$ followed by an isomorphism $\A \rightarrow \A'$ that extends some bijection of generating
sets $\mathcal{B} \cong \mathcal{B}'$.

The {\bf degree $i$ stabilization} of a based DGA, $(\A,\partial)$, with generating set, $\mathcal{B}$, is the based DGA, $(S\A, \partial')$, with generating set, $S\mathcal{B}= \mathcal{B}\cup \{a,b\}$, where the new generators are assigned degrees $|a| = i$, $|b|= i-1$, and $\partial'$ satisfies 
\[
\partial' a= b; \quad \partial'b= 0; \quad \mbox{and} \quad \partial'|_{\mathcal{A}} = \partial.
\]

\begin{definition}  Two based DGAs are {\bf stable tame isomorphic} if after stabilizing each of the DGAs some (possibly different) number of times they become isomorphic by a tame isomorphism.
\end{definition}
It can be checked that stable tame isomorphism satisfies the properties of an equivalence relation.  We occasionally say that two based DGAs are {\bf equivalent} to mean that they are stable tame isomorphic.

The following theorem will serve as our primary method for producing stable tame isomorphisms.

\begin{theorem} \label{thm:Alg} Let $(\A, \dd)$ be a based DGA such that $\partial$ is triangular with respect to the ordered generating set $\mathcal{B} = \{q_1, \ldots, q_n\}$.  Suppose that 
\[
\mbox{$\dd q_k = q_l + w$ where $w \in F_{l-1}\A$,}
\]
 and let $I=I(q_k,\partial q_k)$ denote the $2$-sided ideal generated by $q_k$ and $\partial q_k$.  Then, $(\A/I,\partial)$ is free and triangular with respect to the ordered generating set $\mathcal{B}'=\{[q_1], \ldots, \widehat{[q_l]}, \ldots, \widehat{[q_k]}, \ldots, [q_n]\}$.  Moreover, with these generating sets, $(\A,\partial)$ and $(\A/I, \partial)$ are stable tame isomorphic. 
\end{theorem}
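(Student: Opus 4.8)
The plan is to separate the two assertions: first that $(\A/I,\partial)$ is free and triangular on $\mathcal{B}'$, and then that $\A$ and $\A/I$ are stable tame isomorphic, using the algebraic description of the quotient from the first part to guide the isomorphism in the second. I would begin with two preliminary observations. Since $\partial$ is triangular we have $\partial q_k \in F_{k-1}\A$, so the hypothesis $\partial q_k = q_l + w$ with $w \in F_{l-1}\A$ forces $l < k$; and comparing degrees of the two sides of $\partial q_k = q_l + w$ gives $|q_l| = |q_k| - 1$. The conceptual content is that modulo $I$ the generators $q_k$ and $q_l$ become redundant: the relation $q_k \equiv 0$ eliminates $q_k$, while $\partial q_k \equiv 0$ rewrites $q_l \equiv w$ in terms of the strictly lower generators $q_1,\dots,q_{l-1}$ only.

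To prove the first assertion I would introduce the free algebra $\A'' = (\Z/2)\langle q_i : i \neq l,k\rangle$ and the algebra map $\Phi \colon \A \to \A''$ determined by $\Phi(q_i) = q_i$ for $i \neq l,k$, $\Phi(q_l) = w$ (which makes sense because $w \in F_{l-1}\A \subset \A''$), and $\Phi(q_k) = 0$. Since $\Phi(q_k) = 0$ and $\Phi(\partial q_k) = w + w = 0$, the map $\Phi$ kills the ideal $I$ and descends to $\bar\Phi \colon \A/I \to \A''$. The natural map $\A'' \to \A/I$ is surjective because $[q_l] = [w]$ and $[q_k] = 0$, and $\bar\Phi$ is a one-sided inverse to it (it is the identity on each surviving generator), so $\A/I \cong \A''$ is free on $\mathcal{B}'$. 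Triangularity of the induced differential then follows by transporting $\partial[q_i]$ through $\bar\Phi$ to $\Phi(\partial q_i)$ and observing that $\Phi$ never raises indices: it replaces $q_l$ by $w \in F_{l-1}\A$ and deletes $q_k$, so $\Phi(\partial q_i)$ lies in the subalgebra of $\A''$ generated by the elements of $\mathcal{B}'$ of index below $i$. One verifies this uniformly in the three cases $i < l$, $l < i < k$, and $i > k$.

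For the stable tame isomorphism I would prove the sharper statement that $\A$ is tamely isomorphic to the single degree-$|q_k|$ stabilization $S(\A/I)$, identifying its stabilizing pair $a,b$ (with $\partial a = b$, $\partial b = 0$) with $q_k, q_l$. The first step normalizes the differential: conjugating $\partial$ by the elementary automorphism sending $q_l \mapsto q_l + w$ and fixing the other generators turns $\partial q_k = q_l + w$ into $\partial q_k = q_l$, and the identity $0 = \partial^2 q_k$ then forces $\partial q_l = 0$, so the pair $(q_k,q_l)$ already satisfies the stabilization relations. The idea of the second step is to build a tame automorphism $\Xi$ of $\A$ that removes $q_l$ and $q_k$ from every differential $\partial q_i$ with $i \neq k$, after which the relabeling $q_k \mapsto a$, $q_l \mapsto b$, $q_i \mapsto [q_i]$ exhibits the DGA as $S(\A/I)$ on the nose. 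The basic move at each stage is that an occurrence $u\,q_l\,v$ in some $\partial q_i$ is absorbed by the elementary automorphism $q_i \mapsto q_i + u\,q_k\,v$, which cancels it against $u\,(\partial q_k)\,v = u\,q_l\,v$ modulo $2$ while contributing only terms of strictly lower filtration.

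The main obstacle is precisely this second step. Each cancellation removes one occurrence of $q_l$ but feeds back lower-order terms $(\partial u)\,q_k\,v$ and $u\,q_k\,(\partial v)$ that reintroduce $q_k$ into the differentials, and for $i > k$ the generator $q_k$ can legitimately appear in $\partial q_i$ to begin with; so the heart of the argument is to organize the elimination of the entire pair $\{q_l, q_k\}$ simultaneously and in a well-founded order. I would secure this by building the isomorphism directly as a correction $\Theta(q_i) = [q_i] + C_i$ of the naive relabeling, solving recursively for $C_i$ so that $\Theta$ is a chain map; the existence of each $C_i$ uses that the pair $q_k \mapsto q_l$ is an acyclic subcomplex, so the relevant obstruction is a boundary with an explicit primitive, and the triangular filtration guarantees that the recursion is well founded, terminates, and assembles into a composite of elementary automorphisms, i.e.\ a tame isomorphism. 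Verifying termination and tameness, rather than the mere availability of the individual cancellations, is where the real work lies.
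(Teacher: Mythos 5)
Your first assertion is handled correctly: the retraction $\Phi\colon \A \to \A''$ with $\Phi(q_k)=0$, $\Phi(q_l)=w$ is a clean way to get freeness and triangularity of $(\A/I,\partial)$ (the paper instead just invokes the splitting $\A = \A'\oplus I$), and your overall strategy for the equivalence is the same as the paper's: prove the sharper statement that $\A$ is tamely isomorphic to a \emph{single} stabilization of $\A/I$, with stabilizing pair coming from $(q_k,q_l)$, by building the isomorphism recursively over the triangular filtration. (Normalizing $w$ to $0$ first, and running the map out of $\A$ rather than out of $S(\A/I)$ as the paper's $F$ does, are cosmetic differences.)

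The genuine gap is exactly at the step you defer. The inference ``the pair $q_k\mapsto q_l$ is an acyclic subcomplex, so the relevant obstruction is a boundary with an explicit primitive'' is not valid as stated: the obstruction at stage $i$ is $o_i=\Theta(\partial q_i)+[\partial q_i]=(\Theta+p)(\partial q_i)$, a cycle in the \emph{ambient} algebra $S(\A/I)$, which is in general far from acyclic, and acyclicity of a two-generator sub-DGA of $\A$ gives no direct handle on it. What makes the recursion close --- and this is the actual content of the paper's Lemma \ref{lem:DGAmapf} --- is that one constructs, simultaneously with the map, a chain homotopy between the partially built map and the canonical projection $p$, extended multiplicatively as a $(g_1,g_2)$-derivation and seeded on generators by sending $q_l$ to the stabilization generator $a$ (in the paper's direction this is $H_0(q_{k-1})=q_k$) and all other generators to $0$. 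Carrying this homotopy $K$ through the induction, the obstruction becomes literally $o_i=(\partial K+K\partial)(\partial q_i)=\partial\bigl(K(\partial q_i)\bigr)$, so $C_i=K(\partial q_i)$ is the required primitive; one then extends $K$ by $K(q_i)=0$ and continues, using the fact that a homotopy identity between two algebra maps implemented by a derivation need only be verified on generators. This device also disposes of your worry about $q_k$ being reintroduced into the differentials for $i>k$. Without the derivation-homotopy carried along in the recursion there is no reason for the obstructions to be exact; with it, well-foundedness, termination, and tameness follow from triangularity exactly as you say, and your argument becomes the paper's.
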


Typically we use the same notation for an element of $\A$ and its equivalence class in a quotient of $\A$.  Note that to write the differential $\partial q_i$ in $\A/I$ in terms of the generating set, $\mathcal{B}'$, we simply replace all occurrences of $q_k$ (resp. $q_l$) in $\partial q_i \in \A$ by $0$ (resp. by $w$).

\begin{proof}
This is essentially due to \cite[Section 8.4]{Chekanov02}.

If necessary, we can reorder $\{q_1, \ldots, q_n\}$ as $\{q_1, \ldots, q_l, q_k, q_{l+1}, \ldots, \widehat{q_k}, \ldots, q_n\}$ to arrange that $k$ and $l$ are consecutive while preserving the triangular property of $\dd$.  Thus, we can suppose $\partial q_{k} = q_{k-1} + w$ with $w \in F_{k-2} \A$, and show $(\A/I, \partial)$ is stable tame isomorphic to $(\A,\partial)$ where $I := I(q_{k}, \partial q_{k})$.  
It is clear that $\A/I$ is indeed freely generated by $\mathcal{B}'$ since we have the vector space decomposition, $\A = \A' \oplus I$ where $\A' \subset \A$ is the subalgebra generated by $\mathcal{B} \setminus \{q_k,q_{k-1}\}$.
We next need the following.


\begin{lemma} \label{lem:DGAmapf}
There exists a DGA map $f: \A/I \rightarrow \A$ such that
for any $[q_i] \in \mathcal{B}'$, ($i \neq k,k-1$), $f([q_i]) = q_i + w_i$ with $w_i \in F_{i-1}\A$.
\end{lemma}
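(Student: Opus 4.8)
The plan is to construct $f$ one generator at a time along the triangular order, using the relation $\partial q_k = q_{k-1}+w$ to create exactly the room needed to upgrade the naive inclusion $[q_i]\mapsto q_i$ into a chain map. It is cleanest to first perform the triangular change of coordinates that replaces $q_{k-1}$ by $d:=\partial q_k = q_{k-1}+w$; since $w\in F_{k-2}\A$ this preserves the filtration $F_\bullet$ and the triangularity of $\partial$, fixes every generator $q_i$ with $i\neq k-1$, and rewrites the defining relation as $\partial c = d$ and $\partial d = 0$ for the pair $c:=q_k$ and $d$ (here $\partial d = \partial^2 q_k = 0$). In these coordinates $I=I(q_k,\partial q_k)$ is precisely the two-sided ideal generated by $c$ and $d$, i.e. the $\Z/2$-span of all words containing at least one occurrence of $c$ or $d$, and $\A/I$ is freely generated by the remaining ``pure'' generators.

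I would then define the algebra map $f$ inductively on $[q_i]$ for $i\neq k-1,k$, in increasing order of $i$, carrying along two induction hypotheses: that $f$ is a chain map on the subalgebra generated so far, and that each correction satisfies $w_i\in I\cap F_{i-1}\A$ (in particular $w_i\in F_{i-1}\A$, as the statement requires). For $i<k-1$ we have $\partial q_i\in F_{i-1}\A\subseteq F_{k-2}\A$, which involves neither $c$ nor $d$, so $f([q_i])=q_i$ (with $w_i=0$) is automatically a chain map. For the step $i>k$, writing $f([q_i])=q_i+w_i$, the chain-map identity $\partial f([q_i]) = f(\partial_{\A/I}[q_i])$ collapses to the single equation $\partial w_i = u_i$, where $u_i := f(\pi(\partial q_i)) + \partial q_i$ and $\pi\colon\A\to\A/I$ is the projection. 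Two formal points make this tractable. First, $u_i$ is a cycle: since $f$ is already a chain map in lower filtration and $\pi\partial = \partial_{\A/I}\pi$ with $\partial^2=0$, one gets $\partial u_i = 0$. Second, $u_i\in I\cap F_{i-1}\A$: splitting $\partial q_i$ into its pure part $A_i$ and its impure part $B_i$ (the terms containing a $c$ or $d$), one has $\pi(\partial q_i)=A_i$ and $f(\pi(\partial q_i))= A_i|_{q_j\mapsto q_j+w_j}$, so $u_i = \bigl(A_i|_{q_j\mapsto q_j+w_j}+A_i\bigr) + B_i$; every term of the first bracket carries a factor $w_j\in I$, and $B_i\in I$, whence $u_i\in I$.

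It remains to solve $\partial w_i = u_i$ with $w_i\in I\cap F_{i-1}\A$, which reduces to acyclicity of each filtered piece $I\cap F_p\A$, and this is exactly where $\partial c = d$ is used. I would introduce the homotopy $H$ that locates the leftmost letter of a word belonging to $\{c,d\}$ and replaces it by $c$ if it is a $d$ (returning $0$ if that letter is a $c$, or if the word is pure); note $H$ has degree $+1$. A direct computation from $\partial c = d$, $\partial d = 0$ shows that on $I$ one has $\partial H + H\partial = \op{id} + N$, where the error $N$ arises solely from differentiating the pure prefix to the left of the selected letter, and where every term of $Nx$ contains strictly more occurrences of $c,d$ than $x$. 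Filtering $I\cap F_p\A$ by the number of these special letters, the associated graded differential is the Koszul differential $c\mapsto d$ combined with the pure differential, and the same $H$ is then an honest contracting homotopy; hence the associated graded of $I\cap F_p\A$ is acyclic. (Equivalently, this exhibits $\pi$ as a quasi-isomorphism, so the obstruction $[u_i]$ vanishes.)

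The main obstacle is upgrading this graded acyclicity to an explicit, finite solution of $\partial w_i=u_i$ inside $I\cap F_{i-1}\A$. I would proceed by successive approximation: put $w_i^{(0)}=H(u_i)$, so that $\partial w_i^{(0)} + u_i = N(u_i)$ lies one step deeper in the special-letter filtration and is again a cycle, then correct by $H(N u_i)$ and iterate, taking $w_i=\sum_{r\ge 0} H(N^r u_i)$. The genuinely delicate point — and the one I expect to require real care rather than formal homotopy-perturbation bookkeeping — is termination: one must check that this sum is finite within the fixed subalgebra $F_{i-1}\A$ and fixed $\Z/m$-degree. This does not follow from a naive degree bound (since over $\Z/m$ the special-letter count is unbounded in a fixed degree), but can be forced by a well-founded argument exploiting triangularity, namely that each application of $\partial$ to a prefix letter $q_j$ replaces the index $j$ by a word whose indices are all strictly smaller, so the multiset of indices strictly decreases. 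Once $w_i\in I\cap F_{i-1}\A$ is produced, both induction hypotheses are restored and $f$ is the desired DGA map.
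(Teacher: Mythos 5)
Your construction is correct, but it follows a genuinely different route from the paper's. The paper builds $f$ as a homotopy inverse to the projection $p:\A\rightarrow \A/I$, constructing by induction over the triangular filtration \emph{both} $f$ and a chain homotopy $H$ which is a $(f\circ p,\mathit{id})$-derivation, determined on generators by $H(q_{k-1})=q_k$ and $H(q_j)=0$ for $j\neq k-1$. Because the homotopy identity $f\circ p-\mathit{id}_{\A}=\partial\circ H+H\circ\partial$ is carried along as an induction hypothesis, the correction term at each stage comes in closed form, $w_i=H(\partial_{\A}q_i)$, and the chain-map property is a short computation; no acyclicity of $I$, no perturbation series, and no termination argument ever enter. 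You carry only the chain-map property in your induction, so at each stage you must solve $\partial w_i=u_i$ from scratch; this is why you need the hand-made ``leftmost special letter'' homotopy on $I$, the error term $N$ (your $H$ is not a derivation, so differentiating the pure prefix produces leftover terms), and the series $w_i=\sum_{r\geq 0}H(N^r u_i)$ with its attendant termination problem. Your route does work, and even yields the extra conclusion $w_i\in I\cap F_{i-1}\A$, which the paper does not need; but the delicate points you flag are artifacts of not making the homotopy a derivation, and the paper's choice dissolves them (it also avoids your preliminary change of coordinates).

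One correction to your termination argument: ``the multiset of indices strictly decreases'' is not literally true, because your homotopy replaces $d=q_{k-1}$ by $c=q_k$, which \emph{raises} an index. The well-founded argument should track only the multiset of indices of the pure letters (equivalently, of the letters to the left of the selected occurrence of $c$ or $d$): these are the only letters that $\partial$ expands in forming $N$, and triangularity replaces each such index by a collection of strictly smaller ones, so this multiset does strictly decrease in the Dershowitz--Manna order. Combined with the finiteness of the support of each $N(x)$ and K\"onig's lemma, this gives $N^m u_i=0$ for large $m$, which completes your argument.
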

\begin{proof}  The projection $p: \A \rightarrow  \A/I$ is a DGA map.  We will construct $f$ to be a homotopy inverse to $p$.

Note that $(\A/I,\partial)$ inherits a filtration from $(\A,\partial)$, where $F_i( \A/I) = p(F_i\A)$ is the sub-algebra generated by $[q_1],\ldots, [q_i]$.  In particular, $F_{k}(\A/I) = F_{k-1}(\A/I) = F_{k-2}(\A/I)$ are all freely generated by $[q_i]$ with $1 \leq i \leq k-2$.  
Begin by defining $f_0: F_{k} \A/I \rightarrow F_k\A$, and $H_0:F_k \A \rightarrow F_k\A$ on generators by 
\[
f_0([q_i]) = q_i,  \quad \mbox{for $1\leq i\leq k-2$;} \quad H_0(q_{i}) = \left\{ \begin{array}{cr} q_{k}, & \mbox{if $i=k-1$,} \\
0 & \mbox{else}. \end{array} \right.
\] 
Then, extend $f_0$ as an algebra homomorphism, and extend $H_0$ to be a $(f_0 \circ p, \mathit{id}_{F_k\A})$-derivation, meaning that $H_0(x\cdot y) = H_0(x) \cdot \mathit{id}_{\A_0}(y) + (f_0\circ p)(x) \cdot H_0(y)$.   A direct verification shows that 
\[
f_0 \circ \partial_{\A/I} = \partial_\A \circ f_0;  \quad \mbox{and} \quad f_0 \circ p - \mathit{id}_{\A_0} = \partial_\A \circ H_0 + H_0 \circ \partial_\A
\]
hold when applied to generators of $F_{k} (\A/I)$ and $F_k \A$, respectively.   This implies that the identities in fact hold  on all of $F_k (\A/I)$ and $F_k \A$.  [In general, if $g_1,g_2: (\mathcal{A}_1, \partial_1) \rightarrow (\mathcal{A}_2, \partial_2)$ are DGA maps, and $H: \A_1 \rightarrow \A_2$ is a $(g_1,g_2)$-derivation, then $g_1 -g_2 = \partial_{\A_2}\circ H + H \circ \partial_{\A_1}$ holds if and only if it holds on a generating set for $\mathcal{A}_1$.]

Now, for $i\geq 0$, we inductively define maps $f_i: (F_{k+i} \A/I, \partial) \rightarrow (F_{k+i} \A,\partial)$ and $H_i: (F_{k+i} \A,\partial) \rightarrow (F_{k+i} \A,\partial)$ to satisfy 
\begin{enumerate}
\item $f_i$ is a chain map, 
\item $H_i$ is a $(f_i \circ p, \mathit{id}_{F_{k+i}\A})$-derivation, and  
\item $f_i \circ p - \mathit{id}_{F_{k+i}\A} = \partial_\A \circ H_i + H_i \circ \partial_\A$.
\end{enumerate} 
Supposing $f_i$ and $H_i$ have been defined, we extend them to maps $f_{i+1}: (F_{k+i+1} \A/I, \partial) \rightarrow (F_{k+i+1} \A,\partial)$ and $H_{i+1}: (F_{k+i+1} \A,\partial) \rightarrow (F_{k+i+1} \A,\partial)$.  Begin by setting
\[
f_{i+1}([q_{i+1}]) = q_{i+1} + H_i( \partial_\A( q_{i+1}) ).
\]
Note that $\partial_\A(q_{i+1}) \in F_{i}\A$, since $\partial_A$ is triangular.  Thus, this definition makes sense, and we indeed have $f_{i+1}([q_{i+1}]) = q_{i+1} + w_{i+1}$ with $w_{i+1} \in F_i \A$.
Next, extend $f_{i+1}$ as an algebra homomorphism, put $H_{i+1}(q_{i+1}) =0$, and then extend $H_{i+1}$ to $F_{k+i+1}\A$ as a $(f_{i+1} \circ p, \mathit{id}_{F_{k+i+1}\A})$-derivation.  We check that $f_{i+1}$ and $H_{i+1}$ again satisfy (1)-(3).  Note that (2) holds by construction.  Once  (1) is known, (3) follows from the definition of $f_{i+1}([q_{i+1}])$ since the identity holds when applied to generators of $F_{k+i+1}\A$. 

To verify (1), compute:
\[
f_{i+1}\circ \dd_{\A/I} ([q_{i+1}]) = f_{i+1} \circ p(\partial_\A q_{i+1}) = f_i \circ p (\partial_\A q_{i+1}) = \partial_\A q_{i+1} + (\partial_\A \circ H_i + H_i \circ \dd_\A)(\dd_\A q_{i+1}) =
\]
\[
\partial_\A q_{i+1} + \dd_\A \circ H_i( \dd_A q_{i+1}) = \partial_A( q_{i+1} + H_i( \dd_A q_{i+1}) ) = \partial_A\circ f_{i+1}([q_{i+1}])
\]
where at the second equality we used that $\partial_\A q_{i+1} \in F_i\A$ since $\partial$ is triangular.

Thus, the construction proceeds inductively with $f= f_n$ 
satisfying the stated requirements of the Lemma.
\end{proof}

We now construct a stable tame isomorphism.  Denote the generators of the stabilization, $(S(\A/I), \partial')$, as  $\overline{q}_1, \ldots, \overline{q}_n$ where, for $i \notin \{ k,k-1\}$, $\overline{q}_i = [q_i]$, and $\overline{q}_k$ and $\overline{q}_{k-1}$ are the two new generators from stabilization with $\partial' \overline{q}_{k}= \overline{q}_{k-1}$; the degree of the stabilization is chosen so that $|\overline{q}_{k}| = |q_k|$ and $|\overline{q}_{k-1}| = |q_{k-1}|$.  Define an algebra map $F:(S(\A/I), \dd') \rightarrow (\A,\dd)$ by 
\[
F(\overline{q_i}) = \left\{ \begin{array}{cr}  f(\overline{q_i}), & \mbox{for $i\notin\{k,k-1\}$}, \\
q_k,  & \mbox{for $i=k$}, \\
 q_{k-1}+w, & \mbox{for $i=k-1$}.
  \end{array} \right.
\]

\medskip

\noindent {\bf $F$ is a chain map:}  It suffices to verify that $F\circ \dd'= \dd \circ F$ holds when applied to generators.  Compute 
\begin{align*}
& F\circ \dd' (\overline{q}_{k}) = F(\overline{q}_{k-1}) = q_{k-1} + w = \partial q_k = \partial \circ F(\overline{q}_{k}); \\
&
F\circ \dd' (\overline{q}_{k-1}) = 0 = \partial^2 q_{k} = \partial \circ F(\overline{q}_{k-1}).
\end{align*}
For $i \notin \{k,k-1\}$, $\overline{q}_i \in \A/I$, so we have
\[
(F \circ \dd')(\overline{q}_i) = f ( \partial_{\A/I}([q_i])) = \partial_A( f([q_i]) ) = (\partial \circ F)(\overline{q}_i). 
\]


\medskip

\noindent {\bf $F$ is a tame isomorphism:}  Note that 
for all $1 \leq i \leq n$, $F(\overline{q}_i) = q_i  + w_i$ where $w_i \in F_{i-1} \A$.   
Thus, we can write
\[
F =  f_n \circ \cdots \circ f_1 \circ \sigma
\]
where $f_{i}(q_j) = \left\{ \begin{array}{cr} q_i + w_i & \mbox{for $i=j$} \\ q_j & \mbox{for $j\neq i$},  \end{array} \right.$ and $\sigma(\overline{q}_i) = q_i$.





 
\end{proof}

\begin{remark}  \label{rem:variation}
\begin{enumerate}
\item[(i)] The notion of stable tame isomorphism appears in \cite{Chekanov02} and is now prevalent in the literature surrounding Legendrian contact homology.
\item[(ii)] 
A variation on stable tame isomorphism arises from requiring that the word $w$ that appears in $\phi(q_i)$ in the definition of elementary isomorphism belongs to $F_{i-1}\A$ with respect to some ordering of generators for which $\partial$ is triangular.  We do not know whether the equivalence on based algebras generated by such isomorphisms is distinct from stable tame isomorphism.  However, we note in passing that the proof of Theorem \ref{thm:IndCellD} shows that the DGAs corresponding to distinct compatible cell decompositions of $L$ are related by this stronger equivalence.
\end{enumerate}
\end{remark}



\subsection{Legendrian surfaces}  Let $S$ be a surface.  The $1$-jet space, $J^1S = T^*S \times \R$, has a standard contact structure given by the kernel of the contact form $dz - \lambda$ where $\lambda$ is the Louiville $1$-form on $T^*S$ and $z$ is the coordinate on the $\R$ factor.  Any choice of local coordinates $(x_1,x_2)$ on $U \subset S$ leads to coordinates $(x_1,x_2,y_1,y_2,z)$ on $J^1 U \subset J^1S$ with respect to which the contact form appears as 
\[
dz - \lambda = dz - \sum_{i=1}^2 y_i \, dx_i.
\]
We denote by 
\[
\pi_{xz}:  J^1S \rightarrow S \times \R;  \quad \mbox{and} \quad \pi_x: J^1S \rightarrow S
\]
the projections which are known respectively as the {\bf front projection} and {\bf base projection}.

A surface $L \subset J^1S$ is {\bf Legendrian} if $(dz-\lambda)|_L = 0$.  
The singularities of front and base projections of Legendrian submanifolds are well studied, cf.  \cite{ArnoldGuseinZadeVarchenko}.  In this $2$-dimensional setting, after possibly perturbing $L$ by a small Legendrian isotopy, we may assume that $L = \bigsqcup_{i=0}^2L_i$ such that $L_1 \sqcup L_2 \subset L$ is an embedded $1$-manifold and $L_2$ is a discrete set; near each $L_i$ the front projection has the following standard forms.


\begin{itemize}
\item 
Any $p \in L_0$ has a neighborhood, $N \subset L$, whose front projection is the graph of a smooth function defined on a neighborhood $U$ of $\pi_x(p)$, i.e.
\[
\exists\,\, f:U \rightarrow \R,  \quad \mbox{such that} \quad \pi_{xz}(N) = \{(x,z) \,|\, z= f(x) \}.
\]
In particular, the restriction of $\pi_{xz}$ to $L_0$ is an immersion.  We often refer to connected subsets of $L_0$ and/or their front projections as {\bf sheets} of $L$.

\item Any point in $L_1$ has a neighborhood whose front projection is diffeomorphic to a standard {\bf cusp edge}, i.e. a semi-cubical cusp crossed with an interval, see Figure \ref{fig:Codim1Both}.  Locally cusp edges belong to the closure of two sheets of $L$ that we refer to as the upper and lower sheets of the cusp edge.

\item Any point in $L_2$ has a neighborhood whose front projection is diffeomorphic to a standard {\bf swallow tail} singularity.  In fact, there are two types of swallow tail singularities that we refer to as {\bf upward} and {\bf downward} swallow tails.  
The Legendrian, $L_F$, in $J^1\R^2$ defined by the generating family,
\[
F:\R^2 \times \R \rightarrow \R, \quad F(x_1,x_2;e)= e^4-x_1 e^2+ x_2 e,
\]
has a standard upward swallow tail singularity when $x_1=x_2=0$.  The front projection of $L_F$ is given by 
\[
\pi_{xz}(L_F) = \{(x_1,x_2, f_{x_1,x_2}(e)) \, |\, (f_{x_1,x_2})'(e) =0\} \quad \mbox{where $f_{x_1,x_2}(e) = F(x_1,x_2;e)$.}
\]
The standard downward swallow tail singularity is obtained by negating the $z$-coordinate.
See \cite[p. 47]{ArnoldGuseinZadeVarchenko}.

A swallow tail point, $s_0$, lies in the closure of two cusp edges of $\pi_{xz}(L)$; 
the base projections of these two cusp edges form a semi-cubical cusp curve in $S$ with the cusp point at  $\pi_x(s_0)$. 
The two cusp edges that meet at a swallowtail point have either their upper or lower sheet in common.  (The other two sheets associated with these cusp edges meet at the crossing arc that terminates at $s_0$.) When the swallowtail is \emph{upward} (resp. \emph{downward}) the common sheet appears above (resp. below) the two sheets that cross in the front projection.
\end{itemize}

As long as $L \subset J^1S$ is an \emph{embedded} Legendrian, the front projection of $L_0$ must be transverse to itself.  [Since the $y_0$ and $y_1$ coordinates are recovered by $y_i = \partial z/\partial x_i$.]  Again, after a small perturbation, we can assume 
$L_0$ is also transverse to $L_1$ and $L_2$ as well as to the self intersection set of $L_0$ to arrange that the self intersections of $\pi_{xz}(L)$ are as follows:
\begin{itemize}
\item Along {\bf crossing arcs} two sheets of $L$ intersect transversally.
\item At isolated {\bf triple points}  three sheets of $L$ meet in a manner that is pairwise transverse, and so that the crossing arc between any two of the sheets is transverse to the third sheet.  In the base projection, three crossing arcs meet at a single point, but are pairwise transverse.  
\item At isolated points, $x$, a {\bf cusp-sheet intersection} occurs where a cusp edge cuts transversally through a sheet of $L$.  The crossing arcs between this sheet and the upper and lower sheets of the cusp edge both end at $x$.  The base projections of these two crossing arcs meet at a semi-cubical cusp point on the base projection of the cusp edge. 
\end{itemize}

We refer to the union of the crossing arcs, cusp edges, and swallowtail points in either the front or base projection (depending on context) as the {\bf singular set} of $L$.  We denote the base projection of the singular set as $\Sigma$.  
When $L$ has generic base projection, we write
\[
\Sigma = \Sigma_1 \sqcup \Sigma_2
\]
where points in $\Sigma_1$ are in the image of a single crossing are or cusp edge, and $\Sigma_2 \subset S$ is a finite set containing the image of codimension $2$ parts of the front projection as well as points in the transverse intersection of the image of two distinct cusp edges and/or crossing arcs.  Often, we will refer to the base projections of cusp edges, and crossing arcs as the {\bf cusp locus} and the {\bf crossing locus}.  
See Figures \ref{fig:Codim1Both} and \ref{fig:Codim2Both} for the local appearance of the singular set.

\begin{figure}
\labellist
\small
\pinlabel $x_1$ [l] at 38 48
\pinlabel $x_2$ [b] at 2 86
\pinlabel $x_1$ [l] at 38 200
\pinlabel $x_2$ [l] at 22 226
\pinlabel $z$ [b] at 2 238
\pinlabel \textbf{Cusp~Edge} [t] at 202 -4
\pinlabel \textbf{Crossing~Arc} [t] at 394 -4
\endlabellist
\centerline{\includegraphics[scale=.6]{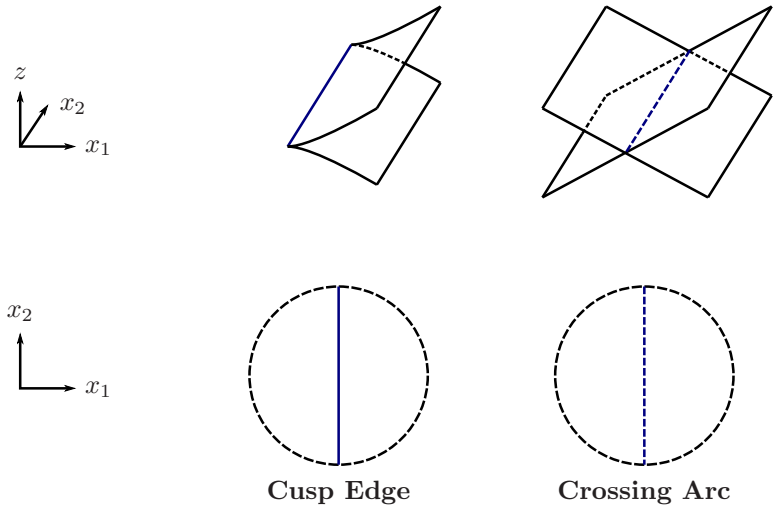}} 

\quad

\quad

\caption{Codimension 1 parts of the singular set pictured in the front and base projection.  In the base projection we picture crossing arcs (resp. cusp edges) as dotted (resp. solid) lines.}
\label{fig:Codim1Both}
\end{figure}

\begin{figure}
\labellist
\small
\pinlabel $x_1$ [l] at 38 48
\pinlabel $x_2$ [b] at 2 86
\pinlabel $x_1$ [l] at 38 200
\pinlabel $x_2$ [l] at 22 226
\pinlabel $z$ [b] at 2 238
\pinlabel \textbf{Triple~Point} [t] at 178 -4
\pinlabel \textbf{Cusp-Sheet~Intersection} [t] at 354 -4
\pinlabel \textbf{Swallow Tail} [t] at 530 -4
\endlabellist
\centerline{\includegraphics[scale=.6]{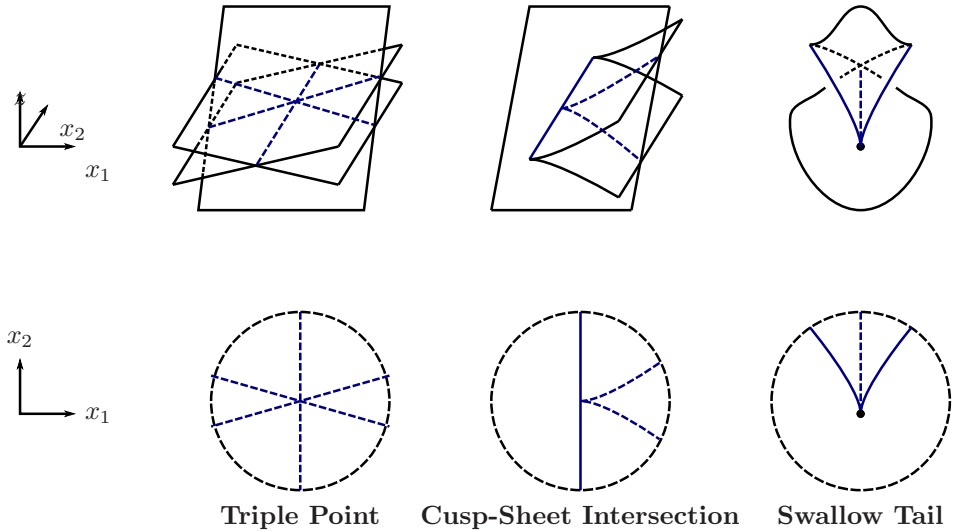}} 

\quad

\quad

\caption{Codimension 2 parts of the singular set pictured in the front and base projection.  Note that the pictured swallow tail is \emph{upward}.  The front projection of a downward swallow tail is obtained by reflecting the $z$-direction.  Additional codimension $2$ points exist in the base projection where two crossing and/or cusp arcs intersect transversally. }
\label{fig:Codim2Both}
\end{figure}

Conversely, any surface in $S \times \R$ matching the above description and without vertical tangent planes is the front projection of a unique Legendrian $L \subset J^1 S$.
See Figure \ref{fig:2DExample}  for an example of a Legendrian surface in $J^1(\R^2)$ pictured in its front and base projection.

\subsubsection{Maslov potentials}
A generic loop $\gamma$ in $L$ is disjoint from swallowtail points and crosses cusp edges transversally.  Let $m(\gamma) = D(\gamma) - U(\gamma)$ where $D(\gamma)$ (resp. $U(\gamma)$) is the number of points where $\gamma$ crosses from the upper to lower sheet (resp. lower to upper sheet) at a cusp edge.  The (minimal) {\bf Maslov number} for $L$,  $m(L) \in \Z_{\geq 0}$ is the smallest positive integer value taken by $m(\gamma)$, or $0$ if all loops have $m(\gamma) =0$.

A {\bf Maslov potential} is a locally constant function
\[
\mu:  L_0 \rightarrow \Z/m(L)
\]  
whose value increases by $1$ when passing from the lower sheet to the upper sheet at a cusp edge.  When $L$ is connected, any two Maslov potentials differ by the overall addition of a constant.







\section{Definition of the cellular DGA $(\A,\partial)$} \label{sec:DefDGA}

In this section we associate a DGA to a Legendrian surface,  $L \subset J^1S$, using an appropriately chosen cell decomposition of $S$.   Initially we assume that the front projection of $L$ does not contain any swallowtail singularities as this simplifies the definition.  In Sections \ref{ssec:defsw}-\ref{sssec:sw2cell}, we complete the definition by addressing the case when swallowtail points are present.

\subsection{Compatible cell decompositions} \label{sec:polygonal}
Let $S$ be a surface.  By a {\bf polygonal decomposition} of $S$, we mean a CW-complex decomposition of $S$, 
\[
S = \sqcup_{i=0}^2 \sqcup_\alpha e^i_\alpha
\]
with characteristic maps $c^i_\alpha: D^i \rightarrow S$ satisfying:
\begin{enumerate}
\item[(i)]  The characteristic maps for $1$-cells are smooth.
\item[(ii)] For any two cell $e_\alpha^2$, preimages of $0$-cells divide the boundary of $D^2$ into intervals that are mapped homeomorphically to $1$-cells by $c_\alpha^2$.   
\end{enumerate}
Note that we allow that the same $1$-cell may appear multiple times in the boundary of a given $2$-cell.  Compare with the decomposition pictured in Figure \ref{fig:mathcalE}.

Let $L \subset J^1S$ be a Legendrian submanifold with generic front projection, and denote by $\Sigma= \Sigma_1\sqcup \Sigma_2 \subset S$ the base projection of the singular set of $L$, where  
$\Sigma_1$ and $\Sigma_2$ are the singularities of codimension $1$ and $2$ respectively.  
We say that a polygonal decomposition $\mathcal{E} = \{e^i_\alpha\}$ of $S$ is {\bf compatible} with $L$ or $L$-compatible if $\Sigma$ is contained in the $1$-skeleton of $\mathcal{E}$.  From the nature of codimension $2$ singularities, it follows that $\Sigma_2$ must be contained in the $0$-skeleton.

\begin{figure}
\labellist
\small
\pinlabel $x_2$ [t] at 62 26
\pinlabel $x_1$ [tr] at -3 3
\pinlabel $z$ [r] at 12 78
\endlabellist
\centerline{\includegraphics[scale=.5]{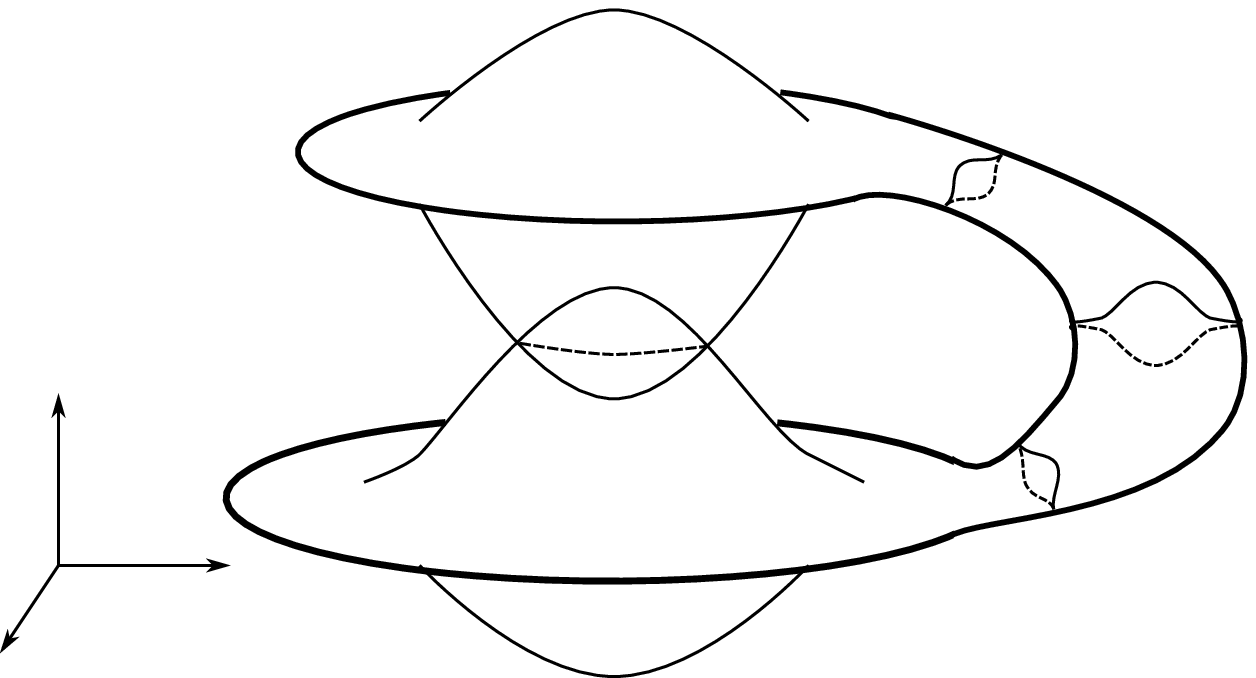} \quad \quad \quad 
\labellist
\small
\pinlabel $x_2$ [t] at 50 282
\pinlabel $x_1$ [r] at -2 240
\endlabellist
\includegraphics[scale=.4]{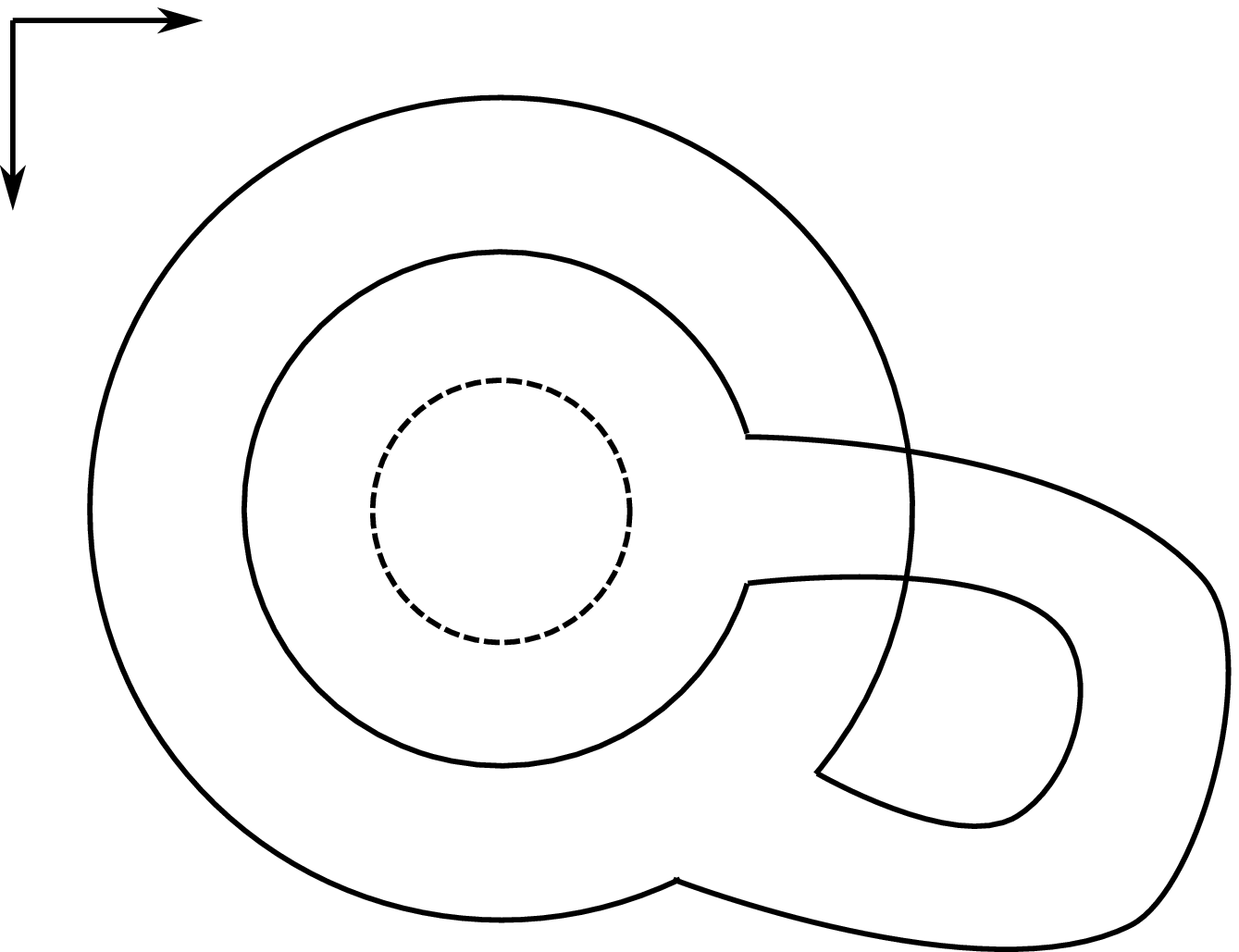}} 
\caption{Example of a Legendrian surface in $J^1(\R^2)$ and its base projection.}
\label{fig:2DExample}
\end{figure}

\subsection{Definition of the cellular DGA $(\A, \partial)$}  \label{sec:CellularDef}
Assume that $\mathcal{E}$ is a compatible polygonal decomposition for a  Legendrian $L$  whose front projection is without swallowtail points.  We now associate a differential graded algebra, $(\mathcal{A},\dd)$, to $L$ using $\mathcal{E}$.  We will refer to $(\mathcal{A}, \dd)$ as the {\bf cellular DGA} of $L$, and in Section \ref{ssec:defsw} we extend the definition to allow swallowtail points.  

The definition of $(\mathcal{A},\dd)$ requires the following additional data associated to $\mathcal{E}$.  For each $2$-cell, $e^2_\alpha$, we choose an  {\bf initial and terminal vertex} from points of $S^1$ that are mapped to $0$-cells by $c^2_\alpha|_{S^1}$, and label these points $v_0^\alpha$ and $v_1^\alpha$.  We allow that $v_0^\alpha = v_1^\alpha$, but in this case we must also declare  a direction for the path around the circle from $v_0^\alpha$ to $v_1^\alpha$.

\subsection{The algebra $\A$} When referring to {\bf sheets} of $L$ above a cell $e^i_\alpha \in \mathcal{E}$ we mean those components of  $L \cap \pi_x^{-1}(e^i_\alpha)$ that are not contained in a cusp edge. 
 We denote the set of sheets of $L$ above $e^i_\alpha$ as $L(e^i_\alpha) = \{S_p \, | \, {p \in I_\alpha}\}$ where $ I_\alpha$ is an indexing set.
 The  sets $L(e^i_\alpha)$ 
are partially ordered by decreasing $z$-coordinate, and we write  $S_p \prec S_q$  if $z(S_p) > z(S_q)$ above $e^i_\alpha$.  Two sheets are incomparible if and only if  they meet in a crossing arc above $e^i_\alpha$ in the front projection of $L$.  

 The algebra $\mathcal{A}$ is the free unital associative (non-commutative) $\Z/2$-algebra with generating set arising from the cells of $\mathcal{E}$ as follows.  For each cell $e_\alpha^i$ we associate one generator for each pair of sheets $S_p$, $S_q \in L(e^i_\alpha)$ satisfying $S_p \prec S_q$.  We denote these generators as $a^\alpha_{p,q}$, $b^\alpha_{p,q}$, or $c^\alpha_{p,q}$ in the case of a $0$-cell, $1$-cell, or $2$-cell respectively. The superscript $\alpha$ will sometimes be omitted from notation.
 

\subsection{The grading on $\A$}  A choice of Maslov potential, $\mu$, for $L$ allows us to assign a $\Z/m(L)$-grading  to $\mathcal{A}$ as follows (where $m(L)$ is the Maslov number of $L$).  Each of the generators is a homogeneous element with degree given by
\begin{equation} \label{eq:graddef}
|c^\alpha_{p,q}| = \mu(S_p) -\mu(S_q) +1;  \quad |b^\alpha_{p,q}| = \mu(S_p) -\mu(S_q);  \quad \mbox{and  }|a^\alpha_{p,q}| = \mu(S_p) -\mu(S_q) -1.
\end{equation} 
When $L$ is connected the grading is independent of the choice of $\mu$.

\subsection{Algebra in  $\mathit{Mat}(n, \A)$} \label{sec:MatrixAlg}  
In the remainder of the article, we often make computations in the ring, $\mathit{Mat}(n, \A)$, of $n\times n$ matrices with entries in $\mathcal{A}$.  Any linear map $f: \mathcal{A} \rightarrow \mathcal{A}$ extends to a linear map $f: \mathit{Mat}(n, \A) \rightarrow \mathit{Mat}(n,\A)$ by applying $f$ entry-by-entry.  Moreover, if $f$ is an algebra homomorphism (resp. a derivation), then the resulting extension is also an algebra homomorphism (resp. a derivation).  Note that a derivation of $\A$ with $\partial(1)=0$ will annihilate any  matrix of constants in $\mathit{Mat}(n, \Z/2) \subset \mathit{Mat}(n,\A)$, so that for instance we can compute $\partial (P B P^{-1}) =P(\partial B) P^{-1}$ if $P \in GL(n, \Z/2)$.  

In our notation, we will use $E_{i,j}$ to denote 
a matrix with all entries $0$ except for the $(i,j)$-entry which is $1$.

\subsection{Defining the differential}  \label{ssec:diff}  We define the differential, $\partial$, by requiring $\partial( 1) = 0$; specifying values on the generators of $\A$; and then extending $\partial$ as a derivation.  Generators that correspond to cells of dimension $0$, $1$ and $2$ are considered separately in the definition.  


\subsubsection{$0$-cells} 
\label{sssec:0cells}
For a zero cell, $e_\alpha^0$, extend the partial ordering $\prec$ to a total linear order via a bijection $\iota: \{1, \ldots, n\}\rightarrow I_\alpha$, so that the (non-cusp) sheets above $e_\alpha^0$ are labeled as $S_{\iota(1)}, \ldots, S_{\iota(n)}$ with $z(S_{\iota(1)}) \geq \cdots \geq z(S_{\iota (n)})$.  Using this ordering, we arrange the generators corresponding to $e_\alpha^0$ into a strictly upper triangular $n \times n$ matrix $A$ with $(i,j)$-entry given by $a_{\iota(i),\iota(j)}^\alpha$ if $S_{\iota(i)} \prec S_{\iota(j)}$ and $0$ otherwise.
The differential is then defined so that the  matrix equation
\begin{equation} \label{eq:Adef}
\partial A = A^2
\end{equation}
holds with $\partial$ applied entry-by-entry.

\begin{lemma} \label{lem:Awd}
There is a unique way to define $\partial a^\alpha_{p,q}$ so that (\ref{eq:Adef}) holds.  Moreover, $\partial a^\alpha_{p,q}$ is independent of the choice of extension of $\prec$ to a total order, and  $\partial^2 a^\alpha_{p,q}=0$.
\end{lemma}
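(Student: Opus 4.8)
The plan is to prove all three assertions by working entirely at the level of the matrix equation (\ref{eq:Adef}) and exploiting the formal facts about $\mathit{Mat}(n,\A)$ recorded in Section \ref{sec:MatrixAlg}; essentially no geometry is needed beyond the fact that $\prec$ is a genuine (transitive) partial order.

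\emph{Existence and uniqueness.} Fix a linear extension $\iota$ and the associated strictly upper triangular matrix $A$. Each generator $a^\alpha_{p,q}$ occurs exactly once among the entries of $A$, in position $(i,j)$ with $\iota(i)=p$ and $\iota(j)=q$, so (\ref{eq:Adef}) forces $\partial a^\alpha_{p,q} = (A^2)_{i,j}$ and imposes no other condition on it; this is the uniqueness statement. For existence I must check that this prescription is \emph{consistent}, i.e. that wherever $A$ has a zero entry the matrix $A^2$ does too. On and below the diagonal this is automatic, since a product of strictly upper triangular matrices is again strictly upper triangular. The only remaining positions are pairs $i<j$ for which $S_{\iota(i)}$ and $S_{\iota(j)}$ are incomparable, so that $A_{i,j}=0$. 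Here I invoke transitivity of $\prec$: a summand $A_{i,k}A_{k,j}$ of $(A^2)_{i,j}$ is nonzero only when $S_{\iota(i)}\prec S_{\iota(k)}$ and $S_{\iota(k)}\prec S_{\iota(j)}$, which forces $z(S_{\iota(i)}) > z(S_{\iota(j)})$ and hence $S_{\iota(i)}\prec S_{\iota(j)}$, contradicting incomparability. Thus $(A^2)_{i,j}=0$ and the prescription is consistent.

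\emph{Independence of the linear extension.} Two linear extensions $\iota,\iota'$ of $\prec$ are related by $\iota'=\iota\circ\sigma$ for a permutation $\sigma$, and on the matrix level relabeling the sheets yields $A' = PAP^{-1}$ for the corresponding permutation matrix $P\in GL(n,\Z/2)$, where $A'$ is again strictly upper triangular because $\iota'$ is $z$-decreasing. Writing $\partial$ for the derivation defined from $\iota$ and using $\partial(PAP^{-1}) = P(\partial A)P^{-1}$ from Section \ref{sec:MatrixAlg}, I compute $\partial A' = P(\partial A)P^{-1} = PA^2P^{-1} = (PAP^{-1})^2 = (A')^2$. So the derivation built from $\iota$ \emph{also} satisfies the defining equation for the $\iota'$-matrix; by the uniqueness established above it must coincide with the derivation built from $\iota'$, and therefore $\partial a^\alpha_{p,q}$ is independent of the chosen extension.

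\emph{The identity $\partial^2=0$, and the main difficulty.} Since $\partial$ extends to $\mathit{Mat}(n,\A)$ as a derivation (Section \ref{sec:MatrixAlg}) and all signs are trivial over $\Z/2$, the Leibniz rule gives $\partial^2 A = \partial(A^2) = (\partial A)\,A + A\,(\partial A) = A^2\cdot A + A\cdot A^2 = A^3 + A^3 = 0$, and reading this entry-by-entry yields $\partial^2 a^\alpha_{p,q}=0$. I do not anticipate a genuine obstacle: once the matrix formalism is in place the three claims are forced. The only step requiring any care is the consistency check at incomparable pairs in the existence argument, which is the single place where the structure of $\prec$ (its transitivity, inherited from the $z$-ordering) is used; everything else is formal manipulation in $\mathit{Mat}(n,\A)$.
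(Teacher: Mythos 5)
Your proposal is correct, and for three of its components it coincides with the paper's own proof almost verbatim: uniqueness because each generator $a^\alpha_{p,q}$ appears exactly once as an entry of $A$; existence via the consistency check that $(A^2)_{i,j}$ vanishes at incomparable pairs, which is exactly the paper's observation that a summand $a_{\iota(i),\iota(k)}a_{\iota(k),\iota(j)}$ requires $S_{\iota(i)} \prec S_{\iota(k)} \prec S_{\iota(j)}$ and hence $S_{\iota(i)} \prec S_{\iota(j)}$; and $\partial^2 = 0$ via the Leibniz computation $\partial^2 A = (\partial A)A + A(\partial A) = A^3 + A^3 = 0$. The one step you handle differently is independence of the linear extension: the paper simply notes that the defining sum, being indexed by the sheets $S_{\iota(k)}$ lying $\prec$-between $S_p$ and $S_q$, manifestly depends only on $\prec$; you instead relate the two matrices by $A' = PAP^{-1}$ for a permutation matrix $P$, use $\partial(PAP^{-1}) = P(\partial A)P^{-1}$ from Section \ref{sec:MatrixAlg}, and then invoke your uniqueness statement. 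Your argument is valid, and it is in fact the same conjugation device the paper deploys in Lemma \ref{lem:Bwd} for $1$-cells, where crossing sheets make the ``same sum'' observation less immediate; for $0$-cells it is slightly heavier machinery than needed, but it buys uniformity with the $B$-matrix argument later.
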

\begin{proof}
Uniqueness is clear since $\partial A$ contains each $\partial a^\alpha_{p,q}$ as an entry.  That such a definition is possible requires that all non-zero entries of $A^2$ correspond to non-zero entries of $A$.  Using $A = (x_{i,j})$ for the entries of $A$, the $(i,j)$-entry of $A^2$ is $\sum_{k}x_{i,k} x_{k,j}= \sum a_{\iota(i),\iota(k)} a_{\iota(k),\iota(j)}$ where the latter sum is over those $k$ such that $S_{\iota(i)} \prec S_{\iota(k)} \prec S_{\iota(j)}$ and hence vanishes unless $S_{\iota(i)} \prec S_{\iota(j)}$.  It is also clear that the sum depends only on $\prec$ and not the choice of extension to a total order.

To see that $\partial^2 a^\alpha_{p,q}=0$, compute
\[
\partial^2A = \partial(A^2) = (\partial A)A + A(\partial A) = A^3 + A^3 =0.
\]
\end{proof}

\subsubsection{$1$-cells} \label{sec:ddef1cell} For generators corresponding to a $1$-cell, $e_\alpha^1$, extend $\prec$ to a total order via a bijection $\iota: \{1, \ldots, n\}\rightarrow I_\alpha$ and form an $n\times n$ matrix $B = (x_{i,j})$ with $x_{i,j} = \left\{ \begin{array}{cr} b_{\iota(i),\iota(j)}^\alpha & \mbox{if $S_{\iota(i)} \prec S_{\iota(j)}$,} \\ 0 &  \mbox{else} \end{array} \right.$.  The characteristic map for $e_\alpha^1$ allows us to refer to the $0$-cells at the boundary of $e_\alpha^1$ as initial and terminal vertices, and we denote these cells as $e_-$ and $e_+$.  (It may be the case that $e_- = e_+$.)  The values, $\partial b^\alpha_{\iota(i),\iota(j)}$, are determined by the matrix equation
\begin{equation} \label{eq:Bdef}
\partial B = A_+(I+B) + (I+B) A_-
\end{equation}
where $A_\pm$ are $n\times n$ matrices formed from the generators corresponding to $e_\pm$ in a manner that will be described presently.

For convenience of notation, we only define $A_+$ as the definition of $A_-$ is identical.    Each sheet above $e_{+}$ belongs to the closure of a unique sheet of $L(e^1_{\alpha})$.  Since we have already chosen a bijection, $\{1,\ldots,n\} \cong L(e^1_\alpha)$, this produces an order preserving injection $ \kappa: L(e_+) \hookrightarrow \{1, \ldots, n\}$.  Those sheets of $e^1_{\alpha}$ not in the image of $\kappa$ meet in pairs at cusp points above $e_{+}$.  We form $A_+$ so that the $(i,j)$ entry is $a^+_{p,q}$ if $S_p \prec S_q$ and $\kappa(p) =i$, $\kappa(q)=j$; the $(k,k+1)$ entry is $1$ if the $k$ and $k+1$ sheets of $L(e^1_{\alpha})$ meet at a cusp above $e_+$; and all other entries are $0$.  Alternatively, one can think of using the total ordering arising from $\kappa$ to form a matrix out of the $a^+_{p,q}$ and then inserting $2\times2$ blocks of the form $\left[ \begin{array}{cc} 0 & 1 \\ 0 & 0 \end{array}\right]$ along the diagonal for each pair of sheets of $L(e^1_{\alpha})$ that meet at a cusp above $e_+$.  For example, if there are $4$ sheets above $e^1_\alpha$ and the top two meet at a cusp at $e_+$, then $A_+ = \left[ \begin{array}{cccc} 0 & 1 & 0 & 0 \\ 0 & 0 & 0 & 0 \\ 0 & 0 & 0 & a^+_{12} \\ 0& 0& 0 &0  \end{array}\right]$.  

\begin{lemma} \label{lem:Bwd}
There is a unique way to define $\partial b^\alpha_{p,q}$ so that (\ref{eq:Bdef}) hold.  Moreover, $\partial b^\alpha_{p,q}$ is independent of the extension of $\prec$ to a total order, and  $\partial^2 b^\alpha_{p,q}=0$.
\end{lemma}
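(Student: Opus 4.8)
The plan is to follow the template of Lemma~\ref{lem:Awd}. Uniqueness is immediate: since $\partial$ is applied entry-by-entry, equation~(\ref{eq:Bdef}) exhibits each $\partial b^\alpha_{p,q}$ as a specific entry of the right-hand side $M := A_+(I+B) + (I+B)A_-$, so at most one definition is possible. For existence I must check that $M$ is compatible with the sparsity pattern of $B$, i.e.\ that $M_{ij}=0$ at every position where the $(i,j)$-entry of $B$ is a forced $0$. First I would record that $A_+$, $A_-$, and $B$ are all strictly upper triangular for the order induced by $\iota$ (the $a^\pm$-entries sit at $(\kappa(p),\kappa(q))$ with $\kappa$ order-preserving, and the inserted cusp $1$'s sit on the superdiagonal), so $M$ is strictly upper triangular too; this disposes of all positions with $i\geq j$. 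The only other forced zeros of $B$ are the positions $(i,j)$, $i<j$, where $S_{\iota(i)}$ and $S_{\iota(j)}$ are incomparable, i.e.\ cross above $e^1_\alpha$. Since the interior of $e^1_\alpha$ meets $\Sigma_1$ in (part of) a single crossing arc or cusp edge ($\Sigma_2$ lies in the $0$-skeleton), the partial order $\prec$ on $L(e^1_\alpha)$ is total except in the crossing case, where there is exactly one incomparable pair, necessarily adjacent in the $z$-order.

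So the crux is a single position $(i,i+1)$. Expanding and using strict upper triangularity to annihilate the sum over intermediate indices, I would obtain
\[
M_{i,i+1} = (A_+)_{i,i+1} + (A_-)_{i,i+1},
\]
and it remains to see that each of these entries vanishes for a crossing pair. This is the step I expect to be the main obstacle, since it is where the local geometry enters. A pair of sheets that crosses transversally near $e_\pm$ cannot be the tangentially-merging pair of a cusp edge there, so $(A_\pm)_{i,i+1}$ is not a cusp $1$; and either the crossing persists above $e_\pm$ (when $e_\pm$ is an interior point of the arc or a triple point), making the two sheets incomparable and ruling out an $a^\pm$-generator, or the arc terminates at $e_\pm$ at a cusp-sheet intersection (swallowtails being excluded in this section), in which case one of the two sheets becomes a cusp sheet there and so is absent from $L(e_\pm)=\mathrm{im}(\kappa)$, again ruling out an $a^\pm$-generator. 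I would run this short case analysis against the list of codimension-$2$ configurations to conclude $M_{i,i+1}=0$.

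For independence of the extension of $\prec$ to a total order, I would argue exactly as in Lemma~\ref{lem:Awd}: for a comparable position $(i,j)$ the entry $M_{ij}$ is a sum of terms $(A_+)_{i,l}B_{l,j}$, $B_{i,l}(A_-)_{l,j}$, and $(A_\pm)_{i,j}$ in which every factor is nonzero only along a chain $S_{\iota(i)}\prec S_{\iota(l)}\prec S_{\iota(j)}$ of pairwise comparable sheets. The set of contributing $l$ therefore depends only on $\prec$, not on how the single incomparable pair happens to be ordered, so each $\partial b^\alpha_{p,q}$ is well defined.

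Finally, for $\partial^2 b^\alpha_{p,q}=0$, I would first verify the auxiliary identity $\partial A_\pm = A_\pm^2$. Writing $A_\pm = \tilde A_\pm + C_\pm$ with $\tilde A_\pm$ the matrix of $a^\pm$-generators placed via $\kappa$ and $C_\pm$ the cusp $1$'s, Lemma~\ref{lem:Awd} applied to $e_\pm$ gives $\partial \tilde A_\pm = \tilde A_\pm^2$, while $\partial C_\pm = 0$; and the cross terms $\tilde A_\pm C_\pm$, $C_\pm \tilde A_\pm$, $C_\pm^2$ all vanish because a cusp sheet carries no $a^\pm$-generator and lies in no second cusp pair, so the cusp entries are inert and $A_\pm^2 = \tilde A_\pm^2 = \partial A_\pm$. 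With this in hand I would differentiate (\ref{eq:Bdef}); since we work over $\Z/2$ there are no signs, and Leibniz gives
\[
\partial M = (\partial A_+)(I+B) + A_+\,\partial B + (\partial B)A_- + (I+B)(\partial A_-).
\]
Substituting $\partial A_\pm = A_\pm^2$ and $\partial B = M$ and expanding, every resulting term appears exactly twice, so $\partial M = 0$ over $\Z/2$ and hence $\partial^2 B = 0$.
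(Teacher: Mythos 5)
Your proposal is correct and follows essentially the same route as the paper: uniqueness is immediate, existence reduces to showing the $(k,k+1)$-entry $(A_+)_{k,k+1}+(A_-)_{k,k+1}$ of the right-hand side vanishes at a crossing, $\partial A_\pm = A_\pm^2$ follows from the block structure of $A_\pm$, and $\partial^2 B = 0$ then follows from the Leibniz rule exactly as you compute. The only cosmetic differences are that the paper proves independence of the total order by conjugating the matrix equation by the permutation matrix $Q$ of the transposition (rather than your entry-wise chain argument modeled on Lemma \ref{lem:Awd}), and it dispatches the key vanishing $(A_\pm)_{k,k+1}=0$ with the single observation that the two crossing sheets still have equal $z$-coordinate above $e_\pm$, of which your case analysis over the codimension-$2$ configurations is a more explicit (and arguably more careful) version.
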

\begin{proof}
In order to be able to define $\partial b^\alpha_{p,q}$ so that (\ref{eq:Bdef}) will hold, we need to know that for any $0$ entries of $B$, the corresponding entry of the right hand side of (\ref{eq:Bdef}) is $0$.  This is only an issue when $e^1_\alpha$ is in the base projection of a crossing arc as otherwise $B$ is a full strictly  upper triangular matrix, and in general the matrices $A_{\pm}$ (resp. $(I+B)$) are strictly upper triangular (resp. upper triangular).  Assuming the $k$ and $k+1$ sheets cross above $e^1_\alpha$, it suffices to check that $A_\pm$ is strictly upper triangular with the $(k,k+1)$-entry equal to $0$.  This is clear since the sheets of $L(e_{\pm})$ that correspond to the $k$ and $k+1$ sheets of $L(e_{\alpha}^1)$ must also have the same $z$-coordinate above $e_+$.

To check independence of $\partial b^\alpha_{p,q}$ from the choice of total order, we again only need to consider the case of a crossing arc above $e^1_{\alpha}$.  The two choices of total order lead to $B$ matrices $B'$ and $B''$ that are related by conjugation by the permutation matrix $Q$ associated to the transposition of the two sheets that cross.  The corresponding $A_{\pm}$ matrices $A_{\pm}'$ and $A_{\pm}''$ are related in the same manner.  Therefore, the equations are equivalent since
\[
\partial B' = A_+'(I+B') + (I+B') A_-' \,\, \Leftrightarrow Q(\partial B')Q = Q[A_+'(I+B') + (I+B') A_-']Q  \,\, \Leftrightarrow 
\]
\[
\partial B'' = A_+''(I+B'') + (I+B'') A_-''
\]
where we use the observation from Section \ref{sec:MatrixAlg} in the $2$-nd equivalence.

To verify $\partial^2 B=0$, observe that in all cases $\partial A_{\pm} = A_{\pm}^2$ because of (\ref{eq:Adef}) combined with the block nature of the matrix $A_\pm$ and the computation $\partial\left[ \begin{array}{cc} 0 & 1 \\ 0 & 0 \end{array}\right]= 0 = \left[ \begin{array}{cc} 0 & 1 \\ 0 & 0 \end{array}\right]^2$.  We compute
\[
\partial^2(I+ B) = \partial[A_+[I+B] + [I+B] A_-] = (\partial A_+)[I+B] +    A_+[\partial(I+B)] + [\partial(I+B)] A_-+[I+B](\partial A_-) =
\]
\[
 (A^2_+)[I+B] + A_+ [A_+(I+B) + (I+B) A_-] + [A_+(I+B) + (I+B) A_-] A_- + [I+B](A_-)^2=0.
\]
\end{proof}

\subsubsection{$2$-cells} \label{sec:Cdef}  For a $2$-cell, $e^2_\alpha$, the partial ordering of sheets in $L(e^2_\alpha)$ is a total ordering, so we take $\{1,\ldots, n\}$ for the indexing set $I_\alpha$ and label sheets as $S_1, \ldots, S_n$ with $z(S_1)> \ldots> z(S_n)$.  Using this ordering of sheets above $e^2_\alpha$, we identify the sheets above all of the edges  and vertices that appear along the boundary of $e^2_{\alpha}$ with subsets of $\{1,\ldots, n\}$.  Then, for each such edge (resp. vertex) we can place the corresponding generators $b^\alpha_{p,q}$ (resp. $a^\alpha_{p,q}$) into an $n \times n$ matrices with $2\times 2$ blocks of the form $\left[ \begin{array}{cc} 0 & 0 \\ 0 & 0 \end{array} \right]$ (resp.  $\left[ \begin{array}{cc} 0 & 1 \\ 0 & 0 \end{array} \right]$) inserted in the $k, k+1$ position along the diagonal whenever sheets $S_k, S_{k+1} \in L(e^2_\alpha)$ meet at a cusp above the edge (resp. vertex).

 Recall that for each $2$-cell we have chosen initial and terminal vertices, $v^\alpha_0$ and $v^\alpha_1$, along the boundary of $D^2$ that are mapped to $0$-cells of $\mathcal{E}$ via the characteristic map, $c^2_\alpha$, for the $2$-cell.  Let $\gamma_+$ and $\gamma_-$ denote paths in $S^1$ that proceed counter-clockwise and clockwise respectively from $v^\alpha_0$ to $v^\alpha_1$.  (If $v^\alpha_0=v^\alpha_1$, then we choose one of these paths to be constant and the other to be the entire circle as mentioned in Section \ref{sec:CellularDef}.) We let $B_1, \ldots, B_{j}$ (resp. $B_{j+1}, \ldots, B_{m}$) denote the matrices associated (as in the previous paragraph) to the successive edges of $\mathcal{E}$ that appear in the image of $c^2_\alpha \circ \gamma_+$ (resp. $c^2_\alpha \circ \gamma_-$).  In addition, we let $A_{v_0}$ and $A_{v_1}$ be the matrices associated (as in the previous paragraph) to the initial and terminal vertices $v_0^\alpha$ and $v_1^\alpha$.  
Collecting the generators corresponding to $e^2_\alpha$ into the strictly upper triangular matrix $C$, we define $\partial c^\alpha_{i,j}$ so that
\begin{equation} \label{eq:Cdef}
\partial C = A_{v_1}C + C A_{v_0} + (I+B_{j})^{\eta_j} \cdots (I+B_1)^{\eta_1} + (I+B_{m})^{\eta_m} \cdots (I+B_{j+1})^{\eta_{j+1}}
\end{equation}
where the exponent, $\eta_i$, is $1$ (resp. $-1$) if the parametrization of the edge corresponding to $B_i$ given by $c^2_\alpha\circ \gamma_{\pm}$ has the same (resp. opposite) orientation as the characteristic map of the $1$-cell.  See Figure \ref{fig:2cell}.


\begin{figure}

\quad

\begin{tabular}{ccc}
$0$-cell:   & \labellist
\small
\pinlabel $e^\alpha_0$ [t] at 4 -4
\pinlabel $A=(a^\alpha_{i,j})$ [bl] at 2 12
\endlabellist
\includegraphics[scale=.5]{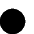}  &  $\partial A = A^2$ \\
 & & \\
 & & \\
  & & \\
  & & \\ 
$1$-cell:  & \labellist
\small
\pinlabel $A_-$ [b] at 4 14
\pinlabel $A_+$ [b] at 116 14
\pinlabel $B$ [b] at 58 18
\endlabellist
\includegraphics[scale=.5]{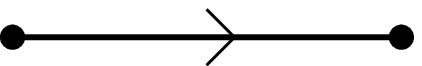}   &  $\partial B = A_+ (I+B) + (I+B) A_-$ \\
 & & \\
 & & \\
$2$-cell: \quad \quad & \labellist
\small
\pinlabel $B_2$ [l] at 156 40
\pinlabel $B_1$ [t] at 52 0
\pinlabel $B_3$ [l] at 198 133
\pinlabel $A_{v_0}$ [tr] at -3 3
\pinlabel $B_5$ [r] at 118 168
\pinlabel $B_4$ [r] at 20 76
\pinlabel $A_{v_1}$ [bl] at 202 184
\pinlabel $C$ [bl] at 102 84
\endlabellist
\raisebox{-1.5cm}{\includegraphics[scale=.5]{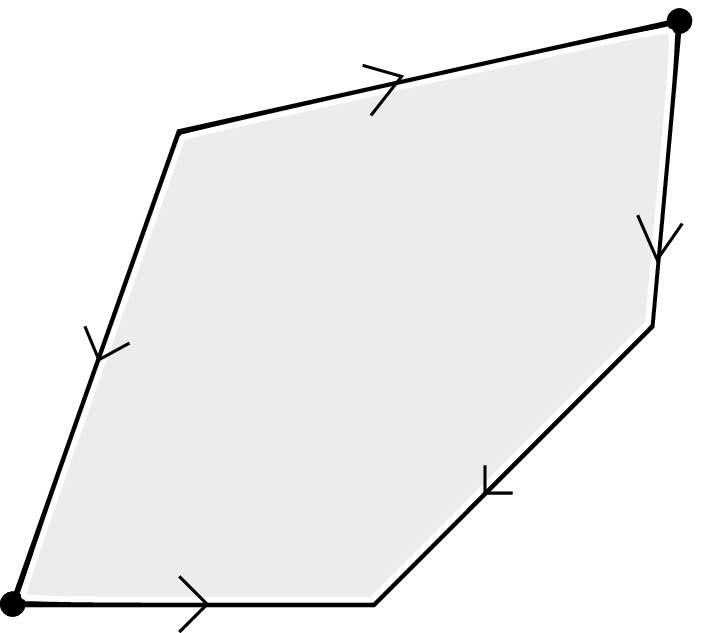}} 
 \quad \quad \quad & $\begin{array}{rc} \partial C = &  A_{v_1}C + CA_{v_0}+ \\ & \\ & (I+B_3)^{-1}(I+B_2)^{-1}(I+B_1) + \\ & \\ & (I+B_5)(I+B_4)^{-1} \end{array}$
\end{tabular}
\medskip
\caption{Summary of differentials in $(\A,\partial)$.}
\label{fig:2cell}
\end{figure}


\begin{lemma}  \label{lem:cwd}
We can uniquely define $\partial c^\alpha_{i,j}$ so that (\ref{eq:Cdef}) holds.  Moreover, $\dd^2 C = 0$.
\end{lemma}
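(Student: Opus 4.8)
The plan is to establish the two claims in turn, following the pattern of Lemmas \ref{lem:Awd} and \ref{lem:Bwd} but with an added transport computation for the boundary products. Uniqueness of $\partial c^\alpha_{i,j}$ is immediate, since each such value appears as an entry of the left-hand side of (\ref{eq:Cdef}). For existence I must check that the right-hand side is supported exactly where $C$ is. The crucial simplification is that a $2$-cell carries no crossing arcs, so the sheets of $L(e^2_\alpha)$ are totally ordered and $C$ is a \emph{full} strictly upper triangular matrix; thus there are no forced zero entries in the strict-upper part, and it suffices to verify that the right-hand side is strictly upper triangular. The terms $A_{v_1}C$ and $CA_{v_0}$ are products of strictly upper triangular matrices. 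Each factor $(I+B_i)^{\eta_i}$ is upper triangular with $1$'s on the diagonal, so each of the two products $G_+ := (I+B_j)^{\eta_j}\cdots(I+B_1)^{\eta_1}$ and $G_- := (I+B_m)^{\eta_m}\cdots(I+B_{j+1})^{\eta_{j+1}}$ is upper triangular with identity diagonal; over $\Z/2$ their sum therefore has zero diagonal and is strictly upper triangular. Reading off entries then defines $\partial c^\alpha_{i,j}$.

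The heart of the $\partial^2 = 0$ argument is a transport identity, $\partial G_\pm = A_{v_1}G_\pm + G_\pm A_{v_0}$. Writing $M_i = (I+B_i)^{\eta_i}$ and letting $A^{(0)} = A_{v_0}, A^{(1)}, \dots, A^{(j)} = A_{v_1}$ be the vertex matrices met successively along $\gamma_+$, I first prove the edge-wise relation $\partial M_i = A^{(i)}M_i + M_i A^{(i-1)}$. When $\eta_i = 1$ this is exactly the $1$-cell relation (\ref{eq:Bdef}), with the initial and terminal vertices of the edge identified as $A^{(i-1)}$ and $A^{(i)}$; when $\eta_i = -1$ the traversal reverses the edge, and I apply $\partial(I+B_i)^{-1} = (I+B_i)^{-1}\bigl[\partial(I+B_i)\bigr](I+B_i)^{-1}$ (valid over $\Z/2$, where the usual sign vanishes) to convert (\ref{eq:Bdef}) into the same form with the two vertices interchanged. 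Feeding this into the Leibniz expansion of $\partial(M_j\cdots M_1)$ produces $\sum_{i=1}^{j}(T_i + T_{i-1})$ with $T_k = (M_j\cdots M_{k+1})A^{(k)}(M_k\cdots M_1)$; over $\Z/2$ the interior terms occur twice and cancel, leaving $T_j + T_0 = A_{v_1}G_+ + G_+ A_{v_0}$. The identical telescoping applies to $G_-$.

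With the transport identity established, and using $\partial A_{v_0} = A_{v_0}^2$, $\partial A_{v_1} = A_{v_1}^2$ (proved as in Lemma \ref{lem:Bwd} from (\ref{eq:Adef}) together with $\partial\left[\begin{smallmatrix} 0 & 1 \\ 0 & 0\end{smallmatrix}\right] = 0 = \left[\begin{smallmatrix} 0 & 1 \\ 0 & 0\end{smallmatrix}\right]^2$), I differentiate (\ref{eq:Cdef}) term by term and substitute (\ref{eq:Cdef}) back in for every occurrence of $\partial C$. Expanding $\partial(A_{v_1}C)$, $\partial(CA_{v_0})$, $\partial G_+$, and $\partial G_-$ yields only the monomials $A_{v_1}^2 C$, $CA_{v_0}^2$, $A_{v_1}CA_{v_0}$, $A_{v_1}G_\pm$, and $G_\pm A_{v_0}$, and a bookkeeping check shows each appears an even number of times; over $\Z/2$ they all cancel, giving $\partial^2 C = 0$.

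The step I expect to be the main obstacle is the transport identity for $G_\pm$. Besides tracking the orientation exponents $\eta_i$ correctly, I must confirm that the vertex matrices $A^{(i)}$ formed with respect to the $2$-cell's total ordering genuinely coincide with the matrices $A_\pm$ appearing in the $1$-cell relation (\ref{eq:Bdef}) for each boundary edge. This is a compatibility of the chosen orderings together with the inserted cusp blocks (the $\left[\begin{smallmatrix}0&0\\0&0\end{smallmatrix}\right]$ blocks for $B_i$ and the $\left[\begin{smallmatrix}0&1\\0&0\end{smallmatrix}\right]$ blocks for the vertex matrices), analogous to the block-matrix argument already used for $\partial A_\pm = A_\pm^2$; once it is verified that (\ref{eq:Bdef}) persists after enlarging to the $2$-cell's matrix size, the telescoping and the final cancellation are routine.
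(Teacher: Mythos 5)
Your proposal is correct and takes essentially the same route as the paper's proof: the same strict-upper-triangularity check for well-definedness (the two boundary products each have the form $I+X$ with $X$ strictly upper triangular, so their $I$'s cancel over $\Z/2$), the same edge-wise transport relation (the paper's (\ref{eq:dbi}) and its inverse form (\ref{eq:dbinv})) followed by telescoping along $\gamma_\pm$, and the same Leibniz cancellation for $\partial^2 C = 0$. The block-compatibility step you single out as the main obstacle is exactly what the paper settles with the one-line block computation $\partial\, 0 = N(I+0) + (I+0)N$, where $N = \left[\begin{smallmatrix} 0 & 1 \\ 0 & 0 \end{smallmatrix}\right]$, so your outline matches the paper's argument in full.
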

\begin{proof}
First, observe that since the matrices $B_{i}$ are strictly uppertriangular, and hence nilpotent in $\mathit{Mat}(n, \A)$, the inverse of $(I+B_i)$ is given by the finite geometric series
\[
(I+B_i)^{-1} = I+B_i+ B_i^2 + \ldots.
\]
That the equation (\ref{eq:Cdef}) may be obtained relies on the right hand side being strictly upper triangular.  This follows since the first two terms are strictly upper triangular and the $3$-rd and $4$-th terms both have the form $I + X$ where $X$ is strictly upper triangular.

Next, each $B_i$ corresponds to a $1$-cell, $e_i^1$, in $\mathcal{E}$.  Moreover, $e_i^1$ has initial and terminal vertices (using the orientation of $e_i^1$ to distinguish initial from terminal).   Denote by $A_i^+$ and $A_i^-$ the $n\times n$ matrices associated to these vertices using the total ordering and insertion of $2\times2$ blocks specified by the $2$-cell $e^2_\alpha$.  We claim that
\begin{equation} \label{eq:dbi}
\partial B_i = A_i^+(I+B_i) + (I+B_i) A_i^-.
\end{equation}
Indeed, as long as there are not two sheets of $L(e^2_\alpha)$ that meet at a cusp above $e_i^1$, this is just (\ref{eq:Bdef}) with matrices formed using the total order from $L(e^2_\alpha)$.  If it does happen that $S_k,S_{k+1} \in L(e^2_\alpha)$ share a cusp edge in their closure above $e_i^1$, then the matrix $B_i$, (resp. $A_i^\pm$) is obtained from the corresponding matrix in (\ref{eq:Bdef}) by forming a block matrix with $\left[ \begin{array}{cc} 0 & 0 \\ 0 & 0 \end{array} \right]$  (resp. $N= \left[ \begin{array}{cc} 0 & 1 \\ 0 & 0 \end{array} \right]$) inserted at the $k,k+1$ part of the diagonal of $B_i$ (resp. $A_i^\pm$).  Then, using block matrix calculations, (\ref{eq:dbi}) follows from (\ref{eq:Bdef}) and the calculation for $2\times 2$ matrices
\[
\partial \, 0 = N(I +0) + (I+0)N.
\]

From (\ref{eq:dbi}), we can deduce that
\begin{equation} \label{eq:dbinv}
\partial[(I+B_i)^{-1}] = A^-_i(I+B_i)^{-1} + (I+B_i)^{-1}A^+_i
\end{equation}
by applying the Liebniz rule to $0 = \partial(I) = \partial[ (I+B)(I+B)^{-1}]$ and solving for $\partial[ (I+B_i)^{-1}]$.

This gives us that $\partial (I+B_i)^{\eta_i} = X_i^+ (I+B_i)^{\eta_i} + (I+B_i)^{\eta_i}X_i^+$ where the $X_i^\pm$ correspond to the initial and terminal vertex of $e_i^1$ with respect to the orientation given by $\gamma_+$ or $\gamma_-$ rather than by the orientation of $e_i^1$ itself.  That is, $X_i^\pm = A^\pm_i$ if $\eta_i = 1$, and $X_i^\pm = A^\mp_i$ if $\eta_i =-1$.  
Since $X_i^+ = X_{i+1}^-$ for $1\leq i \leq j-1$ and $j+1 \leq i \leq m$, the sums arising from expanding $\partial\left[(I+B_j)^{\eta_j} \cdots (I+B_1)^{\eta_1}\right]$ and $\partial\left[(I+B_{m})^{\eta_{m}} \cdots (I+B_{j+1})^{\eta_{j+1}}\right]$ using the Liebniz rule telescope to give
\[
\partial\left[(I+B_j)^{\eta_j} \cdots (I+B_1)^{\eta_1}\right] = X^+_j(I+B_j)^{\eta_j} \cdots (I+B_1)^{\eta_1} + (I+B_j)^{\eta_j} \cdots (I+B_1)^{\eta_1}X^-_1=
\]
\[
 A_{v_1}(I+B_j)^{\eta_j} \cdots (I+B_1)^{\eta_1} + (I+B_j)^{\eta_j} \cdots (I+B_1)^{\eta_1}A_{v_0},
\]
and
\[
\partial\left[(I+B_{m})^{\eta_{m}} \cdots (I+B_{j+1})^{\eta_{j+1}}\right] = A_{v_1}(I+B_{m})^{\eta_m} \cdots (I+B_{j+1})^{\eta_{j+1}} + (I+B_{m})^{\eta_m} \cdots (I+B_{j+1})^{\eta_{j+1}}A_{v_0}.
\] 
With these formulas in hand, $\partial^2C = 0$ follows in a straightforward manner from (\ref{eq:Cdef}).
\end{proof}


Explicit examples computing this differential appear in Section \ref{sec:Examples}.

\subsection{Extending the definition to allow swallowtail points}  \label{ssec:defsw}

For a Legendrian $L$ with swallowtail points and compatible polygonal decomposition $\mathcal{E}$, the DGA $(\A,\partial)$ is defined in the same general manner.  
For a cell whose closure is disjoint from the base projections of swallowtail points, generators and differentials are defined as in Section \ref{sec:CellularDef}. 
The appropriate modifications of the definition for $0$-cells, $1$-cells, and $2$-cells that border swallow tail points are given below in \ref{sssec:sw0cell}, \ref{sec:BdefST}, and \ref{sssec:sw2cell}.

\subsection{Local appearance near a swallowtail point} Suppose that $e^0_\alpha$ is the base projection of a swallowtail point of $L$.  The singular locus $\Sigma \subset S$ near $e^0_\alpha$ consists of $3$ arcs that share a common end point at $e^0_\alpha$.  Two of these arcs are projections of cusp edges that meet in a semi-cubical cusp point at $e^0_\alpha$, and the third is a crossing arc that lies between the cusp arcs.  See Figure \ref{fig:LocalSwallow}.  In a neighborhood of $e^0_\alpha$, we refer to the region between the two cusp arcs (right of the cusp point in Figure \ref{fig:LocalSwallow}) as the {\bf swallowtail region}.  Note that there are $2$ more sheets above the swallowtail region  than there are above
its complement.

\begin{figure}
\centerline{\includegraphics[scale=.5]{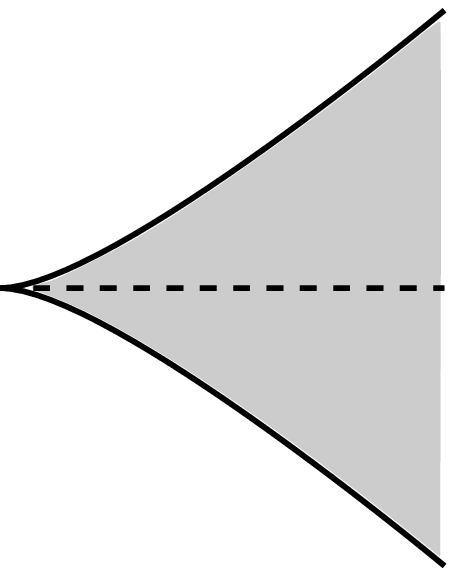}  \quad \quad \quad \quad
\labellist
\small
\pinlabel $S$ [c] at 58 92
\pinlabel $T$ [c] at 58 66
\pinlabel $T$ [c] at 266 92
\pinlabel $S$ [c] at 266 66
\endlabellist
\includegraphics[scale=.5]{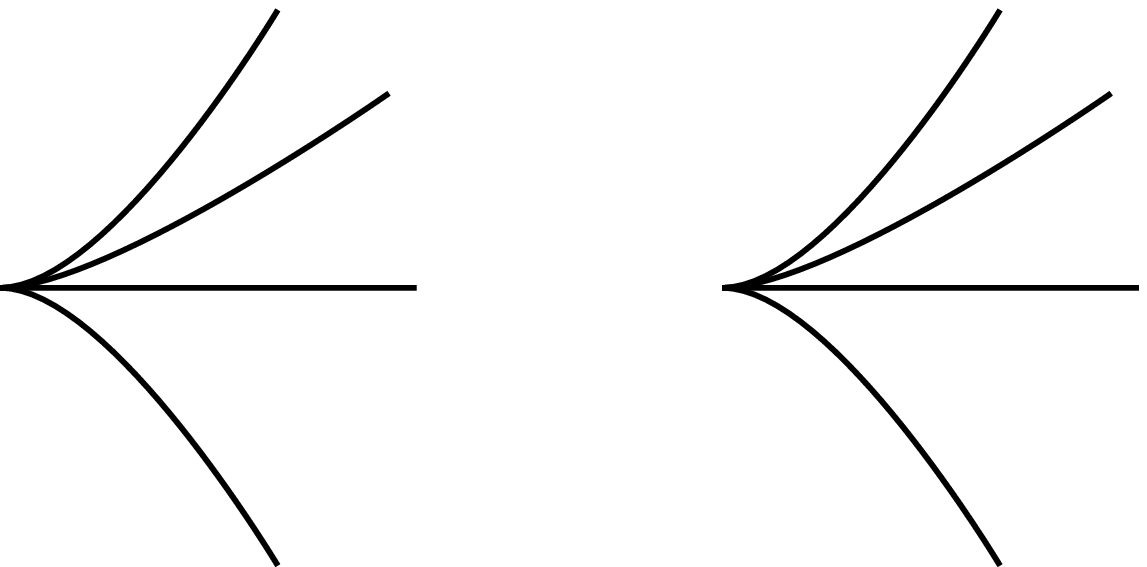}} 
\caption{(left) The base projection of the singular set at a swallowtail point.  Solid curves denote cusp arcs, and the dotted curve is a crossing arc.  The swallowtail region is shaded.  (right)  An example of an $L$-compatible cellular decomposition near a swallowtail point with the two possible labelings by $S$ and $T.$ Here one of the $1$-cells is not on the singular set.}
\label{fig:LocalSwallow}
\end{figure}

Recall that the appearance of the front projection of $L$ near a swallowtail point $s_0$ 
has one of  two distinct types depending on if the $s_0$ is an \emph{upward} or \emph{downward} swallow tail. 
We use the convention of indexing the $3$ sheets that meet at an upward (resp. downward) swallowtail point as $k$, $k+1$, $k+2$ (resp. $l-2$, $l-1$, $l$), so that $k$ (resp. $l$) denotes the upper (resp. lower) sheet and the $k+1$ and $k+2$ (resp. $l-2$ and $l-1$) sheets cross near  $s_0$.  The single sheet outside the swallowtail region that contains $s_0$ in its closure is then labeled $k$ (resp. $l-2$).


\subsection{Decorations at swallowtail points} \label{sec:decor} Defining $(\A,\partial)$ in the presence of swallowtail points requires some additional data.  Within a neighborhood of each swallowtail point, $s \in \Sigma_2$, we choose a labeling of the two regions (subsets of $2$-cells of $\mathcal{E}$) that border the crossing arc that ends at $s$;  label one of the regions with an $S$ and the other with a $T$.  See Figure \ref{fig:LocalSwallow}.


\subsection{$0$-cells} \label{sssec:sw0cell}  In case the $0$-cell, $e^0_\alpha$, is the projection of a swallowtail point the definition of the algebra does not require serious modification.  Just note that we do consider the swallowtail point itself to be a sheet of $L(e^0_\alpha)$ so that there are the same number of  generators $a^\alpha_{i,j}$ as if $e^0_\alpha$ were located in the complement of the swallowtail region.  The differential is defined by (\ref{eq:Adef}) as usual.

\subsection{Matrices associated to a swallowtail point}  \label{sec:matrices} For each swallow tail point, $s \in \Sigma_2$,  we will use several matrices formed from the generators, $a_{i,j}$, associated to $s$ (which is necessarily a $0$-cell of $\mathcal{E}$).  Near $s$, we suppose that there are $n$ sheets above the swallow tail region, and $n-2$ in the complement of the swallow tail region. 
This is partly inspired by the swallowtail discussion of Cerf theory in \cite{HatcherWagoner73}. 

The $n-2$ sheets above $s$ itself are totally ordered, and we let $A$ denote the $(n-2) \times (n-2)$ matrix with non-zero entries $a_{i,j}$.  For $1 \leq m_1 < m_2 \leq n$, we let $\widehat{A}_{m_1,m_2}$ denote the $n \times n$  matrix obtained from $A$ by inserting the block  $N= \left[ \begin{array}{cc} 0 & 1 \\ 0 & 0 \end{array} \right]$ along the main diagonal at the (possibly non-consecutive) $m_1$ and $m_2$ rows and columns with the rest of the entries in these rows and columns equal to $0$.  

In the case of an {\it upward} swallowtail point that involves sheets $k,k+1,$ and $k+2$, we use the following matrix notations:
\begin{equation}
\begin{array}{ll}
A_S = [I + E_{k+2,k+1}] \widehat{A}_{k,k+2} [I + E_{k+2,k+1}];  &  A_T = [I+E_{k+1,k+2}]\widehat{A}_{k,k+1} [I+E_{k+1,k+2}];  \\ & \\
S = I + \widehat{A}_{k,k+1} E_{k+2,k} + E_{k+1,k+2} &  T = I +E_{k+1,k+2}. \\
 \,\,\, = I + \sum_{i<k} a_{i,k} E_{i,k} + E_{k+1,k+2}; &
\end{array}
\end{equation}

In the case of a {\it downward} swallowtail point that involves sheets $l-2,l-1,$ and $l$, we use
\begin{equation}  \label{eq:Atdef}
\begin{array}{ll}
 A_S = [I+E_{l-1,l-2}]\widehat{A}_{l-2,l} [I+E_{l-1,l-2}];  &  A_T = [I+E_{l-2,l-1}]\widehat{A}_{l-1,l} [I+E_{l-2,l-1}];  \\
 &
\\
 S = I +  E_{l,l-2} \widehat{A}_{l-1,l} + E_{l-2,l-1} & 
T= I+ E_{l-2,l-1}. \\
\,\,\,=I + \sum_{l-2<j} a_{l-2,j} E_{l,j+2} + E_{l-2,l-1};
\end{array}
\end{equation}

For instance, if there are $4$-sheets above the swallow tail region of an upward swallow tail, $s$, that involves sheets $2,3$, and $4$, then 
\[A = \left[ \begin{array}{cc}0 & a_{12} \\ 0 & 0 \end{array} \right];  \quad \widehat{A}_{k,k+1}= \left[ \begin{array}{cccc}0 & 0& 0 & a_{12} \\ 0 & 0 & 1 & 0 \\ 0 & 0 & 0 & 0 \\ 0 & 0 & 0 & 0 \end{array} \right]; \quad \widehat{A}_{k,k+2}= \left[ \begin{array}{cccc}0 & 0& a_{12} & 0 \\ 0 & 0 & 0 & 1 \\ 0 & 0 & 0 & 0 \\ 0 & 0 & 0 & 0 \end{array} \right];
\] 
\[A_{S}= \left[ \begin{array}{cccc}0 & 0& a_{12} & 0 \\ 0 & 0 & 1 & 1 \\ 0 & 0 & 0 & 0 \\ 0 & 0 & 0 & 0 \end{array} \right]; \quad A_{T}= \left[ \begin{array}{cccc}0 & 0& 0 & a_{12} \\ 0 & 0 & 1 & 1 \\ 0 & 0 & 0 & 0 \\ 0 & 0 & 0 & 0 \end{array} \right];
\] 
\[S= \left[ \begin{array}{cccc}1 & a_{12} & 0 & 0 \\ 0 & 1 & 0 & 0 \\ 0 & 0 & 1 & 1 \\ 0 & 0 & 0 & 1 \end{array} \right]; \quad T= \left[ \begin{array}{cccc}1 & 0& 0 & 0 \\ 0 & 1 & 0 & 0 \\ 0 & 0 & 1& 1 \\ 0 & 0 & 0 & 1 \end{array} \right].
\] 

\begin{remark}
These somewhat mysterious matrices may be interpreted in a nice way using generating families.  When $L$ is defined by a generating family, $F:S \times \R^N \rightarrow \R$, the sheets of $L$ above $x \in S$ correspond to the critical points of $F(x,\cdot) : \R^N \rightarrow \R$.  Suppose that the entries, $a_{i,j}$, of $A$  are replaced with the mod $2$ count of gradient trajectories connecting sheet $S_i$ to $S_j$ near the swallow tail point, but outside of the swallowtail region.  Then, the matrices $S$ and $T$ are continuation maps determined by the collection of handleslides that end at the swallowpoint as specified in \cite{HatcherWagoner73}.  The matrices $A_{S}$ and $A_{T}$ represent the differential in the Morse complex of $F_x$ above a point $x$ belonging to the crossing arc that ends at the swallowtail point, with respect to the ordering of basis vectors (i.e. sheets of $L$) as they appear above the regions decorated with $S$ and $T$.   See \cite{RuSu3} for more details.     
\end{remark}

\begin{lemma} \label{lem:ASAT} We have 
\[
\partial A_S = (A_S)^2,  \quad \partial A_T = (A_T)^2.
\]
Moreover, for an upward swallowtail
\begin{equation} \label{eq:partialS}
\partial S = \widehat{A}_{k,k+1} S + S A_S= S\widehat{A}_{k,k+1}  + A_S S, \quad \mbox{and} \quad \partial T = \widehat{A}_{k,k+1} T + T A_T;
\end{equation}
for a downward swallowtail
\begin{equation}  \label{eq:partialS2}
\partial S = \widehat{A}_{l-1,l} S + S A_S= S\widehat{A}_{l-1,l}  +  A_S S, \quad \mbox{and} \quad \partial T = \widehat{A}_{l-1,l} T + T A_T.
\end{equation}
\end{lemma}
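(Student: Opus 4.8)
We must verify six matrix identities, split between the upward and downward swallowtail conventions. The plan is to reduce everything to direct computation in $\mathit{Mat}(n,\A)$, exploiting the observation from Section \ref{sec:MatrixAlg} that a derivation with $\partial(1)=0$ annihilates constant matrices, together with the facts that $\partial \widehat{A}_{m_1,m_2} = (\widehat{A}_{m_1,m_2})^2$ and that each constant factor appearing in the definitions ($E_{i,j}$, $I$, and the elementary blocks) is killed by $\partial$. Each statement is proved by expanding its right-hand side and checking equality with the left, so the only genuine inputs are (i) the squaring relation for the $\widehat A$'s, and (ii) a handful of relations among the constant matrices $E_{i,j}$ and the $\widehat A$'s that one verifies by inspecting which entries survive.

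\textbf{Step 1: the squaring relation for $\widehat A$.} I would first establish that $\partial \widehat{A}_{m_1,m_2} = (\widehat{A}_{m_1,m_2})^2$. This follows exactly as in Lemma \ref{lem:Awd}: the matrix $\widehat{A}_{m_1,m_2}$ is built from the strictly-upper-triangular $A$ (which satisfies $\partial A = A^2$ by (\ref{eq:Adef})) by inserting the nilpotent block $N$ along the diagonal, and since $\partial N = 0 = N^2$ one checks the squaring identity persists under this block insertion using the same $2\times 2$ computation $\partial N = N(I) + (I)N$ employed in the proof of Lemma \ref{lem:Bwd}. Because $A_S$ and $A_T$ are obtained from $\widehat{A}_{k,k+2}$ (resp. $\widehat{A}_{k,k+1}$) by conjugation by the involution $I + E_{k+2,k+1}$ (resp. $I+E_{k+1,k+2}$)—which is a constant matrix in $GL(n,\Z/2)$—the identities $\partial A_S = (A_S)^2$ and $\partial A_T = (A_T)^2$ follow immediately from $\partial(P X P^{-1}) = P(\partial X)P^{-1}$ applied to $X = \widehat A$, exactly as noted in Section \ref{sec:MatrixAlg}. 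Here one uses that $I+E_{i,j}$ is its own inverse over $\Z/2$.

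\textbf{Step 2: the $\partial S$ and $\partial T$ identities.} For the two remaining identities I would compute $\partial S$ directly. Since $S = I + \widehat{A}_{k,k+1} E_{k+2,k} + E_{k+1,k+2}$ (upward case), the Leibniz rule and $\partial E_{i,j}=0$ give $\partial S = (\partial \widehat{A}_{k,k+1}) E_{k+2,k} = (\widehat{A}_{k,k+1})^2 E_{k+2,k}$. The task is then to confirm that both proposed expressions $\widehat{A}_{k,k+1} S + S A_S$ and $S\widehat{A}_{k,k+1} + A_S S$ reduce to this same quantity. This is where the argument requires care: one expands each product, discards the constant-times-constant cross terms (which must cancel in pairs over $\Z/2$ for the identity to hold), and checks that the surviving $a_{i,j}$-linear and constant pieces agree with $(\widehat{A}_{k,k+1})^2 E_{k+2,k}$. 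The key relations to isolate are how $\widehat{A}_{k,k+1}$ interacts with the constant pieces $E_{k+2,k}$ and $E_{k+1,k+2}$—i.e. the identities governing $\widehat{A}_{k,k+1} E_{k+1,k+2}$ and $E_{k+1,k+2}\widehat{A}_{k,k+1}$, which pick out specific rows and columns. The $\partial T$ identity is lighter, since $T = I + E_{k+1,k+2}$ is constant, giving $\partial T = 0$, so one need only verify the bookkeeping that $\widehat{A}_{k,k+1} T + T A_T = 0$ over $\Z/2$, i.e. that $\widehat{A}_{k,k+1}(I+E_{k+1,k+2}) = (I+E_{k+1,k+2})A_T$; unwinding $A_T = (I+E_{k+1,k+2})\widehat{A}_{k,k+1}(I+E_{k+1,k+2})$ and using the involution property reduces this to an entrywise check.

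\textbf{Obstacle and downward case.} The main obstacle is purely combinatorial: keeping track, over $\Z/2$, of exactly which entries survive in the products $\widehat A \cdot E_{i,j}$ and $E_{i,j}\cdot \widehat A$ and confirming the middle equality $S\widehat A + A_S S = \widehat A S + S A_S$, which asserts $S$ intertwines two differentials in two directions simultaneously. I would organize this by writing $S = I + \delta$ with $\delta$ nilpotent and reducing each claimed identity to a relation in the single "correction" term $\delta$, then verifying that relation on the relevant $2\times 2$ or $3\times 3$ block indexed by $\{k,k+1,k+2\}$ where all the non-constant interaction occurs (everything outside these rows and columns reduces to the plain $\partial A = A^2$ relation). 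The downward case (\ref{eq:partialS2}) is handled by the same computation after the index relabeling $k,k+1,k+2 \leadsto l, l-1, l-2$ and the corresponding reflection of the $E_{i,j}$'s, so I would prove the upward identities in full and then remark that the downward identities follow by this symmetry, which mirrors the "negate the $z$-coordinate" relationship between upward and downward swallowtails described in Section \ref{sec:Background}.
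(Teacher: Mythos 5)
Your proposal is correct, and for most of the lemma it coincides with the paper's own proof: the squaring identities $\partial A_S = (A_S)^2$ and $\partial A_T = (A_T)^2$ are obtained there exactly as you describe, by conjugating $\partial \widehat{A}_{k,k+2} = (\widehat{A}_{k,k+2})^2$ (resp. $\widehat{A}_{k,k+1}$) by the constant involutions $I+E_{k+2,k+1}$ (resp. $I+E_{k+1,k+2}$), and the $\partial T$ identity is verified in the paper by the same one-line computation $\widehat{A}_{k,k+1}T + T(T\widehat{A}_{k,k+1}T) = 0 = \partial T$. Where you genuinely diverge is the first equation for $\partial S$. You propose to compute $\partial S = (\widehat{A}_{k,k+1})^2E_{k+2,k}$ from the Leibniz rule and then expand both $S\widehat{A}_{k,k+1} + A_S S$ and $\widehat{A}_{k,k+1}S + SA_S$ entrywise; this does work (the constant and linear cross terms cancel in pairs over $\Z/2$, leaving exactly the quadratic terms $\sum_{i<m<k}a_{i,m}a_{m,k}E_{i,k}$ in column $k$, which agree with $(\widehat{A}_{k,k+1})^2E_{k+2,k}$). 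The paper instead factors $S = [Q+E_{k+2,k+2}]F$ with $F = I + E_{k+2,k+1}+\widehat{A}_{k,k+1}E_{k+2,k}$ and $Q$ the permutation matrix of $(k+1\;k+2)$, proves the commutation relation $F\widehat{A}_{k,k+1} = \widehat{A}_{k,k+1}F + \widehat{A}_{k,k+1}^2E_{k+2,k}$, and deduces $S\widehat{A}_{k,k+1} = A_S S + \partial S$; the middle equality of the displayed formula is then obtained essentially for free by conjugating with $S$, using $S^2 = I$ and $S(\partial S)S = \partial S$, rather than by a second expansion. The paper's route buys two things your route pays for by hand: the relation between $\widehat{A}_{k,k+2}$ (hidden inside $A_S$) and $\widehat{A}_{k,k+1}$ is absorbed into the $Q$-conjugation, and the two claimed forms of $\partial S$ are linked by a symmetry argument instead of being verified independently. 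One caution about your bookkeeping plan: the check cannot be confined to the $3\times 3$ diagonal block on $\{k,k+1,k+2\}$, since the nonconstant interaction spreads along the full rows and columns $k$, $k+1$, $k+2$ (for instance $\partial S$ itself is supported in column $k$ at rows $i<k$, and $E_{k+1,k+2}\widehat{A}_{k,k+1}$ occupies row $k+1$ at columns $j>k+2$); outside those rows and columns the identity holds because $\widehat{A}_{k,k+1}$ and $\widehat{A}_{k,k+2}$ agree entrywise, not because of the relation $\partial A = A^2$. This does not affect the soundness of your approach, only how the verification must be organized.
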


\begin{proof}
We verify these formulas in the case of an upward swallowtail.  The case of a downward swallowtail is similar.  

Compute
\begin{align*}
\partial A_S = &  [I + E_{k+2,k+1}] \partial \widehat{A}_{k,k+2} [I + E_{k+2,k+1}] = [I + E_{k+2,k+1}] (\widehat{A}_{k,k+2})^2 [I + E_{k+2,k+1}] =
\\ &  [I + E_{k+2,k+1}] (\widehat{A}_{k,k+2}) [I + E_{k+2,k+1}] [I + E_{k+2,k+1}] (\widehat{A}_{k,k+2}) [I + E_{k+2,k+1}] = A_S^2
\end{align*}
where the second equality follows since $\partial A = A^2$ and $\partial N = N^2$.  That $\partial A_T = (A_T)^2$ is similar.

For the second equation of (\ref{eq:partialS}), we compute from definitions
\[
\widehat{A}_{k,k+1} T + T A_T = \widehat{A}_{k,k+1} T + T(T \widehat{A}_{k,k+1} T) = 0 = \partial T. 
\] 

To establish the first equation of (\ref{eq:partialS}), begin by observing that, for $Q$ the permutation matrix of the transposition $(k+1,k+2)$, we have 
\begin{equation} \label{eq:SQEk}
S= [Q+E_{k+2,k+2}] F  \mbox{   where $F= I + E_{k+2,k+1} + \widehat{A}_{k,k+1}E_{k+2,k}$.}
\end{equation}
Indeed,  $[Q+E_{k+2,k+2}]\widehat{A}_{k,k+1} E_{k+2,k} =\widehat{A}_{k,k+1} E_{k+2,k}$ because all entries below the $k-1$ row of  $AE_{k+2,k}$ are $0$, and 
$[Q+E_{k+2,k+2}][I + E_{k+2,k+1}] = I+E_{k+1,k+2}$.
Next, note that
\begin{equation} \label{eq:WAIE}
F \widehat{A}_{k,k+1} = [I + E_{k+2,k+1} + \widehat{A}_{k,k+1} E_{k+2,k}] \widehat{A}_{k,k+1} = \widehat{A}_{k,k+1}+ 0 + \widehat{A}_{k,k+1}E_{k+2,k+1} = \widehat{A}_{k,k+1}F +\widehat{A}_{k,k+1}^2E_{k+2,k} 
\end{equation} 
where the second equality is due to the $N$ block in the $k,k+1$ rows of $\widehat{A}_{k,k+1}$. 
Finally, using (\ref{eq:SQEk}) and (\ref{eq:WAIE}) compute
\[
S\widehat{A}_{k,k+1} = [Q+E_{k+2,k+2}]F \widehat{A}_{k,k+1} = [Q+E_{k+2,k+2}] \widehat{A}_{k,k+1} F + [Q+E_{k+2,k+2}] \widehat{A}_{k,k+1}^2E_{k+2,k}= 
\]
\[[I+E_{k+2,k+1}]Q\widehat{A}_{k,k+1}\left(Q[I+E_{k+2,k+1}][I+E_{k+2,k+1}]Q\right)F +\partial S
=
\]
\[
[I+E_{k+2,k+1}]Q\widehat{A}_{k,k+1}Q[I+E_{k+2,k+1}] S +\partial S = A_S S + \partial S.
\]
(In the $3$-rd equality we used that $Q$ and $I+E_{k+2,k+1}$ are both self inverse.)

This gives $\partial S = S\widehat{A}_{k,k+1} + A_S S$.  Since $S^2=I$, multiplying on left and right by $S$ gives $S (\partial S) S = \widehat{A}_{k,k+1} S + S A_S$.  One checks that $S(\partial S) S = \partial S$, so this completes the proof.

\end{proof}



\subsection{$1$-cells}  \label{sec:BdefST} Suppose that a $1$-cell $e^1_\alpha$ has its initial  vertex, $e^0_-$, or terminal vertex, $e^0_+$, at a swallowtail point.  The generators,  $b^\alpha_{p,q}$, associated to $e^1_\alpha$ arise from the partially ordered set of sheets $L(e^1_\alpha)$ as usual, and the differential is again characterized by (\ref{eq:Bdef}).   However, some adjustment may be required in forming the matrices $A_\pm$.  To simplify notation, we define $A_-$ in case  $e^0_-$ is a swallowtail point, $s$, and we note that an identical definition applies for $A_+$ if $e^0_+$ is a swallowtail point.

\begin{itemize}
\item Suppose $e^1_\alpha$ is outside the interior of the swallowtail region near $e^0_-$, including if $e^1_\alpha$ lies in either of the cusp arcs ending at $e^0_-$.  We then form the matrices $B$ and  $A_-$ as in \ref{sec:ddef1cell}.  (No issues arise as the sheets of $L(e^1_\alpha)$ are then totally ordered by $\prec$, and the swallow tail point in $L(e^0_-)$ is in the closure of a unique sheet of $L(e^1_\alpha)$.)

\item Suppose $e^1_\alpha$ is in the interior of the swallowtail region, and $e^1_\alpha$ is not contained in the crossing arc that terminates at $e^0_-$.  Then, $L(e^1_\alpha)$ is totally ordered, and we form $B$ as in \ref{sec:ddef1cell}.  In addition, we set 
\[
\mbox{$A_- = \widehat{A}_{k,k+1}$   (resp. $A_- = \widehat{A}_{l-1,l}$)}
\] if $e^0_-$ is an upward (resp. downward) swallow tail point.  (That is, in the upward (resp. downward) case we form $A_-$ as if the upper (resp. lower) $2$ of the $3$ swallowtail sheets above $e^1_\alpha$ meet at a cusp above $e^0_-$.) 

\item Suppose $e^1_{\alpha}$ is contained in the crossing arc that ends at the swallowtail point above $e^0_{-}$.  In forming the matrix, $B$, there are two possible choices for extending the ordering of $L(e^1_{\alpha})$ to a total order.  If the total ordering used agrees with the ordering of sheets on the side of the swallow tail decorated with an $S$, then we set 
\begin{equation} \label{eq:st1} 
A_- = A_S.
\end{equation}
If instead the ordering agrees with the side of the swallow tail decorated with a $T$, then
\begin{equation} \label{eq:st2}
A_- =  A_T.
\end{equation}   
\end{itemize}

\begin{lemma}  With $B, A_-,$ and $A_+$ formed in this manner, equation (\ref{eq:Bdef}) leads to a well defined definition of $\partial b^\alpha_{p,q}$, and $\partial^2b^\alpha_{p,q}=0$.
\end{lemma}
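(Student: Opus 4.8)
The plan is to follow the template of Lemma \ref{lem:Bwd} closely, treating in turn the three geometric positions that $e^1_\alpha$ can occupy relative to the swallowtail: (a) outside the swallowtail region (or along one of the two cusp arcs), where $A_-$ is an ordinary vertex matrix; (b) inside the swallowtail region but off the crossing arc, where $A_- = \widehat{A}_{k,k+1}$ (upward) or $\widehat{A}_{l-1,l}$ (downward); and (c) on the crossing arc terminating at the swallowtail, where $A_- = A_S$ or $A_- = A_T$ as in (\ref{eq:st1})--(\ref{eq:st2}). In each case I must check that (\ref{eq:Bdef}) can be solved for the entries $\partial b^\alpha_{p,q}$ (well-definedness), that the result is independent of the auxiliary total order, and that $\partial^2 b^\alpha_{p,q} = 0$. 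Cases (a) and (b) are routine: there $L(e^1_\alpha)$ is totally ordered, so $B$ is a full strictly upper triangular matrix, and since each matrix $A_\pm$ (including $\widehat{A}_{k,k+1}$) is itself strictly upper triangular, the right-hand side of (\ref{eq:Bdef}) is strictly upper triangular and solvable exactly as in Lemma \ref{lem:Bwd}.

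The only genuinely new point for well-definedness arises in case (c), where the crossing sheets $k+1, k+2$ (resp. $l-2, l-1$) are incomparable above $e^1_\alpha$, so $B$ carries a forced $0$ in the crossing position, say $(k+1,k+2)$ for an upward swallowtail. I would verify that the $(k+1,k+2)$-entry of $A_+(I+B) + (I+B)A_S$ vanishes. The structural facts, read off from the definition of $A_S$, are that $(A_S)_{k+1,k+2} = 0$ and that the $(k+2)$-nd column of $A_S$ is supported only in row $k$; hence $[(I+B)A_S]_{k+1,k+2} = (I+B)_{k+1,k} = B_{k+1,k} = 0$. For the other term one uses that the far endpoint $e^0_+$ of $e^1_\alpha$ again lies on the crossing arc, so $A_+$ is an ordinary vertex matrix with $(A_+)_{k+1,k+2} = 0$; strict upper triangularity of $B$ then forces $[A_+(I+B)]_{k+1,k+2} = 0$ as well. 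The $T$-side and the downward swallowtail are handled by the identical computation applied to $A_T$ and to (\ref{eq:Atdef}).

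For independence of the total order the only ambiguity is again in case (c), where $\prec$ may be completed in two ways differing by the transposition $Q$ of the crossing sheets, with the associated convention $A_- = A_S$ versus $A_- = A_T$. I would show directly from the definitions that $A_T = Q A_S Q$, along with $A_+'' = Q A_+' Q$ and $B'' = Q B' Q$, using that conjugation by $Q$ carries $\widehat{A}_{k,k+2}$ to $\widehat{A}_{k,k+1}$ and exchanges $I+E_{k+2,k+1}$ with $I+E_{k+1,k+2}$. The two instances of (\ref{eq:Bdef}) are then conjugate by $Q$ and hence equivalent, exactly as in the independence argument of Lemma \ref{lem:Bwd}.

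Finally, $\partial^2 B = 0$ follows verbatim from the computation in Lemma \ref{lem:Bwd}, provided one knows $\partial A_\pm = (A_\pm)^2$ for whichever matrix plays the role of $A_\pm$. This identity holds for $A_S$ and $A_T$ by Lemma \ref{lem:ASAT}; for $\widehat{A}_{k,k+1}$ (and $\widehat{A}_{l-1,l}$) it follows from $\partial A = A^2$, $\partial N = 0 = N^2$, and the fact that the inserted $N$-block is decoupled from the $A$-block, so that $\partial \widehat{A}_{k,k+1}$ and $(\widehat{A}_{k,k+1})^2$ agree block-by-block; and for ordinary vertex matrices it is (\ref{eq:Adef}) together with the $2 \times 2$ block calculation already used in Lemma \ref{lem:Bwd}. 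The main obstacle is the bookkeeping in case (c): correctly pinning down the column supports of $A_S$ and $A_T$ and confirming $A_T = Q A_S Q$, together with checking that the downward-swallowtail matrices in (\ref{eq:Atdef}) satisfy the mirror-image versions of these properties.
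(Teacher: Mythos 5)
Your proposal is correct and follows essentially the same route as the paper: the same division into cases by the position of $e^1_\alpha$, order-independence via the conjugation identity $QA_TQ = A_S$ (using $Q\widehat{A}_{k,k+1}Q=\widehat{A}_{k,k+2}$), well-definedness by checking the $(k+1,k+2)$-entry of the right-hand side of (\ref{eq:Bdef}) vanishes, and $\partial^2=0$ from $\partial A_\pm = A_\pm^2$ via Lemma \ref{lem:ASAT} and the block-insertion argument. The one slight imprecision is your claim that $A_+$ is always an ordinary vertex matrix---the far endpoint of the crossing-arc $1$-cell could itself be a swallowtail point---but this is harmless, since the structural facts you establish for $A_S$ and $A_T$ (strict upper triangularity and vanishing $(k+1,k+2)$-entry) apply verbatim to that $A_+$ as well, which is also how the paper disposes of this case.
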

\begin{proof}
First, we verify well-definedness.  The total ordering of $L(e^1_\alpha)$ is uniquely determined except in the case where $e^1_\alpha$ is contained in a crossing locus.  Suppose that this is the case and that $e_-^0$ is a swallowtail point.  (When $e_+^0$ is a swallowtail point the same argument applies.)  Changing the choice of total ordering conjugates $B$ by the permutation matrix, $Q$, of the transposition $(k+1 \,\, k+2).$
The corresponding definitions of $A_-$ from (\ref{eq:st1}) and (\ref{eq:st2}) are related in the same manner since
\[
QA_TQ = Q[I+E_{k+1,k+2}]\widehat{A}_{k,k+1}[I+E_{k+1,k+2}]Q = Q[I+E_{k+1,k+2}]Q(Q\widehat{A}_{k,k+1}Q)Q[I+E_{k+1,k+2}]Q =
\]
\[
 [I+E_{k+2,k+1}]\widehat{A}_{k,k+2} [I+E_{k+2,k+1}]= A_S.
\]
Thus,
 as in the proof of Lemma \ref{lem:Bwd}, the two versions of (\ref{eq:Bdef}) that arise from different choices of ordering of $L(e^1_\alpha)$ are equivalent.

Since the $k+1$ and $k+2$ sheets cross above $e^1_\alpha$, it is also necessary to check that the $(k+1,k+2)$ entry of the right hand side of (\ref{eq:Bdef}) is $0$. Here, it is enough to check that $A_-$ is strictly upper triangular with the $(k+1,k+2)$-entry equal to $0$.  (This implies that $(I+B)A_-$ has the desired property, and a similar argument or the argument of Lemma \ref{lem:Bwd} applies to the $A_+(I+B)$ term.)  Since either ordering produces an equivalent equation, we can assume that the total ordering for sheets of $L(e^1_\alpha)$ coincides with the order of sheets in the $T$ region.  Then, we can compute the $3\times3$ diagonal block at rows $k,k+1,k+2$ of $A_- = [I+E_{k+1,k+2}]\widehat{A}_{-,k,k+1}[I+E_{k+1,k+2}]$ to be
\[
\left[ \begin{array}{ccc} 1& 0 & 0 \\ 0 & 1 & 1 \\ 0 & 0 & 1 \end{array}\right]\,\left[ \begin{array}{ccc} 0& 1 & 0 \\ 0 & 0 & 0 \\ 0 & 0 & 0 \end{array}\right]\, \left[ \begin{array}{ccc} 1& 0 & 0 \\ 0 & 1 & 1 \\ 0 & 0 & 1 \end{array}\right] = \left[ \begin{array}{ccc} 0& 1 & 1 \\ 0 & 0 & \mathbf{0} \\ 0 & 0 & 0 \end{array}\right].
\]

The computation $\partial^2 B = 0$ goes through as before since, according to Lemma \ref{lem:ASAT}, it is still the case that $\dd A_\pm = A_\pm^2$.
\end{proof}

\subsection{$2$-cells}  \label{sssec:sw2cell}  For a $2$-cell, $e^2_\alpha$, the boundary of the domain of the characteristic map $c^2_\alpha: D^2 \rightarrow \overline{e^2_\alpha}$ can be viewed as a polygon, $P$, with edges mapping to $1$-cells and vertices mapping to $0$-cells of $\mathcal{E}$.  Some vertices may have their sufficiently small  neighborhoods in $D^2$ mapping to a region near a swallowtail point that is labeled with an $S$ or $T$.  We indicate this pictorially by placing an $S$ or $T$ near the corresponding corner of $P$, and we form a new polygon $Q$ by adding an extra edge that cuts out each such corner.  Thus, the edges of $Q$ each correspond to either a $1$-cell of $\mathcal{E}$ or an $S$ or $T$ region at a swallow tail point.  Those edges corresponding to $1$-cells receive orientations according to the orientation of the $1$-cells (provided by characteristic maps).  We orient the new $S$ and $T$ edges so that the orientation points away from the endpoint shared with the crossing arc that terminates at the swallow tail point.   See Figure \ref{fig:Ex2Base}.

\begin{figure}
\centerline{
\labellist
\small
\pinlabel $x_1$ [r] at -4 113
\pinlabel $x_2$ [b] at 60 142
\pinlabel $z$ [l] at 21 185
\endlabellist
\includegraphics[scale=.6]{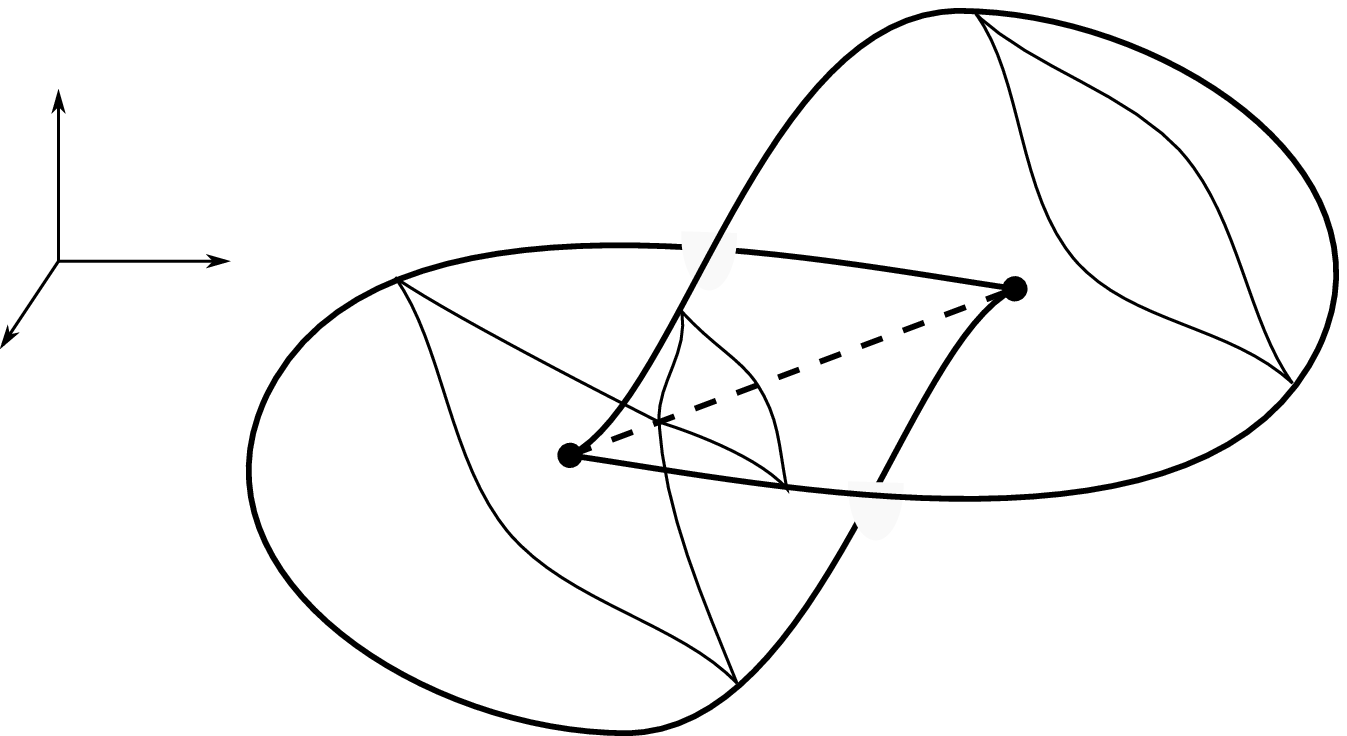}} 

\medskip

\medskip

\centerline{
\labellist
\small
\pinlabel $T_1$ [t] at 112 82
\pinlabel $S_1$ [b] at 112 88
\pinlabel $T_2$ [t] at 206 82
\pinlabel $S_2$ [b] at 206 88
\pinlabel $C_3$ [c] at 156 122
\pinlabel $C_4$ [c] at 156 47
\pinlabel $C_1$ [c] at 49 128
\pinlabel $C_2$ [c] at 253 128
\pinlabel $A_1$ [r] at 66 84
\pinlabel $A_2$ [l] at 248 84
\pinlabel $B_1$ [tr] at 108 56
\pinlabel $B_2$ [br] at 106 113
\pinlabel $B_3$ [tl] at 209 58
\pinlabel $B_4$ [bl] at 189 129
\pinlabel $B_5$ [b] at 170 86
\endlabellist
\includegraphics[scale=.8]{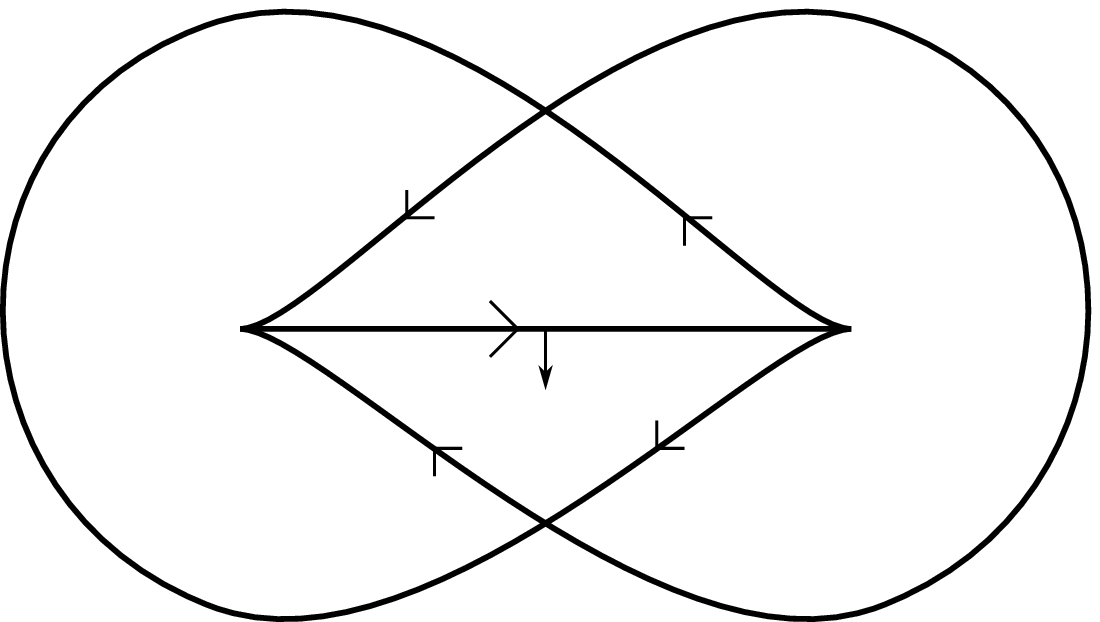}} 

\medskip

\medskip

\centerline{
\labellist
\small
\pinlabel $S_1$ [b] at 30 12
\pinlabel $S_2$ [b] at 134 12
\pinlabel $B_2$ [br] at 50 86
\pinlabel $B_4$ [bl] at 112 86
\pinlabel $B_5$ [t] at 96 6
\pinlabel $S_1$ [tr] at 232 40
\pinlabel $S_2$ [tl] at 378 40
\pinlabel $B_2$ [br] at 270 102
\pinlabel $B_4$ [bl] at 338 102
\pinlabel $B_5$ [t] at 318 6
\pinlabel $P$  at 81 54
\pinlabel $Q$  at 308 54
\endlabellist
\includegraphics[scale=.6]{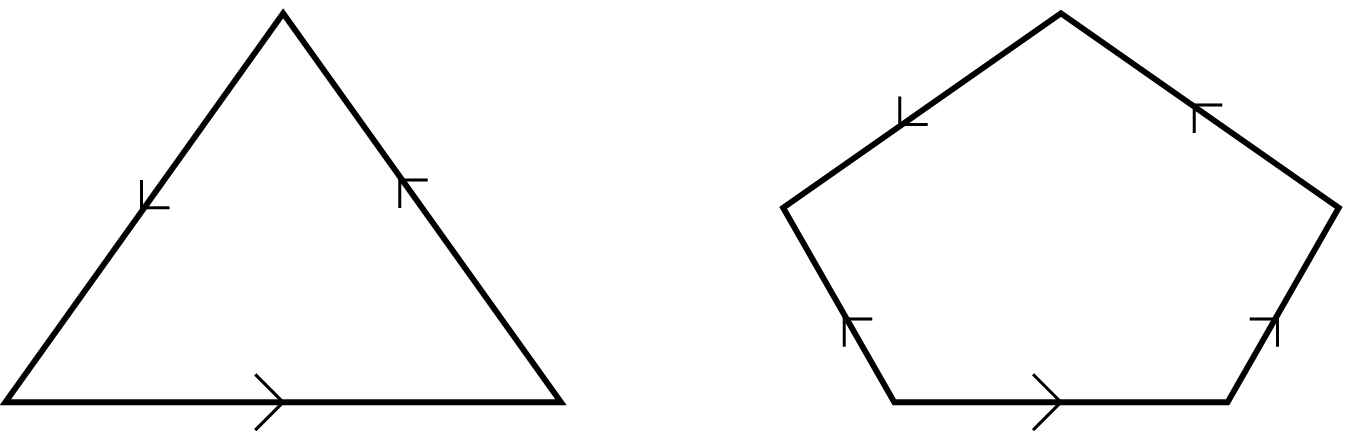}} 
\caption{(top) The front projection of a Legendrian, $L$, with two swallowtail points.  The swallowtail point on the left (resp. right) is upward (resp. downward).
(middle) A compatible cell decomposition of the base projection of $L$ with choices of $S$ and $T$ decorations made at swallowtail points. (bottom)  The polygon $Q$ used in defining $\partial C_3$ is formed by adding additional edges at all $S$ and $T$ corners of $P$. 
}
\label{fig:Ex2Base}
\end{figure}

Next, we assign a matrix $A_v$ to each vertex $v$ of $Q$.  We identify each such vertex with a $0$-cell in $\mathcal{E}$ by using the characteristic map of $e^2_\alpha$ and declaring that both endpoints of $S$ and $T$ edges are identified with the swallowtail point of the corresponding $S$ or $T$ region.
As before, we use the total ordering of sheets above $e^2_\alpha$ and the nature of the singular set above $v$ to form a matrix $A_{v}$ from the generators of the $0$-cell $v$ with the following modifications made when $v$ is a swallowtail point.  

\begin{itemize}
\item Suppose $v$ is the initial vertex of an $S$ or $T$ edge with respect to the orientation of that edge. Then, we respectively set $A_{v}=A_S$ or $A_{v} = A_T$. 

\item  Suppose $v$ is the terminal vertex of an $S$ or $T$ edge \underline{or} that small neighborhoods of $v$ are mapped within the swallow tail region, but not to one of the regions $S$ or $T$.  Then, if the swallowtail is upward $A_{v} = \widehat{A}_{k,k+1}$, and if the swallowtail is downward $A_{v} = \widehat{A}_{l-1,l}$.  (That is, we form $A_{v}$ as if $v$ were located on the portion of the cusp edge at the swallowtail point that is in the same half of the swallow tail region as the image of a neighborhood of $v$.)

\item  If the $2$-cell is in the complement of the swallowtail region, then no adjustment is required to define $A_{v}$.  (That is, $A_v = A$.)  
\end{itemize}

Next, we assign a matrix $Y_e$ to each edge of $Q$ as follows.
\begin{itemize}
\item  Suppose $e$ is an edge corresponding to a $1$-cell of $\mathcal{E}$. Then, we use the total ordering of the sheets above $e^2_\alpha$ and the nature of the singular set above $e$ to place the generators $b_{i,j}$ associated to $e$ into an $n \times n$ matrix $B_e$.  This is precisely as in Section \ref{sec:Cdef}.  We then set 
\[
Y_e = I+B_e.
\]

\item  Suppose $e$ is an edge corresponding to an $S$ region at a swallowtail point.  Then, we take $Y_e=S$.

\item  Suppose $e$ is an edge corresponding to a $T$ region.  Then, we take $Y_e=T$.
\end{itemize}

We form an upper triangular matrix $C$ from the generators $c^\alpha_{i,j}$ associated to $e^2_\alpha$.  To define $\partial C$, we make a choice of an initial and terminal vertex $v_0$ and $v_1$ from the vertices of $Q$.  [Once again, if $v_0=v_1$, then a direction needs to be chosen for the path around $\partial Q$ from $v_0$ to $v_1$.]  We let $\gamma_+$ and $\gamma_-$ denote paths around $\partial Q$ that respectively proceed counter-clockwise and clockwise from $v_0$ to $v_1$.  [If $v_0 = v_1$, one of these paths is constant as specified by the choice of direction from $v_0$ to $v_1$.]

Let $Y_1, \ldots, Y_{j}$ (resp. $Y_{j+1}, \ldots, Y_{m}$) denote the matrices $Y_e$ associated to successive edges of $Q$ that appear along $\gamma_+$ (resp. $\gamma_-$), and let $A_{v_0}$ and $A_{v_1}$ be the matrices associated to the vertices $v_0$ and $v_1$.  We define $\partial c^\alpha_{i,j}$ so that
\begin{equation} \label{eq:CdefSwallowtail}
\partial C = A_{v_1}C + C A_{v_0} + Y_j^{\eta_j} \cdots Y_1^{\eta_1} + Y_m^{\eta_m} \cdots Y_{j+1}^{\eta_{j+1}}
\end{equation}
where the exponent, $\eta_i$, is $1$ (resp. $-1$) if the orientation of the corresponding edge of $Q$ agrees (resp. disagrees) with the orientation of $\gamma_{\pm}$.  Here, it is useful to note that $S=S^{-1}$ and $T= T^{-1}$.

\begin{lemma} \label{lem:SwallowTail} Equation (\ref{eq:Cdef}) leads to a well defined definition of $\partial c^\alpha_{p,q}$, and $\partial^2c^\alpha_{p,q}=0$.
\end{lemma}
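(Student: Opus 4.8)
The plan is to prove Lemma \ref{lem:SwallowTail} by mimicking the structure of the proof of Lemma \ref{lem:cwd}, with the swallowtail matrices $S$, $T$, $A_S$, $A_T$ playing the role of the edge and vertex matrices $(I+B_i)$ and $A_i^\pm$. Two things must be checked: that the right-hand side of (\ref{eq:CdefSwallowtail}) is strictly upper triangular (so that $\partial C$ is well defined by matching entries with $C$), and that $\partial^2 C = 0$.

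For \textbf{well-definedness}, I would first observe that each $Y_e$ is upper triangular with $1$'s on the diagonal. For edges coming from $1$-cells this is clear since $Y_e = I + B_e$ with $B_e$ strictly upper triangular; for the new $S$ and $T$ edges it follows by inspection of the matrices in Section \ref{sec:matrices}, which are of the form $I + (\text{strictly upper triangular})$. Since the set of such matrices is closed under products and inverses (and $S = S^{-1}$, $T = T^{-1}$), each product $Y_j^{\eta_j}\cdots Y_1^{\eta_1}$ and $Y_m^{\eta_m}\cdots Y_{j+1}^{\eta_{j+1}}$ again has the form $I + (\text{strictly upper triangular})$. The vertex matrices $A_{v_0}, A_{v_1}$ are strictly upper triangular (including the swallowtail cases $A_S, A_T, \widehat{A}_{k,k+1}, \widehat{A}_{l-1,l}$), so $A_{v_1}C + C A_{v_0}$ is strictly upper triangular as well. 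Hence the right-hand side of (\ref{eq:CdefSwallowtail}) is strictly upper triangular, and each $\partial c^\alpha_{p,q}$ is uniquely determined by reading off entries. I would also remark that the definition is independent of the auxiliary ordering choices, appealing to the conjugation-invariance already established for the crossing-arc cases.

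For \textbf{$\partial^2 C = 0$}, the key is a uniform \emph{transport identity} of the form $\partial Y_e^{\eta_e} = X_e^+ Y_e^{\eta_e} + Y_e^{\eta_e} X_e^-$, where $X_e^\pm$ are the vertex matrices at the terminal and initial endpoints of $e$ (with respect to the orientation of $\gamma_\pm$). For $1$-cell edges this is (\ref{eq:dbi}) and its inverse version (\ref{eq:dbinv}). The new content is that the \emph{same} identity holds for the $S$ and $T$ edges: this is exactly the purpose of Lemma \ref{lem:ASAT}, whose formulas $\partial S = \widehat{A}_{k,k+1}\,S + S\,A_S = S\,\widehat{A}_{k,k+1} + A_S\,S$ and $\partial T = \widehat{A}_{k,k+1}\,T + T\,A_T$ (and their downward analogues) say precisely that $S$ transports between the vertex matrices $\widehat{A}_{k,k+1}$ and $A_S$, and $T$ between $\widehat{A}_{k,k+1}$ and $A_T$. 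The way $Q$ is constructed—orienting $S$ and $T$ edges away from the crossing arc, and assigning $A_S$ or $A_T$ at their initial vertex and $\widehat{A}_{k,k+1}$ (or $\widehat{A}_{l-1,l}$) at their terminal vertex—ensures that the endpoint matrix $X_e^+$ of each edge agrees with the endpoint matrix $X_{e'}^-$ of the next edge along $\gamma_\pm$, including at the cut corners. Granting this matching, applying the Leibniz rule to each product causes the sum to telescope exactly as in Lemma \ref{lem:cwd}, yielding $\partial[Y_j^{\eta_j}\cdots Y_1^{\eta_1}] = A_{v_1}(Y_j^{\eta_j}\cdots Y_1^{\eta_1}) + (Y_j^{\eta_j}\cdots Y_1^{\eta_1})A_{v_0}$ and likewise for the $\gamma_-$ product. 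Combining these with $\partial A_{v_i} = A_{v_i}^2$ (Lemma \ref{lem:ASAT} for swallowtail vertices, (\ref{eq:Adef}) otherwise) and substituting into $\partial(\partial C)$ gives cancellation to $0$ by the same algebra as in Lemma \ref{lem:cwd}.

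\textbf{The main obstacle} I anticipate is the bookkeeping at the cut corners: verifying that the vertex matrix assigned to the terminal endpoint of an $S$ or $T$ edge genuinely coincides with the matrix assigned on the adjacent $1$-cell edge at that shared vertex, so that the telescoping is seamless. This is where the orientation conventions for the $S$, $T$ edges and the case distinctions in Section \ref{sssec:sw2cell} (initial versus terminal vertex of an $S$/$T$ edge, and vertices mapped into the swallowtail region but not to an $S$ or $T$ region) all have to line up with the transport directions recorded in Lemma \ref{lem:ASAT}. Once the edge-matching is confirmed, the telescoping and the final $\partial^2 = 0$ computation are routine repetitions of the argument in Lemma \ref{lem:cwd}.
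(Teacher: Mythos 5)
Your proposal is correct and follows essentially the same route as the paper's proof: well-definedness via the observation that the $Y_e$ are upper triangular with $1$'s on the diagonal and the vertex matrices are strictly upper triangular, and $\partial^2 C = 0$ via the uniform transport identity $\partial Y_e = A_{e^+} Y_e + Y_e A_{e^-}$ (Lemma \ref{lem:ASAT} for $S$/$T$ edges, the relation to (\ref{eq:Bdef}) for $1$-cell edges) followed by the telescoping argument of Lemma \ref{lem:cwd}. The corner bookkeeping you flag as the main obstacle is exactly what the conventions of Section \ref{sssec:sw2cell} are designed to guarantee, just as you surmise.
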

\begin{proof}
Equation (\ref{eq:CdefSwallowtail}) may be used to define $\partial c^\alpha_{i,j}$, since the matrices $A_{v_i}$ and $C$ are strictly upper triangular and the $Y_i$ are upper triangular with $1$'s on the diagonal.  [Thus, both products $Y_j^{\eta_j} \cdots Y_1^{\eta_1}$ $Y_m^{\eta_m} \cdots Y_{j+1}^{\eta_{j+1}}$ have the form $I + X$ with $X$ strictly upper triangular, so that the entries on the main diagonal cancel.]  

Recall that 
matrices have been assigned to all edges and vertices of $Q$.  For an edge, $e$, of $Q$ let $Y_e$, $A_{e^-}$ and $A_{e^+}$ denote the matrices so assigned to $e$ and the initial and terminal vertices $e^-$ and $e^+$ of $e$ (with respect to the orientation of $e$). 
We claim that 
\begin{equation} \label{eq:BiA}
\partial Y_e = A_{e^+} Y_e + Y_e A_{e^-}.
\end{equation}
Note that $\partial^2 C=0$ can then be checked using an argument similar to the proof of Lemma \ref{lem:cwd} with (\ref{eq:BiA}) used in place of (\ref{eq:dbi}).  In the case where $e$ corresponds to a $1$-cell of $\mathcal{E}$, (\ref{eq:BiA}) follows from observing  the relation between the matrices $Y_e$ and $A_{e^\pm}$, formed when viewing $e$ and $e^{\pm}$ as belonging to the boundary of $Q$, and the matrices $I+B$ and $A_\pm$ from (\ref{eq:Bdef}) that are used in defining the differential of the generators associated to $e$.  As in the proof of Lemma \ref{lem:cwd}, if it is not the case that two sheets of $e^2_{\alpha}$ meet at a cusp  above $e$, then $Y_e = I +B$ and $A_{e^\pm} = A_{\pm}$ provided that, in forming $I+B$ and $A_\pm$, we use the ordering of sheets above $e^2_{\alpha}$ and follow the provisions of \ref{sec:BdefST}.  

Finally, if $e$ is an edge of $Q$ labeled with an $S$ or $T$, then (\ref{eq:BiA}) is one of the equations in (\ref{eq:partialS}) (resp. in (\ref{eq:partialS2})) from Lemma \ref{lem:ASAT} if the swallowtail is upward (resp. downward). 
\end{proof}

An explicit example computing this differential with swallowtails appears in Section \ref{ssec:ExampleST}.

\medskip

We summarize the results of Section \ref{sec:DefDGA}.
\begin{theorem}  \label{thm:Summary}
The cellular DGA $(\mathcal{A}, \partial)$ satisfies $\partial^2 =0$.  A choice of Maslov potential, $\mu$, on $L$ provides a $\Z/m(L)$-grading on $\mathcal{A}$ for which $\partial$ has degree $-1$.   
\end{theorem}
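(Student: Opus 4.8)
The plan is to assemble Theorem \ref{thm:Summary} from the lemmas already established in this section, treating the two claimed properties—namely $\partial^2 = 0$ and that $\partial$ has degree $-1$—separately.

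For $\partial^2 = 0$, I would argue that it suffices to verify the identity on the generators of $\A$, since $\partial$ is defined as a derivation and $\partial^2$ is then automatically a derivation (the standard computation $\partial^2(xy) = (\partial^2 x)y + x(\partial^2 y)$ over $\Z/2$, using that $\partial$ has odd degree, shows $\partial^2$ is a derivation, so it vanishes on all of $\A$ as soon as it vanishes on a generating set). The generators come in three families, associated to $0$-, $1$-, and $2$-cells, and possibly with modifications near swallowtail points. I would then simply invoke the well-definedness lemmas proved above: Lemma \ref{lem:Awd} gives $\partial^2 a^\alpha_{p,q} = 0$, Lemma \ref{lem:Bwd} gives $\partial^2 b^\alpha_{p,q} = 0$, and Lemma \ref{lem:cwd} gives $\partial^2 c^\alpha_{p,q} = 0$ in the swallowtail-free case, while the swallowtail lemmas of Sections \ref{sec:BdefST} and \ref{sssec:sw2cell} (together with Lemma \ref{lem:ASAT}) handle the generators near swallowtail points. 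Since every generator is covered by one of these lemmas, $\partial^2 = 0$ on all generators, hence on $\A$.

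For the grading claim, I would verify that each defining matrix equation is homogeneous of the appropriate degree under the grading \eqref{eq:graddef}, so that $\partial$ lowers degree by $1$ on generators and therefore—being a derivation, with $|x\cdot y| = |x| + |y|$—lowers degree by $1$ throughout. Concretely, the key observation is that if a product such as $a_{p,r} a_{r,q}$ appears in $\partial a_{p,q} = (A^2)_{p,q}$, then its degree is $(\mu(S_p)-\mu(S_r)-1) + (\mu(S_r)-\mu(S_q)-1) = \mu(S_p)-\mu(S_q)-2 = |a_{p,q}| - 1$, as required; the telescoping of the intermediate potentials $\mu(S_r)$ is what makes this work. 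The same bookkeeping applies to \eqref{eq:Bdef}, \eqref{eq:Cdef}, and their swallowtail analogues \eqref{eq:partialS}--\eqref{eq:CdefSwallowtail}, once one checks that the identity matrices and the off-diagonal $1$'s inserted at cusps are assigned the degrees forced by the convention that consecutive sheets meeting at a cusp differ by $1$ in Maslov potential—so the $(k,k+1)$-entry $1$ behaves like a degree-$0$ element $b_{k,k+1}$ with $\mu(S_k)-\mu(S_{k+1}) = 0$, and the entries of $A_\pm$ likewise carry the correct degrees. I would also note that these gradings are well-defined modulo $m(L)$ precisely because $\mu$ takes values in $\Z/m(L)$.

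I expect the only genuinely delicate point to be the grading verification at swallowtail points, where the matrices $S$, $T$, $A_S$, $A_T$, and $\widehat{A}_{m_1,m_2}$ mix entries drawn from different rows and columns and insert the block $N = \left[\begin{smallmatrix} 0 & 1 \\ 0 & 0 \end{smallmatrix}\right]$ at non-consecutive positions. The task is to confirm that every such entry is homogeneous of the degree dictated by its row and column indices under \eqref{eq:graddef}, so that the matrix identities of Lemma \ref{lem:ASAT} and equations \eqref{eq:partialS}--\eqref{eq:CdefSwallowtail} are degree-homogeneous. This reduces to checking the degree assignments sheet-by-sheet near an upward or downward swallowtail, using the fact that the common sheet and the two crossing sheets have Maslov potentials determined by the cusp-crossing convention; I would carry this out for the upward case and remark that the downward case is symmetric. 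The remaining verifications are routine matrix bookkeeping already implicit in the proofs of the preceding lemmas.
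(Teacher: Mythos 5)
Your proposal is correct and takes essentially the same route as the paper: the paper's proof of Theorem \ref{thm:Summary} simply observes that $\partial^2=0$ was already verified generator-by-generator during the construction (Lemmas \ref{lem:Awd}, \ref{lem:Bwd}, \ref{lem:cwd}, \ref{lem:ASAT}, and \ref{lem:SwallowTail}) and that the degree $-1$ property is a straightforward check against (\ref{eq:graddef}). The details you add---that $\partial^2$ is itself a derivation over $\Z/2$, so vanishing on generators suffices, and that the $1$'s inserted at cusps and the swallowtail matrices carry the degrees forced by the Maslov potential convention---are exactly the points the paper leaves implicit.
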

\begin{proof}
That $\partial^2=0$ has been verified during the definition of $(\mathcal{A},\partial)$.  Using equation (\ref{eq:graddef}) it is straightforward to verify that $\partial$ has degree $-1$. 
\end{proof}

\section{Independence of cell decomposition}  \label{sec:Ind}

In this section we prove the following: 

\begin{theorem} \label{thm:IndCellD}  The stable tame isomorphism type of the cellular DGA $(\A,\partial)$ is independent of the choice of cell decomposition $\mathcal{E}$ and additional data.
\end{theorem}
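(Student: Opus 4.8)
The plan is to show that any two $L$-compatible polygonal decompositions, together with their auxiliary data, can be joined by a finite sequence of local modifications, and that each modification changes $(\A,\partial)$ only by a stable tame isomorphism. Since any two polygonal decompositions admit a common refinement, and a common refinement of two $L$-compatible decompositions is again $L$-compatible (the singular set $\Sigma$ lies in both $1$-skeletons, hence in the refinement's), it suffices to treat the passage from a decomposition to a refinement by elementary subdivisions. I would isolate three elementary moves: (1) subdividing a $1$-cell by inserting a $0$-cell in its interior; (2) subdividing a $2$-cell by inserting a $1$-cell whose endpoints are boundary $0$-cells; and (3) changes of auxiliary data, namely the choice of initial and terminal vertices of each $2$-cell and the $S/T$ labelings at swallowtail points. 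In each case the mechanism is to recognize the newly introduced generators as a collection of acyclic pairs and to cancel them using Theorem \ref{thm:Alg}, after which the surviving generators and their differentials reassemble into the coarser DGA.

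For the $1$-cell subdivision (move (1)), inserting a $0$-cell $v$ splits an edge into halves with matrices $B_1$ and $B_2$ and introduces a vertex matrix $A_v$. Reading off (\ref{eq:Bdef}) for the near half gives $\partial b^{(1)}_{p,q} = a^v_{p,q} + w_{p,q}$, where, after ordering generators by the sheet distance $\mu(S_p)-\mu(S_q)$ so that $\partial$ is triangular, $w_{p,q}$ lies below $a^v_{p,q}$. Applying Theorem \ref{thm:Alg} to each pair $(b^{(1)}_{p,q},a^v_{p,q})$ cancels all generators of $v$ and of the near half-edge. Setting $B_1=0$ in (\ref{eq:Bdef}) forces $A_v = A_-$, so the far half-edge matrix $B_2$ now satisfies $\partial B_2 = A_+(I+B_2)+(I+B_2)A_-$, which is exactly the differential (\ref{eq:Bdef}) of the original un-subdivided edge. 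Thus the quotient DGA is identified with the one for the coarser decomposition.

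For the $2$-cell subdivision (move (2)), the new edge $e^1_{\mathrm{new}}$ splits $e^2$ into $e^2_a,e^2_b$; I would choose the initial and terminal vertices of $e^2_a$ to be the endpoints of $e^1_{\mathrm{new}}$ so that $\gamma_-$ runs along $e^1_{\mathrm{new}}$ alone. Then (\ref{eq:Cdef}) reads $\partial C_a = A_{v_1}C_a + C_a A_{v_0} + P_+ + (I+B_{\mathrm{new}})^{\pm 1}$; since $P_+$ and $(I+B_{\mathrm{new}})^{\pm 1}$ are both unipotent, their diagonal $I$'s cancel and $\partial c^a_{p,q} = b^{\mathrm{new}}_{p,q} + (\text{lower order})$. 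Cancelling each pair $(c^a_{p,q},b^{\mathrm{new}}_{p,q})$ via Theorem \ref{thm:Alg} sets $B_{\mathrm{new}} = 0$; the opposite occurrence $(I+B_{\mathrm{new}})^{\mp 1}$ in the boundary product of $e^2_b$ becomes $I$, and the remaining factors telescope into the full boundary product of $e^2$, so $\partial C_b$ becomes the original $\partial C$. The auxiliary-data changes (move (3)) are handled separately: relocating $v_0,v_1$ conjugates $C$ by the intervening unipotent boundary products, a tame isomorphism; and swapping the $S/T$ labels conjugates the relevant matrices by the transposition $Q$, which by $QA_TQ=A_S$ (as in the swallowtail $1$-cell lemma) yields an isomorphic DGA.

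The main obstacle will be twofold. First is the topological input that the chosen moves suffice, i.e. that any two $L$-compatible decompositions are connected through $L$-compatible ones by moves (1)--(3), including subdivisions of edges lying on crossing arcs and cusp edges and refinements reaching into $2$-cell interiors; this must be arranged so that $\Sigma_2$ (swallowtail points, triple points, cusp-sheet intersections) stays in the $0$-skeleton throughout. Second, and more delicate, is verifying that every cancellation is a genuine application of Theorem \ref{thm:Alg}: one must produce a single total ordering of all generators for which $\partial$ is simultaneously triangular and for which each leading term $a^v_{p,q}$ or $b^{\mathrm{new}}_{p,q}$ dominates its remainder, and then check that the substitutions propagate correctly through the matrix formulas, in particular through the swallowtail matrices $A_S,A_T,S,T$, where the cancellation must be shown compatible with the identities of Lemma \ref{lem:ASAT}. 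Confirming that each cancellation only involves words in $F_{i-1}\A$ would also yield the stronger equivalence noted in Remark \ref{rem:variation}(ii).
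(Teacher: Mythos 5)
Your treatment of the swallowtail decorations does not work. Swapping the $S$ and $T$ labels is not implemented by conjugating by the transposition $Q$: the identity $QA_TQ = A_S$ is precisely what makes the crossing-arc $1$-cell differential well defined under a change of the total ordering of sheets, and it holds for \emph{either} choice of decoration, so it says nothing about the relation between the two differentials $\partial_1,\partial_2$ coming from opposite labelings. Those differentials genuinely differ: the boundary words of the two $2$-cells adjacent to the crossing arc contain $S$ on one side and $T$ on the other, and $S\neq T$ (they differ by the strictly upper triangular term $\widehat{A}_{k,k+1}E_{k+2,k}$, since $ST = I + \widehat{A}_{k,k+1}E_{k+2,k}$). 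The paper's Theorem \ref{thm:IndDecorate} has to construct a nontrivial tame isomorphism fixing all generators except those of the crossing-arc $1$-cell, defined by $\psi(I+B) = (I+B)(ST)$, and then verify $\psi\circ\partial_1 = \partial_2\circ\psi$ on the two bordering $2$-cells and on $I+B$ itself using the identities $\partial S = \widehat{A}_{k,k+1}S + SA_S$, $\partial T = \widehat{A}_{k,k+1}T + TA_T$ of Lemma \ref{lem:ASAT}. No conjugation or relabeling substitutes for this computation, so this part of your move (3) is a genuine gap.

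Your move set is also insufficient to connect two compatible decompositions, so the reduction ``it suffices to treat the passage from a decomposition to a refinement by elementary subdivisions'' is false as stated. A common refinement in general contains interior edges that border the \emph{same} $2$-cell on both sides, and circles attached to the rest of the $1$-skeleton only through such bridge edges: for instance, whenever a $1$-cell of $\mathcal{E}_2$ forms a loop in the interior of a $2$-cell of $\mathcal{E}_1$, the refinement must include an extra connecting edge so that all complementary regions are disks. Such configurations cannot be created or removed by your moves (1) and (2): move (2) only inserts edges separating two distinct $2$-cells, and move (1) only produces vertices of valence at least $2$, so a bridge edge or a $1$-valent vertex is unreachable. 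This is why the paper needs a third move --- adding/deleting an edge with a $1$-valent vertex (Theorem \ref{thm:1val}) --- together with the combinatorial argument of Theorem \ref{thm:refine} (delete interior edges bordering distinct $2$-cells until one $2$-cell remains, then prune the leftover trees by cancelling $1$-valent vertices) showing that the three moves suffice. Two smaller points: orientations of $1$-cells are also auxiliary data (they enter through $A_\pm$ and the exponents $\eta_i$) and require an argument (the paper's Corollary \ref{cor:IndOrient}, a two-fold application of $1$-cell subdivision); on the other hand, your cancellation schemes for moves (1) and (2) and your vertex-relocation argument (left multiplication of $C$ by the unipotent product $W$ of intervening edges, using $\partial W = A_{v_1'}W + WA_{v_1}$) are sound and essentially reproduce Theorems \ref{thm:sub1cell} and \ref{thm:sub2cell}, the latter by a more direct route than the paper's loop-edge trick in Corollary \ref{cor:IndVer}.
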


This requires showing independence of the following items:
\begin{enumerate}
\item  The orientation of $1$-cells.
\item The choice of initial and terminal vertex for each $2$-cell. 
\item  The choice of decorations at swallow tail points.
\item  The choice of cell decomposition $\mathcal{E}$.
\end{enumerate}
These results are obtained in Corollary \ref{cor:IndOrient},  Corollary \ref{cor:IndVer}, and Theorem \ref{thm:refine} below.  Before embarking upon their proof we collect some algebraic preliminaries.


\subsection{Ordering of generators}  Our main tool for producing stable tame isomorphisms will be Theorem \ref{thm:Alg} whose application requires a DGA to have its generating set ordered so that the differential becomes triangular.  Such orderings are provided for the cellular DGA in the following Lemma \ref{lem:precA}.

The cellular DGA $(\A,\partial)$ was constructed in Section \ref{sec:DefDGA} with a specific genererating set.  Define a partial ordering of this generating set, $\prec_\A$, by declaring that $y \preceq_\A x$ if
\begin{enumerate}
\item The cell corresponding to $x$ has larger  dimension than the cell corresponding to $y$, or 
\item The same cell, $e^i_\alpha$, corresponds to both $x$ and $y$, and  subscripts $p_1,q_1$ and $p_2,q_2$ for $x$ and $y$ are such that $S_{p_1} \preceq S_{p_2}$ and $S_{q_2} \preceq S_{q_1}$ holds in $L(e^i_\alpha)$.  
\end{enumerate}

\begin{lemma}  \label{lem:precA}
The differential of $(\A,\partial)$ is triangular with respect to any ordering of the generating set that extends $\prec_\A$.
\end{lemma}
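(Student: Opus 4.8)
The plan is to verify that each defining matrix equation for $\partial$ produces a differential compatible with $\prec_\A$, checking the generators of each dimension in turn and in decreasing order of cell dimension. Condition (1) of $\prec_\A$ handles the fact that the differential of a generator on an $i$-cell only involves generators on cells of dimension $\leq i$ (indeed, for $2$-cells the differential involves $b$'s and $a$'s; for $1$-cells it involves $a$'s and lower $b$'s; for $0$-cells only lower $a$'s). So the real content is to establish condition (2): among generators attached to the \emph{same} cell $e^i_\alpha$, the summands appearing in $\partial x$ are strictly lower in $\prec_\A$ than $x$ itself.

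For the $0$-cells, I would read off from the proof of Lemma \ref{lem:Awd} that the $(i,j)$-entry of $A^2$ is $\sum a_{\iota(i),\iota(k)}a_{\iota(k),\iota(j)}$ over $k$ with $S_{\iota(i)} \prec S_{\iota(k)} \prec S_{\iota(j)}$. Writing $x = a_{p,q}$ with $p=\iota(i)$, $q=\iota(j)$ and a summand factor $y = a_{\iota(i),\iota(k)}$, the indices satisfy $S_p = S_{\iota(i)}$ and $S_{\iota(k)} \prec S_{\iota(j)} = S_q$, so $S_p \preceq S_{\iota(i)}$ and $S_q \preceq S_{\iota(k)}$ fail to be equalities strictly, giving $y \prec_\A x$; the other factor is symmetric. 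For the $1$-cells, I would examine the four matrix products in (\ref{eq:Bdef}), namely the entries of $A_+$, $A_+B$, $BA_-$, and $A_-$: in each case the left row-index of $x=b_{p,q}$ is preserved or moved up while the right column-index is preserved or moved down, so condition (2) holds; here one uses that $A_\pm$ are strictly upper triangular (including at swallowtail points, where $A_S, A_T, \widehat A$ are all strictly upper triangular). The $2$-cell case is handled identically using (\ref{eq:Cdef})/(\ref{eq:CdefSwallowtail}): the first two terms $A_{v_1}C$ and $CA_{v_0}$ shift the $c$-indices monotonically since the vertex matrices are strictly upper triangular, while the remaining products of $(I+B_i)^{\pm 1}$ (and $S$, $T$) contribute only $b$-generators, which are lower by condition (1).

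The step I expect to require the most care is the swallowtail case, since there the matrices $A_S$, $A_T$, $S$, $T$, and $\widehat A_{m_1,m_2}$ enter, and I must confirm that all of them are strictly upper triangular (for the $A$-type matrices) or unipotent upper triangular (for $S$, $T$). The matrices $S$ and $T$ contribute only to the $Y_i$ products, which are built from $b$-generators, so they never produce a \emph{same-cell} summand and are covered by (1); the verification is that they are upper triangular with $1$'s on the diagonal, which follows directly from the explicit forms in (\ref{eq:Atdef}) and its upward counterpart. The matrices $A_S$ and $A_T$, serving as vertex matrices $A_{v_0}, A_{v_1}$, must be strictly upper triangular so that $A_{v_1}C$ and $CA_{v_0}$ respect (2); this is immediate from their definitions as conjugates of the strictly-upper-triangular $\widehat A_{m_1,m_2}$ by the unipotent $I+E_{\cdot,\cdot}$ factors. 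Once these structural facts are in place, the argument in each dimension reduces to the same bookkeeping: a product $X\, Z$ of (strictly) upper triangular matrices has its $(p,q)$-entry supported on index pairs with row $\geq p$ impossible and column $\leq q$ impossible in the wrong direction, so every monomial summand of $\partial x$ is strictly below $x$ in $\prec_\A$, completing the proof.
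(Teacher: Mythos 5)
Your overall skeleton matches the paper's proof: cross-dimensional terms are absorbed by condition (1), and your $0$-cell and $2$-cell arguments go through exactly as in the paper (for $2$-cells the sheets are totally ordered, so index comparisons and sheet comparisons agree). The gap is in the $1$-cell case, where you argue with matrix indices while condition (2) of $\prec_\A$ is phrased in terms of the partial order $\preceq$ on sheets of $L(e^1_\alpha)$ --- and the two disagree exactly where it matters. Suppose $e^1_\alpha$ lies on the crossing locus, so the sheets in positions $k$ and $k+1$ of the chosen total order cross above $e^1_\alpha$ and are therefore \emph{incomparable} in $L(e^1_\alpha)$. The $A_+B$ term contributes $(A_+)_{k,k+1}\, b_{\iota(k+1),\iota(j)}$ to $\partial b_{\iota(k),\iota(j)}$; since $S_{\iota(k)}$ and $S_{\iota(k+1)}$ are incomparable, the generator $b_{\iota(k+1),\iota(j)}$ is not $\prec_\A$-below $b_{\iota(k),\iota(j)}$ --- the two are $\prec_\A$-incomparable, so some total order extending $\prec_\A$ places it \emph{after} $b_{\iota(k),\iota(j)}$, and triangularity would fail. ``Row index moves up, column index preserved'' does not rescue this, and neither does strict upper triangularity of $A_\pm$. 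What is actually needed, and what the paper's proof invokes, is the stronger vanishing statement established during the well-definedness checks: the entry of $A_\pm$ (resp.\ of $A_S$, $A_T$) in the position of the crossing pair is $0$, so the offending term simply does not occur; see Lemma \ref{lem:Bwd} and the corresponding lemma in Section \ref{sec:BdefST}. A parallel remark applies to the $BA_-$ term.

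A secondary inaccuracy: you claim strict upper triangularity of $A_S$ and $A_T$ is ``immediate'' because they are conjugates of the strictly upper triangular $\widehat{A}_{m_1,m_2}$ by unipotent factors. But the conjugating matrices $I+E_{k+2,k+1}$ (resp.\ $I+E_{l-1,l-2}$) are \emph{lower} triangular, and conjugating a strictly upper triangular matrix by a lower-triangular unipotent matrix need not yield an upper triangular matrix. The conclusion holds only because of the zero rows and columns inserted into $\widehat{A}_{m_1,m_2}$, and the paper verifies it --- together with the crucial vanishing of the crossing-pair entry --- by an explicit block computation. Once you replace ``strict upper triangularity of $A_\pm$'' with these precise vanishing statements, your argument closes and coincides with the paper's proof.
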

\begin{proof}
As in the proof of Lemma \ref{lem:Awd}, we have $\partial\,a^\alpha_{p,q} = \sum a^\alpha_{pr}a^\alpha_{rq}$ with the sum over those sheets $S_r \in L(e^0_\alpha)$ such that $S_p \prec S_r \prec S_q$.  Therefore, all of these generators satisfy $ a^\alpha_{pr},a^\alpha_{rq}\prec_\A a^\alpha_{p,q} $. 

In the formula (\ref{eq:Bdef}) that characterizes $\partial b^\alpha_{p,q}$, only $A_+B$ and $BA_-$ give rise to terms that do not correspond to cells of lower dimension.  
Recall that a total ordering  $\iota: \{1,\ldots, n\} \rightarrow L(e^1_\alpha)$ is used to form the matrices  $A_\pm$ and $B$, so that 
the $A_+B$ term corresponds to a sum $\sum_{i<k<j} x_{i,k}b_{\iota{(k)},\iota{(j)}}$ in $\partial b_{\iota{(i)},\iota{(j)}}$ where $A_+ = (x_{i,j})$.  
Since $i <k <j$, $b_{\iota(k), \iota(j)} \prec_\A b_{\iota(i),\iota(j)}$ unless the sheets $S_{\iota(i)}$ and $S_{\iota(k)}$ are incomparible in $L(e_\alpha^1)$. However, if this is the case then the entry $x_{i,k}$ was checked to be $0$ in Lemma \ref{lem:Bwd} and Lemma \ref{lem:SwallowTail}.  The $BA_-$ term is handled similarly.

For a generator $c_{i,j}$ corresponding to a $2$-cell, only the terms $A_{v_1}C$ and $C A_{v_0}$ are relevant.  Since the sheets above a $2$-cell are totally ordered, we have $c_{i,k},c_{k,j} \prec_\A c_{i,j}$ for $i < k < j$.
\end{proof}

\subsection{Elementary modifications to a compatible polygonal decomposition}  To prove Theorem \ref{thm:IndCellD}, we begin by showing that the stable tame isomorphism type of $(\mathcal{A},\partial)$ is invariant under certain local changes to the defining data.  

\subsubsection{Subdividing a $1$-cell}
We say that compatible cell decompositions $\mathcal{E}$ and $\mathcal{E}'$ for $L$ are related by {\bf subdividing a $1$-cell} if $\mathcal{E}'$ is obtained from $\mathcal{E}$ by dividing a $1$-cell, $e^1_\alpha$, into two pieces, $e^1_x$ and $e^1_y$, by placing a new vertex $e^0$ somewhere along $e^1_\alpha$.  Let $e^0_x$ and $e^0_y$ denote the endpoints of $e^1_{\alpha}$ in $\mathcal{E}$ that become endpoints of $e^1_x$ and $e^1_y$ respectively in $\mathcal{E}'$.  See Figure \ref{fig:Sub1Cell} (a). 

\begin{figure}
\labellist
\small
\pinlabel $e^1_\alpha$ [b] at 60 86
\pinlabel $e^1_x$ [b] at 248 86
\pinlabel $e^1_y$ [b] at 302 86
\pinlabel $e^0_x$ [t] at 3 76
\pinlabel $e^0_y$ [t] at 116 76
\pinlabel $e^0_x$ [t] at 220 76
\pinlabel $e^0$ [t] at 276 76
\pinlabel $e^0_y$ [t] at 332 76
\pinlabel $(a)$ [c] at -26 76
\pinlabel $(b)$ [c] at -26 8
\pinlabel $\cong$ [c] at 136 8
\pinlabel $\cong$ [c] at 288 8
\endlabellist
\centerline{\includegraphics[scale=.8]{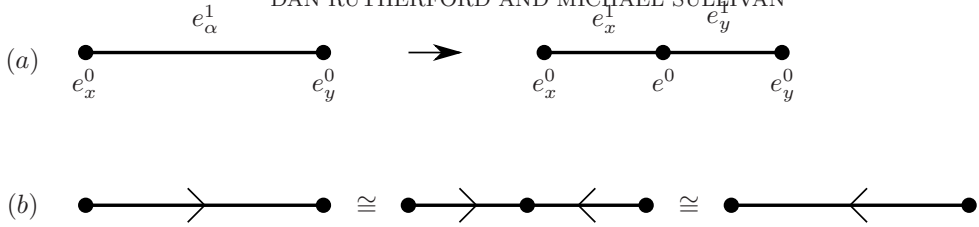}} 

\caption{(a) Subdiving a $1$-cell. (b) Reversing the orientation of a $1$-cell.  
}
\label{fig:Sub1Cell}
\end{figure}

\begin{theorem} \label{thm:sub1cell}
Suppose that $\mathcal{E}$ and $\mathcal{E}'$ are related by subdividing a $1$-cell.  Moreover assume that:
\begin{enumerate} 
\item The orientation of $e^1_x$ agrees with the orientation of $e^1_\alpha$.
\item The $S$ and $T$ regions at swallow tail points, as well as initial and terminal vertices for $2$-cells of $\mathcal{E}$ and $\mathcal{E}'$ are chosen in an identical way. 
\end{enumerate} 
Then, the corresponding cellular DGAs are stable tame isomorphic.
\end{theorem}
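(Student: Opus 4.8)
The plan is to exhibit the DGA $(\A',\partial')$ of $\mathcal{E}'$ as $(\A,\partial)$ with two extra families of generators adjoined, and then to cancel those two families against one another in pairs using Theorem \ref{thm:Alg}. I would first fix notation: let $B$ be the matrix of generators of $e^1_\alpha$ and $A_x,A_y$ the vertex matrices at its initial vertex $e^0_x$ and terminal vertex $e^0_y$, so that (\ref{eq:Bdef}) reads $\partial B = A_y(I+B) + (I+B)A_x$. In $\mathcal{E}'$ the edge $e^1_\alpha$ is replaced by $e^1_x$ (from $e^0_x$ to $e^0$), $e^1_y$ (from $e^0$ to $e^0_y$), and the new vertex $e^0$; write $B_x,B_y,A_0$ for the matrices they contribute. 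Because $e^0$ is placed at a generic interior point of $e^1_\alpha$, the sheets over $e^0$, $e^1_x$, and $e^1_y$ are canonically identified with those over $e^1_\alpha$, along with their partial order and any cusp/crossing block structure; orienting $e^1_y$ to agree with $e^1_\alpha$, I identify $B_y$ with $B$. Thus the generating set of $\A'$ is that of $\A$ together with the entries of $A_0$ and of $B_x$, and these last two families are in bijection through $a^0_{p,q}\leftrightarrow b^x_{p,q}$.

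Next I would compute, from (\ref{eq:Bdef}) applied to $e^1_x$ (whose terminal vertex is $e^0$ by hypothesis (1)),
\[
\partial B_x = A_0(I+B_x) + (I+B_x)A_x = A_0 + A_x + A_0B_x + B_xA_x,
\]
so that $\partial b^x_{i,j} = a^0_{i,j} + (A_x)_{i,j} + \sum_{i<k<j}\big((A_0)_{i,k}b^x_{k,j} + b^x_{i,k}(A_x)_{k,j}\big)$. The cross terms carry the $1$-cell generators $b^x_{k,j}$ and $b^x_{i,k}$, which are \emph{higher} than the $0$-cell generator $a^0_{i,j}$ in the order of Lemma \ref{lem:precA}; hence Theorem \ref{thm:Alg} cannot be applied to cancel the two families simultaneously. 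The key point is that each such $b^x$-entry lies strictly nearer the diagonal than $b^x_{i,j}$. I would therefore fix a total order extending $\prec_\A$ in which the generators of $e^0$ lie above those of $e^0_x$, and cancel the pairs $(b^x_{i,j},a^0_{i,j})$ in order of increasing $j-i$. When $j-i=1$ there are no cross terms, so $\partial b^x_{i,i+1}=a^0_{i,i+1}+(A_x)_{i,i+1}$ and Theorem \ref{thm:Alg} applies with $q_k=b^x_{i,i+1}$, $q_l=a^0_{i,i+1}$. Inductively, once all pairs with smaller $j-i$ are cancelled every $b^x$-entry in the cross terms of $\partial b^x_{i,j}$ has been sent to $0$, so in the running quotient $\partial b^x_{i,j}=a^0_{i,j}+(A_x)_{i,j}$ with $(A_x)_{i,j}\in F_{l-1}\A$, and Theorem \ref{thm:Alg} again applies. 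Each cancellation substitutes $a^0_{i,j}\mapsto (A_x)_{i,j}$ and $b^x_{i,j}\mapsto 0$, so the cumulative effect is $A_0\rightsquigarrow A_x$ and $B_x\rightsquigarrow 0$.

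It then remains to identify the final quotient with $(\A,\partial)$. The substitution turns $\partial B_y = A_y(I+B_y)+(I+B_y)A_0$ into $A_y(I+B_y)+(I+B_y)A_x$, which is exactly $\partial B$ under $B_y\leftrightarrow B$. For each $2$-cell whose boundary traversed $e^1_\alpha$, the corresponding factor in (\ref{eq:Cdef}) (or (\ref{eq:CdefSwallowtail})) is now the consecutive pair $(I+B_y)^{\eta}(I+B_x)^{\eta}$, which under $B_x\rightsquigarrow 0$ collapses to $(I+B_y)^{\eta}$, matching the factor $(I+B)^{\eta}$ in $\A$; by hypothesis (2) the new vertex $e^0$ is never an initial or terminal vertex of a $2$-cell, so it contributes no extra term and appears only as an interior vertex of the boundary path. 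All generators and differentials untouched by $e^1_\alpha$ are identical in the two complexes. Hence the quotient DGA coincides with $(\A,\partial)$, and composing the stable tame isomorphisms supplied by the successive applications of Theorem \ref{thm:Alg} yields the theorem.

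Finally I would check that the argument is uniform across the possible local types of $e^1_\alpha$. Since $e^0$ is chosen generic, any $N$-blocks (at cusps) and vanishing $(k,k+1)$-entries (at crossings) sit in identical positions throughout $A_0,B_x,B_y,A_x,A_y$, and any swallowtail matrices $A_S,A_T,\widehat{A}$ occur only at the inherited endpoints $e^0_x,e^0_y$; in every case $\partial B_x$ retains the form $A_0 + A_x + (\text{cross terms nearer the diagonal})$ that the induction requires. I expect the main obstacle to be exactly this middle step: because of the quadratic cross terms, Theorem \ref{thm:Alg} does not apply to the two families at once, and one must verify carefully that ordering the cancellations by distance from the diagonal clears the obstructing higher-order terms at each stage, and that the accumulated substitution reproduces the matrix identity $A_0\rightsquigarrow A_x$ that drives the final identification.
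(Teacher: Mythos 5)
Your cancellation scheme is sound, and it is in fact the mirror image of the paper's own argument: the paper cancels the generators $B_y$ of $e^1_y$ against those of the new vertex $A_0$ (working in increasing order so that the quadratic cross terms are already zero in the running quotient, exactly as you arrange), which replaces $A_0$ by $A_y$ and leaves $B_x$ as the surviving copy of $B$. You instead cancel $B_x$ against $A_0$ and keep $B_y$. This reversal creates a genuine gap: identifying $B_y$ with $B$ at the end requires the orientation of $e^1_y$ to agree with that of $e^1_\alpha$, and you simply impose this (``orienting $e^1_y$ to agree with $e^1_\alpha$''). But the orientation of $e^1_y$ is part of the given data of $\mathcal{E}'$, not a choice available inside the proof; hypothesis (1) pins down only the orientation of $e^1_x$. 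If $e^1_y$ is oriented from $e^0_y$ to $e^0$, then after your substitutions $A_0 \rightsquigarrow A_x$ and $B_x \rightsquigarrow 0$ the surviving differential is $\partial B_y \doteq A_x(I+B_y)+(I+B_y)A_y$ rather than $A_y(I+B_y)+(I+B_y)A_x$, and in the differential of a bordering $2$-cell this edge contributes a factor $(I+B_y)^{-\eta}$ where $e^1_\alpha$ contributed $(I+B)^{\eta}$; so the quotient is not literally $(\A,\partial)$ under $B_y\leftrightarrow B$.

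This case cannot be dismissed: it is precisely the one needed when Theorem \ref{thm:sub1cell} is applied to prove Corollary \ref{cor:IndOrient} (there one subdivides so that $e^1_y$ has the \emph{opposite} orientation to $e^1_\alpha$), so assuming it away, or appealing to orientation-independence of the DGA, would be circular. Two repairs are available. The clean one is the paper's: cancel $B_y$ against $A_0$ instead; this works for either orientation of $e^1_y$, since $B_y$ is killed and its factor in $2$-cell differentials becomes $I^{\pm 1}=I$, leaving $B_x$ --- whose orientation is controlled by hypothesis (1) --- as the copy of $B$. Alternatively, keep your cancellation and, in the disagreeing case, identify $I+B$ with $(I+B_y)^{-1}=I+B_y+B_y^2+\cdots$; one must then check that this substitution is a chain map (it is, by the computation of $\partial[(I+B_y)^{-1}]$ as in the proof of Lemma \ref{lem:cwd}) and that it is tame (each $b_{p,q}$ maps to $b^y_{p,q}$ plus words in generators strictly below it in $\prec_\A$, so it factors into elementary automorphisms), an argument your write-up does not supply.
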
 

\begin{proof}
Let $(\A,\partial)$ and $(\A',\dd')$ denote the DGAs associated to $\mathcal{E}$ and $\mathcal{E}'$ respectively.  We fix a total ordering of the sheets in  $L(e^1_\alpha)$, and use this choice to produce total orderings of $L(e^1_x), L(e^1_y), L(e^0), L(e^0_x)$ and $L(e^0_y)$.  We can then collect corresponding generators of $\A'$ (resp. $\A$) into matrices $B_x,B_y,A_0,A_x,A_y$ (resp. $B, A_x,A_y$) where, in the case that $e^0_x$ or $e^0_y$ is a swallow tail point, we follow the instructions from \ref{sec:BdefST} when forming $A_x$ or $A_y$. 

Assume the orientation of $e^1_\alpha$ is from $e^0_x$ to $e^0_y$, as the argument for the reverse orientation is similar.  Then, we have
\[
\dd B = A_y (I +B) + (I+B) A_x;  
\]
\[
\dd' B_x = A_0 (I +B_x) + (I+B_x) A_x; 
\] 
and
\[
\dd' B_y = \left\{ \begin{array}{cr} A_y (I +B_y) + (I+B_y) A_0 & \mbox{if $e^0_y$ and $e^0_\alpha$ have the same orientation,} \\
A_0(I+B_y) + (I+B_y)A_y & \mbox{else}. 
\end{array}\right.
\]

  We can extend the partial ordering, $\prec_{\A'}$,   to a total ordering so that all of the generators corresponding to $e^0$ are greater than the generators corresponding to $e^0_y$.  Working in increasing order of the $b^y_{p,q}$, we then apply Theorem \ref{thm:Alg} inductively to 
cancel the generators $b^y_{p,q}$ in pairs with the $a^0_{p,q}$. 
[Note that the sheets above $e^0$ are in bijection with the sheets of $e^1_y$ and have the same partial ordering.  Therefore, the generators $b^y_{p,q}$ and $a^0_{p,q}$ are in bijective correspondence.] In the resulting quotient, the entries of $A_0$ will all be replaced by the corresponding entries of $A_y$.  This is because of the order that we cancel the $b^y_{p,q}$;  at the inductive step, any of the $B_y$ terms that would appear in $\partial b^y_{p,q}$ have already been cancelled, and we indeed have $\partial b^y_{p,q}= a^0_{p,q} +w$ where $w$ is the corresponding entry in $A_y$ (which is less than $a^0_{p,q}$ as required in Theorem \ref{thm:Alg}).  

Thus, the resulting stable tame isomorphic quotient is obtained from $(\A',\dd')$ by replacing $B_y$ with $0$, and replacing all occurrences of $A_0$ in $\dd'$ with $A_y$.  [Note that this includes in $\partial C$ if $C$ is a $2$-cell bordering $e^1_\alpha$.  When compared with $\dd C$ there is an extra $(I+B_i)^{\eta_i}$ factor in $\dd' C$ corresponding to the edge $e^1_y$.  This term is replaced with $I^{\eta_i}$ in the quotient.] 

\end{proof} 

\begin{corollary} \label{cor:IndOrient} The stable tame isomorphism type of $(\A,\dd)$ is independent of the orientation of $1$-cells of $\mathcal{E}$.
\end{corollary}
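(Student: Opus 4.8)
The plan is to reduce to the case of reversing a single $1$-cell and to write down an explicit tame isomorphism, so that no stabilization is needed; reversing the orientations of several $1$-cells is then handled by composing the isomorphisms obtained for each one. So fix a single $1$-cell $e^1_\alpha$ and let $(\A,\partial)$ and $(\A',\partial')$ be the cellular DGAs of two decompositions agreeing everywhere except that $e^1_\alpha$ carries opposite orientations. Since the generators depend only on cells and sheets, I would identify $\A=\A'$ as algebras and note that $\partial$ and $\partial'$ differ only in the terms that see $e^1_\alpha$.

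The key idea is the substitution $I+B \mapsto (I+B)^{-1}$, where $B$ is the matrix of generators $b^\alpha_{p,q}$ of $e^1_\alpha$. Concretely, I would set $\tilde B := (I+B)^{-1}-I = B + B^2 + \cdots$ and define $\Phi\colon \A' \to \A$ to fix every generator not associated to $e^1_\alpha$ and to send $B$ to $\tilde B$ entrywise. Two observations make $\Phi$ the desired map. First, each entry satisfies $\tilde b^\alpha_{p,q} = b^\alpha_{p,q} + w_{p,q}$ with $w_{p,q}$ a combination of products of generators strictly $\prec_\A b^\alpha_{p,q}$; thus $\Phi$ is triangular with respect to any total order extending $\prec_\A$ and is therefore a composition of elementary automorphisms, i.e.\ a tame isomorphism of algebras. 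Second, if $\partial$ uses the orientation $e^0_x\to e^0_y$, so that $\partial B = A_y(I+B)+(I+B)A_x$ and $\partial' B = A_x(I+B)+(I+B)A_y$, then equation (\ref{eq:dbinv}) gives $\partial[(I+B)^{-1}] = A_x(I+B)^{-1}+(I+B)^{-1}A_y$, which is exactly $\Phi$ applied to $\partial' B$. Hence $\Phi\circ\partial' = \partial\circ\Phi$ on the generators of $e^1_\alpha$, while on all other $0$- and $1$-cells both differentials coincide and $\Phi$ acts as the identity.

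The remaining, and most delicate, step is compatibility with the differentials of the $2$-cells bordering $e^1_\alpha$. In the formula (\ref{eq:Cdef}) (or (\ref{eq:CdefSwallowtail}) in the presence of swallowtails) the edge $e^1_\alpha$ contributes a single factor $(I+B)^{\eta}$, and reversing the orientation flips the sign of this exponent. Since $\Phi$ is an algebra map with $\Phi(I+B) = (I+B)^{-1}$, it sends the factor $(I+B)^{-\eta}$ occurring in $\partial' C$ back to $(I+B)^{\eta}$, while fixing the matrices $A_{v_0},A_{v_1},C$ and every other edge factor; thus applying $\Phi$ to the expression for $\partial' C$ reproduces the expression for $\partial C$. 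I expect the main obstacle to be bookkeeping rather than anything conceptual: one must verify that the exponent flip is the only change caused by the reversal, and that the swallowtail-modified vertex matrices $A_S, A_T, \widehat A_{k,k+1}$ and the edge matrices $S, T$ are untouched by $\Phi$ (they involve no generator of $e^1_\alpha$), so that the general identity $\partial[(I+B)^{-1}] = A_-(I+B)^{-1}+(I+B)^{-1}A_+$ underlying (\ref{eq:dbinv}) continues to apply verbatim in those cases. Assembling these checks shows $\Phi$ is a tame isomorphism $(\A',\partial')\to(\A,\partial)$, which (consistent with Remark \ref{rem:variation}(ii)) proves the corollary without any stabilization.
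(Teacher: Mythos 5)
Your proof is correct, but it takes a genuinely different route from the paper's. The paper deduces Corollary \ref{cor:IndOrient} from Theorem \ref{thm:sub1cell}: subdivide $e^1_\alpha$ into $e^1_x \cup e^1_y$ with the two halves oppositely oriented, and observe that the resulting decomposition is simultaneously a subdivision of $\mathcal{E}$ with the original orientation (taking $e^1_x$ as the half agreeing with $e^1_\alpha$) and of $\mathcal{E}$ with the reversed orientation (interchanging the roles of $e^1_x$ and $e^1_y$); stable tame isomorphism then follows by applying the subdivision theorem twice and transitivity. You instead construct the tame isomorphism $\Phi$ directly, via the substitution $I+B \mapsto (I+B)^{-1}$, with the chain-map property on $B$ coming from the identity (\ref{eq:dbinv}) and on bordering $2$-cells from the fact that reversal only flips the exponent $\eta$ of the single factor $(I+B)^{\eta}$. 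This is essentially the same mechanism the paper uses later for Theorem \ref{thm:IndDecorate} (where $\psi(I+B)=(I+B)(ST)$ is shown to be a composition of elementary automorphisms by triangularity with respect to $\prec_\A$), so your triangularity argument is sound; the needed bookkeeping checks also go through, e.g.\ the zero pattern of $B$ along a crossing arc is preserved since the $(k,k+1)$-entry of every power $B^m$, $m\geq 2$, vanishes, and $\Phi$ commutes with the block insertions and permutation conjugations used to form the matrices $B_e$ in $2$-cell differentials. What each approach buys: the paper's is shorter given that Theorem \ref{thm:sub1cell} is already in hand, and requires no new verification; yours is self-contained, avoids stabilization entirely, and yields the slightly stronger conclusion (an honest tame isomorphism through elementary automorphisms respecting a triangular order) discussed in Remark \ref{rem:variation}(ii) --- a strengthening the paper only obtains indirectly.
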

\begin{proof}  
To reverse the orientation of a $1$-cell, $e^1_\alpha$, of $\mathcal{E}$, apply Theorem \ref{thm:sub1cell} twice.  First, subdivide into $e^1_x$ and $e^1_y$ so that $e^1_y$ has opposite orientation to $e^1_\alpha$.   This is just as well a subdivision of $\mathcal{E}$ with the orientation of $e^1_\alpha$ reversed if the roles of $e^1_x$ and $e^1_y$ are interchanged.  See Figure \ref{fig:Sub1Cell} (b).
\end{proof}

\subsubsection{Subdividing a $2$-cell}  
Suppose that $\mathcal{E}$ and $\mathcal{E}'$ are cell decompositions of $S$ that are compatible with $L$ and that $\mathcal{E}'$ is obtained from $\mathcal{E}$ by removing a $1$-cell, $e^1_\alpha$, that borders two distinct $2$-cells, $e^2_x$ and $e^2_y$, of $\mathcal{E}$.  Thus, $\mathcal{E}'$ has a single $2$-cell, $e_z^2$, satisfying 
\[
e^2_z = e^2_x \sqcup e^1_\alpha \sqcup e^2_y.
\]
Note that (because $\mathcal{E}'$ is compatible with $L$) it is necessarily the case that $e^1_\alpha$ is disjoint from $\Sigma$.  We say that $\mathcal{E}$ and $\mathcal{E}'$ are related by {\bf subdividing a $2$-cell} of $\mathcal{E}'$.  See Figure \ref{fig:Sub2Cell}.

\begin{theorem} \label{thm:sub2cell}  Suppose $\mathcal{E}$ and $\mathcal{E}'$ are related by subdividing a $2$-cell.  Suppose in addition that:
\begin{enumerate}
\item Initial and terminal vertices are chosen for $2$-cells of $\mathcal{E}$ and $\mathcal{E}'$ so that the initial and terminal vertices of $e^2_z$ are chosen to coincide with those of $e^2_x$, and the  choice made for all remaining $2$-cells of $\mathcal{E}'$ coincides with the choice for the corresponding $2$-cells of $\mathcal{E}$.
\item The $S$ and $T$ sides of the crossing locus are assigned near each swallowtail point in an identical manner for $\mathcal{E}$ and $\mathcal{E}'$.  Here, we allow for the possibility that $e^1_\alpha$ borders $S$ and $T$ regions at either of its endpoints.   
\end{enumerate}
 Then, the associated cellular DGAs are stable tame isomorphic.
\end{theorem}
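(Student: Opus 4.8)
The plan is to follow the template of Theorem \ref{thm:sub1cell}: work inside the finer DGA $(\A,\partial)$, which carries the extra edge $e^1_\alpha$ and the two faces $e^2_x, e^2_y$, and apply Theorem \ref{thm:Alg} repeatedly to cancel generators in pairs until what survives is visibly isomorphic to the coarser DGA $(\A',\partial')$ built from the single face $e^2_z$. Because $e^1_\alpha$ is disjoint from $\Sigma$, the sheets over $\overline{e^2_z}$ are totally ordered, so in this common order the matrix $B$ of generators $b^\alpha_{p,q}$ is full strictly upper triangular and enters $\partial C_x$ and $\partial C_y$ only through a single boundary factor $(I+B)^{\pm 1}$, with opposite exponents for the two faces since $e^1_\alpha$ is traversed in opposite directions. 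The generator counts match: removing the $b^\alpha_{p,q}$ together with the $c^y_{p,q}$ leaves exactly the $c^x_{p,q}$, which I will rename $c^z_{p,q}$.

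First I would record $\partial B = A_+(I+B)+(I+B)A_-$ together with the face equations (\ref{eq:Cdef}) (or (\ref{eq:CdefSwallowtail}) in the swallowtail case) for $C_x$ and $C_y$, and note that $C_x$ and $C_y$ never occur in each other's differentials. Isolating the coefficient of the generator $b^\alpha_{p,q}$ in the boundary word of $\partial C_y$ shows $\partial c^y_{p,q} = b^\alpha_{p,q} + w_{p,q}$; the cross terms from the product of boundary factors are quadratic and so do not interfere with this coefficient. Since $b^\alpha_{p,q}$ is a $1$-cell generator it lies below $c^y_{p,q}$ in any total order extending $\prec_\A$ from Lemma \ref{lem:precA}, and I will further arrange the total order so that the generators of $e^1_\alpha$ exceed those of the remaining $1$-cells and all $0$-cells along $\partial e^2_y$. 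As in Theorem \ref{thm:sub1cell}, a single application of Theorem \ref{thm:Alg} is unavailable because $w_{p,q}$ still contains higher terms (the $A_vC_y + C_yA_v$ contributions and products of boundary generators). I therefore cancel the pairs $(c^y_{p,q}, b^\alpha_{p,q})$ inductively in increasing order: for the shortest pairs these obstructing terms are absent, and once shorter $c^y$ and $b^\alpha$ have been set to $0$, the products appearing in a longer pair's differential vanish and Theorem \ref{thm:Alg} applies.

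The substance of the argument is the computation of the quotient differential on the survivors and its comparison with $(\A',\partial')$. By hypothesis (1) the initial and terminal vertices of $e^2_z$ coincide with those of $e^2_x$, so under $c^x_{p,q}\leftrightarrow c^z_{p,q}$ the terms $A_{v_1}C + CA_{v_0}$ match automatically. For the boundary word, cancelling the $c^y$ replaces each occurrence of $b^\alpha$ in $\partial C_x$ by the remainder $w$; the net effect is that the factor $(I+B)^{\pm 1}$ of $e^1_\alpha$ in the boundary word of $e^2_x$ is replaced by the word running around the outer boundary of $e^2_y$, so that the outer boundary of $e^2_x$ glued to the outer boundary of $e^2_y$ reproduces exactly the boundary word of $e^2_z$. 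Conceptually this is the statement that the face relation for $e^2_y$ permits substituting the edge word $(I+B)$ by the complementary word around $\partial e^2_y$, which is precisely the elementary collapse merging $e^2_x$ and $e^2_y$ into $e^2_z$. Since each cancellation is a stable tame isomorphism and the final renaming together with the shared generators is a tame isomorphism of DGAs, the composite is the desired stable tame isomorphism.

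I expect the main obstacle to be the bookkeeping in this last matching: verifying that the telescoping of $w$ through $\partial C_x$ yields precisely the boundary word of $e^2_z$ with the correct orientation exponents $\eta_i$ and the correct placement of the $y$-edges relative to the chosen paths $\gamma_\pm$. Along the way one must check that no permutation conjugation is needed for $e^1_\alpha$ itself, since it avoids $\Sigma$, while any conjugations forced by crossings or cusps on the outer edges are by constant matrices and hence harmless as in Section \ref{sec:MatrixAlg}; and, when an endpoint of $e^1_\alpha$ is a swallowtail point bordering $S$ or $T$ regions, that hypothesis (2) makes the decorated matrices $A_S, A_T, S, T$ entering $\partial C_x$, $\partial C_y$ and $\partial C_z$ agree, so that the same cancellation goes through using Lemma \ref{lem:ASAT}.
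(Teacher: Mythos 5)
Your proposal follows essentially the same route as the paper's proof: order the generators so that those of $e^1_\alpha$ dominate all other $1$-cell generators, inductively cancel the pairs $(c^y_{p,q}, b^\alpha_{p,q})$ via Theorem \ref{thm:Alg}, and observe that in the quotient the relation $\partial C_y = 0$ forces $(I+B_\alpha)$ to equal the complementary boundary word of $e^2_y$, so that $\partial C_x$ becomes $\partial' C_z$ under the identification $C_x \leftrightarrow C_z$. The paper carries out the final bookkeeping you flag by writing $\gamma_{z,\pm}$ explicitly in terms of $\gamma_{x,\pm}$ and $\gamma_{y,\pm}$ and solving for $(I+B_\alpha)$, which is precisely the substitution you describe.
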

\begin{proof}
As in Section \ref{sssec:sw2cell}, let  $Q_x$, $Q_y$, and $Q_z$ denote the polygons associated to the $2$-cells $e^2_x$, $e^2_y$ of $\mathcal{E}$ and the $2$-cell $e^2_z$ respectively.    In addition, for $w \in \{x,y,z\}$, let $\gamma_{w,\pm}$ denote the paths around $\partial Q_w$ from the initial vertex to the terminal vertex.  Note that the edge $e^1_\alpha$ appears precisely once along either $\gamma_{x,+}$ or $\gamma_{x,-}$ and once along either $\gamma_{y,+}$ or $\gamma_{y,-}$.  Since interchanging the notations of $\gamma_+$ and $\gamma_-$ has no effect on the definition of the differential from  (\ref{eq:CdefSwallowtail}), we can assume without loss of generality that $\gamma_{x,+}$ and $\gamma_{y,+}$ each contain $e^1_\alpha$ exactly once.  Moreover, in view of Corollary \ref{cor:IndOrient} we may assume that the orientation of $e^1_\alpha$ agrees with the orientation of $\gamma_{x,+}$.  
Using $*$ for concatenation of paths, we can then write 
\[
\gamma_{x,+} = \gamma_{x,a} * e^1_\alpha * \gamma_{x,b} \quad \mbox{and} \quad  
\gamma_{y,+} = \gamma_{y,a} * (e^1_\alpha)^\nu * \gamma_{y,b}\]
where $\nu = \pm 1$ and some of these paths may be constant.  Note that
\begin{equation} \label{eq:gzgx}
\gamma_{z,-} = \gamma_{x,-} \quad \mbox{and} \quad \gamma_{z,+} = \gamma_{x,a} * ((\gamma_{y,a})^{-1} * \gamma_{y,-}* (\gamma_{y,b})^{-1})^{\nu}* \gamma_{x,b}.
\end{equation}
 See Figure \ref{fig:Sub2Cell}.
\begin{figure}
\labellist
\small
\pinlabel $e^2_x$ [c] at 132 132
\pinlabel $e^2_y$ [c] at 308 124
\pinlabel $e^2_z$ [c] at 722 132
\pinlabel $e^1_\alpha$ [l] at 230 132
\pinlabel $\gamma_{x,-}$ [tr] at -2 122
\pinlabel $v_0$ [t] at 90 2
\pinlabel $v_1$ [b] at 98 264
\pinlabel $\gamma_{x,a}$ [t] at 162 8
\pinlabel $\gamma_{x,b}$ [bl] at 170 236
\pinlabel $v_0$ [bl] at 390 224
\pinlabel $v_1$ [tl] at 390 44
\pinlabel $\gamma_{y,-}$ [l] at 404 138
\pinlabel $\gamma_{y,a}$ [br] at 306 216
\pinlabel $\gamma_{y,b}$ [t] at 306 28
\pinlabel $\gamma_{z,+}$ [t] at 824 28
\pinlabel $\gamma_{z,-}$ [tr] at 544 122
\pinlabel $v_1$ [b] at 634 264
\pinlabel $v_0$ [t] at 626 2

\endlabellist
\centerline{\includegraphics[scale=.4]{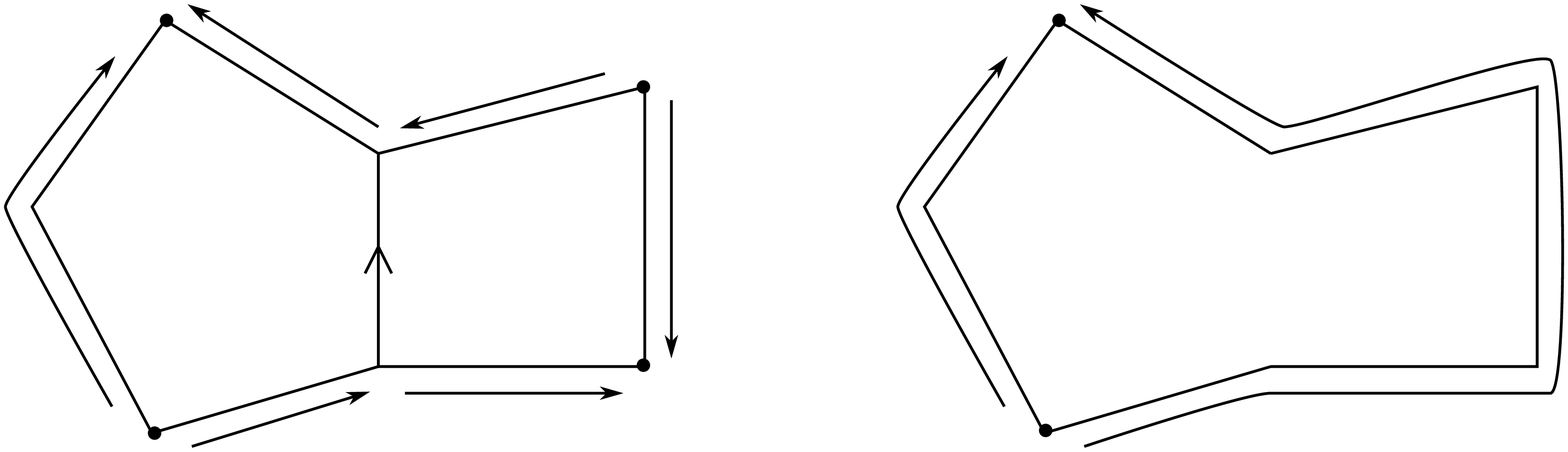}} 

\caption{The cell decompositions $\mathcal{E}$ (left) and $\mathcal{E}'$ (right).  Note that the exponent $\nu = -1$. 
}
\label{fig:Sub2Cell}
\end{figure}
In view of Corollary \ref{cor:IndOrient}, we may assume the orientation of $e^1_\alpha$ is from the initial vertex to the terminal vertex.  
Using Theorem \ref{thm:Alg}, we now show that the DGAs $(\A, \partial)$ and $(\A,\partial')$ are stable tame isomorphic.  Collect generators of $\A$ (resp. $\A'$) associated to $e^2_x$ and $e^2_y$ (resp. to $e^2_z$) into matrices $C_x$ and $C_y$ (resp. $C_z$).   We choose an extension of the ordering of generators of $A$ from $\prec_\A$ to a total order so that the generators corresponding to $e^1_\alpha$ are larger than any other $1$-cell generators.  Using this ordering we can inductively cancel the $c^y_{i,j}$ with the $b^\alpha_{i,j}$ using Theorem \ref{thm:Alg}.  [Indeed, expanding the product of edges along $\gamma_{y,+}$ that appears in equation (\ref{eq:CdefSwallowtail}) for $\partial C_y$ allows us to write
\begin{equation}  \label{eq:CyAv1}
\partial C_y = A_{v_1} C_y + C_y A_{v_0} + B_{\alpha} + X
\end{equation}
where the matrix $X$ is strictly upper triangular with and has its $i,j$ entry in the sub-algebra generated by generators from $0$-cells and $1$-cells different from $e^1_\alpha$ and also those $b^\alpha_{i',j'}$ with $i \leq i' < j' \leq j$ such that at least one of the first and last inequalities is strict, i.e. those $b^\alpha_{i',j'}$ with $b^\alpha_{i',j'} \prec_\A b^\alpha_{i,j}$.  Therefore,  we can apply Theorem \ref{thm:Alg} and inductively quotient by ideals generated by $c_{i,j}$ and $\partial c_{i,j}$ according to the increasing ordering of these generators.  For the inductive step, it is important to observe that, as long as $c_{i',j'}= 0$ for $i',j'$ such that $c_{i',j'} \prec_A c_{i,j}$, equation  (\ref{eq:CyAv1}) shows that $\partial c_{i,j} = b^\alpha_{i,j} + w$ where $w$ belongs to the subalgebra generated by those $x$ with $x \prec_A b^\alpha_{i,j}$.]

The resulting quotient has the $c^y_{i,j}$ and $b^\alpha_{i,j}$ removed from the generating set.  Moreover, the relations
\begin{align*}
& 0 = C_y, \quad \mbox{and} \\
& 0 = \partial C_y = A_{v_1} C_y +C_y A_{v_0} + \underbrace{Y^{\eta_j}_{j} \cdots Y^{\eta_1}_1}_{\gamma_{y,+}} + \underbrace{Y^{\eta_m}_{m} \cdots Y^{\eta_{j+1}}_{j+1}}_{\gamma_{y,-}} = \\
& \quad \underbrace{Y^{\eta_j}_{j} \cdots Y^{\eta_{l+1}}_{l+1}}_{\gamma_{y,b}} (I+B_{\alpha})^{\nu}  \underbrace{Y^{\eta_{l-1}}_{l-1} \cdots Y^{\eta_{1}}_{1}}_{\gamma_{y,a}}+ \underbrace{Y^{\eta_m}_{m} \cdots Y^{\eta_{j+1}}_{j+1}}_{\gamma_{y,-}}
\end{align*}
allow us to find 
\begin{equation} \label{eq:IBalpha}
(I+B_{\alpha}) = \big( [\underbrace{Y^{\eta_j}_{j} \cdots Y^{\eta_{l+1}}_{l+1}}_{\gamma_{y,b}}]^{-1}  \underbrace{Y^{\eta_m}_{m} \cdots Y^{\eta_{j+1}}_{j+1}}_{\gamma_{y,-}}  [\underbrace{Y^{\eta_{l-1}}_{l-1} \cdots Y^{\eta_{1}}_{1}}_{\gamma_{y,a}}]^{-1}\big)^\nu.
\end{equation}
(We have indicated which parts of the paths $\gamma_{y,-}$ and $\gamma_{y,+}$ the various portions of the product correspond to. The $Y_i$ matrices are as in Section \ref{sssec:sw2cell}, and some of them may be $S$ or $T$ matrices from corners decorated at swallowtail points.)  

Finally, note that $\partial C^x$ becomes identical to $\partial' C^z$ after making the substitution (\ref{eq:IBalpha}).  [Compare with (\ref{eq:gzgx}).]  
It follows that the identification of $C_x$ with $C_z$ provides an isomorphism between this quotient of $(\A,\partial)$ and $(\A', \partial')$.

\end{proof}

\begin{corollary} \label{cor:IndVer} The stable tame isomorphism type of $(\A,\dd)$ is independent of the choices of initial and terminal vertices for $2$-cells.
\end{corollary}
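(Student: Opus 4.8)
The plan is to reduce the statement to a single $2$-cell and then to the movement of a single vertex, and to exhibit the two competing vertex choices as the two possible recombinations of one common refinement, so that Theorem~\ref{thm:sub2cell} applies in both directions. Since initial and terminal vertices are chosen independently for distinct $2$-cells, it suffices to fix one $2$-cell $e^2_z$ and show that two choices of its initial and terminal vertices yield stable tame isomorphic DGAs; the general statement then follows by altering one $2$-cell at a time and composing the resulting equivalences. Writing the passage from $(v_0,v_1)$ to $(v_0',v_1')$ as a move of the initial vertex $v_0 \mapsto v_0'$ with $v_1$ held fixed, followed by a move of the terminal vertex $v_1 \mapsto v_1'$ with $v_0'$ held fixed, I may further assume that only one of the two vertices is being relocated.

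Suppose then that we wish to move the initial vertex of $e^2_z$ from $v_0$ to $v_0'$ while keeping $v_1$. First I would use Theorem~\ref{thm:sub1cell} to introduce a new $0$-cell $w$ on the boundary of $e^2_z$, placed on the open boundary arc between $v_0$ and $v_0'$ that avoids $v_1$; since $v_0,v_0',v_1$ are distinct, such a $w$ exists and may be taken generic on the $1$-skeleton, preserving $L$-compatibility. Next, cut $e^2_z$ by a $1$-cell $e^1_\alpha$ running through the interior of $e^2_z$ from $v_1$ to $w$. As $e^1_\alpha$ lies in the open $2$-cell it is automatically disjoint from $\Sigma$, and it splits $e^2_z$ into two distinct $2$-cells $e^2_x$ and $e^2_y$ with, say, $\{v_0,v_1\}\subseteq \partial e^2_x$ and $\{v_0',v_1\}\subseteq \partial e^2_y$. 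This produces a finer $L$-compatible decomposition $\mathcal{E}$.

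The crucial observation is that $e^2_x$ and $e^2_y$ are different cells, so in $\mathcal{E}$ I may simultaneously declare $(v_0,v_1)$ to be the initial and terminal vertices of $e^2_x$ and $(v_0',v_1)$ to be those of $e^2_y$. Applying Theorem~\ref{thm:sub2cell} to recombine $e^2_z = e^2_x \sqcup e^1_\alpha \sqcup e^2_y$ with $e^2_z$ inheriting the vertices of $e^2_x$ shows that the DGA for the choice $(v_0,v_1)$ is stable tame isomorphic to the DGA of $\mathcal{E}$. Interchanging the labels $x$ and $y$---which is harmless, since the proof of Theorem~\ref{thm:sub2cell} simply cancels the $c$-generators of whichever piece is absorbed against the $b^\alpha_{p,q}$ of $e^1_\alpha$ via Theorem~\ref{thm:Alg}---and recombining instead with $e^2_z$ inheriting the vertices of $e^2_y$ shows that the DGA for the choice $(v_0',v_1)$ is stable tame isomorphic to the same DGA of $\mathcal{E}$. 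Both recombined decompositions carry the auxiliary vertex $w$, which is removed from each by a final application of Theorem~\ref{thm:sub1cell}. Since stable tame isomorphism is an equivalence relation, the two vertex choices give equivalent cellular DGAs.

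The main obstacle is organizational rather than computational: arranging a single refinement that realizes both vertex choices at once. This is precisely why I reduce to moving one vertex at a time---two interleaved pairs $\{v_0,v_1\}$ and $\{v_0',v_1'\}$ cannot be separated by a single cut into the two pieces, whereas holding $v_1$ fixed lets it serve as the shared endpoint of the cut, placing $v_0$ and $v_0'$ on opposite pieces. The remaining points are routine: the degenerate case $v_0 = v_1$, together with its choice of direction around $\partial D^2$, is handled by the same cut construction after a preliminary move separating the two vertices, and the hypotheses of Theorems~\ref{thm:sub1cell} and~\ref{thm:sub2cell} concerning orientations and $S,T$ decorations are met by leaving all auxiliary data unchanged away from $e^1_\alpha$.
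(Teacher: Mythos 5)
Your argument is correct, and it runs on the same engine as the paper's proof: subdivide the $2$-cell so that both competing vertex choices can be realized on pieces of a common refinement, apply Theorem~\ref{thm:sub2cell} twice with the roles of the two pieces interchanged, and conclude by transitivity of stable tame isomorphism. The difference is in the subdivision. The paper adds a \emph{loop} edge based at an existing boundary $0$-cell $e^0$, cutting off a small disk inside $e^2_\alpha$; recombining one way shows the DGA for $(v_0,v_1)$ is equivalent to that of the refinement, and recombining the other way does the same for the degenerate choice $(e^0,e^0)$, so every choice is equivalent to this single canonical reference. That version needs no new $0$-cells, no use of Theorem~\ref{thm:sub1cell}, no reduction to moving one vertex at a time, and no case analysis: the degenerate choice $v_0=v_1$, which your write-up defers to a final (and somewhat underspecified) remark about ``a preliminary move separating the two vertices,'' is the pivot of the paper's argument rather than an exception to it. Your cross-cut from $v_1$ to an auxiliary vertex $w$ instead compares $(v_0,v_1)$ and $(v_0',v_1)$ directly, which is conceptually transparent since each competing initial vertex lives on its own piece; the price is the insertion and later deletion of $w$ via Theorem~\ref{thm:sub1cell}, the two-step reduction (first move $v_0$, then $v_1$), and the separate treatment of coincidences such as $v_0'=v_1$. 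These extra steps are all fixable in the way you indicate, so there is no gap, but the paper's loop trick is shorter precisely because it exploits, rather than avoids, the degenerate vertex choice.
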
 
\begin{proof}
Let $\mathcal{E}$ be an $L$-compatible polygonal decomposition of $S$, and let $e^2_\alpha$ be any $2$-cell of  $\mathcal{E}$.  Let $(\A,\dd)$ denote a cellular DGA formed from using $v_0$ and $v_1$ as the initial and terminal vertices for $e^2_\alpha$.

Pick a $0$-cell, $e^0$, appearing in the boundary of $e^2_\alpha$ and modify the cell decomposition $\mathcal{E}$ to $\mathcal{E}'$ by adding a loop edge, $e^1$, from $e^0$ to itself that is contained in $e^2_\alpha$.  This subdivides $e^2_\alpha$ into two pieces $e^2_x$ and $e^2_y$ where $e^2_x$ is exterior to $e^1$ and $e^2_y$ is interior to $e^1$.  Choose initial and terminal vertices of $e^2_x$ to be $v_0$ and $v_1$, and choose both initial and terminal vertices of $e^2_y$ to be $e^0$.  Keep all other choices the same. Then according to Theorem \ref{thm:sub2cell}  the DGA $(\A',\dd')$ associated to $\mathcal{E}'$ is stable tame isomorphic to $(\A,\dd)$.  On the other hand, if we reverse the role of $e^2_x$ and $e^2_y$, we see that the DGA associated to $\mathcal{E}$ modified so that the initial and terminal vertices $v_0$ and $v_1$ for $e^2_{\alpha}$ are both replaced with $e^0$ is also stable tame isomorphic to $(\A',\dd')$.  Since stable tame isomorphism satisfies the properties of an equivalence relation, it follows that the stable tame isomorphism type of $(\A,\dd)$ is independent of the choice of $v_0$ and $v_1$.  See Figure \ref{fig:Sub2Proof}.
\end{proof}

\begin{figure}
\labellist
\small
\pinlabel $v_0$ [t] at 74 -2
\pinlabel $v_1$ [b] at 82 260
\pinlabel $v_0$ [t] at 384 -2
\pinlabel $v_1$ [b] at 402 258
\pinlabel $e^0$ [tl] at 534 38
\pinlabel $e^0$ [tl] at 854 38
\pinlabel $\cong$ [c] at 268 124
\pinlabel $\cong$ [c] at 590 124
\endlabellist
\centerline{\includegraphics[scale=.4]{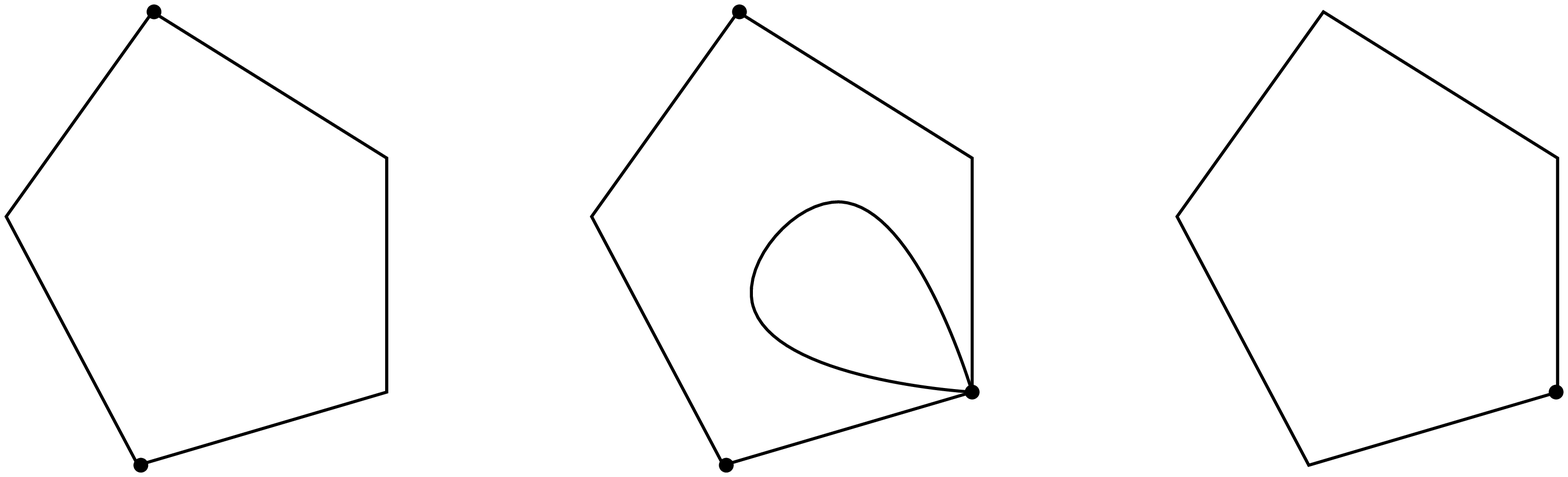}} 

\quad

\caption{The cell decompositions and initial and terminal vertices used in the proof of Corollary \ref{cor:IndVer}. 
}
\label{fig:Sub2Proof}
\end{figure}

\subsubsection{Deleting an edge with $1$-valent vertex} 
Suppose that an edge of $\mathcal{E}$ has an endpoint at a $1$-valent vertex.  The edge and vertex must be disjoint from the singular set $\Sigma \subset S$, and therefore we can delete them to produce another $L$-compatible cell decomposition of $S$ which we denote as $\mathcal{E}'$.
\begin{theorem} \label{thm:1val} For $\mathcal{E}$ and $\mathcal{E}'$ related by deleting an edge with $1$-valent vertex, the associated cellular DGAs for $L$ are stable tame isomorphic.
\end{theorem}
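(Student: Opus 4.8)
The plan is to cancel the generators associated to the deleted edge and its $1$-valent endpoint in pairs using Theorem \ref{thm:Alg}, in close analogy with the proof of Theorem \ref{thm:sub1cell}. Write $e^0$ for the $1$-valent vertex, $e^0_*$ for the other endpoint of the deleted edge $e^1$, and let $e^2_\beta$ be the $2$-cell whose interior contains $e^1$. Since $e^0$ is $1$-valent, $e^1$ lies in the closure of this single $2$-cell and appears exactly twice, with opposite orientations, in the boundary walk of $e^2_\beta$; moreover $e^1$ and $e^0$ are disjoint from $\Sigma$, so the sheets above them are totally ordered and the associated matrices $A_0, A_*, B$ (built from the generators of $e^0$, $e^0_*$, $e^1$) are full strictly upper triangular of a common size $n$. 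By Corollary \ref{cor:IndOrient} we may orient $e^1$ as convenient, and by Corollary \ref{cor:IndVer} we may take the initial and terminal vertices of $e^2_\beta$ to be distinct from $e^0$ (for instance both equal to $e^0_*$).

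The key geometric observation is that the dangling edge contributes trivially to $\partial C_\beta$. In the boundary product of (\ref{eq:CdefSwallowtail}) for $e^2_\beta$, the two consecutive traversals of $e^1$ supply a factor $(I+B)^{\eta}(I+B)^{-\eta} = I$, with no vertex matrix intervening because $e^0$ is neither the initial nor the terminal vertex; for the same reason no $A_0$ term appears. Hence $\partial C_\beta$ already equals the differential $\partial' C_z$ of the corresponding $2$-cell generators for $\mathcal{E}'$, whose boundary walk is obtained from that of $e^2_\beta$ by deleting the excursion along $e^1$. More generally, since $e^0$ is incident to no edge other than $e^1$, and $e^1$ borders no $2$-cell other than $e^2_\beta$, none of the generators $a^0_{p,q}$ or $b^1_{p,q}$ occurs in the differential of any generator that survives in $\A'$.

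It remains to cancel the $a^0_{p,q}$ against the $b^1_{p,q}$, which are in bijective, order-preserving correspondence. From (\ref{eq:Bdef}) the linear part of $\partial b^1_{p,q}$ is $a^0_{p,q} + a^*_{p,q}$, while the remaining quadratic terms, coming from $A_0 B$ and $B A_*$, each contain a factor $b^1_{p',q'}$ with index interval $[p',q'] \subsetneq [p,q]$, hence $b^1_{p',q'} \prec_\A b^1_{p,q}$. Extend $\prec_\A$ (with respect to which $\partial$ is triangular by Lemma \ref{lem:precA}) to a total order in which every $a^0_{p,q}$ exceeds every $a^*_{p,q}$, and process the $b^1_{p,q}$ in increasing order. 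Exactly as in the proof of Theorem \ref{thm:sub1cell}, at the step treating $b^1_{p,q}$ all strictly smaller $b^1$-generators have already been set to $0$, so in the current quotient $\partial b^1_{p,q} = a^0_{p,q} + a^*_{p,q}$; Theorem \ref{thm:Alg} then applies with $w = a^*_{p,q} \prec_\A a^0_{p,q}$, cancelling $b^1_{p,q}$ with $a^0_{p,q}$. After all these cancellations the surviving generators and their differentials are precisely those of $(\A',\partial')$, so the two DGAs are stable tame isomorphic.

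The step requiring the most care is the $2$-cell analysis: one must verify that a dangling edge contributes the identity to the boundary product and that the $1$-valent vertex can be excluded from the distinguished vertices, so that $a^0$ and $b^1$ are genuinely absent from every surviving differential. Granting this, the pairwise cancellation is a direct repetition of the argument for subdividing a $1$-cell, with $e^0_*$ playing the role that the unchanged endpoint plays there.
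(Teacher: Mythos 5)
Your proposal is correct and follows exactly the paper's route: the paper's proof is the one-line instruction to cancel the $1$-cell generators against the univalent-vertex generators as in the proof of Theorem \ref{thm:sub1cell}, which is precisely the inductive application of Theorem \ref{thm:Alg} you carry out (your observation that the dangling edge contributes $(I+B)^{\eta}(I+B)^{-\eta}=I$ to the boundary product, so that $a^0$ and $b^1$ vanish from all surviving differentials, is the detail the paper leaves implicit).
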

\begin{proof}
Cancel the generators of the $1$-cell with the generators of the univalent vertex as in the proof of Theorem \ref{thm:sub1cell}.
\end{proof}

\subsection{Independence of $S$ and $T$ decorations at swallow tail points}

\begin{theorem}  \label{thm:IndDecorate}
The stable tame isomorphism type of the cellular DGA is independent of the choice of $S$ and $T$ regions at swallowtail points.
\end{theorem}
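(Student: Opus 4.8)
The plan is to build, for a single swallowtail point $s$ (the general statement then follows by treating swallowtails one at a time, since the construction is supported in a neighborhood of $s$), an explicit tame isomorphism between the two cellular DGAs $(\A,\partial)$ and $(\A,\partial')$ determined by the two labelings of the regions bordering the crossing arc terminating at $s$. Both DGAs have the same generators, and the two differentials agree except on generators of cells whose closure contains $s$ and meets that crossing arc: the crossing-arc $1$-cells ending at $s$ and the $2$-cells having a corner at $s$ in the decorated regions. On these the swap interchanges the vertex matrices $A_S \leftrightarrow A_T$ and the cut-corner edge matrices $S \leftrightarrow T$ from Sections \ref{sec:BdefST} and \ref{sssec:sw2cell}.

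The key algebraic inputs are Lemma \ref{lem:ASAT} and the relation $A_T = Q A_S Q$, with $Q$ the transposition permutation matrix of the two crossing sheets, established in the $1$-cell well-definedness lemma of Section \ref{sec:BdefST}. From $\partial S = \widehat{A}_{k,k+1}S + S A_S$ and $\partial T = \widehat{A}_{k,k+1} T + T A_T$ one computes that the unipotent upper-triangular matrix $P := ST$ satisfies $\partial P = A_S P + P A_T$, whence, over $\Z/2$ where the two cross terms cancel, $A_S = P A_T P^{-1} + (\partial P) P^{-1}$ and $A_T = P^{-1} A_S P + (\partial P^{-1}) P$, together with $S = T P^{-1}$ and $T = S P$. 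Thus $P$ is an invertible intertwiner converting the $A_T$/$T$ data into the $A_S$/$S$ data, and $P^{-1}$ reverses this.

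I would realize the label swap as a gauge transformation supported near $s$. Assigning to each vertex $v$ a unipotent matrix $g_v$, equal to $I$ away from $s$ and to $P^{\pm 1}$ at the crossing-arc vertices at $s$ (the sign dictated by which side is being converted), one transforms the cellular data by $I + B_e \mapsto g_{e^+}(I+B_e)g_{e^-}^{-1}$ on edges, $A_v \mapsto g_v A_v g_v^{-1} + (\partial g_v)g_v^{-1}$ on vertices, and $C \mapsto g_{v_1} C g_{v_0}^{-1}$ on $2$-cells. A direct Leibniz computation using only $\partial(g^{-1}) = g^{-1}(\partial g)g^{-1}$ shows that this transformation carries a solution of the intertwining relations (\ref{eq:BiA}) and the $2$-cell equations (\ref{eq:CdefSwallowtail}) for $\partial$ to another such solution for the same $\partial$; the identities $A_S = P A_T P^{-1} + (\partial P)P^{-1}$ and $T = SP$ then show that the transformed matrices are exactly the matrices of the other labeling, so that reading off the transformed entries defines an algebra automorphism $\Phi$ of $\A$ intertwining $\partial$ and $\partial'$. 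Because each $g_v - I$ has entries among the $0$-cell generators, which are $\prec_\A$-below all $1$- and $2$-cell generators (Lemma \ref{lem:precA}), $\Phi$ sends every generator to itself plus strictly lower-order terms; it is therefore triangular and unipotent, hence a composition of elementary automorphisms and a genuine tame isomorphism with no stabilization (in agreement with Remark \ref{rem:variation}(ii)), rather than merely an application of Theorem \ref{thm:Alg}.

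The main obstacle is pinning down the gauge at $s$ so that it is globally consistent and truly lands on the other labeling's data. The difficulty is that a crossing-arc vertex does not carry a single matrix: its assignment is $A_S$ or $A_T$ according to the decoration of the adjacent region, so the vertex matrix is effectively $Q$-conjugated across the crossing arc, and the gauge must be read relative to the two opposite sheet-orderings on the two sides (where $P^{-1}$ in one ordering corresponds to $P$ in the other via $Q A_S Q = A_T$). One must check that a single local gauge simultaneously converts the crossing-arc $1$-cell equations (with $A_-=A_S$ versus $A_-=A_T$) and all adjacent $2$-cell equations, that the transformed matrices vanish in the forced-zero positions $(k+1,k+2)$ where no generator is present, and that the argument is run separately for upward and downward swallowtails using (\ref{eq:partialS}) and (\ref{eq:partialS2}). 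This case-by-case verification against the provisions of Sections \ref{sec:BdefST} and \ref{sssec:sw2cell}, rather than any single computation, is the crux.
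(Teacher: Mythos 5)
Your proposal is correct and is essentially the paper's own proof: the gauge element $P = ST$ placed at the crossing-arc corner induces exactly the paper's tame isomorphism $\psi(I+B) = (I+B)(ST)$ with all other generators fixed, and your key identities --- $\partial(ST) = A_S(ST)+(ST)A_T$, $T = SP$, $S = TP^{-1}$, the commutation of $ST$ with $Q$, and triangularity of $ST - I$ with respect to $\prec_\A$ --- are precisely the content of Lemma \ref{lem:ASAT}, equation (\ref{eq:TSIAKK1}), and the three chain-map computations in the paper's proof of Theorem \ref{thm:IndDecorate}. The gauge-transformation packaging is a clean way to organize the Leibniz checks (and the case analysis you defer as the ``crux'' is exactly the paper's computations \textbf{1}--\textbf{3}), but the underlying map and its verification coincide with the paper's.
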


\begin{proof}  It suffices to establish stable tame isomorphism between cellular DGAs $(\A,\partial_1)$ and $(\A, \partial_2)$ arising from a common $L$-compatible decomposition, $\mathcal{E}$, such that the choice of $S$ and $T$ regions is opposite at a single swallowtail point, $s \in \Sigma$, as pictured in Figure \ref{fig:DecorateProof}.  We give the proof only in the case of an upward swallowtail with $n$ sheets above the swallow tail region and so that sheets $k, k+1,$ and $k+2$ meet at the swallowtail point.  The case of a downward swallowtail is similar.

Using Theorems \ref{thm:sub1cell} and \ref{thm:sub2cell} as well as Corollaries \ref{cor:IndOrient} and \ref{cor:IndVer},  we may assume that
\begin{itemize}
\item  The $1$-cell, $e^1_\alpha$, that has an endpoint at $s$ and is contained in the crossing arc is  oriented away from $s$.   
\item  The $2$-cells containing the $S$ and $T$ regions are distinct.  
\item  The initial and terminal vertices of these $2$-cells are disjoint from $s$, and the orientations of the paths $\gamma_\pm$ around the boundaries of the $2$-cells agree with the orientation of $B$ as they pass around the corner of the $S$ and $T$ regions. 
\end{itemize}

Fixing an orientation of $S$ (the base surface) near the swallowtail point, we collect the generators for the $2$-cell that sits to the left (resp. right) of $e^1_\alpha$ into matrices $C_1$ and $C_2$.  We form a matrix $B$ from the generators for $e^1_\alpha$ by using the total ordering of sheets as it appears to the left of $e^1_\alpha$, so that rows and columns of $B$ and $C_1$ correspond to the same sheets.  The generators associated to the swallowtail point itself are placed in an $(n-2) \times (n-2)$ matrix $A$, and we have $n\times n$ matrices $A_S$, $A_T$, $S$, and $T$ as defined in Section \ref{sec:matrices}.  We suppose that for $(\A,\partial_1)$ (resp. $(\A,\partial_2)$), the $S$ region is to the right (resp. left) of $e^1_\alpha$ and the $T$ region is to the left (resp. right) of $e^1_\alpha$.  See Figure \ref{fig:DecorateProof}.

\begin{figure}
\[
\begin{array}{ccc}
\labellist
\small
\pinlabel $T$ [b] at 50 86
\pinlabel $S$ [t] at 50 76
\pinlabel $B$ [b] at 132 84
\pinlabel $C_1$ [c] at 110 134
\pinlabel $C_2$ [t] at 110 30
\endlabellist
\includegraphics[scale=.5]{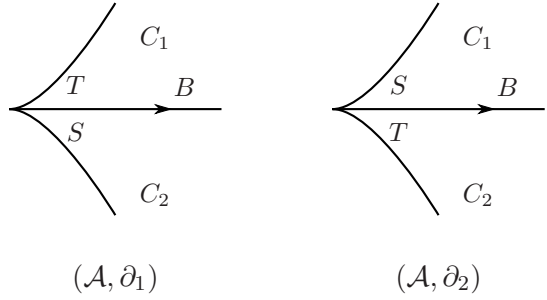}  & \quad \quad & 
\labellist
\small
\pinlabel $S$ [b] at 50 86
\pinlabel $T$ [t] at 50 76
\pinlabel $B$ [b] at 132 84
\pinlabel $C_1$ [c] at 110 134
\pinlabel $C_2$ [t] at 110 30
\endlabellist
\includegraphics[scale=.5]{images/DecorateProof} \\  &  &  \\
 (\A,\partial_1) & & (\A, \dd_2) 
\end{array}
\]
\caption{Generators and decorations near the swallow tail point $s$. 
}
\label{fig:DecorateProof}
\end{figure}

We define an algebra morphism $\psi: (\A, \partial_1) \rightarrow (\A,\partial_2)$ by requiring that when applied entry by entry
\begin{equation} \label{def:psiIso}
\psi(I + B) = (I+B)(S T),
\end{equation}
and $\psi(x) = x$ for any generator that is not associated to the $1$-cell $e^1_\alpha$.  To verify that (\ref{def:psiIso}) can be obtained by a unique assignment of values $\psi(b^\alpha_{i,j})$ it is necessary to note that $(I+B)(ST)= I+X$ where $X$ is strictly upper-triangular with $k+1,k+2$ entry is equal to $0$.  The latter claim holds since it is true for $B$ and also for
\begin{equation}  \label{eq:TSIAKK1}
S T = T S = I + \widehat{A}_{k,k+1}E_{k+2,k}.
\end{equation}

We claim that $\psi$ is in fact a tame isomorphism from $\A$ to itself.  To verify, note that for each generator $b_{i,j}^\alpha$,  since  the $\widehat{A}_{k,k+1}E_{k+2,k}$ term from (\ref{eq:TSIAKK1}) is strictly upper triangular, it follows from (\ref{def:psiIso})  
  that
\[
\psi(b_{i,j}^\alpha) = b_{i,j}^\alpha + w_{i,j}
\]
where $w_{i,j}$ belongs to the sub-algebra generated by generators $x$ with $x \prec_A b^\alpha_{i,j}$.  Define $\psi_{i,j}: \A \rightarrow \A$ so that
$\psi_{i,j}(b_{i,j}^\alpha) = b_{i,j}^\alpha + w_{i,j}$ and $\psi_{i,j}(x) =x$ for any generator not equal to $b_{i,j}^\alpha$.  The $\psi_{i,j}$ are elementary isomorphisms\footnote{In fact, the $\psi_{i,j}$ satisfy the stronger requirement discussed in Remark \ref{rem:variation}.
}.  Moreover, it is straightforward to check that $\psi$ can be written as the composition of all of the $\psi_{i,j}$ provided that we compose in such a way that the subscripts increase from right to left, with respect to a total ordering of the $b_{i,j}^\alpha$ that extends $\prec_A$.  Thus, $\psi$ is indeed a tame isomorphism.

To complete the proof, we check that $\psi \circ \partial_1 = \partial_2 \circ \psi$.  Note that it is enough to verify this equality when the two sides are applied entry-by-entry to the matrices $C_1$, $C_2$, and $I+B$.  This is because these matrices contain the only generators for which the entries of $B$ can appear in the differential.  To this end, we compute:

\noindent \textbf{1.}
\begin{align*}
\psi \circ \partial_1( C_1 )= & \psi( \cdots (I+B)T \cdots + \cdots) = \\
 &  \cdots (I+B)(ST)T \cdots + \cdots = \cdots (I+B)S \cdots + \cdots = \partial_2 \circ \psi (C_1)
\end{align*}
where we used that $T$ is self inverse.

\noindent \textbf{2.}  Using $Q$ for the permutation matrix of the transposition $(k+1 \,\, k+2)$,
\begin{align*}
\psi \circ \partial_1( C_2 )= & \psi( \cdots Q(I+B)Q S \cdots + \cdots) = \\
 &  \cdots Q(I+B)(TS)Q S \cdots + \cdots = \cdots Q(I+B)Q(TS)S \cdots + \cdots = \\
&  \cdots Q(I+B)QT \cdots + \cdots=  \partial_2 \circ \psi (C_1)
\end{align*}
where we used (\ref{eq:TSIAKK1}), that $(TS)Q= Q(TS)$ since the $k+1$ and $k+2$ columns and rows of $TS$ agree with those columns of the identity matrix, and that $S^2 =I$.  (The $Q$'s appear in $\partial_i(C_2)$ because the matrix $B$ was formed using the ordering of sheets above $C_1$ which has the $k+1$ and $k+2$ sheets in the opposite order that they appear in above $C_2$.)  

\noindent \textbf{3.}  With $A_+$ denoting a matrix corresponding to the terminal end point of $e^1_\alpha$, we compute
\begin{align*}  
\psi \circ \partial_1( I+B)= & \psi[ A_+ (I+B) + (I+B)A_T] = \\
 &  A_+ (I+B) ST + (I+B)STA_T = A_+ (I+B) ST + (I+B) S \widehat{A}_{k,k+1} T = \\
 &  A_+(I+B) ST + (I+B) (A_S S + \partial_2 S) T = \\
&  [A_+(I+B) + (I+B) A_S] ST + (I+B)(\partial_2 S)T = \\
 &  \partial_2(I+B) \cdot ST + (I+B) \cdot \partial_2(ST) = \partial_2 \circ\psi(I+B).
\end{align*}
Here, the 3rd and 4th
 equalities used identities from Lemma \ref{lem:ASAT}.
\end{proof}

\subsection{Common refinements for $L$-compatible cell decompositions}
Let $\mathcal{E}_1$ and $\mathcal{E}_2$ be any cell decompositions for $S$ that are compatible with $L$.  After modifying $\mathcal{E}_2$ by an ambient isotopy of $S$, preserving the sets $\Sigma_2$ and $\Sigma_1$, we can assume that the cells of $\mathcal{E}_1$ and $\mathcal{E}_2$ intersect transversally in each of the strata $\Sigma_2$, $\Sigma_1$, and $\Sigma_0:=S\setminus\Sigma$.  That is, if $e^d_{\alpha} \in \mathcal{E}_1$ and $e^{d'}_\beta \in\mathcal{E}_2$ satisfy $e^d_\alpha, e^{d'}_\beta \subset \Sigma_i$, then they intersect transversally when viewed as subsets of the $2-i$-dimensional manifold $\Sigma_i$. [This just amounts to requiring that the only common $0$-cells of $\mathcal{E}_1$ and $\mathcal{E}_2$ are in $\Sigma_2$, and that all $1$-cells and $0$-cells of $\mathcal{E}_1$ and $\mathcal{E}_2$ are transverse in $S\setminus \Sigma.$]  
Such a modification of $\mathcal{E}_2$ does not affect the cellular DGA.
\begin{theorem}  \label{thm:refine}
  With the above transversality assumption, we can transform $\mathcal{E}_1$ into $\mathcal{E}_2$ via a sequence of the local modifications  appearing in Theorems \ref{thm:sub1cell}, \ref{thm:sub2cell}, and \ref{thm:1val}. 
\end{theorem}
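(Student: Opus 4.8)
The strategy is to produce a common refinement $\mathcal{E}_3$ of $\mathcal{E}_1$ and $\mathcal{E}_2$ and to connect each of $\mathcal{E}_1$ and $\mathcal{E}_2$ to it separately. Each of Theorems \ref{thm:sub1cell}, \ref{thm:sub2cell}, and \ref{thm:1val} relates two decompositions by a move that may be performed in either direction, so a sequence of moves carrying $\mathcal{E}_1$ to $\mathcal{E}_3$ together with one carrying $\mathcal{E}_2$ to $\mathcal{E}_3$ gives, upon reversing the second, a sequence carrying $\mathcal{E}_1$ to $\mathcal{E}_2$. Thus it suffices to transform $\mathcal{E}_1$ into $\mathcal{E}_3$, the same construction applying to $\mathcal{E}_2$.

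Take $\mathcal{E}_3$ to be the overlay, whose $1$-skeleton is $\mathcal{E}_1^{(1)} \cup \mathcal{E}_2^{(1)}$ and whose $0$-cells are the $0$-cells of $\mathcal{E}_1$ and $\mathcal{E}_2$ together with the transverse crossings $\mathcal{E}_1^{(1)} \cap \mathcal{E}_2^{(1)}$. By the transversality normalization, the shared $0$-cells lie in $\Sigma_2$ and every other crossing lies in $S \setminus \Sigma$; in particular every $1$-cell of $\mathcal{E}_3$ contained in the interior of a $2$-cell of $\mathcal{E}_1$ is disjoint from $\Sigma$, so each intermediate decomposition produced below is automatically $L$-compatible and the $\Sigma$-hypothesis of Theorem \ref{thm:sub2cell} is met. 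I would build $\mathcal{E}_3$ from $\mathcal{E}_1$ in two stages. \emph{First}, apply Theorem \ref{thm:sub1cell} repeatedly to subdivide the $1$-cells of $\mathcal{E}_1$ at all $0$-cells of $\mathcal{E}_3$ lying on $\mathcal{E}_1^{(1)}$, namely the vertices of $\mathcal{E}_2$ interior to an $\mathcal{E}_1$-edge and the crossings $\mathcal{E}_1^{(1)} \cap \mathcal{E}_2^{(1)}$. \emph{Second}, inside each $2$-cell $e$ of $\mathcal{E}_1$ install the remaining graph $G_e = \mathcal{E}_2^{(1)} \cap \overline{e}$ by adding its edges one at a time in an order in which each new edge either reaches a vertex not yet present---in which case its near endpoint is already present and it is attached as an edge ending at a $1$-valent vertex, via the reverse of Theorem \ref{thm:1val}---or joins two already-present vertices---in which case its interior lies in a single current $2$-cell, which it cuts into two disks, so it is introduced via Theorem \ref{thm:sub2cell}; intermediate $0$-cells are placed using Theorem \ref{thm:sub1cell}. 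Concretely one grows a spanning tree of each component of $G_e$ outward from a boundary vertex by the first type of move, then adds the remaining edges as chords by the second. After processing every $e$, the decomposition is exactly $\mathcal{E}_3$.

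The main obstacle is the combinatorial verification underlying the second stage. One must check that every intermediate object is a genuine $L$-compatible polygonal decomposition; in particular the dangling edges created while growing a spanning tree are permitted, since the definition allows a $1$-cell to occur more than once in the boundary of a $2$-cell. One must also ensure that each chord borders two \emph{distinct} $2$-cells, so that Theorem \ref{thm:sub2cell} applies: this uses that a simple properly embedded arc cuts a disk into two disks, after first ruling out loops and double edges of $\mathcal{E}_2$ by preliminary subdivisions (which do not affect the final decomposition). The key point making the spanning-tree construction possible is that $G_e$ has no component disjoint from $\partial e$: such a component would be an entire connected component of $\mathcal{E}_2^{(1)}$ contained in the interior of $e$, which cannot occur because the $1$-skeleton of a decomposition of the connected surface $S$ into disk $2$-cells is connected. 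Establishing this connectivity statement is where the genuine topology enters. Finally, the hypotheses of Theorems \ref{thm:sub1cell} and \ref{thm:sub2cell} concerning orientations of $1$-cells, initial and terminal vertices, and $S$ and $T$ decorations at swallowtail points are arranged to match at each step by fixing the auxiliary data on $\mathcal{E}_3$ compatibly; the discrepancy with the data originally chosen on $\mathcal{E}_2$ is then absorbed using Corollaries \ref{cor:IndOrient} and \ref{cor:IndVer} and Theorem \ref{thm:IndDecorate}.
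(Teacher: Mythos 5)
Your global strategy---construct a common refinement and exploit the reversibility of the three moves---is the same as the paper's, but there is a genuine gap in your construction of $\mathcal{E}_3$. The bare overlay, whose $1$-skeleton is exactly $\mathcal{E}_1^{(1)} \cup \mathcal{E}_2^{(1)}$, is in general \emph{not} a polygonal decomposition: the complementary regions need not be disks. Correspondingly, your key claim that every component of $G_e = \mathcal{E}_2^{(1)} \cap \overline{e}$ meets $\partial e$ is false, and the justification you give is a non-sequitur: connectivity of $\mathcal{E}_2^{(1)}$ does not prevent the \emph{entire} (connected) $1$-skeleton of $\mathcal{E}_2$ from lying in the interior of a single $2$-cell of $\mathcal{E}_1$. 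Concretely, take $L$ with $\Sigma = \emptyset$ (e.g.\ the zero section over $S = S^2$), let $\mathcal{E}_1$ be the decomposition determined by a circle $C_1$ with one vertex, and $\mathcal{E}_2$ the one determined by a disjoint circle $C_2$ contained in one of the caps of $\mathcal{E}_1$. Then $G_e = C_2$ for that cap is disjoint from $\partial e$, the overlay has an annular complementary region, and your spanning-tree procedure cannot even begin. Note that your connectivity argument does work whenever the two $1$-skeletons intersect, which is automatic when $\Sigma \neq \emptyset$ (both skeletons contain $\Sigma$); so the gap is concentrated precisely in the components of $S$ where $\Sigma$ is empty---a case the theorem must cover.

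The repair is what the paper does: the common refinement $\mathcal{E}$ is taken to be the overlay \emph{plus} extra $1$-cells joining distinct boundary components of the non-disk complementary regions (these regions are planar, being contained in cells of the $\mathcal{E}_i$, so finitely many such edges cut them into disks). The paper then runs the comparison in the deletion direction: inside each $2$-cell of $\mathcal{E}_i$, first delete edges of $\mathcal{E}$ bordering two distinct $2$-cells until a single $2$-cell remains, then repeatedly cancel $1$-valent vertices with their edges, and finally remove the leftover subdivision points of $1$-cells. Be aware that your build-up approach cannot be patched by temporarily adding a connecting arc from $\partial e$ to the floating circle and deleting it afterwards: once the circle is completed, that arc borders the \emph{same} $2$-cell on both sides and has no $1$-valent endpoint, so none of Theorems \ref{thm:sub1cell}, \ref{thm:sub2cell}, \ref{thm:1val} can remove it. The connecting edges must therefore be built into $\mathcal{E}_3$ itself, i.e.\ your $\mathcal{E}_3$ should not be the overlay but the overlay together with such edges; with that change, and with the deletion-direction argument, the rest of your outline (preliminary subdivision of $1$-cells, handling of orientations, vertices, and $S$/$T$ decorations via Corollaries \ref{cor:IndOrient}, \ref{cor:IndVer} and Theorem \ref{thm:IndDecorate}) goes through as in the paper.
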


Theorem \ref{thm:refine}, together with Corollaries \ref{cor:IndOrient},  \ref{cor:IndVer}, and Theorem \ref{thm:IndDecorate},  establish Theorem \ref{thm:IndCellD}

\begin{proof}
By subdividing edges and $2$-cells, we can assume that:
\begin{equation}  \label{eq:char}
 \mbox{All characteristic maps for both $\mathcal{E}_i$ are embeddings of closed disks into $S$.}
\end{equation}

 We construct a $3$-rd $L$-compatible cell decomposition $\mathcal{E}$, and then show that $\mathcal{E}$ is related to both $\mathcal{E}_1$ and $\mathcal{E}_2$ in the required manner.

\paragraph{{\bf Defining $\mathcal{E}$}}  We start by defining $\mathcal{E}$ on $\Sigma$.  Here, the $0$-cells of $\mathcal{E}$ are the union of the $0$-cells of $\mathcal{E}_1$ and $\mathcal{E}_2$ that belong to $\Sigma$.  These two sets are disjoint except for $\Sigma_2$, and to obtain the one cells of $\mathcal{E}$ in $\Sigma$ we just subdivide the $1$-cells of $\mathcal{E}_1$ at any $0$-cells of $\mathcal{E}_2$ not in $\Sigma_2$.  

To extend the construction of $\mathcal{E}$ to all of $\Sigma$, we include the $1$-skeletons of both $\mathcal{E}_1$ and $\mathcal{E}_2$ in the $1$-skeleton of $\mathcal{E}$.  Note, that all $1$-cells and $0$-cells in $S\setminus\Sigma$ intersect transversally, so we triangulate this union of $1$-skeletons by adding new $4$-valent vertices at intersections of $1$-cells of $\mathcal{E}_1$ and $\mathcal{E}_2$ in $S\setminus \Sigma$.  Denote this union of $1$-skeletons as $X$.  

The components of $S \setminus X$ are open surfaces with polygonal boundary in $X$.  They are planar, since they are contained in cells of the $\mathcal{E}_i$, and can be subdivided into disks by adding some extra $1$-cells that connect distinct boundary components.  This completes the construction of $\mathcal{E}$.

It remains to prove that $\mathcal{E}$ is related to both of the $\mathcal{E}_i$ by the $3$ allowable moves:
\begin{enumerate}
\item[(A)]  Adding/deleting an edge with totally ordered sheets that borders two distinct $2$-cells.
\item[(B)]  Adding/deleting a $0$-cell that subdivides a $1$-cell.
\item[(C)]  Adding/deleting an edge with a $1$-valent vertex.
\end{enumerate}

For this purpose, let $e$ be a $2$-cell in  $\mathcal{E}_1$ and consider the polygonal decomposition of $D^2$ consisting of preimages of cells of $\mathcal{E}$.  

\paragraph{{\bf Step 1.}}  Remove all $0$ and $1$-cells from the interior of $e$, by repeated application of (A)-(C).  This is done as follows.  If there is more than one $2$-cell in $e$, then there must be some edge that borders two distinct $2$-cells which we delete to decrease the number of $2$-cells.  [The sheets of $L$ are totally ordered above the edge since it is in the interior of $e$ and the singular set $\Sigma$ is contained in the $1$-skeleton of $\mathcal{E}_1$.]  Once there is only one $2$-cell in the polygonal decomposition of $e = D^2$, then either the $1$-skeleton is the boundary of $D^2$ or there exists a $1$-valent vertex in the interior.  [To find it, start with any edge in the interior.  Building a path inductively starting with this edge, we can either find some closed loop in the $1$-skeleton that contains this edge, which would contradict their only being one $2$-cell, or we can find a $1$-valent vertex.]  Repeatedly cancelling $1$-valent vertices with their corresponding edges removes all remaining $0$ and $1$-cells from the interior of $D^2$. 

\paragraph{{\bf Step 2.}}  Applying Step 1 to each of the $2$-cells of $\mathcal{E}_1$ leaves an $L$-compatible polygonal decomposition that has the same $2$-cells and same $1$-skeleton as $\mathcal{E}_1$.  The only remaining difference is that some of the $1$-cells of $\mathcal{E}_1$ are subdivided, and we can remove these subdivisions using (A).

\end{proof}






\section{Examples and Extensions}
\label{sec:Examples}

In Sections \ref{ssec:ExampleSphere} and \ref{ssec:ExampleST}, we compute some examples of the cellular DGAs of Legendrian spheres.  In Sections \ref{ssec:ConePoints} and \ref{ssec:MultipleCrossings}, we extend the definition of the DGA to allow for Legendrians with (non-generic) cone point singularities, and to allow the flexibility of having more the one crossing arc above a given $1$-cell.  We then apply this extensions to give an algebraic description of the DGA of surfaces obtained from spinning a $1$-dimensional Legendrian around an axis, allowing for the possibility that the axis intersects the Legendrian.  Finally, in Section \ref{sec:Lnsigma} we compute the DGA of a family of Legendrian spheres.  Many pairs of spheres from this family have linearized contact homology groups with the same ranks but can be distinguished with product operations.  

For some of these examples, the LCH has been (partially or completely) computed before with holomorphic curves consistent with our computations.
So such computations can be viewed as ``empirical evidence" that the cellular DGA is the same as LCH.

\subsection{Legendrian spheres}  
\label{ssec:ExampleSphere}

For these next two examples, the linearized contact homology has already been partially computed and used to show that they form
an infinite family of distinct Legendrian spheres with the same rotation class and Thurston-Bennequin invariant \cite[Proposition 4.10 and Theorem 4.11]{EkholmEtnyreSullivan05a}.

\begin{example}
\label{ex:OneCopy}
Recall the Legendrian $L_1$  pictured in Figure \ref{fig:2DExample}.  
We use the polygonal decomposition, $\mathcal{E}$, of $\pi_x(L_1)$ indicated in Figure \ref{fig:mathcalE}.

\begin{figure}
\labellist
\small
\pinlabel $B_3$ [br] at 108 158
\pinlabel $C_2$ [c] at 128 82
\pinlabel $A$ [l] at 158 124
\pinlabel $B_2$ [br] at 80 198
\pinlabel $B_4$ [b] at 216 126
\pinlabel $C_1$ [c] at 128 34
\pinlabel $B_1$ [l] at 264 132
\pinlabel $C_3$ [c] at 128 140
\endlabellist
\centerline{\includegraphics[scale=.6]{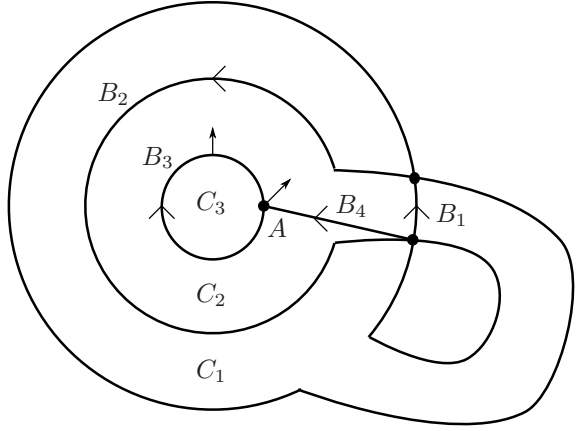}} 
\caption{To arrive at a polygonal decomposition of $\pi_x(L_1)$ that is compatible with $L_1$ we take the $0$-cells to consist of the codimension $2$ singularities of the base projection together with a point on the crossing locus.  The $1$-cells are the resulting pieces of the cusp and crossing locus together with 
an additional $1$-cell that subdivides the annular component of $\pi_x(L_1) \setminus \Sigma$ into a disk.
  Orientations are assigned to $1$-cells as pictured.  The arrows pointing out of $A$ and $B_3$ indicate a preferred ordering of sheets at these cells.}
\label{fig:mathcalE}
\end{figure}

Generators for the cellular DGA are determined once we assign an indexing set to the sheets above each cell of $\mathcal{E}$.  In all cases, if $L(e^i_\alpha)$ consists of $n$ sheets, we use $\{1, \ldots, n\}$ for the indexing set in such a way that the $z$-coordinates are non-increasing, $z(S_i) \geq z(S_{i+1})$.  This uniquely specifies the indexing for cells that are not contained in the crossing locus.  For such cells, it is convenient to specify the indexing of sheets by choosing a bordering $2$-cell, and ordering sheets as they appear above this $2$-cell.  In Figure \ref{fig:mathcalE} the choice is indicated with arrows pointing from the cells in the crossing locus to a neighboring $2$-cell.  

In the definition of the cellular DGA, the generators associated to a given cell are placed into several, possibly distinct matrices depending on the context, eg. generators for a $1$-cell may be placed into different matrices when computing the differentials of the two bordering $2$-cells.  
When working through examples, it is convenient to fix particular matrices containing the generators of each cell and then write all differentials in terms of these initial matrices.  For $L_1$, we form initial $2\times 2$ matrices $B_1, B_2$, and $C_1$, as well as $4\times 4$ matrices $A$, $B_3$, $B_4$, $C_2$ and $C_3$ by ordering rows and columns according to our indexing of sheets.  All of these matrices are strictly upper triangular.  We note that the matrices $A$ and $B_3$ have their $(2,3)$-entry equal to $0$ and that there are no generators associated to cells that appear on the boundary of $\pi_x(L_1)$.
We choose as $v_1 =v_0$  for both $C_1$ and $C_2$ the initial point of $B_4.$

Differentials are then determined by the matrix formulas
\begin{align}
\label{eq:DifferentialEx1}
& \partial A = A^2;  \\ 
\notag
& \partial B_1 = N (I+B_1) + (I+B_1) N = 0; \\
\notag
& \partial B_2  = N (I+B_2) + (I+B_2) N = 0; \\
\notag
& \partial C_1= N C_1 + C_1 N + (I+B_2)(I+B_1) + I = \left[\begin{array}{cc} 0 & b^1_{1,2}+ b^2_{1,2} \\ 0 & 0 \end{array} \right]; \\
\notag
& \partial B_3 = A(I+B_3) + (I+B_3) A; \\
\notag
& \partial B_4 = A ( I+B_4) + (I+B_4) \widehat{N}_{3,4};  \\
\notag
& \partial C_2 = \widehat{N}_{3,4} C_2 + C_2 \widehat{N}_{3,4} + (I+B_4)^{-1}(I+B_3)(I+B_4)(I+\widetilde{B}_{2,(1,2)})(I+\widetilde{B}_{1,(3,4)}) + I; \\
\notag
& \partial C_3 = (Q A Q) C_3 + C_3 (Q A Q) + (I+ Q B_3 Q) + I. 
\end{align}
Here, the notation $\widehat{X}_{k,l}$ (resp. $\widetilde{X}_{k,l}$) indicates the matrix obtained from $X$ by inserting the $2\times 2$ block $N$ (resp. $0$) along the diagonal at row and column $k$ and $l.$
In particular, $\widetilde{B}_{j, (k,l)} = \left(\widetilde{B}_j\right)_{k,l}.$
In addition, $Q = \left[ \begin{array} {cccc} 1 & 0 & 0 & 0 \\ 0 & 0 & 1 & 0 \\ 0 & 1 & 0 & 0 \\ 0 & 0 & 0 & 1 \end{array} \right]$ is the permutation matrix associated to the transposition $(2 \,\, 3)$.  Note that $A$ and $B_3$ are conjugated by $Q$ in the formula for $\partial C_3$ because the indexing of sheets above $A$ and $B_3$ was chosen to agree with the ordering above $C_2$ where sheets $2$ and $3$ appear in opposite order than they do above $C_3$.  

The cellular DGA of $L_1$ has $31$ generators.  While the differential of each generator is easily obtained from the above matrix formulas, calculations with this version of the DGA would probably be handled best by a computer.  However, applying Theorem \ref{thm:Alg} allows us to find a much smaller stable tame isomorphic quotient.
We essentially cancel matrices of generators in pairs, following a sequence of simple homotopy equivalences applied to the polygonal decomposition of $\pi_x(L_1).$

\begin{proposition}
\label{prop:OneCopy}
The DGA of $L_1$ is stable tame isomorphic to a DGA, $(\A_1,\partial)$, with $3$ generators $x,y$ and $z$ of degrees
\[
|x| = -1;  \quad |y| = |z| = 2
\]
with differentials
\[
\partial x = \partial y = 0; \quad \partial z = x y + y x.
\]
\end{proposition}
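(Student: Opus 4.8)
The plan is to begin from the $31$-generator presentation recorded in \eqref{eq:DifferentialEx1} and to apply Theorem \ref{thm:Alg} repeatedly, cancelling generators in pairs until only three remain. By Lemma \ref{lem:precA} the differential is triangular with respect to any linear extension of $\prec_\A$, so the only thing to arrange at each step is that the relevant relation have the form $\partial q_k = q_l + w$ with $w \in F_{l-1}\A$; Theorem \ref{thm:Alg} then deletes the pair $q_k, q_l$ and substitutes $q_l \mapsto w$ throughout. These cancellations can be organized to mirror a sequence of elementary collapses of the polygonal decomposition in Figure \ref{fig:mathcalE}.

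I would first cancel whole matrices against one another, reading the pairs directly off \eqref{eq:DifferentialEx1}. From $\partial c^1_{1,2} = b^1_{1,2} + b^2_{1,2}$ the generator $c^1_{1,2}$ cancels $b^1_{1,2}$, identifying $b^1_{1,2}$ with $b^2_{1,2}$. The formula $\partial B_4 = A(I+B_4) + (I+B_4)\widehat{N}_{3,4}$ has linear part $A + \widehat{N}_{3,4}$, so each $b^4_{p,q}$ with $(p,q) \neq (2,3)$ cancels the entry $a_{p,q}$; since $A$ has a zero $(2,3)$-entry this removes all of $A$ together with five of the six $b^4_{p,q}$, leaving the single generator $b^4_{2,3}$. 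Similarly $\partial C_3 = (QAQ)C_3 + C_3(QAQ) + QB_3Q$ has linear part $QB_3Q$, so five of the six $c^3_{p,q}$ cancel the five entries of $B_3$, and the $(2,3)$-hole leaves the single generator $c^3_{2,3}$. These two leftovers, $x := b^4_{2,3}$ and $y := c^3_{2,3}$, together with one surviving generator $z$ of the $2$-cell $C_2$, will be the three generators; their degrees $-1, 2, 2$ are read off from \eqref{eq:graddef} once a Maslov potential is fixed.

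The content of the computation is the induced differential on $x, y, z$. That $\partial x = \partial y = 0$ is immediate, since the $(2,3)$-entries of $\partial B_4$ and $\partial C_3$ carry no linear term and the accompanying quadratic terms vanish under the cancellations. For $\partial z$ the point is to follow how the quadratic matrix entries propagate. Writing $(I+B_4)^{-1}(I+B_3)(I+B_4) = I + (I+B_4)^{-1} B_3 (I+B_4)$ in the formula for $\partial C_2$ couples $b^4_{2,3}$ with the off-diagonal entries of $B_3$; cancelling $C_3$ against $B_3$ in a suitable order then replaces those entries of $B_3$ by the corresponding ``$w$''-terms, which contain $c^3_{2,3}$ precisely because $c^3_{2,3}$ is still present when $c^3_{1,3}$ and $c^3_{2,4}$ are cancelled. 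Propagating these substitutions into $\partial C_2$ and cancelling the remaining $C_2$-generators against the leftover $1$-cell generators should leave one generator $z$ with $\partial z = xy + yx$. I would close by confirming the degrees from \eqref{eq:graddef}; note that $\partial^2 = 0$ requires no separate check, being inherited through the stable tame isomorphisms.

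The hard part is exactly this last bookkeeping. The order of cancellation is not cosmetic: the substitution $q_l \mapsto w$ feeds quadratic terms forward, so a careless (e.g.\ naively increasing) order can make every $w$ vanish and collapse $\partial z$ to $0$, whereas a different order is needed to route $c^3_{2,3}$ into $\partial C_2$ and leave the clean anticommutator $xy + yx$. Verifying that all spurious higher-order products cancel in pairs over $\Z/2$, and that exactly the two terms $xy$ and $yx$ survive on a single generator $z$, is the crux; the geometric picture of collapsing cells is what keeps the process organized and terminating.
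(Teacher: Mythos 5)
Your proposal is correct and follows essentially the same route as the paper's proof: the same presentation \eqref{eq:DifferentialEx1}, the same tool (Theorem \ref{thm:Alg} applied with orderings extending $\prec_\A$ as in Lemma \ref{lem:precA}), and the same principal cancellations ($B_4$ against $A$ leaving $x$, then $C_3$ against $B_3$ leaving $y$, then the entries of $C_2$). The only deviation is that you cancel $c^1_{1,2}$ against $b^1_{1,2}$ at the outset, so your surviving generator $z$ lives on the $2$-cell $C_2$ (e.g. $z = c^2_{3,4}$, after cancelling $c^2_{1,2}$ with $b^2_{1,2}$ forces $b^2_{1,2} \doteq xy$ and hence $\partial c^2_{3,4} \doteq xy + yx$), whereas the paper keeps $z = c^1_{1,2}$ with $\partial z = b^1_{1,2} + b^2_{1,2} \doteq xy + yx$; the deferred bookkeeping closes up exactly as you predict.
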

\begin{proof}
We will show how to cancel generators of the cellular DGA $(\A,\partial)$ using Theorem \ref{thm:Alg} in order to arrive at $(\A_1,\partial)$ as a stable tame isomorphic quotient.  Throughout we use the same notation for generators and their equivalence classes in various quotients of $(\A,\partial)$, and we use the symbol $\doteq$ to indicate that equality holds in the currently considered quotient.

Consider the equation
\[
\partial \framebox{$B_4$} = \framebox{$A$}+  A B_4 + (I+B_4) \widehat{N}_{3,4}.
\]
The $(2,3)$-entries of $A$ and $\widehat{N}_{3,4}$ are both $0$, so, using the notation $x := b^4_{2,3}$, we have $\partial x = 0$.  The remaining entries of $B_4$ can be inductively canceled with the entries of $A$, leaving
\begin{equation}
\label{eq:dB4}
B_4 \doteq x E_{2,3}; \quad A \doteq (I+x E_{2,3}) \widehat{N}_{3,4} (I+x E_{2,3}) = \left[ \begin{array} {cccc} 0 & 1 & x & 0 \\ 0 & 0 & 0 & x \\ 0 & 0 & 0 & 1 \\ 0 & 0 & 0 & 0 \end{array} \right].
\end{equation}


Proceeding in a similar manner, the equation
\begin{equation}
\label{eq:dC3}
\partial \framebox{$C_3$} = (Q A Q) C_3 + C_3 (Q A Q) + Q \framebox{$B_3$} Q
\end{equation}
allows us to cancel all entries of $C_3$, except for $y := c^3_{2,3}$, with the corresponding entries of $QB_3 Q$.  In the quotient, we have
\[
C_3 \doteq  y E_{2,3};  \quad 
B_3 \doteq A Q (y E_{2,3})Q + Q(y E_{2,3})Q A \doteq \left[ \begin{array} {cccc} 0 & x y & 0 & 0 \\ 0 & 0 & 0 & 0 \\ 0 & 0 & 0 & y x \\ 0 & 0 & 0 & 0 \end{array} \right]. 
\]

The equation for $\partial C_2$ becomes
\begin{equation}
\label{eq:dC2}
\partial C_2 \doteq \left[ \begin{array} {cccc} 0 & 0 & c^2_{2,3} & c^{2}_{1,3}+c^{2}_{2,4} \\ 0 & 0 & 0 & c^2_{2,3} \\ 0 & 0 & 0 & 0 \\ 0 & 0 & 0 & 0 \end{array} \right] + \left[ \begin{array} {cccc} 1 & xy & xyx & 0 \\ 0 & 1 & 0 & x yx  \\ 0 & 0 & 1 & yx \\ 0 & 0 & 0 & 1 \end{array} \right] \left[ \begin{array} {cccc} 1 & b^1_{1,2} & 0 & 0 \\ 0 & 1 & 0 & 0 \\ 0 & 0 & 1 & b^2_{1,2} \\ 0 & 0 & 0 & 1 \end{array} \right] + I.
\end{equation}
We can thus cancel as indicated in the following
\begin{align*}
\partial \framebox{$c^2_{1,2}$}  \doteq 
x y  +\framebox{$b^1_{1,2}$}, \quad
\partial \framebox{$c^2_{1,3}$} \doteq 
\framebox{$c^2_{2,3}$}+ \cdots, \quad 
\partial \framebox{$c^2_{1,4}$} \doteq 
\framebox{$c^2_{2,4}$} + \cdots, \quad 
\partial \framebox{$c^2_{3,4}$} \doteq 
y x +\framebox{$b^2_{1,2}$}.
\end{align*}
The first and last equations imply $b^1_{1,2} \doteq xy$ and $b^2_{1,2} \doteq yx.$  
Finally,
\begin{equation}
\label{eq:dC1}
{\partial c^1_{1,2}} = {b^1_{1,2}} + b^2_{1,2}.
\end{equation}
We have canceled all generators except for $x,y$ and $z:= c^1_{1,2}.$
Since $\partial x = 0 =\partial y,$ the result follows.

\end{proof}

\end{example}

\begin{example}
\label{ex:kCopies}

Let $L_k$ denote $k$ copies of $L_1$ from Example \ref{ex:OneCopy} cusp-connect summed as in Figure \ref{fig:k2DExample}.
Each cell $A, B_1, B_2, B_3,B_4, C_2,C_3$ in Example \ref{ex:OneCopy} now has $k$ copies which we indicate with a superscript,
such as $B_1^1, \ldots, B_1^k .$
Note that there is only one $C_1$ cell.
We add a corresponding second superscript when labeling the Reeb chords;
for example, the unique non-zero entry in $B_2^j$ is $b_{1,2}^{2,j}.$
Except for $\partial C_1,$ the differentials are exactly as in (\ref{eq:DifferentialEx1}) with appropriate subscripts.
For example, $\partial B_4^j = A^j(I + B_4^j) + (I + B_4^j) \widehat{N}_{3,4}.$
For $C_1,$ if we set $v_0 = v_1$ equal to the start point of cell $B_4^1,$ then
$$
\partial C_1 = \widehat{N}_{3,4} C_1 + C_1 \widehat{N}_{3,4} + (1+B_2^1)(1+B_1^k)(1+B_2^k) \cdots (1+B_1^2)(1+B_2^2)(1+B_1^1) + I.
$$

\begin{proposition}
\label{prop:kCopies}
The DGA of $L_k$ is stable tame isomorphic to a DGA, $(\A_k,\partial)$, with $2k+1$ generators $x_1,y_1, \ldots, x_k,y_k, z$ of degrees
\[
|x_j| = -1;  \quad |y_j| = |z| = 2
\]
with differentials
\[
\partial x_j = \partial y_j = 0; \quad \partial z = \sum_{j=1}^k (x_j y_j + y_j x_j).
\]
\end{proposition}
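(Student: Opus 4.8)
The plan is to run the cancellation argument of Proposition \ref{prop:OneCopy} one copy at a time, deferring the single shared cell $C_1$ to the very end. For each fixed $j\in\{1,\dots,k\}$, the cells $A^j,B_1^j,B_2^j,B_3^j,B_4^j,C_2^j,C_3^j$ carry precisely the differentials of (\ref{eq:DifferentialEx1}) with superscripts attached, so I would apply the identical sequence of invocations of Theorem \ref{thm:Alg}. First, set $x_j:=b^{4,j}_{2,3}$ (a cycle, since the $(2,3)$-entries of $A^j$ and $\widehat{N}_{3,4}$ both vanish) and cancel the remaining entries of $B_4^j$ against $A^j$, yielding $A^j\doteq(I+x_jE_{2,3})\widehat{N}_{3,4}(I+x_jE_{2,3})$ exactly as in (\ref{eq:dB4}). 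Next, set $y_j:=c^{3,j}_{2,3}$ and cancel the rest of $C_3^j$ against $B_3^j$ as in (\ref{eq:dC3}), so that the surviving entries of $B_3^j$ become $x_jy_j$ and $y_jx_j$. Finally, use the entries of $\partial C_2^j$ as in (\ref{eq:dC2}) to cancel $c^{2,j}_{1,2}$ with $b^{1,j}_{1,2}$, $c^{2,j}_{1,3}$ with $c^{2,j}_{2,3}$, $c^{2,j}_{1,4}$ with $c^{2,j}_{2,4}$, and $c^{2,j}_{3,4}$ with $b^{2,j}_{1,2}$. This leaves $x_j,y_j$ together with the substitutions $b^{1,j}_{1,2}\doteq x_jy_j$ and $b^{2,j}_{1,2}\doteq y_jx_j$, just as in the single-copy case.

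The structural point that legitimizes processing the copies one at a time is locality: every generator cancelled while handling copy $j$ (all entries of $A^j,B_3^j,B_4^j,C_2^j,C_3^j$, and of $B_1^j,B_2^j$) appears in the differential of no cell belonging to another copy. The only cell whose differential couples distinct copies is $C_1$, and $C_1$ is never cancelled. Hence the reorderings permitted by Theorem \ref{thm:Alg} can be carried out independently within each copy, and the cancellation of $c^{2,j}_{1,2}$ against $b^{1,j}_{1,2}$ substitutes $b^{1,j}_{1,2}\mapsto x_jy_j$ (and likewise $b^{2,j}_{1,2}\mapsto y_jx_j$) everywhere in the DGA, including inside the yet-untouched $\partial C_1$.

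It then remains to evaluate the single surviving generator $z:=c^1_{1,2}$ in
\[
\partial C_1 = \widehat{N}_{3,4} C_1 + C_1 \widehat{N}_{3,4} + (I+B_2^1)(I+B_1^k)(I+B_2^k) \cdots (I+B_1^2)(I+B_2^2)(I+B_1^1) + I.
\]
The terms $\widehat{N}_{3,4}C_1$ and $C_1\widehat{N}_{3,4}$ contribute nothing to the $(1,2)$-entry, since $\widehat{N}_{3,4}$ has no nonzero entry in row $1$ or column $2$. Each matrix $B_i^j$ is strictly upper triangular with its only relevant generator in the $(1,2)$ slot, so every product of two or more of them vanishes in the $(1,2)$-entry; expanding the $2k$-fold product therefore collapses, in that entry, to the sum of the individual contributions $\sum_{j}\bigl((B_1^j)_{1,2}+(B_2^j)_{1,2}\bigr)$. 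After the substitutions above this equals $\sum_{j=1}^k(x_jy_j+y_jx_j)$, giving $\partial z=\sum_{j=1}^k(x_jy_j+y_jx_j)$, while $\partial x_j=\partial y_j=0$ as in Proposition \ref{prop:OneCopy}. The degrees are read off from (\ref{eq:graddef}) exactly as in the single-copy computation: each $x_j$ is a $b$-type generator of degree $-1$, each $y_j$ a $c$-type generator of degree $2$, and $z=c^1_{1,2}$ has degree $2$.

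I expect the only genuine subtlety to be the bookkeeping in the last step: confirming that the prescribed cyclic order of the $2k$ factors in $\partial C_1$, with its $\widehat{N}_{3,4}$ cusp/conjugation terms, really does reduce in the $(1,2)$-entry to $\sum_j(x_jy_j+y_jx_j)$, and that the triangular reorderings required by Theorem \ref{thm:Alg} can be chosen compatibly across all $k$ copies. Both reduce to the nilpotency of the $B_i^j$ and the copy-locality established above, so neither should present real difficulty once those observations are in place.
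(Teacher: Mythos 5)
Your proposal is correct and follows essentially the same route as the paper's proof: the identical per-copy cancellations of Proposition \ref{prop:OneCopy} (keeping $x_j = b^{4,j}_{2,3}$ and $y_j = c^{3,j}_{2,3}$, and producing $b^{1,j}_{1,2}\doteq x_jy_j$, $b^{2,j}_{1,2}\doteq y_jx_j$), followed by reading off $\partial z$ from the $(1,2)$-entry of the $k$-copy formula for $\partial C_1$. One small correction to a side justification: under the paper's conventions $\widehat{N}_{3,4}=E_{1,2}+E_{3,4}$, which does have a nonzero entry in row $1$ and in column $2$; the terms $\widehat{N}_{3,4}C_1$ and $C_1\widehat{N}_{3,4}$ still contribute nothing to the $(1,2)$-entry, but the reason is that both factors are strictly upper triangular (the same nilpotency observation you invoke for the edge product), not that those rows and columns of $\widehat{N}_{3,4}$ vanish.
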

\begin{proof}
We do the same cancelations as in (\ref{eq:dB4}), (\ref{eq:dC3}) and (\ref{eq:dC2}), labeling the remaining generators $x_j:=b^{4,j}_{2,3}, y_j = c^{3,j}_{2,3}.$
Relations in (\ref{eq:dC2}) imply $b^{1,j}_{1,2} \doteq x_j y_j$ and $b^{2,j}_{1,2} \doteq y_j x_j.$
The $k$-copy version of (\ref{eq:dC1}) is
$\partial c^1_{1,2} = \sum_{j=1}^k (b^{1,j}_{1,2} + b^{2,j}_{1,2}).$
Setting $z:= c^1_{1,2},$ the result follows.

\end{proof}

\end{example}

\begin{figure}
\labellist
\small
\endlabellist
\centerline{\includegraphics[scale=.25]{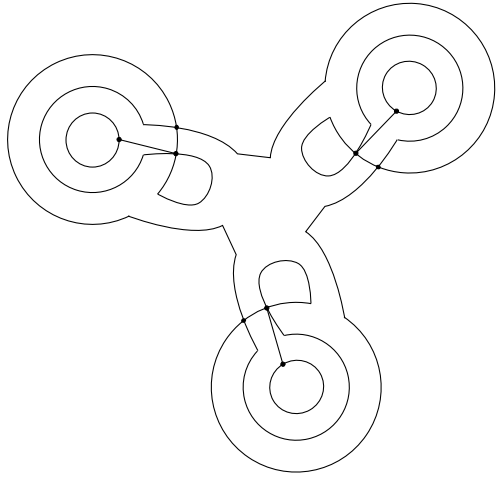}} 
\caption{The Legendrian $L_k$ with $k=3$.}
\label{fig:k2DExample}
\end{figure}

As alluded to before, a partial computation for these examples appears in \cite[Proposition 4.10 and Theorem 4.11]{EkholmEtnyreSullivan05a}.
In particular, using $J$-holomorphic disks, the paper computes that the degree $-1$ subspace of the linearized Legendrian contact homology is $\mbox{Lin}LCH_{-1}(L_k) = (\Z/2)^k.$
[Linearized LCH was introduced in \cite{Chekanov02}. Briefly, an augmentation, if it exists, is a DGA-morphism $\epsilon:(\mathcal{A}, \partial) \rightarrow (\Z_2, 0).$ Define a graded algebra morphism $\phi_\epsilon: \mathcal{A} \rightarrow \mathcal{A}$ on the generators
by $\phi_\epsilon(x) = x + \epsilon(x).$ Define $d_\epsilon(x)$ as the sum of words of length one in $\phi_\epsilon \partial \phi_\epsilon^{-1}(x).$
One can check that $d_\epsilon^2=0.$ The linearized LCH is $\mbox{Ker}(d_\epsilon)/\mbox{Im}(d_\epsilon).$]
This computation agrees with ours. 
To see this consistency, note that since there are no generators of grading 0, there is a unique (graded) augmentation.
Thus, the  degree $-1$ subspace of the linearized cellular homology is freely generated by $x_1, \ldots, x_k,$
and so isomorphic to $(\Z_2)^k.$

\subsection{An example with swallowtail points}  
\label{ssec:ExampleST}

Consider the Legendrian, $L$, which is pictured in Figure \ref{fig:Ex2Base} together with a compatible decomposition of $\pi_x(L)$.  As in the earlier example, we fix upper-triangular matrices $A_1, A_2, B_1, \ldots, B_5, C_1, \ldots, C_4$ by placing generators of $(\mathcal{A}, \partial)$ into rows and columns according to the ordering of sheets above each cell.  For the $1$-cell $B_5$ that contains the crossing locus, we use the ordering of sheets as they appear above $C_4$.  (This choice of ordering is indicated in Figure \ref{fig:Ex2Base} by the arrow pointing from $B_5$ to $C_4$.)  

The differential of any generator can be computed using the matrix formulas of Section \ref{ssec:defsw}.  Here we write out $\partial C_3$ and $\partial B_5$ explicitly.

To compute $\partial C_3$ we would consider the polygon  $Q$ pictured in Figure \ref{fig:Ex2Base} that arises from considering the boundary of $C_3$ along with the location of the $S$ and $T$ decorations.  Note that, for $L$, the matrices $S_1, T_1$ and $S_2, T_2$ associated to the swallowtail points $A_1$ and $A_2$ are all equal to $I + E_{2,3}$.  Taking the upper vertex of $Q$ for $v_0$ and $v_1$, with the path around $Q$ from $v_0$ to $v_1$ chosen to be counter-clockwise, we have
\begin{equation}   \label{eq:pC3}
\partial C_3= \widehat{N}_{3,4} C_3 + C_3 \widehat{N}_{3,4} + I +    (I+\widetilde{B}_{4,(3,4)})S_2(I+Q B_5Q)  S_1(I+\widetilde{B}_{2,(1,2)})
\end{equation}
 where $Q$ is the permutation matrix of the transposition $(2,3)$ which appears because the labeling of sheets $S_2$ and $S_3$ above $B_5$ and $C_3$ is opposite.  (Notations are as in Example \ref{ssec:ExampleSphere}.)

We now consider $\partial B_5$. As the ordering of rows and columns of $B_5$ agrees with the ordering of sheets above the $T_1$ and $T_2$ corners at the swallow tail points, we have
\begin{equation} \label{eq:pB5}
\partial B_5 = \left( A_2 \right)_T (I+B_5) + (I+ B_5) \left( A_1 \right)_T, 
\end{equation}
where
\[
\left( A_1 \right)_T  = (I+E_{2,3}) \left[ \begin{matrix} 0 & 1 & & \\  &  0 & & \\ & &0 & a^1_{1,2} \\ & & & 0 \end{matrix} \right](I+E_{2,3}) \, ; \quad 
\left( A_2 \right)_T = (I+E_{2,3}) \left[ \begin{matrix} 0 & a^2_{1,2} & & \\  &  0 & & \\ & &0 & 1 \\ & & & 0 \end{matrix} \right](I+E_{2,3})
\]

The cellular DGA $(\mathcal{A}, \partial)$ for $L$ has $25$ generators, but once again appropriate applications of Theorem \ref{thm:Alg} produce a much smaller stable tame isomorphic quotient.

\begin{proposition}
The cellular DGA of $L$ is stable tame isomorphic to a DGA with a single generator $x$ with $|x|=2$ and $\partial x = 0$.
\end{proposition}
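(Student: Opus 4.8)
The plan is to collapse the $25$-generator cellular DGA down to a single generator by repeated application of Theorem \ref{thm:Alg}, in the same spirit as the proof of Proposition \ref{prop:OneCopy}. Each step cancels a generator $q_k$ against a generator $q_l$ appearing linearly as a leading term of $\partial q_k$, where ``leading'' is measured with respect to a total order extending the partial order $\prec_\A$ of Lemma \ref{lem:precA}; throughout I retain the matrix notation $A_1,A_2,B_1,\dots,B_5,C_1,\dots,C_4$ of the example and write $\doteq$ for equality in the quotient currently under consideration.

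The first stage handles the crossing arc $B_5$ together with the two swallowtail generators $a^1_{1,2}$ and $a^2_{1,2}$. Expanding (\ref{eq:pB5}) entry by entry, and using that $S_i=T_i=I+E_{2,3}$, produces differentials of the shape $\partial b_{1,2}=1+a^2_{1,2}$ and $\partial b_{2,4}=1+a^1_{1,2}$ (with $b_{1,3},b_{3,4}$ behaving the same way), together with a single mixed relation for $b_{1,4}$. I would first cancel $b_{1,2}$ against $a^2_{1,2}$ and $b_{2,4}$ against $a^1_{1,2}$; here the constant term $1$ lies in $F_0\A=\Z/2$, so these are legitimate instances of Theorem \ref{thm:Alg}, and in the quotient $a^1_{1,2},a^2_{1,2}\doteq 1$. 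The two remaining crossing generators then become closed and cancel against each other through the relation for $b_{1,4}$, leaving exactly one closed generator coming from $B_5$.

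The second stage cancels $2$-cell generators against the cusp $1$-cells $B_1,\dots,B_4$, following the elementary-collapse pattern of Proposition \ref{prop:OneCopy}. The differentials $\partial C_3,\partial C_4$ (for instance (\ref{eq:pC3}), after the substitutions above and the conjugation by the permutation matrix $Q$) carry leading terms from the neighbouring cusp matrices; these let me cancel the generators of $C_3$ and $C_4$ against those of $B_1,\dots,B_4$, and in particular the surviving closed $B_5$-generator from the first stage is itself eliminated as a leading term of one of these $2$-cell differentials. Feeding the resulting relations into $\partial C_2$ and finally into $\partial C_1$ collapses the remaining generators in pairs. Tracking degrees via (\ref{eq:graddef}) through all of these cancellations, a single $2$-cell generator $x$ with $|x|=2$ survives; as it is the only remaining generator and has even degree, $\partial x=0$ for degree reasons.

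The main obstacle is the bookkeeping needed to certify that each cancellation satisfies the hypotheses of Theorem \ref{thm:Alg}: at every step the intended leading term must be a single generator strictly below $q_k$ in a total extension of $\prec_\A$, with the remainder lying in the corresponding $F_{l-1}\A$. This is most delicate near the swallowtail points, where the matrices $A_S,A_T,S,T$ and the transposition $Q$ intervene, so that a swallowtail generator may be replaced by the unit rather than by another generator, and where incomparable crossing generators (such as $b_{1,3}$ and $b_{3,4}$) must first be ordered by a choice of total extension of $\prec_\A$ before Theorem \ref{thm:Alg} applies. Once such an ordering is fixed, the identities $\partial A_S=A_S^2$, $\partial A_T=A_T^2$ and (\ref{eq:partialS}) from Lemma \ref{lem:ASAT} ensure that the differential remains well defined and that $\partial^2=0$ is preserved at each stage, so the procedure terminates at the stated one-generator DGA.
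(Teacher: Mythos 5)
Your proposal is correct, and I verified the formulas it rests on: expanding (\ref{eq:pB5}) indeed gives $\partial b^5_{1,2}=\partial b^5_{1,3}=a^2_{1,2}+1$, $\partial b^5_{2,4}=\partial b^5_{3,4}=a^1_{1,2}+1$, and $\partial b^5_{1,4}=a^2_{1,2}(b^5_{2,4}+b^5_{3,4})+(b^5_{1,2}+b^5_{1,3})a^1_{1,2}$, so your first stage is legitimate (one wording fix: $b^5_{1,3}$ and $b^5_{3,4}$ do not cancel \emph{against each other}; rather $b^5_{1,4}$ cancels against $b^5_{3,4}$ with remainder $w=b^5_{1,3}$, which gives exactly the one surviving closed $B_5$-generator you claim). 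In the resulting quotient one finds $\partial c^4_{1,2}\doteq b^3_{1,2}$, $\partial c^4_{3,4}\doteq b^5_{1,3}+b^1_{1,2}$, $\partial c^3_{3,4}\doteq b^2_{1,2}$, $\partial c^3_{1,2}\doteq b^5_{1,3}+b^4_{1,2}$, plus the internal pairs $(c_{1,3},c_{2,3})$ and $(c_{1,4},c_{2,4})$ in each of $C_3$, $C_4$, and finally $\partial c^1_{1,2}=b^1_{1,2}+b^2_{1,2}$ and $\partial c^2_{1,2}=b^3_{1,2}+b^4_{1,2}$, so your second stage terminates as described with a single degree-$2$ generator; your parity argument for $\partial x=0$ is valid because $L$ is a sphere, hence $m(L)=0$ and the grading is a $\Z$-grading (alternatively, $\partial c^2_{1,2}\doteq 0$ falls out of the last relation directly). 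The paper runs the same machinery --- iterated application of Theorem \ref{thm:Alg} on the same decomposition --- but traverses it in essentially the opposite order, and the comparison is instructive: it first cancels $(b^2_{1,2},a^1_{1,2})$, $(c^1_{1,2},b^1_{1,2})$, $(b^4_{1,2},a^2_{1,2})$, $(c^2_{1,2},b^3_{1,2})$, using $\partial b^2_{1,2}=a^1_{1,2}+1$ and $\partial c^1_{1,2}=b^1_{1,2}+b^2_{1,2}$, which involve no swallowtail matrices at all, and only then collapses $C_4$ and $C_3$ against the entries of $B_5$, so the survivor is the crossing-arc generator $b^5_{1,4}$, with closedness read off from $\partial B_5\doteq 0$ once $a^1_{1,2}\doteq a^2_{1,2}\doteq 1$. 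In other words, the step you flag as the main obstacle --- certifying cancellations that go through $(A_1)_T$, $(A_2)_T$, the $S,T$ matrices and the conjugation by $Q$ --- is front-loaded in your scheme but sidestepped entirely in the paper's ordering; that is the practical advantage of the published route, while yours has the mild aesthetic advantage of ending on a $2$-cell generator. Both schemes cancel twelve pairs and land on the same one-generator DGA.
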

 
\begin{remark}  This agrees with the DGA of the standard $2$-dimensional Legendrian unknot, $U$.  In fact, it can be shown that $L$ and $U$ are Legendrian isotopic. 
\end{remark}

\begin{proof}
We first compute
\[
\framebox{$\partial b^2_{1,2}$} = \framebox{$a^1_{1,2}$} +1 
\]
and apply Theorem \ref{thm:Alg} to cancel $b^2_{1,2}$ with $a^1_{1,2}$ so that $b^2_{1,2} \doteq 0$ and $a^1_{1,2} \doteq 1$.  (Again, $\doteq$ denotes equality in the presently considered quotient.)
Independent of the choices of $v_0$ and $v_1$ for the $2$-cell $C_1$, we have
\[
\framebox{$\partial c^1_{1,2}$} = b^1_{1,2} + b^2_{1,2} \doteq \framebox{$b^1_{1,2}$}
\]
so that we may cancel $c^1_{1,2}$ with $b^1_{1,2}$.

Next, a parallel sequence of applications of Theorem \ref{thm:Alg} results in
\[
c^2_{1,2} \doteq b^3_{1,2} \doteq b^4_{1,2} \doteq 0 \quad \mbox{and} \quad a^2_{1,2} \doteq 1.
\]

For computing $\partial C_4$, we take $v_0$ and $v_1$ to be the vertex that is the common endpoint of $B_1$ and $B_3$ and choose the path around the boundary of $C_4$ to be clock-wise.  We then have
\[
\partial C_4 = \left[\begin{matrix} 0 & 1 & & \\ & 0 & & \\  & & 0 & 1 \\ & & & 0 \end{matrix} \right] C_4 + C_4 \left[\begin{matrix} 0 & 1 & & \\ & 0 & & \\  & & 0 & 1 \\ & & & 0 \end{matrix} \right] + I +    (I+\widetilde{B}_{3,(3,4)})T_2(I+ B_5)  T_1(I+\widetilde{B}_{1,(1,2)}) \doteq
\] 
\[
\left[\begin{matrix} 0 & 0 & c^4_{2,3} & c^4_{24}+c^4_{1,3} \\ &  & & c^4_{2,3} \\  & &  & 0 \\ & & & 0 \end{matrix} \right] + I + \left[\begin{matrix} 1 &  &  &  \\ & 1  & 1 &  \\  & & 1 &  \\ & & & 1 \end{matrix} \right]  \left[\begin{matrix} 1 & b^5_{1,2}  & b^5_{1,3}  & b^5_{14}  \\ & 1  & 0 & b^5_{24} \\  & & 1 & b^5_{34}  \\ & & & 1 \end{matrix} \right] \left[\begin{matrix} 1 &  &  &  \\ & 1  & 1 &  \\  & & 1 &  \\ & & & 1 \end{matrix} \right]. 
\]
In particular, we can use the following entry-by-entry evaluations to cancel all of the $c^4_{i,j}$ along with $b^{5}_{1,2}$ and $b^{5}_{34}$ as indicated:
\begin{align*}
 \framebox{$\partial c^4_{1,2}$} \doteq \framebox{$b^5_{1,2}$}\,; \, \quad \quad \quad \quad & \framebox{$\partial c^4_{34}$} \doteq \framebox{$b^5_{34}$}\,; \\
 \framebox{$\partial c^4_{1,3}$} \doteq \framebox{$c^4_{2,3}$}+ x_b\,; \quad \quad  & \framebox{$\partial c^4_{14}$} \doteq \framebox{$c^4_{24}$}+ c^4_{1,3} + y_b \doteq \framebox{$c^4_{24}$} + y_b\,;
\end{align*}
where the terms $x_b$ and $y_b$ are in the subalgebra generated by entries of $B_5$.  (Hence, $x_b$ and $y_b$ satisfy the condition of the term $w$ from the statement of Theorem \ref{thm:Alg}.)

At this point, equation (\ref{eq:pC3}) has simplified to 
\[
\partial C_3 = \left[\begin{matrix} 0 & 0 & c^3_{2,3} & c^3_{24}+c^3_{1,3} \\ &  & & c^3_{2,3} \\  & &  & 0 \\ & & & 0 \end{matrix} \right] + I + \left[\begin{matrix} 1 &  &  &  \\ & 1  & 1 &  \\  & & 1 &  \\ & & & 1 \end{matrix} \right]  \left[\begin{matrix} 1 & b^5_{1,3}  & 0  & b^5_{14}  \\ & 1  & 0 & \\  & & 1 & b^5_{24}  \\ & & & 1 \end{matrix} \right] \left[\begin{matrix} 1 &  &  &  \\ & 1  & 1 &  \\  & & 1 &  \\ & & & 1 \end{matrix} \right]. 
\]
We cancel
\begin{align*}
 \framebox{$\partial c^3_{1,2}$} \doteq \framebox{$b^5_{1,3}$}\,;  \quad \quad & \framebox{$\partial c^3_{34}$} \doteq \framebox{$b^5_{24}$}\,; \\
 \framebox{$\partial c^3_{1,3}$} \doteq \framebox{$c^3_{2,3}$}\,; \quad \quad  & \framebox{$\partial c^3_{14}$} \doteq \framebox{$c^3_{24}$}+ b^5_{14}\,.
\end{align*}
The only remaining generator is $b^5_{14}$ which has $|b^5_{14}|=2$.  In the current quotient, $b^5_{1,2} \doteq b^5_{13} \doteq b^5_{24} \doteq b^5_{34} \doteq 0$, and   $a^1_{1,2} \doteq a^2_{1,2} \doteq 1$, and it follows that (\ref{eq:pB5}) simplifies to $\partial B_5 \doteq 0$.  In particular,
\[
\partial b^5_{14} = 0.
\]
\end{proof}


\subsection{Cone points}
\label{ssec:ConePoints}
In this subsection, we extend the definition of the cellular DGA to allow for Legendrians whose front projections have cone point singularities.

The standard cone $x_1^2+x_2^2=z^2$ is the (non-generic) front projection of a Legendrian cylinder in $J^1\R^2$.     A point $p$ on the front projection of a Legendrian $L \subset J^1S$ is called a {\bf cone point} if there is a diffeomorphism of a neighborhood of $p$ in $S \times \R$ onto a neighborhood of $(0,0,0) \in \R^3$ that takes the front projection of $L$ to the standard cone.  The inverse image in $L$ of a cone point singularity is an $S^1$, so that neighborhoods of cone points in $L$ are topologically cylinders.  

Consider a Legendrian $L \subset J^1S$ with generic base and front projections except for the presence of finitely many cone points whose base projections are disjoint from the image of cusps and crossings.  Write $\Sigma_{\mathit{cone}} \subset S$ and $\Sigma \subset S$ for the base projection of cone points, and the remaining singular set of $L$.   Let $\mathcal{E}$ be a polygonal decomposition of $\pi_x(L) \subset S$ that 
 contains $\Sigma$ (resp. $\Sigma_{\mathit{cone}}$) in its $1$-skeleton (resp. $0$-skeleton).

We associate a  DGA, $(\mathcal{A},\partial)$, to $L$ as in Section \ref{sec:DefDGA} using $\mathcal{E}$ with the following modification at a cone point.  Assume that near a zero cell, $e^0_\alpha \in \Sigma_{\mathit{cone}}$, sheets of $L$ are labeled $S_1, \ldots, S_n$ with the cone point connecting sheets $S_k$ and $S_{k+1}$.  Note that a Maslov potential $\mu$ on $L$ must have $\mu(S_k) = \mu(S_{k+1}) +1$; see the desingularization of the cone point in Figure \ref{fig:Octagon}.
\begin{enumerate}
\item  The generators associated to $e^0_\alpha$ are $a^\alpha_{i,j}$ with $1 \leq i < j \leq n$ and $(i,j) \neq (k,k+1)$ with
\[
|a^\alpha_{i,j}| = \mu(S_i)-\mu(S_{j})-1.
\]  The corresponding upper triangular matrix, $A$, (the $(k,k+1)$-entry is $0$) satisfies $\partial A = A^2$.  This same matrix is used when $e^0_\alpha$ occurs as an initial or terminal vertex of a bordering $1$-cell or $2$-cell.  
\item  Choose a $2$-cell $e^2_\beta$ that borders $e^0_\alpha$, and label one of the occurrences of $e^0_\alpha$ as a vertex along $\partial e^2_\beta$ with a $U$.  When computing $\partial C$ for $e^2_\beta$, we add an additional edge to $\partial e^2_\beta$ at this vertex, and insert the matrix
\begin{equation}
\label{eq:ConePointU}
U = I +\sum_{\ell < k} a_{\ell, k+1} E_{\ell, k} + \sum_{k+1< \ell} a_{k,\ell} E_{k+1,\ell}
\end{equation}
into the product of edges that appears in $\partial C$.
\end{enumerate}

\begin{proposition} \label{prop:coneDGA} The DGA $(\mathcal{A},\partial)$ is equivalent to the cellular DGA of $L$.
\end{proposition}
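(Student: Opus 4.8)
The plan is to compare $(\mathcal{A},\partial)$ directly with the cellular DGA of a generic resolution of the cone points. Near each cone point I would perturb $L$ to a generic Legendrian $\tilde L$ whose front is the desingularization drawn in Figure \ref{fig:Octagon} and which agrees with $L$ outside a small disk $D$ about the former cone point. By ``the cellular DGA of $L$'' I mean the cellular DGA $(\tilde{\mathcal A},\tilde\partial)$ of this $\tilde L$, which is well defined up to stable tame isomorphism by Theorem \ref{thm:IndCellD} (and, granting the Legendrian isotopy invariance of Theorem \ref{thm:FirstStatement}, independent of the chosen resolution). Thus it suffices to exhibit a stable tame isomorphism $(\mathcal A,\partial)\cong(\tilde{\mathcal A},\tilde\partial)$, and since both DGAs are built cell by cell the entire comparison localizes to the disks $D$.

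First I would arrange the two decompositions to coincide away from the cone points, so that $\mathcal E$ and $\tilde{\mathcal E}$ assign identical generators and differentials to every cell outside the disks $D$. Inside each $D$, the decomposition $\tilde{\mathcal E}$ realizes the octagonal pattern of Figure \ref{fig:Octagon}, whose $1$-skeleton contains the circular cusp arcs, together with whatever crossing arcs and swallowtail $0$-cells the resolution introduces, and a central $0$-cell $e^0_\alpha$ at the former cone point carrying all $n$ sheets. Using the total order of these $n$ sheets over $D$, I would record the matrices attached to the new cells exactly as in Sections \ref{sec:Cdef} and \ref{sssec:sw2cell}: the factors $I+B_e$ for the octagon edges, any swallowtail matrices $S,T,A_S,A_T$ of Section \ref{sec:matrices}, and the matrix $A$ for $e^0_\alpha$.

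The heart of the argument is a cancellation modeled on the proofs of Theorems \ref{thm:sub1cell}--\ref{thm:IndDecorate}. Ordering the generating set of $\tilde{\mathcal A}$ so that $\tilde\partial$ is triangular (Lemma \ref{lem:precA}) and working in increasing order, I would apply Theorem \ref{thm:Alg} to cancel in pairs all generators carried by the cells interior to $D$ — the inner $2$-cell, the octagon edges, and the swallowtail vertices — just as the extra $1$- and $2$-cells were cancelled in the subdivision lemmas. After these cancellations the only surviving generators are those of the central $0$-cell, namely the $a^\alpha_{i,j}$ with $(i,j)\neq(k,k+1)$, and the relation $\partial A=A^2$ of (\ref{eq:Adef}) persists with the $(k,k+1)$-entry of $A$ set to $0$; this reproduces the first clause of the cone-point definition. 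The single residual effect on the rest of the DGA is that, in $\partial C$ for any $2$-cell bordering $D$, the telescoping product of the boundary factors $Y_e^{\pm1}$ coming from $\partial D$ collapses, after substituting the relations forced by the cancelled generators, to the matrix $U$ of (\ref{eq:ConePointU}); this reproduces the second clause.

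The main obstacle is precisely this last identity: showing that the monodromy of $\tilde L$ around the resolved cone point equals $U$. Here the bookkeeping is the analogue of Lemma \ref{lem:ASAT} and of the computation in the proof of Theorem \ref{thm:IndDecorate}. One must track how the entries $a_{\ell,k+1}$ and $a_{k,\ell}$ are produced when the edge factors are expanded and the inner relations are substituted, and verify that they land in the $(\ell,k)$ and $(k+1,\ell)$ positions dictated by (\ref{eq:ConePointU}); the off-diagonal, asymmetric shape of $U$ is exactly what encodes the identification of sheets $k$ and $k+1$ at the cone, so the product must be carried out in block form with care. Once this monodromy identity is in hand, the identification of the reduced DGA with $(\mathcal A,\partial)$ is immediate, and the structural properties $\partial^2=0$ and $\deg\partial=-1$ are inherited from $(\tilde{\mathcal A},\tilde\partial)$ via Theorem \ref{thm:Summary}.
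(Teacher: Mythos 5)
Your overall strategy coincides with the paper's: resolve each cone point into the standard picture with four swallowtail points (the square $X$ of Figure \ref{fig:Octagon}), take a compatible decomposition that agrees with $\mathcal{E}$ outside a disk about the cone point, and then use Theorem \ref{thm:Alg} to cancel everything inside the disk until only an $n\times n$ matrix of vertex generators and the monodromy matrix $U$ remain. However, what you label ``the main obstacle'' — verifying that the product of boundary factors collapses to the matrix $U$ of (\ref{eq:ConePointU}) — is not an afterthought to be checked; it \emph{is} the proof, and you have not carried it out. In the paper this occupies several pages of order-sensitive cancellations involving the swallowtail matrices $S_i$, $T_i$, the permutation matrices $Q_{(0\,1)}$, $Q_{(2\,3)}$, and repeated recomputation of the edge matrices $I+B_i$ in successive quotients; the asymmetric form of $U$ emerges only at the very end from the identity $I+\widehat{B_0}_{(k+1,k+2)} \doteq S_2 Q_{(0\,1)}(I+B_2)Q_{(0\,1)}(I+B_1)^{-1}T_1$ computed in the final quotient. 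Asserting that this ``must be carried out in block form with care'' names the work without doing it.

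Moreover, the cancellation scheme you sketch is too naive and, as stated, would fail. Generators inside the disk do \emph{not} cancel cell-by-cell in pairs: the crossing arcs (the diagonals of $X$) force zero entries at positions $(k,k+1)$ and $(k+2,k+3)$ in the edge and central-vertex matrices, so the generator counts of adjacent cells do not match. For instance, $C_2$ has one more generator than $B_2$, and the leftover $c^2_{k,k+1}$ must be cancelled against $c^1_{k+1,k+2}$, a generator of a \emph{different} $2$-cell; similarly the vertex generator $a^0_{k,k+1}$ is killed against the $2$-cell generator $c^3_{k+2,k+3}$, not against an edge. You also misidentify the surviving generators: the central vertex of $X$ sits on both crossing arcs and carries $n+2$ sheets, and \emph{all} of its generators (the matrix $A_1$) are cancelled against the half-diagonal $B_1$; what survives is the common $n\times n$ matrix $A_0$ of the four swallowtail \emph{corner} vertices, which become identified after cancelling three of the outer cusp edges. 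These mismatches are exactly why the paper's cancellation order is delicate, and a proof must resolve them explicitly rather than appeal to the pattern of Theorems \ref{thm:sub1cell}--\ref{thm:IndDecorate}.
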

\begin{proof}
 By a small Legendrian isotopy, $L$ becomes front and base generic with the cone point replaced by $4$ swallowtail points, connected by $4$-cusp edges, and $2$-crossing arcs, cf. \cite[Section 3.1]{DimitroglouRizell11}.    In the base projection, the image of the cusp edges bounds a square, $X$, whose vertices are swallowtail points.  Two opposite corners of the square are upward swallowtails on the lower sheet of the cone point, and the other two opposite corners are downard swallowtails on the upper sheet.  Crossing arcs appear as diagonals of the square.  See Figure \ref{fig:Octagon}.  There are $n+2$ sheets of $L$ above the interior of $X$.

Produce from $\mathcal{E}$ an $L$-compatible polygonal decomposition (for this generic perturbation of $L$) by replacing the single $0$-cell, $C$, that was the cone point with the natural decomposition of the square $X$ into 4 triangles from Figure \ref{fig:Octagon}.  Modify the $1$-cells that had previously had endpoints at $C$ so that their endpoints are vertices of $X$ in such a way that the corner labeled with $U$ is replaced with some sequence of edges around the square starting with $B_0$.  

To prove the proposition we produce the cone point DGA from the cellular DGA via repeated application of Theorem \ref{thm:Alg}.  As a first step, we orient the boundary edges of $X$ consistently, and then cancel the generators associated to $3$ of the boundary edges of $X$ with the generators of their terminal vertex.  The generators and differentials of these edges are simply
\[
\partial \framebox{$B$} = \framebox{$A_+$}(I+B) + (I+B) A_-
\]
where all $B$ and $A$ matrices are $n\times n$ with diagonal entries given by the appropriate $b_{i,j}$ or $a_{i,j}$.  (There are no $0$ or $1$ entries above the diagonal.)
In the resulting quotient, generators of the three $1$-cells have become $0$, and generators associated to the $4$ vertices of $X$ are now equal.  We use subscripts $A_0$ and $B_0$ for the matrices associated to the vertices of $X$ and the $1$ non-zero edge.  See Figure \ref{fig:Octagon}.

\begin{figure}

\labellist
\small
\pinlabel $S_k=S_{k+1}$ [l] at 230 212
\pinlabel $S_{k+1}=S_{k+2}$ [l] at 230 74
\pinlabel $A_0$ [l] at 248 124 
\pinlabel $0$ [bl] at 180 168
\pinlabel $A_0$ [b] at 124 248 
\pinlabel $0$ [br] at 72 168
\pinlabel $A_0$ [t] at 124 -2 
\pinlabel $0$ [tl] at 180 76
\pinlabel $A_0$ [r] at -2 124 
\pinlabel $B_0$ [tr] at 72 76
\endlabellist
\centerline{\includegraphics[scale=.5]{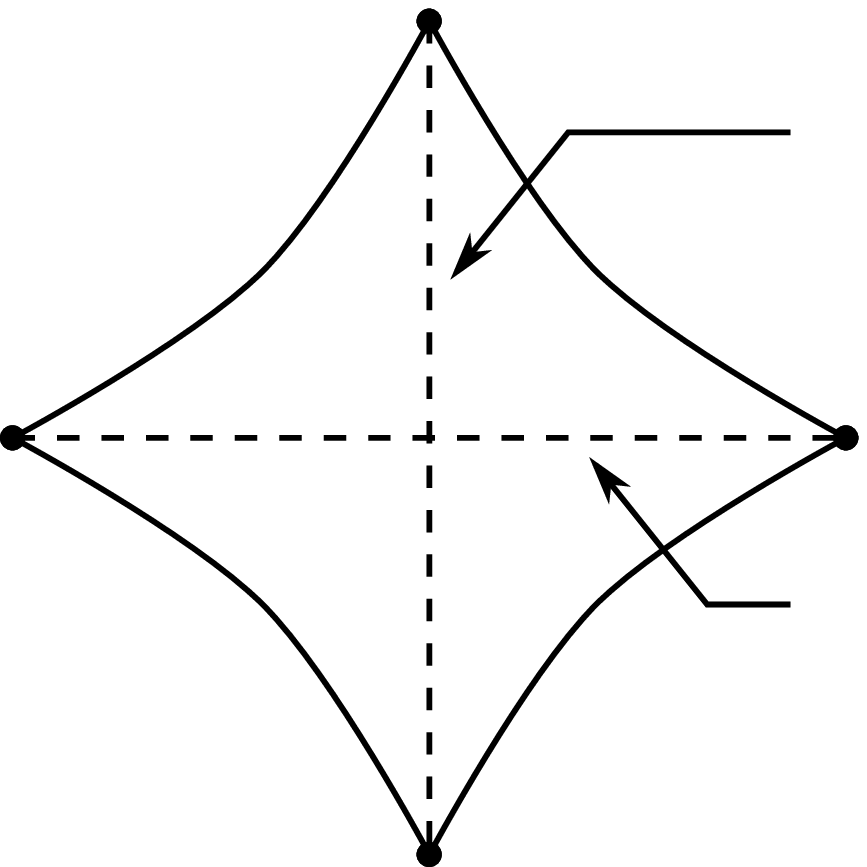} \quad \quad \quad \quad \quad \quad \quad \quad \quad \quad 
\labellist
\small
\pinlabel $T_1$ [r] at -2 98
\pinlabel $S_1$ [r] at -2 144
\pinlabel $S_3$ [l] at 244 98 
\pinlabel $T_3$ [l] at 244 144
\pinlabel $S_2$ [t] at 98 -2 
\pinlabel $T_2$ [t] at 144 -2
\pinlabel $S_4$ [b] at 144  244 
\pinlabel $T_4$ [b] at 98 244
\pinlabel $B_1$ [t] at 54 118 
\pinlabel $B_2$ [l] at 124 52
\pinlabel $B_3$ [b] at 184  124 
\pinlabel $B_4$ [r] at 118 182
\pinlabel $C_1$ at 70 66 
\pinlabel $C_2$  at 178 66
\pinlabel $C_3$  at 178 176 
\pinlabel $C_4$  at 70 176
\pinlabel $A_1$ [tl] at 126 116
\pinlabel $B_0$ [tr] at 32 44
\endlabellist
 \includegraphics[scale=.5]{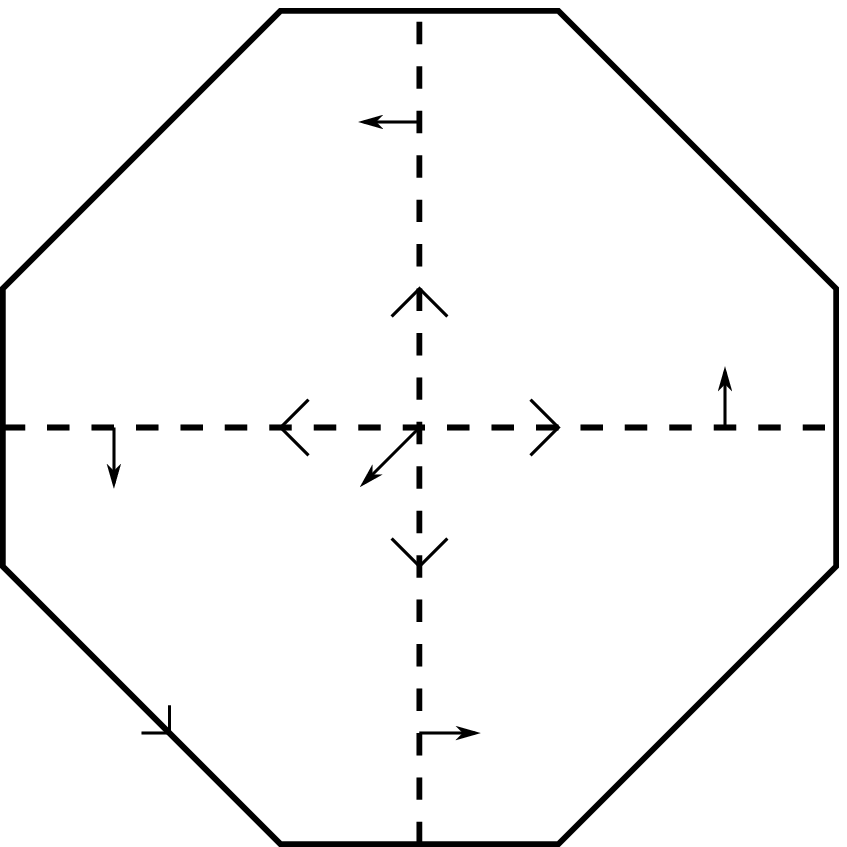} } 
\caption{(left) The square $X$ in the base projection of a resolved cone point.  (right) Notation for matrices associated to cells of $X$ and corners at swallowtail points.}
\label{fig:Octagon}
\end{figure}

Notations for cells in the interior of $X$ are indicated in Figure \ref{fig:Octagon}.  Above the $1$-cells and $0$-cells in the interior of $X$, we label sheets from $1$ to $n+2$ as they appear above the neighboring $2$-cell indicated by the small arrows in Figure \ref{fig:Octagon}.  This ordering specifies the notation for generators.  \emph{In this proof, the notation for matrices is fixed,} so that, eg., the matrix $B_i$ always has rows and columns ordered according to the ordering of sheets above $C_i$, and $A_1$ is always ordered as above $C_1$.  The crossing locus results in the following $0$ entries above the diagonal:

\centerline{
\begin{tabular}{c|c}  
Matrix & $0$ entries \\
\hline 
$B_1, B_3$  & $(k+2,k+3)$ \\
$B_2, B_4$  & $(k,k+1)$ \\
$A_1$ &  $(k,k+1)$ and $(k+2,k+3)$
\end{tabular}
}
  
As in Section \ref{sssec:sw2cell}, for computing differentials of $2$-cells containing swallowpoints, we add additional edges at corners labeled with $S$ or $T$.  We notate the matrices associated to these edges
as illustrated in Figure \ref{fig:Octagon}.  These matrices are
\[
\begin{array}{ll} \displaystyle S_1 = S_3 = I+E_{k+2,k+3} + \sum_{l < k+1} a^0_{l,k+1} E_{l,k+1}, \quad & T_1=T_3 = I+ E_{k+2,k+3} \\
 & \\
\displaystyle S_2 = S_4 = I+E_{k,k+1} + \sum_{k+2<l} a^0_{k,l-2} E_{k+2,l}, \quad & T_2=T_4 = I + E_{k,k+1}, \\
\end{array}
\]

We denote the $A_T$ matrices (as in (\ref{eq:Atdef}) from \ref{sec:matrices}) associated to the endpoints of the $1$-cell $B_i$ as 
\[
(A_0)_{T_i} = T_i \widehat{(A_0)}_{(k+1,k+2)} T_i
\]

For the $2$-cells $C_1$ (resp. $C_i$, $2 \leq i \leq 4$), we choose initial and terminal vertices to be the endpoints of $B_0$ (resp. of $B_i$).   We have differentials
\begin{align*}
\partial B_1 = & (A_0)_{T_1}(I+B_1) + (I+B_1) A_1, \\
\partial C_1 = & \widehat{(A_0)}_{k+1,k+2} C_1 + C_1 \widehat{(A_0)}_{k+1,k+2} + (I+\widehat{B_0}_{k+1,k+2}) + S_2 Q_{(0\,1)}(I+B_2)Q_{(0\,1)} (I+B_1)^{-1} T_1, \\
\partial C_2 = & (A_0)_{T_2}C_2 + C_2 Q_{(0\,1)}A_1 Q_{(0\,1)} + (I+B_2) + T_2 S_3 Q_{(2\,3)}(I+B_3)Q_{(2\,3)}, \\ 
\partial C_3 = & (A_0)_{T_3}C_3 + C_3 Q_{(0\,1) (2\,3)}A_1 Q_{(0\,1)(2\,3)} + (I+B_3) + T_3 S_4 Q_{(0\,1)}(I+B_4)Q_{(0\,1)}, \\
\partial C_4 = & (A_0)_{T_4}C_4 + C_4 Q_{(2\,3)}A_1 Q_{(2\,3)} + (I+B_4) + T_4 S_1 Q_{(2\,3)}(I+B_1)Q_{(2\,3)}, \\
\end{align*}
where the matrices $Q_{(0 \, 1)}$ and $Q_{(2\,3)}$ are permutation matrices for the transpositions $(k \, k+1)$ and $(k+2 \, k+3)$, and $Q_{(0\,1)(2\,3)}=Q_{(0 \, 1)} Q_{(2\,3)}$.

We begin to cancel generators.  First, observe that
\[
\partial \framebox{$c^4_{k,k+1}$} = 1+ a^0_{k,k+1} + \framebox{$b^1_{k,k+1}$},
\]
and use Theorem \ref{thm:Alg} to cancel the boxed generators.  [The product of strictly upper triangular matrices has all $(i,i+1)$-entries $0$, so the $(k,k+1)$-entry of $\partial C_4$ is the sum of the $(k,k+1)$-entries from $I+B_4$, $T_4$, $S_1$, and $Q_{(2\,3)}(I+B_1)Q_{(2\,3)}$.]
The remaining non-zero entries of $C_4$ are in correspondence with those of $B_4$, so we cancel
\[
\partial \framebox{$C_4$} = \framebox{$B_4$}+ \cdots. 
\]
[The condition on the ordering of generators required to apply Theorem \ref{thm:Alg} is verified as in proof of Theorem \ref{thm:sub2cell}.]
Note that in the quotient, we have 
\[
b^4_{k+2,k+3} \doteq 1,
\]
so we can compute and cancel
\[
\partial \framebox{$c^3_{k+2,k+3}$} = 1+ a^0_{k,k+1}+ b^4_{k+2,k+3} \doteq \framebox{$a^0_{k,k+1}$}.
\]
Record that
\[
a^0_{k,k+1} \doteq 0,   \Rightarrow b^1_{k,k+1} \doteq 1.
\]
With non-zero entries of $C_3$ and $B_3$ now in correspondence, cancel
\[
\partial \framebox{$C_3$} = \framebox{$B_3$}+ \cdots.
\]
This gives $b^3_{k,k+1} \doteq 1$, so that
\[
\partial c^2_{k,k+1} = 1+a^0_{k,k+1}+b^3_{k,k+1} \doteq 0. 
\]
Hence, we can modify our usual orderings of generators (arising as in Lemma \ref{lem:precA}) by moving $c^2_{k,k+1}$ to be the first generator.   
This allows us to cancel all entries of $C_2$ \emph{other than $c^2_{k,k+1}$} (the $(k,k+1)$ entry of $B_2$ is $0$) by
\[
\partial \framebox{$C_2$} = \framebox{$B_2$} + \cdots.
\]
With $b^1_{k,k+1}$ already canceled, the remaining generators in $B_1$ are in correspondence with those from $A_1$; cancel
\[
\partial \framebox{$B_1$} = \framebox{$A_1$} + \cdots.
\]

To summarize where we are, the only remaining generators associated to the cells of $X$ are $c^2_{k,k+1}$ and the entries of $B_1$, $C_1$, and $A_0$ except for $a^0_{k,k+1}$ which has $a^0_{k,k+1} \doteq 0$.  We have 
\[
C_4 \doteq 0,  \quad C_3 \doteq 0, \quad C_2 \doteq c^2_{k,k+1} E_{k,k+1}, \quad (I+B_1) \doteq (I+ E_{k,k+1}).
\]
Using $\partial C_4 \doteq 0$ gives
\[
I+B_4 \doteq T_4 S_1 Q_{(2\,3)}(I+E_{k,k+1})Q_{(2\,3)} = S_1.
\]
Then, $\partial C_3 \doteq 0$ gives
\[
I+B_3 \doteq T_3 S_4 Q_{(0\,1)}(I+B_4)Q_{(0\,1)} \doteq T_3S_4 Q_{(0\,1)}S_1Q_{(0\,1)} \doteq 
\]
\[
(I+E_{k+2,k+3})(I+E_{k,k+1} + \sum_{k+3 <l} a^0_{k,l-2} E_{k+2,l}) ( I + E_{k+2,k+3} + \sum_{l<k} a^0_{l,k+1}E_{l,k}) =
\]
\[
I+E_{k,k+1} + \sum_{k+3 <l} a^0_{k,l-2} E_{k+2,l} + \sum_{l<k} a^0_{l,k+1}E_{l,k}.
\]
[The range of $l$ in the summations is different from in the definitions of $S_1$ and $S_4$ because $a^0_{k,k+1} \doteq 0$.]
Next, compute
\[
I+B_2 \doteq F + G
\]
where
\begin{align*}
F =&  T_2 S_3 Q_{(2\,3)}(I+B_3)Q_{(2\,3)}  \\ 
 \doteq & (I+E_{k,k+1})(I+E_{k+2,k+3} + \sum_{l < k} a^0_{l,k+1} E_{l,k+1})(I+E_{k,k+1} + \sum_{k+3 <l} a^0_{k,l-2} E_{k+3,l} + \sum_{l<k} a^0_{l,k+1}E_{l,k}) \\
 =& I + E_{k+2,k+3} + \sum_{l<k} a^0_{l,k+1} (E_{l,k} + E_{l,k+1}) + \sum_{k+3<l}a^0_{k,l-2}(E_{k+2,l}+E_{k+3,l}), 
\end{align*}
and
\begin{align*}
G=& (A_0)_{T_2}C_2 + C_2 Q_{(0\,1)}A_1 Q_{(0\,1)}. 
\end{align*}
Using that $\partial B_1 = 0$ and $B_1 \doteq (I+E_{k,k+1})$ shows that 
\[
A_1 \doteq (I+B_1)^{-1}(A_0)_{T_1}(I+B_1) \doteq (I + E_{k,k+1}+ E_{k+2,k+3})\widehat{A_0}_{(k+1,k+2)}(I + E_{k,k+1}+ E_{k+2,k+3}),
\]
so we compute
\begin{align*}
C_2 Q_{(0\,1)}A_1 Q_{(0\,1)}\doteq&  (c^2_{k,k+1} E_{k,k}) A_1 Q_{(0\,1)} \\
 \doteq& \left(\sum_{k+1 < l} c^2_{k,k+1} a^0_{k,l} E_{k,l+2}\right) + c^2_{k,k+1} E_{k,k+2} + c^2_{k,k+1} E_{k,k+3}; \mbox{and} \\
(A_0)_{T_2}C_2 \doteq & \sum_{l<k} a^0_{l,k} c^2_{k,k+1}E_{l,k+1}.
\end{align*}
Combining the previous computations gives
\begin{align*}
I+B_2 \doteq& I + E_{k+2,k+3} + \sum_{l<k} a^0_{l,k+1} (E_{l,k} + E_{l,k+1}) + \sum_{k+3<l}a^0_{k,l-2}(E_{k+2,l}+E_{k+3,l}) + \\
 &  c^2_{k,k+1} E_{k,k+2} + c^2_{k,k+1} E_{k,k+3} +\sum_{k+1 < l} c^2_{k,k+1} a^0_{k,l} E_{k,l+2} + \sum_{l<k} a^0_{l,k} c^2_{k,k+1}E_{l,k+1}.
 \end{align*}
Observe that 
 $Q_{(0\,1)}(I+B_2) Q_{(0\,1)}$ is upper triangular with $(k+1,k+2)$-entry $c^2_{k,k+1}$.  

Next, since the $(k+1,k+2)$-entries of $S_2$, $(I+B_1)^{-1}$, and $T$ are all $0$, the formula for $\partial C_1$ shows
\[
\partial\framebox{$c^1_{k+1,k+2}$} = \framebox{$c^2_{k,k+1}$}.
\]  
After making this cancellation, 
\[
Q_{(0\,1)}(I+B_2)Q_{(0\,1)} \doteq I + E_{k+2,k+3} + \sum_{l<k} a^0_{l,k+1} (E_{l,k} + E_{l,k+1}) + \sum_{k+3<l}a^0_{k,l-2}(E_{k+2,l}+E_{k+3,l}),
\]
and another careful matrix multiplication shows
\[
S_2 Q_{(0\,1)}(I+B_2)Q_{(0\,1)} (I+B_1)^{-1} T_1 = I+ \sum_{l<k} a^0_{l,k+1}E_{l,k} + \sum_{k+3<l} a_{k,l-2} E_{k+3,l}. 
\]
This matrix has all above diagonal entries in the $k+1$ and $k+2$ rows and columns equal to $0$.  Thus, the entries of $\partial C^1$ in these rows and columns agree with the entries of $\widehat{(A_0)}_{k+1,k+2} C_1 + C_1 \widehat{(A_0)}_{k+1,k+2}$, and we have, for $i<k+1$ and $k+2 < j$, 
\[
\partial \framebox{$c^1_{i,k+2}$} \doteq \sum_{i<m<k+1} a^0_{i,m}c^1_{m,k+2} + \framebox{$c^1_{i,k+1}$};  \quad  \partial \framebox{$c^1_{k+1,j}$} \doteq \framebox{$c^1_{k+2,j}$} + \sum_{k+2<m<j} c^1_{k+1,m} a^0_{m-2,j-2}. 
\]
Here, we cancel beginning with $i=k$ (resp. $j=k+3$), and then with $i$ decreasing (resp. $j$ increasing), so that inductively when we cancel the equation  simplifies to 
\[
\partial \framebox{$c^1_{i,k+2}$} \doteq \framebox{$c^1_{i,k+1}$};  \quad  \partial \framebox{$c^1_{k+1,j}$} \doteq \framebox{$c^1_{k+2,j}$}. 
\]
Thus, all of the $c^1_{i,k+1}$, $c^1_{i,k+2}$, $c^1_{k+1,j}$, $c^1_{k+1,j}$, and $c^1_{k+1,k+2}$ are $0$ in the quotient. 
The remaining generators from $C^1$ are in correspondence with the $B_0$ generators, and we cancel
\[
\partial \framebox{$C_1$} = \framebox{$\widehat{B_0}_{(k+1,k+2)}$}+ \cdots.
\]
In the quotient, we have $C_1 \doteq 0$ and
\[
I+\widehat{B_0}_{(k+1,k+2)} = I+ \sum_{l<k} a^0_{l,k+1}E_{l,k} + \sum_{k+3<l} a_{k,l-2} E_{k+3,l},
\]
which gives the relations
\[
I+B_0 \doteq I +\sum_{l < k} a^0_{l, k+1} E_{l, k} + \sum_{k+1< l} a^0_{k,l} E_{k+1,l}=U.
\]

At this point all generators above the square $X$ have been canceled except for  $a^0_{i,j}$ with $(i,j) \neq (k,k+1)$.  Moreover, the matrix associated to each of the $4$ corners of $X$ is $A_0$, and to all of the outer edges is $0$ except for the edge labeled $B_0$ in Figure \ref{fig:Octagon} which has $I+B_0=U$ with $U$ as in the definition of the DGA with cone points.  Thus, the appearances of $a^0_{i,j}$ in the differential of bordering cells matches the definition of the cone point DGA.
\end{proof}

\subsection{1-cells with more than one crossing or cusp arc}
\label{ssec:MultipleCrossings}
Often it is possible to relax the requirement that at most $1$ crossing or cusp arc lies above each $1$-cell.  We do not attempt to give the most general statement here, but restrict ourselves to an illustrative example that will be useful for the class of examples in Section \ref{sec:Lnsigma}.

For $L \subset J^1S$, suppose that $\mathcal{E}_C$ is a polygonal decomposition of $\pi_x(L)$ that is $L$-compatible except near a $2$-sided simple closed curve $C$ contained in the $1$-skeleton of $\mathcal{E}_C$.  In a neighborhood $N \cong S^1 \times[0,1]$ of $C \cong S^1 \times\{1/2\}$, there appear several crossing arcs that project to small shifts of $C$ in the normal direction, and no other crossing or cusp arcs.  See Figure \ref{fig:CCrossings}.   Above $N$, $L$ is a union of sheets that project homeomorphically to $N$.  Suppose further that {\it no two sheets of $L$ cross each other more than once above $N$.}

\begin{figure}

\labellist
\small
\pinlabel $C$ [tr] at 46 58
\endlabellist
\centerline{ \includegraphics[scale=.5]{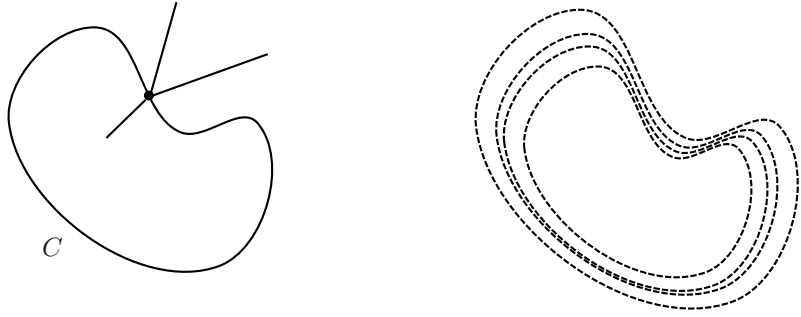} } 
\caption{(left) The curve $C$ in the decomposition $\mathcal{E}_C$.  (right) Crossing arcs appearing in the neighborhood $N \cong S^1\times[0,1]$ of $C$.}
\label{fig:CCrossings}
\end{figure}

With this set-up, we associate a variant of the cellular DGA $(\mathcal{A}_C,\partial)$ to $\mathcal{E}_C$.  For definiteness of notation, choose one side of $C$, and label the sheets of $L$ as $S_1, \ldots, S_n$ in the order they appear above this side of $N$.  Let $\sigma: \{1, \ldots, n\} \rightarrow \{1, \ldots, n \}$ be the permutation such that $S_i$ appears in position $S_{\sigma(i)}$ on the other side of $N$.  Note that for $i<j$, $S_i$ and $S_j$ cross above $S$ if and only if $\sigma(i) > \sigma(j)$.  To each $1$-cell (resp. $0$-cell) of $C$, assign generators $b_{i,j}$  (resp. $a_{i,j}$) for all $i<j$ with $\sigma(i) < \sigma(j)$.  
(So multiple pairs of crossing sheets contribute to multiple 0s to the upper triangle, as in the single crossing case.)
The differential is defined as in the case of the cellular DGA:  $0$-cells in $C$ have $\partial A = A^2$; $1$-cells in $C$ have $\partial B = A_+(I+B) + (I+B)A_-$; when a $1$-cell or $0$-cell appear in the boundary of a $2$-cell, we place the generators into matrices $B_i$ or $A_{v_i}$ being sure to order rows and columns using the ordering of sheets above the $2$-cell.  For instance, above $2$-cells on the side of $C$ where sheets of $L$ above $N$ appear in order $S_1, \ldots, S_n$ (resp. $S_{\sigma^{-1}(1)}, \ldots, S_{\sigma^{-1}(n)}$) generators associated to a bordering $1$-cell in $C$ are placed as $B_i = (b_{i,j})$ (resp.  $B_i= Q^{-1}_{\sigma} (b_{i,j}) Q_{\sigma}= (b_{\sigma^{-1}(i), \sigma^{-1}(j)})$ where $Q_{\sigma}$ is the permutation matrix $Q_{\sigma} = \sum E_{\sigma(i), i}$.)

\begin{proposition} \label{prop:ExtraCrossing}
The DGA $(\mathcal{A}, \partial)$ is stable tame isomorphic to the cellular DGA of $L$.
\end{proposition}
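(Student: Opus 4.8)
The plan is to produce, from $\mathcal{E}_C$, a genuine $L$-compatible decomposition $\mathcal{E}$ by promoting the crossing arcs in $N$ to honest $1$-cells, and then to recover $(\mathcal{A}_C,\partial)$ from the cellular DGA of $\mathcal{E}$ by cancelling the generators carried by the intermediate crossing circles. Since no two sheets cross more than once above $N$, the crossing circles $C_1,\dots,C_m$ (ordered by their normal coordinate) each swap a single adjacent pair, so the sequence of crossings realizes $\sigma$ as a reduced word $\tau_1\cdots\tau_m$ in adjacent transpositions, its length equalling the number of inversions of $\sigma$. Subdividing each $C_k$ into $1$-cells, adding radial $1$-cells so that the annular strips between consecutive crossing circles become polygonal $2$-cells, and keeping $\mathcal{E}_C$ elsewhere yields a legitimate compatible $\mathcal{E}$; by Theorem \ref{thm:IndCellD} its cellular DGA represents the stable tame isomorphism class of $L$.

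Next I would collapse this stack of crossing circles algebraically. One cannot invoke Theorem \ref{thm:sub2cell} to delete a crossing circle, since crossing circles lie in $\Sigma$; instead I would reprise its cancellation argument using Theorem \ref{thm:Alg} directly. Choosing orientations and initial/terminal vertices as in Section \ref{sssec:sw2cell}, the defining equation \eqref{eq:CdefSwallowtail} for the generator matrix $D$ of a strip $2$-cell between $C_k$ and $C_{k+1}$ takes the form $\partial D = A_{v_1}D + D A_{v_0} + (\cdots) + (\cdots)$ in which the factor $(I+B_k)$ of the inner crossing circle appears exactly once; cancelling the strip generators against the $b^{k}_{i,j}$ solves for $(I+B_k)$ as a product of the surviving $(I+B)$-matrices and the permutation matrices $Q_{\tau_k}$, exactly as in \eqref{eq:IBalpha}. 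Iterating from the innermost strip outward telescopes the monodromy $(I+B_m)Q_{\tau_m}\cdots(I+B_1)Q_{\tau_1}$ into a single transition attached to the surviving circle.

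The content of the identification is that this telescoped transition is precisely the matrix $I+B_C$ of the variant DGA: its entries are free generators at the positions $i<j$ with $\sigma(i)<\sigma(j)$ and $0$ at the crossing positions $\sigma(i)>\sigma(j)$, and the induced differential agrees with $\partial B_C = A_+(I+B_C)+(I+B_C)A_-$ under the permutation-conjugation conventions $Q_\sigma$ of Section \ref{ssec:MultipleCrossings}. The reduced-word hypothesis is exactly what makes this work: since each inversion of $\sigma$ is created by a unique crossing, once that crossing's generator has been eliminated the corresponding entry of the telescoped matrix is forced to vanish, while the non-crossing entries persist in bijection with the $b_{i,j}$ of $(\mathcal{A}_C,\partial)$. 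The differential identity then follows from the same telescoping of Leibniz expansions used in Lemma \ref{lem:cwd}, together with $\partial B_k = A_k^+(I+B_k)+(I+B_k)A_k^-$ for each intermediate circle.

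I expect the main obstacle to be the combinatorial bookkeeping in the last step: tracking how the permutation matrices $Q_{\tau_k}$ interleave with the $(I+B_k)$ factors during telescoping, confirming entry-by-entry that the surviving generators (with their gradings and cancellation matrices $A_\pm$) match the variant definition, and verifying that cancelling the circles innermost-to-outermost keeps the triangularity hypothesis of Theorem \ref{thm:Alg} satisfied at every stage. The reduced-word structure of $\sigma$ is precisely what rules out double counting at crossing positions and keeps these verifications tractable.
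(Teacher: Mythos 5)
Your proposal is correct and is essentially the paper's own argument: the paper likewise refines $\mathcal{E}_C$ so that each crossing circle becomes a subcomplex (a $0$-cell plus a $1$-cell) joined by radial $1$-cells and product $2$-cells, then inductively cancels via Theorem \ref{thm:Alg} the strip $2$-cells against the crossing-circle $1$-cells, telescoping exactly as in the proof of Theorem \ref{thm:sub2cell}, with the no-double-crossing hypothesis guaranteeing that each crossing-position entry of the surviving matrices is cancelled exactly once, ending with $A_r \doteq Q_\sigma A_0 Q_\sigma^{-1}$ and $B_r \doteq Q_\sigma B_0 Q_\sigma^{-1}$ and the identification with $(\mathcal{A}_C,\partial)$. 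The one step your sketch leaves implicit is the parallel first stage in which the radial $1$-cells are cancelled against the crossing-circle $0$-cells (eliminating the $B'_m$ and $A_m$ and simplifying $\partial C_m$ before the strip-versus-circle cancellation); this uses the identical mechanism and is where the paper first invokes the hypothesis.
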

\begin{proof}
 For simplicity, we consider the case when the decomposition of $C$ consists of a single $0$-cell at $0=1 \in S^1 = \R/\Z$ and $1$-cell at $(0,1)$ in detail, with a similar argument applying in general.  
We produce a $L$-compatible polygonal decomposition $\mathcal{E}_L$ from $\mathcal{E}_C$ by replacing the decomposition of the single closed curve, $C$, with a decomposition as in Figure \ref{fig:Dec};  using coordinates $(x,y) \in S^1 \times[0,1] \cong N$, we take the crossing arcs in $L$ to be at $y=y_0, y= y_1, \ldots, y=y_r$, each decomposed into a $0$-cell, $\{(0,y_i)\}$ and a $1$-cell $(0,1) \times\{y_i\}$.  Additional $1$-cells are given by $\{0\}\times(y_{m-1},y_m)$ running perpendicularly through the annulus  connecting the $0$-cells of the different crossing arcs.  See Figure \ref{fig:Dec}

Denote the Cellular DGA of $\mathcal{E}_L$ by $(\mathcal{A}_L,\partial)$.  
We place generators of $\mathcal{A}_L$ corresponding to the translates of the original $0$- and $1$-cell of $C$ into matrices $A_m$, $B_m$, for $m=0, \ldots, r$, and place the generators corresponding to vertical product cells of the form $\{0\} \times (y_{m-1},y_{m})$ (resp. $(0,1) \times (y_{m-1},y_{m})$ into matrices $B_m'$, $C_m$, for $m=1, \ldots, r$.   For the $A_m$ and $B_m$ we take the ordering of sheets (and labeling of generators) to agree with the ordering in the bordering region that has smaller $y$-coordinate.  {\it In this proof, these notations for matrices are fixed} (including the ordering of rows and columns).

\begin{figure}

\labellist
\small
\pinlabel $A_0$ [r] at 472 4
\pinlabel $A_1$ [r] at 472 36
\pinlabel $A_2$ [r] at 472 68
\pinlabel $A_3$ [r] at 472 100
\pinlabel $B'_1$ [l] at 482 20 
\pinlabel $B'_2$ [l] at 482 52 
\pinlabel $B'_3$ [l] at 482 84 
\pinlabel $C_1$ [l] at 536 20 
\pinlabel $C_2$ [l] at 536 52 
\pinlabel $C_3$ [l] at 536 84 
\pinlabel $B_0$ [t] at 598 4 
\pinlabel $B_1$ [t] at 598 36 
\pinlabel $B_2$ [t] at 598 68 
\pinlabel $B_3$ [t] at 598 102

\endlabellist
 \includegraphics[scale=.7]{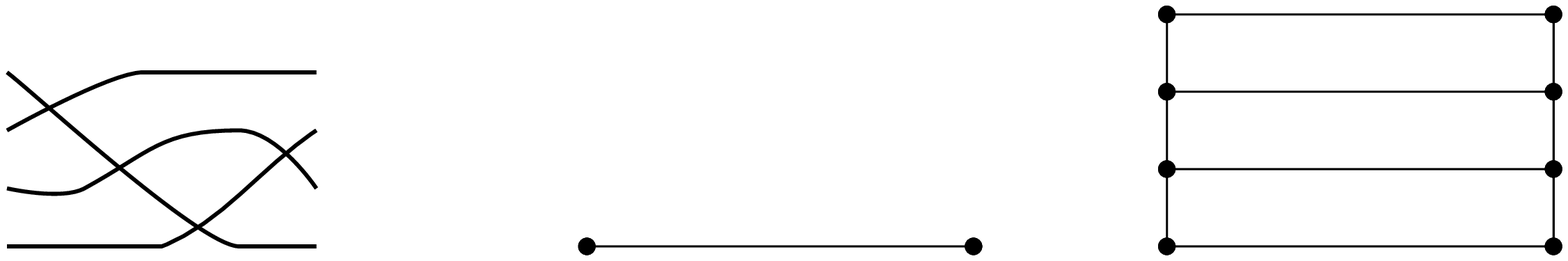}  
\caption{(Far left) A cross-section of the front projection of $L$ above $\{0\} \times[0,1] \subset N$, with $\sigma_0 = (1\,2)$, $\sigma_1 = (2 \, 3)$, $\sigma_2 = (3 \, 4)$, $\sigma_3 = (2 \, 3)$, and $\sigma = (1 \, 4\,2)$.  (Middle)  The decomposition of  $C$ in $\mathcal{E}_C$.  (Right) The decomposition $\mathcal{E}_L$ in the neighborhood $N$.}
\label{fig:Dec}
\end{figure}

To describe the differential, let $\sigma_0= (k_0,k_0+1), \ldots, \sigma_r = (k_r,k_r+1)$ be the sequence of transpositions corresponding to the crossing arcs as $y$ increases from $0$ to $1$.  So, if the crossings occur at $y=y_0, \ldots, y=y_r$, then the sheet in position $i$ immediately before $y_m$ is in position $\sigma_m(i)$ after $y_m$.  Notice that the products
\[
\widetilde{\sigma}_m = \sigma_m \sigma_{m-1} \cdots \sigma_0
\] 
then have the property that the sheet labeled $S_i$ before $y_0$ is labeled $S_{\widetilde{\sigma}_m(i)}$ after $y_m$.
In particular, the permutation $\sigma$ defined prior to the statement of the proposition is $\sigma = \widetilde{\sigma}_r$.

For the subalgebra generated by the $A_m$ and $B'_m$ generators (above $\{0\} \times [0,1]$) we have
\[
\partial B'_m = A_{m}B'_m + B'_{m} Q_{\sigma_{m-1}}A_{m-1}Q^{-1}_{\sigma_{m-1}}.
\]
[The $(i,j)$-entry of $Q_{\sigma_{m-1}}A_{m-1}Q^{-1}_{\sigma_{m-1}}$ is $a^{m-1}_{\sigma^{-1}_{m-1}(i), \sigma^{-1}_{m-1}(j)}$.]
We cancel all of the $B'_m$ and $A_m$ generators with $1 \leq m \leq r$ and some of the $A_0$ generators via the following inductive procedure.

To begin, for $(i,j) \neq (k_1,k_1+1)$, cancel $b^1_{i,j}$ with $a^1_{i,j}$, and for $(i,j) = (k_1,k_1+1)$, cancel $b^1_{k_1,k_1+1} = a^{0}_{\sigma^{-1}_{0}(k_1), \sigma^{-1}_{0}(k_1+1)}$ 
[We apply Theorem \ref{thm:Alg} repeatedly, so that $|j-i|$ increases as we go.  Thus, any $b_{p,q}$ terms that appear in $\partial b_{i,j}$ are already $0$ when the theorem is applied.]

For $m \geq 2$, inductively suppose that we have canceled all of the generators of $B'_l$ with $l<m$, and $A_l$ with $0< l <m$, as well as all entries $a^0_{i,j}$ with the property that the sheets labeled $S_{i}$ and $S_{j}$ before $y_0$ cross one another at some $y \in [y_0, y_m)$.  Moreover, suppose that in the quotient, we have 
\[
A_{m-1} \doteq Q_{\widetilde{\sigma}_{m-2}} A_0 Q_{\widetilde{\sigma}_{m-2}}^{-1}, 
\]
and $a^0_{i,j} \doteq 0$ when $S_i$ and $S_j$ cross before $y_m$.  

Then, the differential becomes
\[
\partial B'_m = A_{m} (I+B'_m) + (I+B'_m) Q_{\sigma_{m-1}} Q_{\widetilde{\sigma}_{m-2}} A_0 Q_{\widetilde{\sigma}_{m-2}}^{-1} Q_{\sigma_{m-1}}^{-1}.
\]
Note that the final term is $Q_{\widetilde{\sigma}_{m-1}} A_0 Q_{\widetilde{\sigma}_{m-1}}^{-1}$;  it is upper-triangular (because of the entries that are equal to $0$.)
For $(i,j) \neq (k_{m}, k_{m}+1)$, we cancel $b^m_{i,j}$ and $a^m_{i,j}$.  For $(i,j) = (k_{m}, k_{m}+1)$, we cancel
\[
\partial b^m_{k_m,k_m+1} = a^0_{\widetilde{\sigma}^{-1}_{m-1}(k_m), \widetilde{\sigma}^{-1}_{m-1}(k_m+1)}.
\] 
[This entry has not yet been canceled because of the assumption that no two sheets cross each other more than once in $N$.]
In the quotient, since $B'_m \doteq 0$, it follows that 
\[
0 \doteq \partial B'_m = A_m + Q_{\widetilde{\sigma}_{m-1}} A_0 Q_{\widetilde{\sigma}_{m-1}}^{-1}  \quad \Rightarrow \quad  A_m = Q_{\widetilde{\sigma}_{m-1}} A_0 Q_{\widetilde{\sigma}_{m-1}}^{-1}, 
\]
and $a^0_{\widetilde{\sigma}^{-1}_{m-1}(k_m), \widetilde{\sigma}^{-1}_{m-1}(k_m+1)} \doteq 0$ so that the induction proceeds.

At the conclusion of the procedure, the only remaining generators are those entries of $A_0$ corresponding to sheets that do not cross one another above $N$, and we have
\begin{equation} \label{eq:Arq}
A_r \doteq Q_\sigma A_0 Q_\sigma^{-1}.
\end{equation}

At this point, differentials of the $C_m$ have simplified as 
\begin{align*}
\partial C_m &= A_{m}C_m +C_m Q_{\sigma_{m-1}}A_{m-1}Q^{-1}_{\sigma_{m-1}} + (I+B_m)(I+B'_m) + (I+B_m')(I + Q_{\sigma_{m-1}}B_{m-1}Q^{-1}_{\sigma_{m-1}}) \\
  & \doteq A_{m}C_m +C_m Q_{\sigma_{m-1}}A_{m-1}Q^{-1}_{\sigma_{m-1}} + B_m + Q_{\sigma_{m-1}}B_{m-1}Q^{-1}_{\sigma_{m-1}}.
\end{align*}
Then, using a similar inductive procedure, we can cancel all of the $C_m$ and $B_m$ for $1 \leq m \leq r$, as well as those entries of $B_0$ corresponding to sheets $S_i$ and $S_j$ that cross somewhere in $N$.  In the quotient, we have the relation
\begin{equation} \label{eq:Arq2}
B_r \doteq Q_\sigma B_0 Q_\sigma^{-1}.
\end{equation}

The remaining generators in this final quotient of $(\A_L,\partial)$ are exactly as in the description of $(\A_C, \partial)$, and the differentials agree as well in view of the relations (\ref{eq:Arq}) and (\ref{eq:Arq2}).


\end{proof}

\subsubsection{Extension to multiple cone points}
\label{sssec:MultipleConePoints}

The DGA with cone points from Proposition \ref{prop:coneDGA} has a similar easy extension to the case where $L$ has several cone points projecting to the same point $x_0 \in S$.  Suppose that above $x_0$ there are $r$ cone points joining the pairs of sheets $(k_1, k_1+1), \ldots, (k_r,k_r+1)$.  In this case, form a DGA as in Proposition \ref{prop:coneDGA} with the two modifications:
\begin{enumerate}
\item The $0$-cell at $x_0$ has generators $a_{i,j}$ with $i<j$ and $(i,j) \neq (k_p,k_p+1)$ for all $1\leq p \leq r$.
\item The matrix $U= U_r U_{r-1} \cdots U_1$ where   
\begin{equation}
\label{eq:MultipleU}
U_p = I + \sum_{\ell < k_p} a_{\ell, k_p+1} E_{\ell, k_p} + \sum_{k_p+1< \ell} a_{k_p,\ell} E_{k_p+1,\ell}.
\end{equation}
\end{enumerate}

\begin{proposition}  \label{prop:ExtraCone}
The resulting DGA is equivalent to the DGA from Proposition \ref{prop:coneDGA}.  
\end{proposition}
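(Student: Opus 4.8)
The plan is to reduce the statement to Proposition \ref{prop:coneDGA} by separating the $r$ coincident cone points into distinct base points, in direct analogy with the proof of Proposition \ref{prop:ExtraCrossing}. Concretely, I would first apply a small Legendrian isotopy moving the $r$ cone points over $x_0$ to distinct base points $x_1, \ldots, x_r$ lying along a short arc through $x_0$, with $x_p$ joining the sheets $(k_p,k_p+1)$. Since these sheets never cross, their order is constant along the arc, and since the pairs $\{k_p,k_p+1\}$ are disjoint the separations may be performed independently. For the separated Legendrian each $x_p$ is an ordinary (single) cone point, so the DGA built as in Proposition \ref{prop:coneDGA} applies: $x_p$ is a $0$-cell with generators $a^{(p)}_{i,j}$ (omitting $(k_p,k_p+1)$), matrix $A^{(p)}$ satisfying $\partial A^{(p)}=(A^{(p)})^2$, and a designated extra edge carrying the matrix $U_p$ of \eqref{eq:MultipleU} inserted into the boundary of a chosen bordering $2$-cell. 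Arranging that all the $x_p$ border a common $2$-cell on one side, a traversal of that cell's boundary meets the edges carrying $U_1,\dots,U_r$ consecutively.

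Next I would choose a ladder decomposition near the arc exactly as in Figure \ref{fig:Dec}: the cone points $x_1,\dots,x_r$ become $0$-cells at heights $v_1<\dots<v_r$, joined by vertical $1$-cells with matrices $B'_m$ together with horizontal $1$-cells and product $2$-cells. The key cancellation is governed by
\[
\partial B'_m = A^{(m+1)}(I+B'_m)+(I+B'_m)A^{(m)},
\]
where, crucially, \emph{no} permutation matrices intervene because no sheets cross. Theorem \ref{thm:Alg} then lets me cancel, in order of increasing $|j-i|$, all generators of $B'_m$ against generators of $A^{(m+1)}$, identifying $A^{(m+1)}\doteq A^{(m)}$ on every entry outside the cone-pair positions. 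At the two special positions, one of $\partial b'_{k_m,k_m+1}$, $\partial b'_{k_{m+1},k_{m+1}+1}$ has a single surviving $a$-term (the other being absent from $A^{(m+1)}$ or $A^{(m)}$), so these cancellations instead eliminate the generators occupying the cone-pair positions $(k_m,k_m+1)$ and $(k_{m+1},k_{m+1}+1)$. Iterating upward through the ladder collapses all the $A^{(p)}$ to a single matrix $A$ at the combined vertex $x_0$ whose surviving generators are exactly the $a_{i,j}$ with $(i,j)\neq(k_p,k_p+1)$ for all $p$, matching the generating set in the statement.

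Finally I would track the edge matrices. After the vertical cells are collapsed, the consecutively appearing $U_p$-edges along the boundary traversal combine, and the product of edge contributions telescopes to $U_r U_{r-1}\cdots U_1$ with factors given by \eqref{eq:MultipleU}; the order is forced by the order in which $x_1,\dots,x_r$ are encountered. Reading off the resulting differentials on the bordering cells shows the quotient DGA is literally the cone-point DGA of the statement, with a single $0$-cell at $x_0$ and $U=U_rU_{r-1}\cdots U_1$. Since each step of Theorem \ref{thm:Alg} produces a stable tame isomorphism, this gives the claimed equivalence. The main obstacle I anticipate is the ladder bookkeeping: verifying both that each cone pair $(k_p,k_p+1)$ is removed from the combined vertex (rather than a wrong entry) and that the factors assemble in the precise order $U_r\cdots U_1$. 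Both points are the cone-analogue of the telescoping of the matrices $Q_{\widetilde\sigma_m}$ in the proof of Proposition \ref{prop:ExtraCrossing}, and should follow from the same inductive scheme once the individual $2\times 2$ cone special entries are handled with care.
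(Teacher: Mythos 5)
Your proposal is correct and follows essentially the same route as the paper's proof: isotope the coincident cone points apart, join the resulting $0$-cells by a string of $1$-cells arranged so the $U_p$-corners lie consecutively along one bordering $2$-cell, cancel each connecting matrix against its neighboring cone-point matrices via $\partial B'_m = A^{(m+1)}(I+B'_m)+(I+B'_m)A^{(m)}$ using Theorem \ref{thm:Alg}, and observe that in the quotient all $A^{(p)}$ are identified, the extra cone-pair entries die, and the edge product collapses to $U_r\cdots U_1$. The only (immaterial) discrepancy is your appeal to the full ladder decomposition of Figure \ref{fig:Dec}: cone points are isolated rather than circles, so no horizontal $1$-cells or product $2$-cells arise, and your computation in fact uses only the string of $0$- and $1$-cells, exactly as in the paper.
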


\begin{proof}
To see this, isotope the front projection of $L$ so that the cone points appear at distinct points of $S$ in a small neighborhood of $x_0$.  Then, form a cellular decomposition where the single $0$-cell at $x_0$ is expanded to a string of $0$-cells, $A_1, \ldots, A_{r}$ at cone points connected by a sequence of $1$-cells, $B_1, \ldots, B_{r-1}$, all located in the corner of $x_0$ that was labeled with $U$.  All of the generators for the $B_1, \ldots, B_{r-1}$, and $A_2, \ldots, A_{r}$, as well as the $(k_p,k_p+1)$-entries of $A_{1}$ are canceled by a sequence of applications of Theorem \ref{thm:Alg}.  (At the inductive step, cancel $B_m$ with all of the generators of $A_{m+1}$ and the $(k_{l},k_{l}+1)$ entries of $A_m$ for $l\geq m+1$.) When computing the differential of $2$-cell that contains the string of $0$-cells and $1$-cells, the contribution to the product of edges from this corner is the product of the individual $U_p$ matrices
\[
U_p = I + \sum_{\ell < k_p} a_{\ell, k_p+1} E_{\ell, k_p} + \sum_{k_p+1< \ell} a_{k_p,\ell} E_{k_p+1,\ell},
\]     
and in the quotient $a^p_{i,j} = a^q_{i,j}$ for all $1 \leq p < q \leq r$.
\end{proof}

\subsection{Front spinning a Legendrian knot}
\label{ssec:FrontSpinning}

In this subsection, we show how the DGA of a surface which is the front-spin of a \dr{1-dimensional} Legendrian knot can be derived from the DGA of the knot. The result agrees with known computations when the axis of rotation and the knot do not intersect, and provides new examples when the axis and knot do intersect.

\subsubsection{The suspension of a DGA}
\label{sssec:AlgebraSpin}
We begin with some algebraic preliminaries, which are motivated by a homotopy theory for DGAs  (or more generally for differential graded operad algebras where the operad might not be the associative one).
Suppose $(\mathcal{A}, \partial_\mathcal{A})$ is a DGA freely generated by $a_1, \ldots, a_{k},$ triangular with respect to these generators (see Section \ref{ssec:STI}), possibly unital, and with an arbitrary coefficient ring.
Define the {\bf{suspension}} of $(\mathcal{A}, \partial_\mathcal{A}),$ $(\hat{\mathcal{A}}, \partial_{\hat{\mathcal{A}}}),$ as follows:
\begin{itemize}
\item 
 $\hat{\mathcal{A}}$ is freely generated by $a_i,$ $\hat{a}_i;$ 
\item
the grading $|a_i|$ of $a_i \in \hat{\mathcal{A}}$ is the same as it is in $\mathcal{A},$ while $|\hat{a}_i| = |a_i|+1;$
\item
$\partial_{\hat{\mathcal{A}}}(a_i) = \partial_\mathcal{A}(a_i)$ and $\partial_{\hat{\mathcal{A}}}(\hat{a}_i) = \Gamma(\partial_\mathcal{A}(a_i))$ where
$\Gamma: \mathcal{A} \rightarrow \hat{\mathcal{A}}$ is a derivation defined by its image of the generators $\Gamma(a_i) = \hat{a}_i.$
(So for example, $\Gamma(1) = 0$ if $\mathcal{A}$ is unital, and $\Gamma(a_1 a_2) = \hat{a}_1 {a}_2 + (-1)^{|a_1|} {a}_1 \hat{a}_2.$)
\end{itemize}

We assume $\hat{\mathcal{A}}$ is unital if ${\mathcal{A}}$ is.

We also consider a  variation of this construction. 
Let $(\mathcal{L}, \partial_\mathcal{L})$ be a sub-DGA
of $(\mathcal{A}, \partial_\mathcal{A})$ generated by some subset of the generating set.
Define the {\bf{suspension of}}  $\mathcal{A}$ {\bf{relative to}}  $\mathcal{L}$ to be the DGA $\hat{\mathcal{A}}$ as above except that for all elements $l \in \mathcal{L},$ $\hat{l} = 0.$

\begin{lemma}
\label{lem:SuspensionSTI}
If $(\mathcal{A},\partial_\mathcal{A})$ and $(\mathcal{B},\partial_\mathcal{B})$ are stable tame isomorphic then so too are
$(\hat{\mathcal{A}}, \partial_{\hat{\mathcal{A}}})$ and $(\hat{\mathcal{B}}, \partial_{\hat{\mathcal{B}}}).$
If $(\mathcal{L}, \partial_\mathcal{L})$ is a sub-DGA as above of both $(\mathcal{A}, \partial_\mathcal{A})$ and $(\hat{\mathcal{B}}, \partial_{\hat{\mathcal{B}}})$ and the \dr{elementary automorphisms in the definition of} stable-tame isomorphism fixes $\mathcal{L},$ then the suspensions of $\mathcal{A}$ and $\mathcal{B}$  {{relative to}}  $\mathcal{L}$ are stable tame isomorphic.

\end{lemma}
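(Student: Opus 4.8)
The plan is to show that the suspension $\widehat{(\,\cdot\,)}$ is functorial with respect to DGA isomorphisms, and that it carries each of the three generating moves of stable tame isomorphism (elementary automorphisms, generator-relabelings, stabilizations) to an analogous move on suspensions; the lemma then follows by assembling these pieces. The technical backbone is the identity
\[
\partial_{\hat{\mathcal{A}}}\circ\Gamma = \Gamma\circ\partial_{\mathcal{A}} \quad\text{on } \mathcal{A},
\]
which holds on generators by the very definition $\partial_{\hat{\mathcal{A}}}\hat a_i=\Gamma(\partial_{\mathcal{A}}a_i)$ and extends to all of $\mathcal{A}$ because the graded commutator $[\partial_{\hat{\mathcal{A}}},\Gamma]$ is a derivation vanishing on generators (immediate over $\Z/2$; sign-sensitive but routine in general). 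Given a DGA morphism $\phi:(\mathcal{A},\partial_{\mathcal{A}})\to(\mathcal{B},\partial_{\mathcal{B}})$ I would define $\hat\phi:\hat{\mathcal{A}}\to\hat{\mathcal{B}}$ by $\hat\phi(a_i)=\phi(a_i)$ and $\hat\phi(\hat a_i)=\Gamma_{\mathcal{B}}(\phi(a_i))$. One checks $\hat\phi\circ\Gamma_{\mathcal{A}}=\Gamma_{\mathcal{B}}\circ\phi$ (both sides are $\phi$-twisted derivations $\mathcal{A}\to\hat{\mathcal{B}}$ agreeing on generators), and combined with the displayed identity this makes $\hat\phi$ a chain map; functoriality $\widehat{\psi\phi}=\hat\psi\hat\phi$ and $\widehat{\mathrm{id}}=\mathrm{id}$ follow from the same generator check, so isomorphisms induce isomorphisms.

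Next I would analyze the generating moves. If $\phi$ is the elementary automorphism $a_k\mapsto a_k+v$, with $v$ in the subalgebra omitting $a_k$, then $\hat\phi$ fixes every generator except $a_k\mapsto a_k+v$ and $\hat a_k\mapsto\hat a_k+\Gamma(v)$, where $\Gamma(v)$ lies in the subalgebra generated by $\{a_j,\hat a_j:j\neq k\}$; hence $\hat\phi$ is a composition of two elementary automorphisms. If $\sigma$ is a relabeling isomorphism extending a bijection of generating sets, then $\hat\sigma$ sends $a_i\mapsto b_{\sigma(i)}$ and $\hat a_i\mapsto\hat b_{\sigma(i)}$, so it too extends a bijection of generating sets. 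Thus suspension sends tame isomorphisms to tame isomorphisms. For stabilization, a direct computation gives $\partial\hat e=\Gamma(\partial e)=\Gamma(f)=\hat f$ and $\partial\hat f=\Gamma(\partial f)=0$, so that $\widehat{S\mathcal{A}}$ is obtained from $\hat{\mathcal{A}}$ by adjoining the two acyclic pairs $(e,f)$ and $(\hat e,\hat f)$, in degrees $(i,i-1)$ and $(i+1,i)$, while the differentials of the $a_j,\hat a_j$ are unchanged. In other words $\widehat{S\mathcal{A}}$ is precisely the double stabilization of $\hat{\mathcal{A}}$, and more generally $\widehat{S^{p}\mathcal{A}}$ is a $2p$-fold stabilization of $\hat{\mathcal{A}}$.

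Assembling these, given stable tame isomorphic $\mathcal{A},\mathcal{B}$ there is a tame isomorphism $T:S^{p}\mathcal{A}\to S^{q}\mathcal{B}$; applying the suspension functor, $\hat T:\widehat{S^{p}\mathcal{A}}\to\widehat{S^{q}\mathcal{B}}$ is a tame isomorphism (a composition of suspended elementary automorphisms and a suspended relabeling), while its source and target are stabilizations of $\hat{\mathcal{A}}$ and $\hat{\mathcal{B}}$; hence $\hat{\mathcal{A}}$ and $\hat{\mathcal{B}}$ are stable tame isomorphic. For the relative statement I would rerun the argument with $\Gamma$ replaced by the relative derivation $\Gamma^{\mathrm{rel}}$, which is well defined and kills all of $\mathcal{L}$ because $\mathcal{L}$ is generated by a subset of the generators and is closed under $\partial$. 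The hypothesis that the elementary automorphisms, and the relabeling, fix $\mathcal{L}$ guarantees that each suspended move respects the relative construction: in the elementary case the modified generators and the word $v$ lie outside $\mathcal{L}$, so $\Gamma^{\mathrm{rel}}(v)$ involves only non-$\mathcal{L}$ hat-generators, and stabilization generators are never in $\mathcal{L}$, so the relative suspension functor applies verbatim.

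The main obstacle I anticipate is bookkeeping rather than conceptual: verifying the commutation identity $\partial_{\hat{\mathcal{A}}}\Gamma=\Gamma\partial_{\mathcal{A}}$ cleanly over an arbitrary coefficient ring (trivial mod $2$, sign-sensitive otherwise), and, in the relative case, confirming that \emph{every} stage of the given stable tame isomorphism—not merely the elementary automorphisms, but also the relabeling and the auxiliary stabilizations—can be taken to respect $\mathcal{L}$, which is exactly what keeps $\Gamma^{\mathrm{rel}}$ and the induced maps $\hat\phi^{\mathrm{rel}}$ compatible throughout.
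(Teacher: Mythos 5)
Your proposal is correct and follows essentially the same route as the paper: the key identity $\hat\phi\circ\Gamma_{\mathcal{A}}=\Gamma_{\mathcal{B}}\circ\phi$ (verified on generators and extended by the Leibniz rule), the observation that an elementary automorphism $a_k\mapsto a_k+v$ suspends to the tame map $a_k\mapsto a_k+v$, $\hat a_k\mapsto\hat a_k+\Gamma(v)$, the fact that $\widehat{S\mathcal{A}}$ is a double stabilization of $\hat{\mathcal{A}}$, and the relative case handled by $\Gamma|_{\mathcal{L}}=0$ are exactly the paper's ingredients. Your write-up is somewhat more explicit about functoriality and the final assembly, but it is the same argument.
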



\begin{proof}

It is easy to check that if $\mathcal{B} = S(\mathcal{A})$ is a stabilization with new generators $x,y$ 
then $\hat{\mathcal{B}} = S(S(\hat{\mathcal{A}}))$ with new generators $x, y, \hat{x} = \Gamma({x}), \hat{y} = \Gamma({y}).$ 
So assume $\phi: (\mathcal{A},\partial_\mathcal{A}) \rightarrow (\mathcal{B},\partial_\mathcal{B})$ is an elementary
 isomorphism, defined on generators as $\phi(a_i) = a_i + \delta^i_j w.$ (In the language of Section \ref{ssec:STI}, $\phi(a_j) = a_j +w$ while $\phi$ is the identity on all other generators.)
We claim the algebra isomorphism
$\hat{\phi}: \hat{\mathcal{A}} \rightarrow \hat{\mathcal{B}}$ defined on generators by 
\[
\hat{\phi}(a_i) = a_i + \delta^i_j w, \quad \hat{\phi}(\Gamma({a}_i)) = \Gamma({a}_i) + \delta^i_j \Gamma({w})
\]
is a (tame) DGA isomorphism.

First note that $\Gamma \phi = \hat{\phi} \Gamma.$ 
This is obvious on generators, while the inductive (on word-length) step follows from linearity and the following relation on monomials:
\begin{eqnarray*}
\hat{\phi}(\Gamma(x y)) 
&=  & \hat{\phi}(\Gamma(x) y + (-1)^{|x|} x \Gamma(y)) 
= \Gamma(\phi(x)) \phi(y) + (-1)^{|x|} \phi(x) \Gamma(\phi(y)) \\
& = & \Gamma(\phi(x)) \phi(y) + (-1)^{|\phi(x)|} \phi(x) \Gamma(\phi(y))
 = \Gamma(\phi(xy)).
 \end{eqnarray*}

First note that since $\partial_{\mathcal{B}} = \phi \partial_{\mathcal{A}} \phi^{-1}$ then for generator
$a_i \in \hat{\mathcal{A}},$ 
$\hat{\phi}\partial_{\hat{\mathcal{A}}} \hat{\phi}^{-1}(a_i) 
= 
\partial_{\hat{\mathcal{B}}}(a_i).$
For the other generators, 
\begin{eqnarray*}
\hat{\phi}\partial_{\hat{\mathcal{A}}}\hat{\phi}^{-1}(\Gamma(a_i))
& = &
\hat{\phi}\partial_{\hat{\mathcal{A}}}\left(\Gamma(a_i) - \delta^i_j \Gamma(w)\right) \\
& = &
\hat{\phi} \Gamma(\partial_\mathcal{A}(a_i)) - \delta^i_j \hat{\phi}\Gamma(\partial_\mathcal{A}(w)) \\
\partial_{\hat{\mathcal{B}}}(\Gamma(a_i)) 
& = &
\Gamma(\partial_{\mathcal{B}}(a_i)) \\
& =& 
\Gamma({\phi}\partial_{{\mathcal{A}}}{\phi}^{-1}(a_i)) \\
& = &
\Gamma({\phi}\partial_{{\mathcal{A}}}(a_i -\delta^i_j w)) \\
& = & 
\Gamma \phi (\partial_{{\mathcal{A}}} (a_i)) - \delta^i_j \Gamma \phi (\partial_{{\mathcal{A}}} (w))
\end{eqnarray*}
These are equal by the $\Gamma \phi = \hat{\phi} \Gamma$ relation above. 

%

If the suspensions are relative, the same formulas and arguments apply after recalling that $\Gamma(l) = 0$ for $l \in \mathcal{L}.$
\end{proof}

\subsubsection{Spun cellular decomposition}
\label{sssec:GeometrySpin}
Let $J^1(\R_{x_1}) = (\R^3_{x_1,y_1,z}, \mbox{ker}\{dz-y_1dx_1\})$ denote the standard contact structure.
Let $\pi_{x_1,z}: J^1(\R_{x_1}) \rightarrow J^0(\R_{x_1}) = \R^2_{x_1,z}$ be the lower-dimensional analog of the front projection $\pi_{x,z}.$
Let $\Lambda \subset J^1(\R_{x_1})$ be a Legendrian embedding of the union of (a non-negative ) number of circles sitting in $\{x_1 <0\}$ as well as a (non-negative) number of arcs in $\{x_1 \le 0\}$ whose endpoints sit on the $z$-axis. 
We impose the following condition on the front projections of the arcs in $J^0(\R_{x_1}).$
If we reflect the arcs' fronts through the $z$-axis, the arcs and their reflections must glue together to form the front of a collection of 
(smooth) Legendrian circles.
An arbitrary Legendrian embedding of circles and arcs can achieve these conditions with a Legendrian isotopy.
Embed $J^0(\R_{x_1})$ as $\{x_2 = 0\} \subset J^0(\R^2_{x_1,x_2}) = \R_{x_1,x_2,z}^3.$
Consider the rotation about the $z$-axis, $S^1 \times \pi_{x,z}(\Lambda) \subset J^0(\R^2_{x_1,x_2})$ as defined
in \cite[Section 4]{EkholmEtnyreSullivan05a}.
We say that the {\bf{front spin}} of $\Lambda$ is the Legendrian surface $L$ in standard contact $J^1(\R^2_{x_1,x_2}) = \R^5$ whose front is given by this rotation.
Note that $L$ is a Legendrian embedding of a collection of spheres and/or tori.
In the case when two arc endpoints have the same front projection in $\{x_1 = 0\} \subset J^0(\R_{x_1}),$ then the spun surface $L$ has
a cone point.
We consider Legendrians with possibly multiple pairs of matching arc-front-endpoint conditions, \dr{but we assume} no three arcs have fronts with the same endpoint.

Consider a cellular decomposition of $\pi_{x_1}(\Lambda) \subset \R^1_{x_1},$ where in the case that  $\Lambda$ has a Legendrian arc, the decomposition includes $\{x_1 =0\}$ as a 0-cell.
For each cell $e$ of $\pi_x(\Lambda),$ excluding the 0-cell $\{x_1 = 0\}$ if it appears, we associate two cells for $\pi_x(L) \subset \R^2_{x_1,x_2},$ 
which we denote by $e$ and  $\hat{e}.$
The first is simply the cell $e$ after embedding $J^0(\R_{x_1})$ as $\{x_2 = 0\} \subset J^0(\R^2_{x_1,x_2}).$
The second is the embedded $e$ spun about the $z$-axis; that is, a radially symmetric map of $e \times [-\pi, \pi]$ with the ends $e \times \{-\pi\}$ and $e \times \{\pi\}$ mapping to the same cell as $e$.
Note that the crossing/cusp 0-cells become analogous 1-cells  under spinning.

If $e = \{x_1 = 0\}$ is a 0-cell of $\Lambda,$ then we denote by $e$ again the 0-cell $\{x_1=0=x_2\}$ of $L.$
As mentioned above, if $e =\{x_1 = 0\}$ represents a crossing 0-cell of $\Lambda,$ then the corresponding cell in $L$ sits below a cone point.

\subsubsection{The cellular DGA of a front spin}

\begin{prop}
\label{prop:1dimLCH_DGA}
The LCH DGA $(\mathcal{A}_{LCH, \Lambda}, \partial_{LCH, \Lambda})$ of the $1$-dimensional Legendrian $\Lambda$ 
is stable-tame isomorphic to  the cellular DGA of $\Lambda$,  $(\mathcal{A}_\Lambda, \partial_\Lambda)$,  as specified in Section \ref{sec:DefDGA} equations (\ref{eq:Adef}) and (\ref{eq:Bdef}).
\end{prop}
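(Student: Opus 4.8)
The plan is to fix a cellular decomposition of $\pi_{x_1}(\Lambda) \subset \R_{x_1}$ adapted to the front, reduce the resulting $1$-dimensional cellular DGA to a minimal model using the cancellation lemma (Theorem \ref{thm:Alg}), and then identify this minimal model with the combinatorial Chekanov--Eliashberg DGA of $\Lambda$ as computed in \cite{Chekanov02, Ng03}. Concretely, I would place a $0$-cell at each crossing and each cusp of the front (together with the point $\{x_1=0\}$ when $\Lambda$ has arcs) and take the $1$-cells to be the intervening open intervals. Over each such $1$-cell the strands of $\Lambda$ are totally ordered, so the associated matrix $B$ is a full strictly upper triangular matrix, while each $0$-cell carries a matrix $A$ with $\partial A = A^2$ as in (\ref{eq:Adef}); the $1$-cell differential is $\partial B = A_+(I+B)+(I+B)A_-$ as in (\ref{eq:Bdef}). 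Since in dimension one the only elementary modifications are subdividing a $1$-cell and deleting a $1$-valent edge, independence of the decomposition follows from the one-dimensional specializations of Theorems \ref{thm:sub1cell} and \ref{thm:1val}, so it suffices to compute with this particular decomposition.

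First I would carry out the reduction. Over a $1$-cell incident to a \emph{regular} $0$-cell, i.e. one at which the vertical order of strands does not change, the entries of $B$ are in bijection with the entries of the adjacent vertex matrix $A$, and $\partial B = A_+(I+B)+(I+B)A_-$ gives $\partial b_{i,j} = a_{i,j} + (\text{lower terms})$; applying Theorem \ref{thm:Alg} inductively in order of increasing $j-i$ cancels every such pair, exactly as in the proof of Theorem \ref{thm:sub1cell} and in Example \ref{ex:OneCopy}. After eliminating all regular vertices and the interior edge generators, the only generators that cannot be cancelled this way are those forced to survive by a \emph{zero} in a vertex matrix: at a crossing of strands $k,k+1$ the vertex matrix has vanishing $(k,k+1)$-entry, so $b_{k,k+1}$ on the adjacent edge survives and becomes the crossing generator (compare $x := b^4_{2,3}$ in Example \ref{ex:OneCopy}), while at a right cusp the corresponding edge generator survives with a constant term $1$ in its differential, coming from the block $N = \left[\begin{smallmatrix} 0 & 1 \\ 0 & 0 \end{smallmatrix}\right]$ inserted at the cusp (so that Theorem \ref{thm:Alg} does not apply to it). Throughout the cancellation the vertex matrices are replaced by telescoping products of the matrices $I+B_i$, just as in the proof of Lemma \ref{lem:cwd}, and the degrees assigned in (\ref{eq:graddef}) match the gradings of the corresponding Reeb chords.

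It then remains to identify the reduced differential with the holomorphic-disk differential of the Chekanov--Eliashberg DGA. The surviving generators are precisely the crossings and right cusps of the front, which are the usual generators of the LCH DGA of $\Lambda$, and the reduced differential of a crossing generator is read off from the accumulated product $\prod_i (I+B_i)$ of transfer matrices across the front together with the $A^2$ and cusp contributions. I would match this, monomial by monomial, with Chekanov's count of immersed polygons: each factor $I+B_i$ records the possible jumps between strands over one interval, so an $(i,j)$-entry of a product of such factors enumerates the admissible boundary paths of a disk with corners at the intervening crossings and cusps. Verifying this correspondence --- including that the $N$-blocks at cusps produce exactly the corner and endpoint behavior of the disks, and that the parities agree over $\Z/2$ --- is the main obstacle, since it is where the purely algebraic cellular differential must be reconciled with the geometric disk count. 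I would handle it by induction on the number of singular points of the front, peeling off the rightmost feature and using one-dimensional subdivision invariance to localize the computation. Once the two differentials are seen to coincide, the chain of stable tame isomorphisms produced by the cancellations establishes the Proposition.
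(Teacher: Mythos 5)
The genuine gap is in your second step, which you yourself flag as ``the main obstacle'': proving that the differential of your reduced minimal model coincides with Chekanov's count of immersed polygons. This identification is not a routine verification that can be localized and dispatched by induction; it is essentially the main theorem of the splash/dip literature (\cite{Fu}, \cite{Sab}, and Proposition 4.1 of \cite{FuRu}), whose proofs require either a genuine analysis of the disks appearing in a dipped diagram or an appeal to Chekanov's isotopy-invariance theorem applied to the dipping isotopy. Your proposed mechanism fails concretely: ``peeling off the rightmost feature'' of the front is not a Legendrian isotopy, so nothing relates the LCH DGA before and after the peel, and Chekanov's DGA is not even defined for the truncated front (an open tangle) without bordered machinery of the kind developed in \cite{Sivek11}; meanwhile ``one-dimensional subdivision invariance'' (Theorems \ref{thm:sub1cell} and \ref{thm:1val}) is a statement about the cellular side only and gives no control over the disk count, which is the side you need to localize. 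In addition, the bookkeeping of your reduction is oversimplified: in your decomposition every $0$-cell is a crossing or a cusp, so there are no ``regular'' vertices, and at a right cusp with $n$ strands on the adjacent edge there are $2n-3$ edge entries ($b_{k,k+1}$, $b_{i,k}$, $b_{i,k+1}$, $b_{k,j}$, $b_{k+1,j}$) with no partner among the $\binom{n-2}{2}$ entries of the cusp vertex matrix --- not just the single surviving generator you describe --- so the cancellation necessarily cascades across neighboring cells rather than being local to each vertex.

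The paper avoids all of this by running the comparison in the opposite direction and delegating the geometry to a citation. It takes the splashed front of $\Lambda$ (Legendrian isotopic to $\Lambda$, hence with stable-tame isomorphic LCH DGA by \cite{Chekanov02}), quotes the explicit combinatorial form of its LCH DGA from \cite{FuRu} --- matrices $x^{\pm}_{m;i,j}$ for each elementary tangle plus one generator $y_m$ per crossing and $z_m$ per right cusp --- and observes that under the identification $a^m_{i,j} \leftrightarrow x^+_{m;i,j}$, $b^m_{i,j} \leftrightarrow x^-_{m;i,j}$ this DGA differs from the cellular DGA only by finitely many extra generators at each crossing and right cusp, which Theorem \ref{thm:Alg} cancels in pairs, leaving precisely the cellular DGA. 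No disk is ever counted in the paper's proof. Your outline can be repaired in the same way: replace your induction by citing the splash computation --- but at that point your reduction of the cellular DGA to a minimal model becomes unnecessary, and you have reproduced the paper's argument run in reverse.
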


\begin{proof}
This can be deduced from a (greatly shortened) argument along the lines of the isomorphism between the Cellular DGA and LCH proved for $2$-dimensional Legendrians in \cite{RuSu2}.  

However, this is not necessary, since in the $1$-dimensional case the Cellular DGA is almost identical to the LCH DGA as computed after the addition of {\it splashes} or {\it dips}, \cite{Fu}, \cite{Sab}.  To give a definite isomorphism, we use the version of splashes from \cite{FuRu} where the generators and differentials for the LCH DGA of a $1$-dim Legendrian $\Lambda \subset J^1\R$ are stated in Propsition 4.1 of \cite{FuRu}.  There $\Lambda$ is broken into a product of elementary tangles containing either a single left cusp, right cusp, or crossing.    To the right of the $m$-th elementary tangle, a collection of splashes is given (for all $m$), and this produces two upper triangular matrices of generators labeled in \cite{FuRu} as $x^{+}_{m;i,j}$ and $x^{-}_{m;i,j}$.  In addition, when there is a crossing (resp. right cusp) in the $m$-th elementary tangle, there is a single generator $y_m$ (resp. $z_m$).

A $\Lambda$-compatible polygonal decomposition of $\pi_{x}(\Lambda)$ arises in an obvious way by taking the $0$-cells to occur at cusps and crossings.  Label the $0$- and $1$-cells, so that from left to right they appear as $A_1, B_2, A_2, B_3, A_3, \ldots, B_{N+1}, A_{N+1}$.  If we identify the Cellular DGA generators $a^m_{i,j}$ and $b^m_{i,j}$ with $x^+_{m;i,j}$ and $x^-_{m;i,j}$ respectively, then the differentials almost agree.  The differences are the following:
\begin{enumerate}
\item At $0$-cells with crossings, there are $2$ more generators, $y_m$ and $x^+_{m-1;k,k+1}$, in the LCH DGA.
\item At $0$-cells with right cusps, there are more generators, $z_m$, $x^+_{m-1;k,k+1}$, $x^+_{m-1;i,k+1}$, $x^+_{m-1;i,k}$,  $x^+_{m-1;k,j}$, and $x^+_{m-1;k+1,j}$.
\end{enumerate}
Repeated applications of Theorem \ref{thm:Alg} (that the reader may by now find routine) cancel these generators in pairs.  In this quotient of the LCH DGA, the differential agrees precisely with the Cellular DGA differential.

\end{proof}



We can now state the main proposition of Subsection \ref{ssec:FrontSpinning}

\begin{prop}
\label{prop:FrontSpin}
\begin{enumerate}
\item
Suppose $\Lambda$ has no arc components.
The DGA $(\mathcal{A}_L, \partial_{\mathcal{A}_L})$ is the suspension 
of $(\mathcal{A}_\Lambda, \partial_{\mathcal{A}_\Lambda}).$
\item
Suppose $\Lambda$ has arc components whose front projections have distinct endpoints.
The DGA $(\mathcal{A}_L, \partial_{\mathcal{A}_L})$ is the suspension 
of $(\mathcal{A}_\Lambda, \partial_{\mathcal{A}_\Lambda})$ relative to the sub-DGA  associated to the $\{x_1 = 0 = x_2\}$ 0-cell.
\item
Suppose $\Lambda$ has arc components whose front projections have $r$ pairs of matching endpoints.
The DGA $(\mathcal{A}_L, \partial_{\mathcal{A}_L})$ is the suspension 
of $(\mathcal{A}_\Lambda, \partial_{\mathcal{A}_\Lambda})$ relative to the sub-DGA associated to the $\{x_1 = 0 = x_2\}$ 0-cell,
with the following modification.
Let $A$ be the matrix of generators above the $\{x_1 = 0 = x_2\}$ 0-cell.
We replace $\Gamma(A) =0$ with $\Gamma(A) = U -I$ from equation (\ref{eq:MultipleU}).
\end{enumerate}

\end{prop}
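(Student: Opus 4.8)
The plan is to prove all three parts simultaneously by producing, for the spun cellular decomposition of $\pi_x(L)$ from Section \ref{sssec:GeometrySpin}, an explicit isomorphism of DGAs onto the (relative) suspension $(\hat{\mathcal{A}}_\Lambda, \partial_{\hat{\mathcal{A}}_\Lambda})$; Lemma \ref{lem:SuspensionSTI} and Theorem \ref{thm:IndCellD} then absorb the dependence on auxiliary choices. Recall each cell $e$ of $\pi_{x_1}(\Lambda)$ (other than the axis $0$-cell) gives two cells of $\pi_x(L)$: the embedded copy $e\subset\{x_2=0\}$ of the same dimension, and the spun cell $\hat e$ of one higher dimension. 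The first step is to set up the generator bijection: a generator of $\mathcal{A}_\Lambda$ attached to $e$ is identified with the matching generator of $\mathcal{A}_L$ attached to $e$, and its image $\Gamma(\cdot)$ under the suspension derivation is identified with the generator of $\mathcal{A}_L$ attached to $\hat e$. Since spinning raises cell dimension by one, the grading formula (\ref{eq:graddef}) shows the $\hat e$-generators sit in degree exactly one higher, matching the degree shift of the suspension. The axis $0$-cell, when present, has no spun partner, which is precisely the relative-suspension condition $\hat l=0$.

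I would next check that the embedded generators carry the differential $\partial_{\mathcal{A}_\Lambda}$. This is immediate: the sheets over an embedded cell $e$ coincide with those over $e$ in $\Lambda$, so the $0$-cell relation $\partial A=A^2$ of (\ref{eq:Adef}) and the $1$-cell relation $\partial B=A_+(I+B)+(I+B)A_-$ of (\ref{eq:Bdef}) are literally the defining relations of $\mathcal{A}_\Lambda$. Hence $\partial_{\mathcal{A}_L}$ restricts to $\partial_{\mathcal{A}_\Lambda}$ on the embedded generators, agreeing with $\partial_{\hat{\mathcal{A}}}(a_i)=\partial_{\mathcal{A}}(a_i)$.

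The heart of the argument is matching the spun generators with $\Gamma(\partial_{\mathcal{A}_\Lambda}(\cdot))$. For a $0$-cell $v$ the spun $1$-cell $\hat v$ is a loop with both endpoints equal to the embedded $0$-cell $v$, so (\ref{eq:Bdef}) gives $\partial B_{\hat v}=A_v(I+B_{\hat v})+(I+B_{\hat v})A_v=A_vB_{\hat v}+B_{\hat v}A_v$, which is exactly $\Gamma(A_v^2)$ after identifying $B_{\hat v}=\Gamma(A_v)$. For a $1$-cell $e$ the spun $2$-cell $\hat e$ is the annulus between the loops $\hat v_-,\hat v_+$ cut along the slit $e$; taking the initial and terminal vertices of $\hat e$ to be $e_-$ and $e_+$ and orienting $\hat v_\pm$ and $e$ compatibly, the boundary products in (\ref{eq:Cdef}) become $(I+B_{\hat v_+})(I+B_e)$ along $\gamma_+$ and $(I+B_e)(I+B_{\hat v_-})$ along $\gamma_-$, so that
\[
\partial C_{\hat e}=A_{e_+}C_{\hat e}+C_{\hat e}A_{e_-}+B_{\hat v_+}+B_{\hat v_-}+B_{\hat v_+}B_e+B_eB_{\hat v_-}.
\]
Expanding $\Gamma\big(A_{e_+}(I+B_e)+(I+B_e)A_{e_-}\big)$ by the derivation property over $\Z/2$ yields the same expression once $C_{\hat e}=\Gamma(B_e)$ and $B_{\hat v_\pm}=\Gamma(A_{e_\pm})$, which proves part (1). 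For parts (2) and (3) I would rerun this computation for the innermost spun $2$-cell, whose slit endpoint $e_-$ is the axis: the inner loop $\hat v_-$ degenerates to the axis point, so the $\gamma_-$ product loses its $B_{\hat v_-}=\Gamma(A_{e_-})$ term, which is exactly the rule $\Gamma(A_{\mathrm{axis}})=0$, giving (2); and in the presence of $r$ cone points, Propositions \ref{prop:coneDGA} and \ref{prop:ExtraCone} insert the matrix $U=U_r\cdots U_1$ of (\ref{eq:MultipleU}) into that boundary product in place of the degenerate inner loop, contributing $(I+B_e)(U-I)$, which matches the modified rule $\Gamma(A_{\mathrm{axis}})=U-I$, giving (3).

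The main obstacle I anticipate is the orientation and initial/terminal-vertex bookkeeping needed to turn the boundary products of the spun $2$-cells into clean sums rather than expressions involving inverses, together with verifying that the $U$-decorated corner of the cone-point decomposition lands on exactly the spun $2$-cell adjacent to the axis, so that the single factor $U-I$ appears in the correct position. A secondary technical point is the treatment of cusp $0$-cells of $\Lambda$, which spin to cusp-edge loops of $L$: one must confirm that the $2\times 2$ block insertions used to build the $A_\pm$ matrices are compatible under spinning, so that the matrix-level identification $B_{\hat v}=\Gamma(A_v)$ persists.
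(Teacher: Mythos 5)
Your proposal is correct and follows essentially the same route as the paper: the identical generator identification (embedded cells carry $\mathcal{A}_\Lambda$, spun cells carry the $\Gamma$-images, with the axis $0$-cell unpaired), the same observation that spun $1$-cells are loops so that (\ref{eq:Bdef}) gives $\Gamma(A_v^2)$, and the same boundary-product computation for spun $2$-cells — your annulus-with-slit formula is exactly the paper's polar-coordinate calculation, with the inner edge contributing $I$ in case (2) and $U$ in case (3). The only organizational difference is that the paper also verifies directly the identity $\hat{\partial}(U) = AU + UA$ (i.e.\ $\Gamma \partial = \hat{\partial}\Gamma$ at the axis matrix, by induction on the number of cone points), which guarantees the modified suspension is itself a DGA; in your setup this fact is instead inherited from $\partial_{\mathcal{A}_L}^2 = 0$ via the isomorphism, which is logically equivalent.
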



\begin{proof}

As we see from Sections \ref{sssec:0cells} and \ref{sec:ddef1cell} that for each generator of type $a_{i,j}$ and $b_{i,j}$ in $\mathcal{A}_\Lambda,$ the cellular  $\mathcal{A}_L$ has a pair of generators generators $a_{i,j}, \hat{a}_{i,j}$ and  $b_{i,j}, \hat{b}_{i,j}.$
A quick check shows $|\hat{x}_{i,j}| = |x_{i,j}| +1$ for either type of generator.
(If $A = (a_{i,j})$ sits above the 0-cell $\{x_1  = 0\}$ there are no new generators of type $\hat{a}_{i,j}.$)

Let $\hat{\partial}$ denote the suspension of $\partial := \partial_\Lambda,$ which a priori may not equal $\partial_L.$
We first verify that $\Gamma \partial = \hat{\partial} \Gamma.$ 
This holds by definition in case (1) and by a tautology in case (2), so consider case (3).
Recall $A = (a_{i,j})$ is the matrix of elements sitting over the  0-cell $\{x_1  = 0\}.$
\begin{eqnarray*}
\Gamma \partial (A) &= &\Gamma(A^2) = A \Gamma(A) + \Gamma(A) A = AU + UA,\\
\hat{\partial} \Gamma(A) & = & \hat{\partial}(U).
\end{eqnarray*}
Recall $U = U_r U_{r-1} \cdots U_1.$
Assume we have shown that $AU+UA = \hat{\partial}U$ if $r=1.$ 
Then the general case follows from induction: 
\[
\hat{\partial} (U_{r+1} U) = \hat{\partial}(U_{r+1}) U + U_{r+1} \hat{\partial}(U)  = AU_{r+1} U + U_{r+1} A U + U_{r+1} AU +U_{r+1} U A = A(U_{r+1} U) + (U_{r+1} U)A.
\]
Since $AI + IA = 0$ and $\hat{\partial} I = 0,$ from (\ref{eq:ConePointU}) it suffices to show
\[
\sum_{\ell < k} A E_{\ell, k}a_{\ell, k+1} + \sum_{k+1< \ell} A  E_{k+1,\ell} a_{k,\ell} + 
\sum_{\ell < k}  a_{\ell, k+1} E_{\ell, k} A+ \sum_{k+1< \ell} a_{k,\ell} E_{k+1,\ell}  A = 
\sum_{\ell < k}  \partial a_{\ell, k+1} E_{\ell, k} + \sum_{k+1< \ell} \partial a_{k,\ell} E_{k+1,\ell} .
\]
Recall $a_{k,k+1} = 0.$ 
Let $(M)_{ij}$ denote the $(i,j)$-entry of the $n \times n$ matrix $M$ and $\delta^i_j$ denote the Kronecker delta.
We compute each entry of each term in each of the four left hand side summands.
\begin{eqnarray*}
(A E_{\ell, k})_{ij} a_{\ell, k+1} & = & \delta^j_k a_{i, \ell} a_{\ell, k+1} \,\,\mbox{where} \,\, 1 \le i < \ell < k, \\
(A  E_{k+1,\ell})_{ij} a_{k,\ell} & = & \delta^j_\ell a_{i,k+1} a_{k,\ell} \,\,\mbox{where} \,\, 1 \le i < k, \,\, k+1 < \ell \le n, \\
a_{\ell, k+1} (E_{\ell, k} A)_{ij} & = & \delta^i_\ell a_{\ell, k+1} a_{k,j} \,\,\mbox{where} \,\, 1 \le \ell < k, \,\, k+1 <  j \le n,\\
a_{k,\ell} (E_{k+1,\ell}  A)_{ij} & = & \delta^i_{k+1} a_{k, \ell} a_{\ell, j}\,\,\mbox{where} \,\, k+1 < \ell < j \le n.
\end{eqnarray*}
So the second and third summands cancel, while the first and fourth match the first and second on the right hand side.

Let $X = (x_{i,j})$ and $\hat{X} = (\hat{x}_{i,j})$ denote the strictly upper-triangular matrices of generators as in Section \ref{ssec:diff},
modified appropriately if a crossing or cusp pair is involved.
Then $\partial_L A$ is given by (\ref{eq:Adef}), $\partial_L B, \partial_L \hat{A}$ are given by (\ref{eq:Bdef}), and $\partial_L \hat{B}$ is given by 
(\ref{eq:Cdef}).
We verify the identity for $\partial_L \hat{B},$ as $\partial_L \hat{A}$ is easier and $\partial_L A, \partial_L B$ are immediate.

Let $0 \ge t_1 > \ldots > t_M \in \R^1_{x_1<0} = \R^1_{r>0}$ denote the $0$-cells used to define the cellular DGA of $\Lambda.$
We use polar coordinates $(r,\theta) \in \R^1_+ \times [-\pi, \pi]$ for
$\R^2_{x_1,x_2}$ to refer to the $1$-cells (polar lines) and $2$-cells (polar rectangles) which define the cellular DGA of $L.$ 
For example, all 1-cells that contain cusp or crossing loci are on $\{r = \mbox{const}\}$ lines.
If a $1$-cell is indicated to lie on $\{\theta = +\pi\}$ (resp.  $\{\theta = -\pi\}$), this means that we consider the $1$-cell as the boundary of a $2$-cell lying counter-clockwise (resp. clockwise) to it.
We orient all $1$-cells in the positive polar coordinate direction.
Fix a $2$-cell $\{t_m \le r \le t_{m+1}\}$ with initial and terminal vertices $v_m = (t_m, -\pi),$ $v_{m+1}=(t_{m+1}, \pi).$
We use below that $B_{\{\theta = -\pi\}} =  B_{\{\theta = \pi\}},$  $\hat{B} = \Gamma(B_{\theta = \pm \pi})$ and $\hat{A}_{v_l} = B_{\{r= t_l\}}.$
Also, for the relative suspension case  (so $t_1 = 0$),  $B_{\{r= t_1\}} = \Gamma({A}_{v_1}) $ equals the 0 matrix when there are no cone points in case (2), and equals $U-I$ when there are in case (3).
\begin{eqnarray*}
\partial_L \hat{B} & = & A_{v_{m+1}} \hat{B} + \hat{B} A_{v_m} + (I + B_{\{r=t_{m+1}\}})(I + B_{\{\theta = -\pi\}}) + (I + B_{\{\theta = \pi\}})(I + B_{\{r = t_m\}})\\
& = & A_{v_{m+1}} \hat{B} + \hat{B} A_{v_m} + B_{\{r=t_{m+1}\}}B_{\{\theta = -\pi\}} + B_{\{\theta = \pi\}}B_{\{r = t_m\}} + B_{\{r=t_{m+1}\}}+ B_{\{r = t_m\}} \\
& = & \Gamma \left( A_{v_{m+1}} B_{\{\theta = -\pi\}} + B_{\{\theta = \pi\}}A_{v_m}  + A_{v_{m+1}} + A_{v_m} \right)\\
& = & \Gamma\left( \partial_\Lambda B_{\{\theta = \pi\}}\right).
\end{eqnarray*}
\end{proof}

\subsubsection{A comparison with older front-spun computations}

The first case (where there are no Legendrian arcs) is already known in the literature.
The second two cases (Legendrian arcs spinning to possibly induce cone points, or not) are new. 

\begin{corollary}
\label{cor:EkholmKalman}
The cellular DGA computation in the first example agrees with pre-existing computations of the LCH DGA, up to stable tame isomorphism.
\end{corollary}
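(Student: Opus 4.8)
The plan is to assemble the statement from the structural results already in hand, so that it reduces to matching our suspension construction against the doubling construction of Ekholm and K\'alm\'an \cite{EkholmKalman08}. I would first record the two sides of the comparison. On our side, Proposition \ref{prop:FrontSpin}(1) identifies the cellular DGA $(\mathcal{A}_L, \partial_{\mathcal{A}_L})$ of the front spin with the suspension of the cellular DGA $(\mathcal{A}_\Lambda, \partial_{\mathcal{A}_\Lambda})$ of the one-dimensional Legendrian $\Lambda$. On the pre-existing side, the computation of \cite{EkholmKalman08} expresses the LCH DGA of the front-spun torus as a doubling of the LCH DGA $(\mathcal{A}_{LCH,\Lambda}, \partial_{LCH,\Lambda})$ of $\Lambda$: each Reeb chord generator $c$ gives rise to a partner generator of degree $|c|+1$, and the differential of the partner is obtained from $\partial_{LCH,\Lambda} c$ by the derivation sending each generator to its partner.

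Next I would observe that this doubling is exactly the suspension of Section \ref{sssec:AlgebraSpin} applied to $(\mathcal{A}_{LCH,\Lambda}, \partial_{LCH,\Lambda})$: over $\mathbb{Z}/2$ the signs in the derivation $\Gamma$ are irrelevant, the degree shift $|\hat{c}| = |c|+1$ coincides, and the formula $\partial_{\hat{\mathcal{A}}}(\hat{c}) = \Gamma(\partial(c))$ matches term by term. Thus the front-spun LCH DGA of \cite{EkholmKalman08} is isomorphic to the suspension of the LCH DGA of $\Lambda$. Combining this with Proposition \ref{prop:1dimLCH_DGA}, which gives a stable tame isomorphism $(\mathcal{A}_\Lambda, \partial_{\mathcal{A}_\Lambda}) \simeq (\mathcal{A}_{LCH,\Lambda}, \partial_{LCH,\Lambda})$, and applying Lemma \ref{lem:SuspensionSTI} to pass to suspensions, yields a stable tame isomorphism between $(\mathcal{A}_L, \partial_{\mathcal{A}_L})$ and the front-spun LCH DGA, as required.

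The main obstacle is the verification in the previous paragraph that the geometric doubling of \cite{EkholmKalman08} really is the algebraic suspension, rather than any conceptual difficulty. This amounts to bookkeeping: confirming that the generator correspondence respects the gradings---in particular that the Maslov degree assigned to the new Reeb chords of the spun torus is $|c|+1$ and that the grading groups $\mathbb{Z}/m$ agree---and that the two differentials coincide after reduction mod $2$. Since both constructions operate on the same underlying generating set and each simply adjoins one degree-raised partner per generator, this comparison is routine, and the corollary follows by composing the identification with the stable tame isomorphism produced by Lemma \ref{lem:SuspensionSTI}.
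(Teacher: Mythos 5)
Your proposal is correct and follows essentially the same route as the paper: cite Ekholm--K\'alm\'an's Theorem 1.1 to identify the LCH DGA of the spun torus with the suspension of the LCH DGA of $\Lambda$, then chain this with Proposition \ref{prop:1dimLCH_DGA}, Lemma \ref{lem:SuspensionSTI}, and Proposition \ref{prop:FrontSpin}(1). The only difference is presentational---the paper takes the Ekholm--K\'alm\'an theorem as already phrased in terms of the suspension, whereas you flag the matching of their doubling construction with the suspension as a (routine) verification step.
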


\begin{proof}
A partial computation of the LCH DGA of this example was done, up to certain quadratic terms, in 
\cite[Proposition 4.17]{EkholmEtnyreSullivan05a}.
The complete LCH computation was done in \cite[Theorem 1.1]{EkholmKalman08}, which states that up to stable tame isomorphism,
the LCH DGA of $L$ is the suspension of the LCH DGA of $\Lambda.$
The result then follows from Lemma \ref{lem:SuspensionSTI} and Propositions \ref{prop:1dimLCH_DGA} and \ref{prop:FrontSpin}.
\end{proof}

\subsection{A family of examples} \label{sec:Lnsigma}

The following generalizes a family of Legendrian spheres considered in \cite{BST}.

We introduce Legendrians, $L_{n, \sigma, \mathbf{a}}$, where $n \geq 1$,
\begin{itemize}
\item $\sigma: \{ 1,2, \ldots, 2n \} \rightarrow \{1,2, \ldots, 2n\}$ is a permutation with the property that
\[
i<j \mbox{  and  } \sigma(i)>\sigma(j) \quad \Rightarrow \quad \mbox{$i$ even  and  $j$ odd},
\] 
\item and $\mathbf{a} = (a_1, a_2, \ldots, a_n) \in \Z^n$ satisfies $a_1 > a_2 > \ldots > a_n = 0$.
\end{itemize} 

To construct $L_{n, \sigma, \mathbf{a}}$, start with $n$ copies of the standard $1$-dimensional Legendrian placed vertically above one another so that their left and right cusps all share common $x$-coordinates.  For some pairs of unknots, the lower strand of the upper unknot crosses the upper strand of the lower unknot twice to form a Hopf link.  This is specified by the permutation $\sigma$ as follows:  Labeling the strands near left cusps in with descending $z$-coordinate as $1,2 \ldots, 2n$, for $i<j$ the $i$-th strand crosses the $j$-th strand twice if and only if $\sigma(i) > \sigma(j)$.  Arrange that the front diagram is symmetric, so that when we spin the front diagram around the midpoint (in the $x$-direction) it produces a stack of two dimensional unknots with one circular crossing arc for each $i<j$ with $\sigma(i)>\sigma(j)$.  Call this spun front $L_1$.

Next, to the right of the previously constructed front, we create one more Legendrian sphere, $L_2$.  Begin with a single $1$-dimensional unknot; apply $a_{\ms{1}}$ consecutive Type 1 Reidemeister moves to produce a front with $a_{\ms{1}} +1$ left cusps and $a_{\ms{1}}$ crossings. Then, spin around the midpoint to produce a $2$-dimensional Legendrian sphere that is $a_{{\ms{1}}}+1$ standard $2$-dimensional unknots joined together by a sequence of $a_{\ms{1}}$ cone points.

Finally, to arrive at $L_{n, \sigma, \mathbf{a}}$, we connect these two pieces by joining the $i$-th unknot from $L_1$ to the $a_i$-th cusp edge from $L_2$ by a tube whose cross-sections are Legendrian unknots.  Here, we label the unknots of $L_1$ with descending $z$-coordinate, but label the cusp edges of $L_2$ with ascending $z$-coordinate, starting with $0$.  See Figure \ref{fig:LSigma}.

\begin{figure}

\centerline{ \includegraphics[scale=.5]{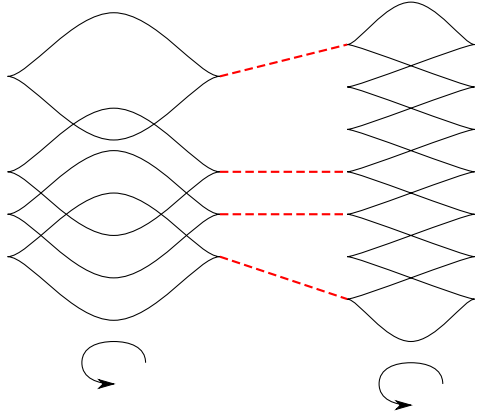} } 

\caption{Construction of $L_{n, \sigma, \mathbf{a}}$ when $n = 4$,  
$\sigma(1) = 1,$ $\sigma(2) = 3,$ $\sigma(3) = 2,$ $\sigma(4) = 6,$ $\sigma(5) = 4,$ $\sigma(6) = 7,$ $\sigma(7) = 5,$ $\sigma(8) = 8$
and $\mathbf{a} = (a_1, a_2, a_3, a_4) = (6, 3, 2, 0)$. The dotted red lines, are tubes with cusp edges.}

\label{fig:LSigma}
\end{figure}

\begin{theorem}  \label{thm:Lnsigma}
The DGA of $L_{n, \sigma, \mathbf{a}}$ is equivalent to a DGA with generators
\[
z, \, b_{i,j}, c_{\sigma(j),\sigma(i)},  \quad \mbox{for $i<j$ with $\sigma(i)>\sigma(j)$}
\]
and differentials
\begin{align*}
\partial z = & \sum_{\stackrel{i<j}{\sigma(i)>\sigma(j)}} \left( b_{i,j}c_{\sigma(j),\sigma(i)} + c_{\sigma(j),\sigma(i)} b_{i,j} \right) \\
\partial b_{i,j} = &  \sum_{\begin{array}{c} i<m<j \\ \sigma(i) > \sigma(m-1) \\ \sigma(m) > \sigma(j) \end{array}} 
b_{i,m-1}b_{m,j} \\
\partial c_{\sigma(j),\sigma(i)} = &  \sum_{\begin{array}{c} i<j<m \\ \sigma(j) < \sigma(m) < \sigma(i) \end{array}} b_{j+1,m}c_{\sigma(m),\sigma(i)} 
+ \sum_{\begin{array}{c} m<i<j \\  \sigma(j) < \sigma(m) < \sigma(i) \end{array}} c_{\sigma(j),\sigma(m)}b_{m,i-1}.
\end{align*}
The grading is given by
\[
|z| = 2, \quad |b_{i,j}| = 2(a_k-a_l)-1, \quad |c_{\sigma(j),\sigma(i)}| = 2(a_l-a_k) +2
\]
where $i = 2k$ and $j = 2l -1$.
\end{theorem}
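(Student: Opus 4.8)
The plan is to exploit the fact that $L_{n,\sigma,\mathbf{a}}$ is, by its very construction, a front spin, so that Proposition \ref{prop:FrontSpin} together with the suspension bookkeeping of Section \ref{sssec:AlgebraSpin} reduces the problem to a one-dimensional computation plus a controlled simplification. First I would exhibit the $1$-dimensional Legendrian $\Lambda = \Lambda_{n,\sigma,\mathbf{a}}$ whose front spin is $L$. Its front in $\{x_1 \le 0\}$ consists of the $n$ nested unknot-arcs with endpoints on the axis (these spin to the spheres of $L_1$), clasped in pairs according to $\sigma$ so that each inversion $i<j$, $\sigma(i)>\sigma(j)$ is a clasp (a double crossing) between strands $i$ and $j$; together with the arc carrying the $a_1$ Reidemeister-$1$ kinks and the connecting arcs realizing the tubes. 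The matching front endpoints on $\{x_1=0\}$ are exactly the cone points of $L_2$, so the relevant case is Proposition \ref{prop:FrontSpin}(3), the relative suspension with the cone modification $\Gamma(A)=U-I$ and $U$ as in (\ref{eq:MultipleU}). The constraint that inversions have $i$ even and $j$ odd is precisely what makes this front a legitimate clasped stack of unknots.

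Next I would compute the cellular DGA of the $1$-dimensional $\Lambda$. By Proposition \ref{prop:1dimLCH_DGA} this agrees with the LCH DGA, and a standard clasp computation—canceling the auxiliary dip/splash generators in pairs via Theorem \ref{thm:Alg}—leaves, for each inversion, the two crossing generators of the clasp, whose degrees are dual and sum to $-1$, together with a single fundamental generator $w$ (degree $1$) governing the connected cone tower coming from $L_2$. The differential of a crossing generator is the composition formula coming from $\partial A = A^2$ in (\ref{eq:Adef}), while $\partial_\Lambda w$ records the pairing of the two clasp crossings through the tubes.

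I would then apply Proposition \ref{prop:FrontSpin}(3): $\mathcal{A}_L$ is the suspension of $\mathcal{A}_\Lambda$ relative to the axis $0$-cell, so every $1$-dimensional crossing generator acquires its degree-raised partner $\hat{(\cdot)}$, and these partners are the circular-crossing-arc generators, which I identify with $b_{i,j}$ and $c_{\sigma(j),\sigma(i)}$. The relabeling $\sigma(j),\sigma(i)$ records that the two sheets swap positions across a crossing arc, and Proposition \ref{prop:ExtraCrossing} justifies collecting all clasp crossing arcs over a single annular neighborhood; the surviving degree-$2$ generator $z$ is $\hat{w}$. Running Theorem \ref{thm:Alg} to cancel all unsuspended crossing generators and remaining auxiliary generators in pairs, and substituting the resulting relations back into $\partial\hat{w}$, $\partial b_{i,j}$, and $\partial c_{\sigma(j),\sigma(i)}$, should yield $\partial z = \sum (b_{i,j}c_{\sigma(j),\sigma(i)} + c_{\sigma(j),\sigma(i)}b_{i,j})$ and the stated composition formulas; Lemma \ref{lem:SuspensionSTI} guarantees independence from the $1$-dimensional simplification choices. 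The gradings follow from the Maslov potential, a crossing's degree being the difference of the potentials of its two sheets, which for the clasp between the unknots meeting the $a_k$- and $a_l$-th cusp levels (so $i=2k$, $j=2l-1$) gives $2(a_k-a_l)-1$ for $b_{i,j}$ and its dual $2(a_l-a_k)+2$ for $c_{\sigma(j),\sigma(i)}$, the factor $2$ reflecting the doubling of the potential under spinning across the connecting tubes.

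The main obstacle is the index bookkeeping during the cancelation: I expect the delicate point to be verifying that the shifts produced by Theorem \ref{thm:Alg} (the $m-1$ appearing in $\partial b_{i,j}$ and the $\sigma$-relabelings in $\partial c_{\sigma(j),\sigma(i)}$) match exactly the summation ranges in the statement, and that no stray terms survive. This is where the hypothesis on $\sigma$ and the strict ordering $a_1 > \cdots > a_n = 0$ must be used carefully, to confirm that the triangles contributing to each differential are precisely those enumerated and that exactly the listed generators remain after the simplification.
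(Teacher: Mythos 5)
Your proposal founders at its first step: $L_{n,\sigma,\mathbf{a}}$ is not a front spin ``by its very construction,'' and it is not presented as one anywhere in the paper. The construction spins $L_1$ about one vertical axis and spins $L_2$ about a \emph{different} axis lying to its right, and then joins the two pieces by tubes that are local --- they do not wrap around either axis. The resulting front is therefore not rotationally symmetric about any single axis, which is precisely the hypothesis needed to invoke Proposition \ref{prop:FrontSpin}; one sees this already in the base projection of Figure \ref{fig:LnSigma}, where the cusp locus of $L_1$ together with the tubes projects to an \emph{arc} attached to the $L_2$ circle, rather than to a family of circles concentric about the cone point. To salvage your approach you would have to produce a Legendrian isotopy carrying $L_{n,\sigma,\mathbf{a}}$ to a genuine surface of revolution; neither the paper nor your sketch supplies such an isotopy, and this is a substantial geometric claim, not index bookkeeping.

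Even granting such an isotopy, the cross-section $\Lambda$ you propose is not the right one. You clasp strands $i$ and $j$ by a \emph{double} crossing inside the half-plane $\{x_1\le 0\}$; spinning that picture produces \emph{two} concentric circular crossing arcs for each inversion, whereas $L_{n,\sigma,\mathbf{a}}$ has exactly \emph{one} circular crossing arc per inversion (in the paper's construction the two crossings of each clasp are mirror-symmetric about the spinning axis, so only one of them lies in the half-plane that gets spun). Hence the spin of your $\Lambda$ has twice as many crossing-circle generators as $L_{n,\sigma,\mathbf{a}}$, and the suspension-plus-cancellation scheme cannot recover the stated DGA. Your grading argument is also incorrect: suspension shifts degrees by exactly $1$, and the quantities $2(a_k-a_l)$ in $|b_{i,j}|$ and $|c_{\sigma(j),\sigma(i)}|$ arise from Maslov potential differences between sheets of $L$, computed along paths running through the tubes and around the lobes of $L_2$, not from any ``doubling of the potential under spinning.'' For contrast, the paper's proof never leaves dimension two: it applies the extended cellular DGA of Propositions \ref{prop:ExtraCrossing} and \ref{prop:ExtraCone} to the decomposition of Figure \ref{fig:LnSigma} (all clasp crossing arcs lying over one closed curve, all cone points over one $0$-cell), and then cancels generators with Theorem \ref{thm:Alg} in two stages, first over the $L_2$ region and then over the $L_1$ region.
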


The crossing configurations that lead to terms in $\partial$ are depicted in Figure \ref{fig:Config}.
{\ms{We prove Theorem \ref{thm:Lnsigma} at the end of this subsection.}}

\begin{figure}

\labellist
\small
\pinlabel $\partial~z=$ [r] at -5 344
\pinlabel $i$ [l] at 64 320
\pinlabel $j$ [l] at 64 368
\pinlabel $\big(b_{i,j}c_{j,i}+c_{j,i}b_{i,j}),$ [l] at 90 344
\pinlabel $\partial~b_{i,j}=$ [r] at 470 344
\pinlabel $i$ [l] at 586 320
\pinlabel $j$ [l] at 586 352
\pinlabel $m-1$ [l] at 586 392
\pinlabel $m$ [l] at 586 280
\pinlabel $b_{i,m-1}b_{m,j},$ [l] at 614  344
\pinlabel $\partial~c_{j,i}=$ [r] at 4 94
\pinlabel $j$ [l] at 130 152
\pinlabel $i$ [l] at 130 82
\pinlabel $j+1$ [l] at 130 42
\pinlabel $m$ [l] at 130 114
\pinlabel $b_{j+1,m}c_{m,i}+$ [l] at 194 94
\pinlabel $i$ [l] at 482 40
\pinlabel $j$ [l] at 482 0
\pinlabel $m$ [l] at 482 82
\pinlabel $i-1$ [l] at 482 152
\pinlabel $c_{j,m}b_{m,i-1},$ [l] at 532  94

\endlabellist
\centerline{ \includegraphics[scale=.4]{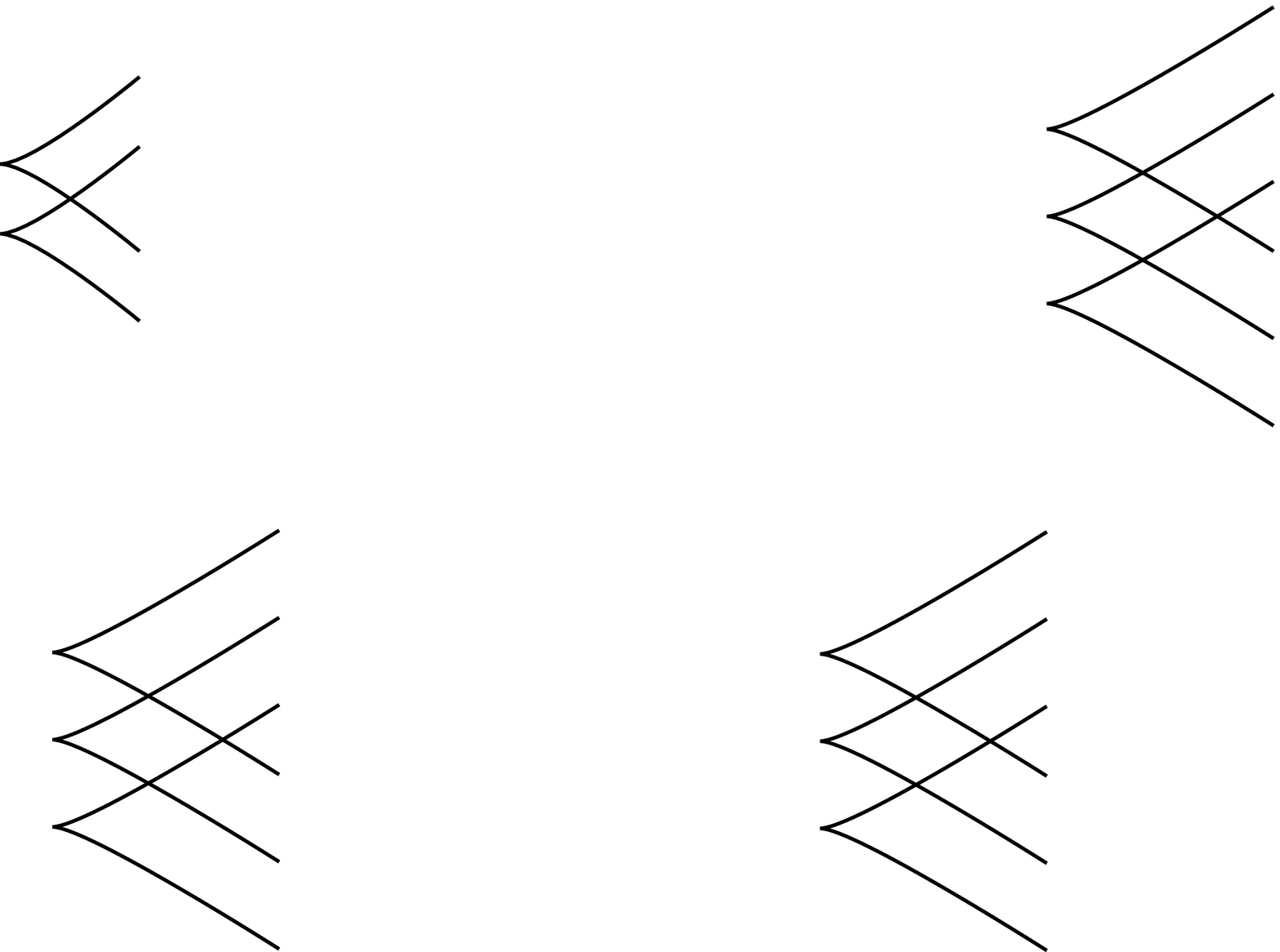} } 
\caption{The differential for $L_{n,\sigma, \mathbf{a}}$.}
\label{fig:Config}
\end{figure}

Among the $L_{n,\sigma, \mathbf{a}}$, there are many examples of pairs of Legendrians whose linearized homology groups have identical ranks, but are distinguished by the ring structure on linearized cohomology.  We briefly recall the definition of the ring structure.  Let $\epsilon: (\A,\partial) \rightarrow (\Z/2,0)$ be a graded augmentation, and write $\A = \oplus_{n=0}^\infty V^{\otimes n}$ where $V$ denotes the linear $\Z/2$-span of the generators of $\A$. 
The conjugated differential
\[
\partial^\epsilon = \Phi_\epsilon \partial (\Phi_\epsilon)^{-1}
\]
has the form
\[
\partial^\epsilon|_{V} = d_1 + d_2 +d_3 +\cdots,  \quad \mbox{with $d_{n}:V \rightarrow V^{\otimes n}$.}
\]
Dualizing gives operations
\[
m_n: (V^*)^{\otimes n} \rightarrow V^*.
\]
The linearized cohomology complex is $(V^*, m_1)$, and the operation $m_2$ descends to a well defined associative product of degree $+1$ on the linearized cohomology
\[
LCH^*(L, \epsilon) := H^*(V^*,m_1).
\]
The set of isomorphism types of the rings 
\[
\{LCH^*(L,\epsilon) \, | \, \epsilon \mbox{ any graded augmentation of $\A$} \}
\]
depends only on the stable tame isomorphism type of $(\A, \partial)$ and is hence a Legendrian isotopy invariant \cite{CEKSW}.

Observe that for any $L_{n, \sigma, \mathbf{a}}$ there is at least one augmentation, $\epsilon_0$, such that $m_1 =0$.  (Take $\epsilon_0$ to vanish identically on generators.)  Moreover, for $L_{n,\sigma, \mathbf{a}}$, the operation $m_2$ is independent of $\epsilon$ (due to the absence of monomials of degree $3$ or more in $\partial$).  Thus, the ring structure on $LCH^*(L,\epsilon_0)$ is independent of the choice of $\epsilon_0$ (with $m_1=0$).  This ring $LCH^*(L_{n,\sigma, \mathbf{a}},\epsilon_0)$ is an invariant of $L_{n,\sigma, \mathbf{a}}$.  

For example, consider the case where $\sigma = \sigma_0$ is 
\[
\sigma_0(1)=1; \quad \sigma_0(2k) = 2k+1;  \quad \sigma_0(2k+1) = 2k;\,\, 1\leq k \leq n-1, \quad \sigma_0(2n) = 2n.
\]
the Poincare polynomial of $LCH^*(L_{n,\sigma_0, \mathbf{a}},\epsilon_0)$ is 
\[
P(t)= \sum_{k \in \Z} (\dim LCH^k(L_{n,\sigma_0, \mathbf{a}},\epsilon_0))t^k = t^2 + \sum_{l=1}^{n-1} \left( t^{2(a_l-a_{l+1})-1} + t^{2(a_{{\ms{l}}+1}-a_{\ms{l}})+2}\right). 
\]
By choosing the sequence $\mathbf{a}= (a_1,a_2, \ldots, a_n)$ appropriately, this can be an arbitrary polynomial of the form
\begin{equation} \label{eq:Poincare}
P(t) = t^2 + \sum_{l=1}^N \left(t^{2c_l-1} + t^{2-2c_l}\right)
\end{equation}
for any $c_1, \ldots, c_N >0$.

\begin{proposition}
\begin{enumerate}
\item For arbitrary $L_{n, \sigma, \mathbf{a}}$, there exists $n'$ and $\mathbf{a}'$ such that the groups
\[
LCH^*(L_{n, \sigma, \mathbf{a}}, \epsilon_0)  \quad \mbox{and} \quad LCH^*(L_{n',\sigma_0, \mathbf{a}'},\epsilon_0)
\]
have the same Poincare polynomial.  

\item If there exists $i<j$ with $|i-j| \geq 2$ such that $\sigma(i) > \sigma(j)$, then these groups are not isomorphic as rings.  In particular, $L_{n, \sigma, \mathbf{a}}$ and $L_{n', \sigma_0, \mathbf{a}'}$ {\ms{are not Legendrian isotopic}}.
\end{enumerate}
\end{proposition}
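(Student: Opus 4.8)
The plan is to read everything off the explicit formulas in Theorem \ref{thm:Lnsigma}, working throughout with the zero augmentation $\epsilon_0$. The crucial structural feature is that every differential in that DGA is a sum of \emph{quadratic} monomials: the right-hand sides of $\partial z$, $\partial b_{i,j}$ and $\partial c_{\sigma(j),\sigma(i)}$ all have word length exactly two. Since $\epsilon_0$ vanishes on generators, the conjugated differential $\partial^{\epsilon_0}$ equals $\partial$ and has no linear part, so $m_1=0$ and $LCH^*(L_{n,\sigma,\mathbf a},\epsilon_0)=V^*$ as a graded space, with product $m_2$ dual to $\partial$. The basic observation I would exploit is that the square $R^2$ of the (non-unital) ring $R:=LCH^*(L_{n,\sigma,\mathbf a},\epsilon_0)$, namely the span of all products $m_2(\xi,\eta)$, is exactly $\mathrm{span}\{w^*: \partial w\neq 0\}$, because $\langle m_2(\xi\otimes\eta),w\rangle=\langle \xi\otimes\eta,\partial w\rangle$.

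For part (1) I would just compute the Poincaré polynomial. As $m_1=0$, it is the generating function of the generator degrees: $z$ contributes $t^2$, and each inversion $(i,j)=(2k,2l-1)$ of $\sigma$ contributes $t^{2(a_k-a_l)-1}+t^{2(a_l-a_k)+2}$. Writing $c=a_k-a_l$, the inequality $i<j$ forces $k<l$ and hence $c>0$, so the polynomial has exactly the shape \eqref{eq:Poincare}. The realizability remark preceding the proposition then furnishes $n'$ and $\mathbf a'$ so that $L_{n',\sigma_0,\mathbf a'}$ has the same Poincaré polynomial.

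For part (2) I would separate the two rings using the invariant $\dim_{\Z/2}R^2$. For $\sigma_0$ the only inversions are the adjacent pairs $(2k,2k+1)$; one checks straight from the formulas that then $\partial b_{2k,2k+1}=0$ and $\partial c_{2k,2k+1}=0$, so $z$ is the only generator with nonzero differential and $R^2=\langle z^*\rangle$ is one-dimensional. For a general $\sigma$ with a non-adjacent inversion $(i,j)$ (so $j-i\geq 3$, the gap being odd), the key computational step is to produce a quadratic term in $\partial b_{i,j}$. This follows from the defining constraint on $\sigma$: within $[i,j]$ no two odd positions and no two even positions can invert, so $\sigma$ is strictly increasing on the odd positions and on the even positions of $[i,j]$. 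Hence $\sigma(i+1)<\sigma(j)<\sigma(i)$ and $\sigma(i+2)>\sigma(i)>\sigma(j)$, which makes $(i,i+1)$ and $(i+2,j)$ inversions and places $m=i+2$ in the index set of $\partial b_{i,j}$; thus $b_{i,i+1}b_{i+2,j}$ is a term of $\partial b_{i,j}$ and $b_{i,j}^*\in R^2$. Since also $z^*\in R^2$ and $|b_{i,j}^*|$ is odd while $|z^*|=2$, these are independent, so $\dim R^2\geq 2$.

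Finally I would package this as a ring invariant and conclude. Any ring isomorphism $\phi\colon R\to R'$ satisfies $\phi(R^2)=R'^2$, so $\dim R^2$ is preserved; comparing $1$ with $\geq 2$ shows the two $\epsilon_0$-rings are not isomorphic. To upgrade to non-isotopy I would invoke the invariance of the \emph{set} of ring isomorphism types $\{LCH^*(L,\epsilon)\}$ (from \cite{CEKSW} together with Theorem \ref{thm:FirstStatement}): for $\sigma_0$ a one-line computation shows $d_1^{\epsilon}=0$ for \emph{every} graded augmentation (the two $\epsilon$-corrected terms in $\partial z$ cancel over $\Z/2$) and $m_2$ is $\epsilon$-independent, so every ring in the set for $L_{n',\sigma_0,\mathbf a'}$ has $\dim R^2=1$, whereas the set for $L_{n,\sigma,\mathbf a}$ contains a ring with $\dim R^2\geq 2$. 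The main obstacle is the combinatorial lemma that a single non-adjacent inversion forces a quadratic term in $\partial b_{i,j}$; once that is in hand, the rest is bookkeeping.
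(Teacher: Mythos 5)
Your proposal is correct and takes essentially the same approach as the paper: part (1) via the form of the Poincar\'e polynomial together with the realizability remark, and part (2) by comparing the dimension of the image of $m_2$ (your $R^2$), which is $\langle z^*\rangle$ for $\sigma_0$ but also contains the odd-degree element $b_{i,j}^*$ whenever a non-adjacent inversion exists. Your explicit choice $m=i+2$, justified by the observation that $\sigma$ is increasing on even positions and on odd positions, supplies exactly the witness $\sigma(m-1)<\sigma(j)<\sigma(i)<\sigma(m)$ whose existence the paper merely asserts, and your check that $d_1^{\epsilon}=0$ for every graded augmentation of the $\sigma_0$ model makes the final non-isotopy conclusion slightly more self-contained than the paper's.
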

 
{\ms{Recall that for all $2$-dimensional Legendrian spheres in standard contact $\R^5,$ such as $L_{n, \sigma, \mathbf{a}},$ the ``classical" Legendrian invariants do not contain any information: there is only one smooth isotopy class,  the Thurston-Bennequin number is 1, and the rotation class vanishes \cite{EkholmEtnyreSullivan05a}.}}

\begin{proof}
(1) follows since for arbitrary $L_{n, \sigma, \mathbf{a}}$ the Poincare polynomial has the form given in (\ref{eq:Poincare}).

(2) follows since in $LCH^*(L_{n',\sigma_0, \mathbf{a}'},\epsilon_0)$, the image of $m_2$ is $1$-dimensional.  (It is the span of $z^*$.)  However, when $|i-j| \geq 2$ has $i<j$ and $\sigma(i)>\sigma(j)$, there is some $i<m-1<m<j$ such that $\sigma(m-1)<\sigma(j)<\sigma(i)<\sigma(m)$.  This implies that $m_2(b^*_{i,m-1},b^*_{m,j})= b^*_{i,j}$, so that the image of $m_2$ has dimension $2$ or more. 
\end{proof}

As a specific example where (2) applies, consider 
\[
\begin{array}{ccl} n=3, & \sigma = (2 \, 4 \, 5 \, 3),  & \mbox{and $\mathbf{a} = (2,1,0)$;} \\
n'=4, & \sigma_0 = (2 \, 3) (4\, 5) (6 \, 7), & \mbox{and $\mathbf{a}'= (4,2,1,0)$}.
\end{array}
\] 

\begin{remark}
In the case of $\sigma = \sigma_0$ and $n=2,$ the Legendrians $L_{2,\sigma_0, \mathbf{a}}$ are considered in \cite[Section 6]{BST} with the  generating family homology computed.  After correcting for a small error in the computation in \cite{BST} of the Maslov potential, the generating family homology has the same Poincare polynomial as $LCH^*(L_{2,\sigma_0, \mathbf{a}},\epsilon_0)$.   
\end{remark}

\begin{proof}[Proof of Theorem \ref{thm:Lnsigma}]
We use the extended version of the cellular DGA from Propositions \ref{prop:ExtraCrossing} and \ref{prop:ExtraCone}, so we work with a decomposition of $\pi_x(L_{n, \sigma, \mathbf{a}})$ where all of the crossing arcs (resp. cone points) of $L_1$ (resp. $L_2$)  correspond to a single circle (resp. single $0$-cell) in the $1$-skeleton of $\mathcal{E}$.  Moreover, by methods similar to those used in the proof of Proposition \ref{prop:ExtraCrossing}, we can position the cusp arcs so that together their projection is as in Figure \ref{fig:LnSigma}.  
In more detail, the cusp edges of the $L_2$ part of $L_{n, \sigma, \mathbf{a}}$ all project to a circle in $\R^2$; the cusp edges that are part of $L_{n, \sigma, \mathbf{a}}$  to the left of $L_2$ (i.e.,  $L_1$ together with the tubes connecting $L_1$ to $L_2$) project to an arc with two end points on the $L_2$ circle.
This leads to the cellular decomposition pictured in Figure \ref{fig:LnSigma}.  Since the cusp edges above any given edge all involve distinct pairs of sheets, the only modification to the usual definition of the DGA is that when computing the differential of bordering cells, at $0$-cells (resp. $1$-cells)  we insert one $N = \left[\begin{array}{cc} 0 & 1 \\ 0 & 0 \end{array} \right]$ block (resp. $2\times 2$ $0$ block) for every pair of sheets that cusp above the $0$-cell.  In particular, above the $0$-cells $v_0$ and $v_1$ the associated matrices have $N$ blocks all along the diagonal.    

\begin{figure}

\labellist
\small
\pinlabel $C_1$  at 80 82
\pinlabel $C_2$ at 186 106
\pinlabel $C_3$  at 330 116 
\pinlabel $B_1$ [bl] at 180 62
\pinlabel $A_1$ [r] at 116 82 
\pinlabel $v_0$ [tr] at 240 36
\pinlabel $v_1$ [b] at 232 128
\pinlabel $B_2$ [tr] at 60 45 
\pinlabel $A_2$ [l] at 310 82
\pinlabel $B_3$ [l] at 228 92
\pinlabel $B_4$ [l] at 280 62

\endlabellist
\centerline{ \includegraphics[scale=.7]{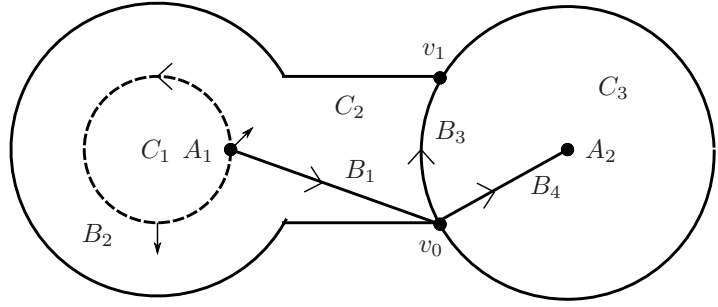} } 
\caption{The polygonal decomposition for $L_{n,\sigma, \mathbf{a}}$.}
\label{fig:LnSigma}
\end{figure}

\medskip

\noindent {\bf Step 1.}  {\it Canceling generators in the $L_2$ part of $L$.}

\medskip

Above $C_3$, $L$ has $2a_1+2$ sheets.  We have differentials
\begin{align}
\label{eq:B4diff} \partial B_4 & = N (I+B_4) +(I+B_4) A_2 \\
\label{eq:C3diff} \partial C_3 & = N C_3 + C_3 N + (I + \widehat{B}_3) + (I+B_{4}) U (I+B_4)^{-1}  
\end{align}
where
\begin{equation} \label{eq:Udiff}
U = U_{a_1}U_{a_1-1} \cdots U_1, \quad U_p = I +  \sum_{l< 2p} a^{\ms{2}}_{l,2p+1} E_{l,2p} + \sum_{2p+1 < l} a^{\ms{2}}_{2p,l} E_{2p+1,l}, \quad N = \sum_{k=1}^{a_1+1} E_{2k-1,2k}.
\end{equation}
That is, $N$ is block diagonal with the $2\times 2$ blocks $\left[ \begin{array}{cc} 0 & 1 \\ 0 & 0 \end{array} \right]$ along the diagonal.  

Note that the matrix $A_2$ has $0$'s in the $(2,3), (4,5), \ldots, (2a_1,2a_1+1)$ entries.  Therefore, we compute from (\ref{eq:B4diff})
\[
\partial b^4_{2k,2k+1} = 0, \quad \mbox{for $ 1\leq k \leq a_1$.}
\]
Thus, we can adjust our usual triangular partial ordering of generators (as in Lemma \ref{lem:precA}), $\prec_\A$, by taking these $b^4_{2k,2k+1}$ to be the smallest generators.  With this adjustment to the ordering, for all $i<j$ with $(i,j) \neq (2k,2k+1)$, we can cancel
\[
\partial \framebox{$b^4_{i,j}$} = \framebox{$a^2_{i,j}$} + o(a^2_{i,j})
\]
where (canceling with $j-i$ increasing), the $o(a^2_{i,j})$ term belongs to the sub-algebra generated by generators that are strictly smaller than $a^2_{i,j}$ with respect to $\prec$.  In the quotient, we have
\[
B_4 \doteq \sum_{k=1}^{a_1} b^4_{2k,2k+1} E_{2k,2k+1}; \quad  \partial B_4 \doteq 0,
\]
and the latter equality shows that
\begin{equation}  \label{eq:A2IB4}
A_2 \doteq (I+B_4)^{-1} N(I+B_4) \doteq (I+\sum_{k=1}^{a_1} b^4_{2k,2k+1} E_{2k,2k+1}) \left(\sum_{l=1}^{a_1+1} E_{2l-1,2l} \right) (I+\sum_{k=1}^{a_1} b^4_{2k,2k+1} E_{2k,2k+1}) 
\end{equation}

From (\ref{eq:Udiff}) and (\ref{eq:A2IB4}), we observe that the $(2l-1,2l)$-entries of $U$ are given by
\[
\begin{array}{lll}
(U)_{1,2} & = a^2_{1,3} & \doteq b^4_{2,3}, \\
(U)_{2l-1,2l} & = a^2_{2l-1,2l+1} +a_{2l-2,2l} & \doteq b^4_{2l, 2l+1} + b^4_{2l-2,2l-1}, \\
(U)_{2a_1+1,2a_1+2} & = a^2_{2a_1,2a_1+2} & \doteq b^4_{2a_1,2a_1+1}.
\end{array}
\]

Thus, for $1 \leq l \leq a_1$ we can cancel 
\[
\partial \framebox{$c^3_{1,2}$} \doteq \framebox{$ b^4_{2,3}$} + (\widehat{B_3})_{1,2},
\]
\[
\partial \framebox{$c^3_{2l-1,2l}$} \doteq \framebox{$ b^4_{2l,2l+1}$} + b^4_{2l-2,2l-1} + (\widehat{B_3})_{2l-1,2l}.
\]
We DO NOT cancel $c^3_{2a_{1}+1,2a_{1}+2}$.  Instead, set $z = c^3_{2a_{1}+1,2a_{1}+2}$, and observe that
\begin{equation} \label{eq:Zdiff}
\partial z \doteq (\widehat{B_3})_{1,2}+(\widehat{B_3})_{3,4}+ \cdots+  (\widehat{B_3})_{2a_{1}+1,2a_{1}+2}.
\end{equation}

Finally, we cancel all remaining $C_{\ms{3}}$ generators,  with themselves; note that (\ref{eq:C3diff}) gives for $i<2l<2l+1 < j$,
\[
\partial \framebox{$c^{\ms{3}}_{i,2l+1}$} = \framebox{$c^{\ms{3}}_{i,2l}$} + X;  \quad \partial \framebox{$c^{\ms{3}}_{2l,j}$} = \framebox{$c^{\ms{3}}_{2l+1,j}$} + Y
\]
where $X$ belongs to the sub-algebra generated by generators $x$ satisfying $x \prec_\mathcal{A} c^{\ms{3}}_{i,2l}$, and similar for $Y$.  

To summarize, the only generator remaining from the matrices $A_2, B_4, C_3$ is $z= c^3_{2a_{1}+1,2a_{1}+2}$ whose differential is given by (\ref{eq:Zdiff})

\medskip

\noindent {\bf Step 2.}  {\it Canceling generators in the $L_1$ part of $L$.}

\medskip

Above $L_1$, $L$ has $2n$ sheets.  The matrices $B_1,B_3, C_2, C_3$ have generators in all of there entries with $1 \leq i< j \leq n$.  The matrices $A_1$ and $B_2$  have generators $a^1_{i,j}$ and $b^2_{i,j}$ in the entries with $i<j$ and $\sigma(i) < \sigma(j)$, with the remaining entries $0$ (as specified Proposition \ref{prop:ExtraCrossing}).  
{\ms{There are no generators associated to the vertices $v_0$ and $v_1.$}}
As indicated in Figure \ref{fig:LnSigma}, we label sheets above these cells as they appear above $C_3$.  The notation for these matrices is fixed.  We have differentials
\begin{align}
\partial B_1 & = A_1 (I+B_1) + (I+B_1) N, \\
\partial C_1 & = Q_\sigma A_1 Q^{-1}_\sigma C_1 + C_1  Q_\sigma A_1 Q^{-1}_\sigma +  Q_\sigma (I+B_2) Q^{-1}_\sigma + I,  \label{eq:DiffC11}\\
\partial C_2 & = N C_2 + C_2 N + (I+B_3) + (I+B_1)(I+B_2)(I+B_1)^{-1}.
\end{align}
Recall $Q_\sigma = \Sigma E_{\sigma(i), i}$ so that the $(i,j)$-entry of $Q_\sigma A_1 Q^{-1}$ is $a^1_{\sigma^{-1}(i), \sigma^{-1}(j)}.$

Start by canceling
\[
\partial \framebox{$C_2$} = \framebox{$B_3$} + \cdots.
\]
Observe, that in the quotient
\begin{equation} \label{eq:doteq}
(B_3)_{i,i+1} \doteq (B_2)_{i,i+1}
\end{equation}
(because the $(i,i+1)$-entry of  $NC_3$ and $C_3N$ is $0$ since both factors are upper triangular, and the $(i,i+1)$-entry of $(I+B_1)(I+B_2)(I+B_1)^{-1}$ is the sum of the $(i,i+1)$-entries of the three factors).

Next, we will cancel as many of the entries of $B_1$ with $A_1$ as we can.  When doing so, in order to meet the hypothesis of Theorem \ref{thm:Alg}, we will need to modify the ordering of generators.  For the entries of $B_1$ and $A_1$, we require that $x_{i',j'} \prec y_{i,j}$ if and only if $j'-i' < j-i$ (where each of  $x$ and $y$ has the form $a^1$ or $b^1$).  Remaining generators are all larger than those from $B_1$ and $A_1$ and are ordered as in Lemma \ref{lem:precA}.  

With this ordering, we can cancel
\[
\partial \framebox{$b^1_{i,j}$} = \framebox{$a^1_{i,j}$} + \cdots, \quad \forall i<j \mbox{ with } \sigma(i) < \sigma(j).
\]
In the quotient, the values of $a^1_{i,j}$ are as follows, depending on the parity of $i$ and $j$:
\begin{equation} \label{eq:parity}
\begin{array}{lcl}
\mbox{$i$ odd, $j$ even:}&  \quad a^1_{i,j}  & \doteq \left\{\begin{array} {cr} 1, &  (i,j) = (2l-1,2l) \\ 0, & \mbox{else.} \end{array} \right. \\
\mbox{$i$ even, $j$ even:}&   \quad a^1_{i,j} & \doteq b^{\ms{1}}_{i,j-1} \\
\mbox{$i$ odd, $j$ odd:}&  \quad a^1_{i,j}   &  \doteq b^{\ms{1}}_{i+1,j} 
\end{array}
\end{equation}
[These formula are verified using induction on $j-i$ and the relation
\[
a^1_{i,j} \doteq \sum_{i<m<j} a_{i,m}^1 b_{m,j}^1 + \left( B_1 N \right)_{i,j}.
\]
In doing so, it is useful to note that if $b^{\ms{1}}_{m,j}$ is not $0$ in the quotient then, since $\sigma(m) > \sigma(j)$, we must have $m$ even and $j$ odd.]

The generators $b^1_{i,j}$ with $i<j$ and $\sigma(i) > \sigma(j)$ will not be canceled.  They have differentials
\begin{equation}  \label{eq:b1ijdiff}
\partial b^1_{i,j} = \sum_{i<m<j} a^1_{i,m}b^1_{m,j} \doteq \sum_{\begin{array}{c} i<m<j \\ \sigma(i) > \sigma(m-1) \\ \sigma(m) > \sigma(j) \end{array}} 
b^{\ms{1}}_{i,m-1}b^{\ms{1}}_{m,j}. 
\end{equation}
[At the first equality, note that there is no $a^1_{i,j}$ generator, and for the second equality use that $b^1_{m,j}=0$ unless $m$ is even.]

Finally, we cancel as many entries of $C_1$ as we can with the entries of $B_2$, using
\[
\partial \framebox{$C_1$} = Q^{-1}_\sigma \framebox{$B_2$} Q_\sigma + ...
\]
In doing so, it is convenient to rewrite (\ref{eq:DiffC11}) as
\[
{\ms{\partial Q^{-1}_{\sigma} C_1 Q_{\sigma} = A_1 Q^{-1}_{\sigma} C_1 Q_\sigma + Q^{-1}_{\sigma} C_1 Q_\sigma A_1 +B_2}},
\]
giving the entry-by-entry formula
\[
\partial c^1_{\sigma(i),\sigma(j)} = \sum_{i<m, \sigma(i) < \sigma(m)<\sigma(j)} a^1_{i,m} c^1_{\sigma(m), \sigma(j)} + \sum_{m<j, \sigma(i) < \sigma(m)<\sigma(j)} c^1_{\sigma(i),\sigma(m)} a^1_{m,j} + \left\{\begin{array}{cr} b^2_{i,j}, &  i<j \\ 0, & i>j. \end{array}\right. 
\]
Thus, for all $i<j$ with $\sigma(i) < \sigma(j)$, we cancel
\[
\partial \framebox{$c^1_{\sigma(i),\sigma(j)}$} = \framebox{$b^2_{i,j}$} + \cdots.
\]
[To apply Theorem \ref{thm:Alg}, we order so that all entries of $B_2$ are greater than all entries of $B_1$ (the $A_1$ generators belong to the subalgebra generated by the $B_1$ generators in the current quotient), and so that $c_{\sigma(i),\sigma(j)} \prec b^2_{i',j'}$ if and only if ${\ms{\sigma(j)-\sigma(i)}} < j'-i'$.]
All $c^1_{\sigma(i),\sigma(j)}$ generators with $j<i$ and $\sigma(i) < \sigma(j)$ remain in the quotient.  For such generators, $j$ is even and $i$ is odd so we use (\ref{eq:parity}) to compute
\begin{align}
\partial c^1_{\sigma(i), \sigma(j)} & =  \sum_{i<m, \sigma(i) < \sigma(m)<\sigma(j)} a^1_{i,m} c^1_{\sigma(m), \sigma(j)} + \sum_{m<j, \sigma(i) < \sigma(m)<\sigma(j)} c^1_{\sigma(i),\sigma(m)} a^1_{m,j} \label{eq:c1ijdiff} \\
 & \doteq \sum_{i<m; j<m; \sigma(i) < \sigma(m)<\sigma(j)} b^1_{i+1,m} c^1_{\sigma(m), \sigma(j)} +  \sum_{m<j; m<i; \sigma(i) < \sigma(m)<\sigma(j)} c^1_{\sigma(i),\sigma(m)} b^1_{m,j-1}. \notag
\end{align}
Note that these formulas agree with those from the statement of the theorem with the role of $i$ and $j$ interchanged.

To summarize, in Step 1 and Step 2 we have now cancelled all of the generators besides $z$ and those $b^1_{i,j}$ and $c^1_{\sigma(j), \sigma(i)}$ with $i<j$ and $\sigma(i) > \sigma(j)$.  The differentials of the $b^1_{i,j}$ and $c^1_{\sigma(j),\sigma(i)}$ have been computed in (\ref{eq:b1ijdiff}) and (\ref{eq:c1ijdiff}).  It remains only to calculate $\partial z$.  Using that 
\begin{align*}
b^2_{2l-1,2l} & \doteq \sum_{2l-1<m, \sigma(2l-1) < \sigma(m)<\sigma(2l)} a^1_{2l-1,m} c^1_{\sigma(m), \sigma(2l)} + \sum_{m<2l, \sigma(2l-1) < \sigma(m)<\sigma(2l)} c^1_{\sigma(2l-1),\sigma(m)} a^1_{m,2l} \\
 & \doteq \sum_{2l<m, \sigma(m)<\sigma(2l)} b^1_{2l,m} c^1_{\sigma(m), \sigma(2l)} + \sum_{m<2l-1, \sigma(2l-1) < \sigma(m)} c^1_{\sigma(2l-1),\sigma(m)} b^1_{m,2l-1}.
\end{align*}
[At the second equality, we used that the condition on the first (resp. second) summation implies $m$ crosses $2l$ (resp. $m$ crosses $2l-1$), so $m$ is odd (resp. even).]  Since 
\begin{align*}
\partial z &  \doteq (\widehat{B_3})_{1,2}+(\widehat{B_3})_{3,4}+ \cdots+  (\widehat{B_3})_{2a_{1}+1,2a_{1}+2} \\ 
 & = b^3_{1,2}+b^3_{3,4}+ \cdots+  b^3_{2l-1,2l}  \\
 & \doteq \sum_{l=1}^n b^2_{2l-1,2l} \\
 \end{align*}
 the result follows. [The second equality uses that $(\widehat{B_3})_{2r-1, 2r}$ is $0$ unless there is a tube connecting one of the unknots on the left to the $r$-th cusp edge.  The final inequality uses (\ref{eq:doteq}).]



\end{proof}


\end{document}